\tikzset{
curvarr/.style={
  to path={ -- ([xshift=2ex]\tikztostart.east)
    |- (#1) [near end]\tikztonodes
    -| ([xshift=-2ex]\tikztotarget.west)
    -- (\tikztotarget)}
  }
}
\tikzset{%
    symbol/.style={%
        draw=none,
        every to/.append style={%
            edge node={node [sloped, allow upside down, auto=false]{$#1$}}}
    }
}
\newtheorem{theorem}{Theorem}[section]
\theoremstyle{definition}
\newtheorem{lemma}[theorem]{Lemma}
\newtheorem{proposition}[theorem]{Proposition}
\newtheorem{corollary}[theorem]{Corollary}
\newtheorem{definition}[theorem]{Definition}
\newtheorem{remark}[theorem]{Remark}
\newtheorem{question}[theorem]{Question}
\newtheorem{example}[theorem]{Example}
\newcommand{\Fun}{\text{Fun}}
\newcommand{\colim}{\text{colim}}
\begin{document}
\title{Algebraic $K$-theory of coherent spaces}
\author{Georg Lehner}
\subjclass[2020]{Primary: 19D99, Secondary: 06D22, 54B40, 18F20}
\keywords{Coherent spaces, Stone duality, algebraic $K$-theory, $\infty$-categories of sheaves on a space.}
\begin{abstract}
We give a description of the value of a finitary localizing invariant, such as algebraic $K$-theory, on the category of sheaves on a locally coherent space $X$. This in particular includes all spaces that arise as spectra of commutative rings. As applications we discuss the connection between scissors congruence $K$-theory and Topological Hochschild Homology of certain locally coherent spaces, as well as the algebraic $K$-theory of a measure space.
\end{abstract}
\maketitle

\begin{figure}[h!]
    \centering
    \includegraphics[width=0.45\textwidth]{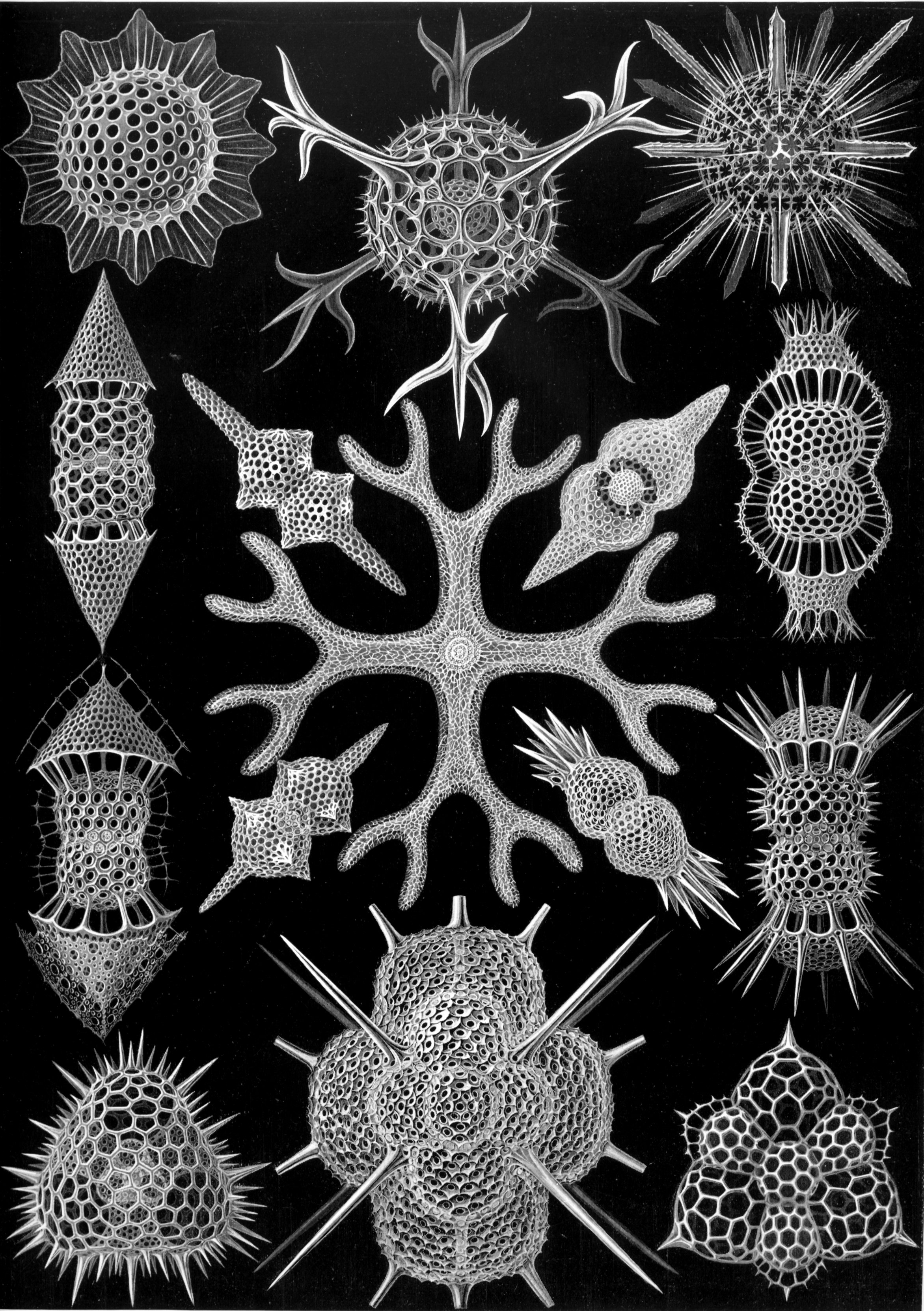}
    \caption*{\tiny Ernst Haeckel, \textit{Kunstformen der Natur}, 1904, plate 91: Spumellaria, Public domain, via Wikimedia Commons}
\end{figure}

\tableofcontents

\section{Introduction}

Recent progress made by Efimov \cite{efimov2025ktheorylocalizinginvariantslarge}, Krause-Nikolaus-Pützstück \cite{krause_nikolaus_puetzstueck}, Lurie, and Clausen among others have raised interest in the (continuous) $K$-theory of a class of large stable $\infty$-categories called \emph{dualizable} $\infty$-categories. An example is the $\infty$-category of sheaves on a locally compact Hausdorff space $X$. The result that is central here is the computation of the $K$-theory of this category.

\begin{theorem}[\cite{efimov2025ktheorylocalizinginvariantslarge} Theorem 6.11, see also \cite{krause_nikolaus_puetzstueck}  Theorem 3.6.1] \label{ktheorylch}
Let $X$ be a locally compact Hausdorff space. Then
$$K^{cont}( \mathrm{Sh}(X, \mathrm{Sp}) ) \simeq H_{cs}(X; K(\mathbb{S}) )$$
where $\mathrm{Sp}$ is the $\infty$-category of spectra and the right-hand side refers to compactly supported sheaf cohomology of $X$ with respect to the local system given by the $K$-theory of the sphere spectrum. 
\end{theorem}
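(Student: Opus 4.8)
The plan is to show that the assignment $U \mapsto K^{cont}(\mathrm{Sh}(U, \mathrm{Sp}))$, viewed as a functor on the poset of open subsets of $X$ with covariant functoriality given by extension by zero, is the compactly supported cohomology of the constant sheaf with value $K(\mathbb{S})$; the theorem then follows by taking $U = X$. The first ingredient is that for $X$ locally compact Hausdorff — and hence for every open subset, and more generally every locally closed subspace — the category $\mathrm{Sh}(U, \mathrm{Sp})$ is dualizable (it is typically not compactly generated, which is exactly why the Efimov-extended invariant $K^{cont}$ is needed), and that for an open $U \subseteq V$ with closed complement $Z = V \setminus U$ the six-functor formalism supplies a Verdier sequence
$$\mathrm{Sh}(U, \mathrm{Sp}) \xrightarrow{\ j_!\ } \mathrm{Sh}(V, \mathrm{Sp}) \xrightarrow{\ i^*\ } \mathrm{Sh}(Z, \mathrm{Sp})$$
of dualizable categories and strongly continuous functors. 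Since $K^{cont}$ is a localizing invariant on dualizable categories it sends this to a fiber sequence of spectra; in particular $K^{cont}(\mathrm{Sh}(\emptyset, \mathrm{Sp})) = 0$.

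From this I would extract the two structural properties characterizing compactly supported cohomology. For excision, given opens $U, V \subseteq X$, the Verdier sequence above applied both to $U \cap V \subseteq U$ and to $V \subseteq U \cup V$ identifies each of the two Verdier quotients $\mathrm{Sh}(U, \mathrm{Sp}) / \mathrm{Sh}(U \cap V, \mathrm{Sp})$ and $\mathrm{Sh}(U \cup V, \mathrm{Sp}) / \mathrm{Sh}(V, \mathrm{Sp})$ with $\mathrm{Sh}(U \setminus V, \mathrm{Sp})$; applying $K^{cont}$ then yields a cocartesian (Mayer--Vietoris) square relating $K^{cont}(\mathrm{Sh}(W, \mathrm{Sp}))$ for $W \in \{U \cap V,\, U,\, V,\, U \cup V\}$. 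For continuity, given a directed family of opens one has $\mathrm{Sh}(\bigcup_\alpha U_\alpha, \mathrm{Sp}) \simeq \colim_\alpha \mathrm{Sh}(U_\alpha, \mathrm{Sp})$ as a filtered colimit of dualizable categories along the extension-by-zero functors, and since $K^{cont}$ is finitary this gives $K^{cont}(\mathrm{Sh}(\bigcup_\alpha U_\alpha, \mathrm{Sp})) \simeq \colim_\alpha K^{cont}(\mathrm{Sh}(U_\alpha, \mathrm{Sp}))$. Thus $U \mapsto K^{cont}(\mathrm{Sh}(U, \mathrm{Sp}))$ is a reduced, excisive, continuous $\mathrm{Sp}$-valued functor on the opens of $X$, and by the recognition principle for compactly supported cohomology on locally compact Hausdorff spaces (in the spirit of Lurie's $\mathcal{K}$-sheaves) it is therefore of the form $U \mapsto \Gamma_c(U, \mathcal{F})$ for a unique sheaf $\mathcal{F} \in \mathrm{Sh}(X, \mathrm{Sp})$.

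It then remains to identify $\mathcal{F}$ with the constant sheaf $\underline{K(\mathbb{S})}$. For the base case, $\mathrm{Sh}(\pt, \mathrm{Sp}) = \mathrm{Sp}$ is compactly generated, so $K^{cont}(\mathrm{Sh}(\pt, \mathrm{Sp})) = K(\mathrm{Sp}^\omega) = K(\mathbb{S})$. Applying the Verdier sequence to $U \setminus \{x\} \subseteq U$ for a point $x$ of an open $U$ gives $\mathrm{cofib}( K^{cont}(\mathrm{Sh}(U \setminus \{x\}, \mathrm{Sp})) \to K^{cont}(\mathrm{Sh}(U, \mathrm{Sp})) ) \simeq K^{cont}(\mathrm{Sh}(\{x\}, \mathrm{Sp})) = K(\mathbb{S})$, which under the recognition principle says precisely that every stalk of $\mathcal{F}$ is $K(\mathbb{S})$; since these identifications are induced by $K^{cont}$ applied to canonical equivalences of categories, they are mutually compatible and assemble into a canonical equivalence $\underline{K(\mathbb{S})} \xrightarrow{\ \sim\ } \mathcal{F}$. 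Hence $K^{cont}(\mathrm{Sh}(X, \mathrm{Sp})) \simeq \Gamma_c(X, \underline{K(\mathbb{S})}) = H_{cs}(X; K(\mathbb{S}))$. (An alternative, possibly cleaner for general non-CW compact Hausdorff spaces, is to use the Verdier sequence $\mathrm{Sh}(X, \mathrm{Sp}) \to \mathrm{Sh}(X^{+}, \mathrm{Sp}) \to \mathrm{Sp}$ for the one-point compactification $X^{+}$ to reduce to $X$ compact, then continuity of $\mathrm{Sh}(-, \mathrm{Sp})$ along cofiltered limits of compacta to reduce to finite simplicial complexes, and finally dévissage along the cells via the localization sequences.)

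I expect the main obstacle to be the package of categorical inputs behind the second paragraph: that $\mathrm{Sh}(-, \mathrm{Sp})$ lands in dualizable categories, that the six-functor sequences are genuine Verdier sequences of dualizable categories with strongly continuous functors (so that a localizing invariant registers them), and that $\mathrm{Sh}(-, \mathrm{Sp})$ is continuous in the sense used above — together with setting up the recognition principle identifying reduced, excisive, continuous $\mathrm{Sp}$-valued functors on $\mathrm{Open}(X)$ with compactly supported sections of sheaves of spectra. The input concerning dualizability and the behaviour of $K^{cont}$ on dualizable categories is supplied by \cite{efimov2025ktheorylocalizinginvariantslarge}; the work is in organizing it into the computation above and in controlling the coefficient sheaf uniformly over all locally compact Hausdorff $X$, along the lines of \cite{krause_nikolaus_puetzstueck}.
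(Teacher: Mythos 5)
The paper does not prove this theorem: it is cited as Theorem~6.11 of \cite{efimov2025ktheorylocalizinginvariantslarge} and Theorem~3.6.1 of \cite{krause_nikolaus_puetzstueck} and used purely as background, so there is no internal proof to compare against. Your outline is broadly consonant with the strategy of those references (dualizability of $\mathrm{Sh}(U,\mathrm{Sp})$ for $U$ locally compact Hausdorff, the open-closed Verdier sequences, and the continuity of $\mathrm{Sh}(-,\mathrm{Sp})$ along directed unions of opens combined with the finitariness of $K^{cont}$).

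However, the final step as written has a genuine gap. Knowing that every stalk of the sheaf $\mathcal{F}$ is equivalent to $K(\mathbb{S})$ does \emph{not} identify $\mathcal{F}$ with the constant sheaf $\underline{K(\mathbb{S})}$: a nontrivial local system on $S^1$ with fibre $K(\mathbb{S})$ has the same stalks. The assertion that the pointwise identifications ``are mutually compatible and assemble into a canonical equivalence $\underline{K(\mathbb{S})}\to\mathcal{F}$'' is precisely what needs to be proved, and it cannot be extracted from stalkwise data alone. One must first \emph{construct} a natural transformation $\Gamma_c(-,\underline{K(\mathbb{S})}) \to K^{cont}(\mathrm{Sh}(-,\mathrm{Sp}))$ of $\mathrm{Sp}$-valued functors on $\mathrm{Open}(X)$ --- for instance via an assembly/unit map coming from the constant-sheaf functor $\mathrm{Sp}\to\mathrm{Sh}(U,\mathrm{Sp})$ and the (lax symmetric monoidal) structure of the localizing invariant, or via the trace formalism --- and only then check that this fixed map is an equivalence on stalks. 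Without producing the comparison map first, the excision and continuity properties plus stalk computations only pin $\mathcal{F}$ down up to a potentially nontrivial twist. The ``alternative'' one-point-compactification route you sketch in passing faces the same issue: reducing to finite complexes by dévissage still requires a functorial comparison with which to induct, and the claimed continuity of $\mathrm{Sh}(-,\mathrm{Sp})$ along cofiltered limits of compacta is itself a nontrivial lemma that needs justification.
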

This result is used as a starting point to import point-set topological techniques into the study of algebraic $K$-theory, not unlike one does in the setting of operator algebras and their (topological) $K$-theory. However, locally compact Hausdorff spaces are not the only class of spaces whose sheaf categories are dualizable $\infty$-categories. Another class of spaces whose $\infty$-categories of sheaves are dualizable is the class of \emph{locally coherent spaces}.

\begin{definition}
A space $X$ is called \emph{coherent} (sometimes also called a \emph{spectral space}) if either of the following equivalent statements holds:
\begin{itemize}
\item $X$ is a compact sober space together with a basis of compact open sets, closed under finite intersections.
\item $X$ is obtained as the spectrum of a commutative ring.
\item $X$ is obtained as an inverse limit of a diagram of finite posets equipped with the Alexandroff topology.
\item The frame of open sets of $X$ is obtained as the Ind-completion of a bounded distributive lattice.
\end{itemize}
A space $X$ is called \emph{locally coherent} if either of the following holds:
\begin{itemize}
\item $X$ is a sober space together with a basis of compact open sets, closed under finite intersections.
\item The frame of open sets of $X$ is obtained as the Ind-completion of a lower bounded distributive lattice.
\item The frame of open sets of $X$ is obtained from a finitary Grothendieck topology on a poset closed under finite meets.
\end{itemize}
\end{definition}
\noindent We note that a coherent space $X$ is Hausdorff iff $X$ is a profinite set, also called a Stone space.

There is a Stone duality relating the category of locally coherent spaces with the category of lower bounded distributive lattices. Under this duality, a locally coherent space $X$ is sent to the lattice $\mathcal{K}^o(X)$ of compact open subsets. We have the equivalence
$$ \mathrm{Sh}(X,\mathcal{C}) \simeq  \mathrm{Sh}((\mathcal{K}^o(X),fin),\mathcal{C}) $$
for any presentable $\infty$-category $\mathcal{C}$. Here, for the right-hand side we equip the poset of compact opens with the Grothendieck topology obtained by taking finite joins as coverings. Locally coherent spaces are special in the sense that if we choose a compactly generated base $\infty$-category $\mathcal{C}$, the $\infty$-category of sheaves $\mathrm{Sh}(X,\mathcal{C})$ is again compactly generated. The central result that allows the computation of the $K$-theory of this category is the following. Denote the category of lower bounded distributive lattices as $\mathrm{DLatt}_{lb}$.

\begin{theorem}[See Theorem \ref{sheavesfiltereddualizablelowerbounded}]
Let $\mathcal{C}$ be a dualizable, stable $\infty$-category. The functor
$$ \mathrm{Sh}((-,fin),\mathcal{C}) : \mathrm{DLatt}_{lb} \rightarrow \mathrm{Pr}^L_{dual} $$
preserves filtered colimits.
\end{theorem}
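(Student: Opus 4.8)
The plan is to analyse the site directly. Fix a filtered diagram $(L_i)$ in $\mathrm{DLatt}_{lb}$ with colimit $L$; I will show that the canonical functor $\colim_i\mathrm{Sh}((L_i,fin),\mathcal{C})\to\mathrm{Sh}((L,fin),\mathcal{C})$ is an equivalence in $\mathrm{Pr}^L$, the decisive point being that the topology $fin$ is generated by \emph{finite} covers, so every covering relation in $L$ already occurs at a finite stage; the dualizability bookkeeping is then a formality.

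Since $\mathrm{DLatt}_{lb}$ is a finitary variety, the forgetful functor to sets creates filtered colimits, so $L=\colim_iL_i$ on underlying posets and $L^{op}=\colim_iL_i^{op}$ in $\mathrm{Cat}_\infty$; hence $\mathrm{Fun}(L^{op},\mathcal{C})\simeq\lim_i\mathrm{Fun}(L_i^{op},\mathcal{C})$, the limit taken along the restriction functors $\phi_i^*$ induced by the structure maps $\phi_i\colon L_i\to L$. Each $\phi_i$ is a lattice homomorphism, so it carries $fin$-covers to $fin$-covers and preserves the finite meets appearing in their \v{C}ech data; consequently $\phi_i^*$ preserves sheaves and is right adjoint to the transition functor $\mathrm{Sh}((L_i,fin),\mathcal{C})\to\mathrm{Sh}((L,fin),\mathcal{C})$. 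Because the underlying $\infty$-category of a colimit in $\mathrm{Pr}^L$ is the limit of the associated diagram of right adjoints, the comparison functor above has right adjoint $\mathcal{F}\mapsto(\phi_i^*\mathcal{F})_i$; tracing this through the identification of presheaf categories, it is an equivalence exactly when, inside $\mathrm{Fun}(L^{op},\mathcal{C})$, one has $\mathrm{Sh}((L,fin),\mathcal{C})=\lim_i\mathrm{Sh}((L_i,fin),\mathcal{C})$, i.e. when a presheaf on $L$ is a sheaf if and only if all of its restrictions to the $L_i$ are.

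The forward implication was just observed. For the converse, suppose every $\phi_i^*\mathcal{F}$ is a sheaf, and let $c_1\vee\dots\vee c_n=c$ be a covering in $L$. Filteredness lets me lift $c$ and the $c_k$ to a single $L_i$, and because an equality in $L=\colim_iL_i$ already holds at a finite stage, after enlarging $i$ I may assume the join relation itself holds in some $L_j$; there the lifts form a $fin$-covering, their pairwise meets lift the meets $c_k\wedge c_l$, and the sheaf condition for $\phi_j^*\mathcal{F}$ at this covering is carried by $\phi_j$ precisely to the descent condition for $\mathcal{F}$ at $c_1\vee\dots\vee c_n=c$. Thus $\mathcal{F}$ is a sheaf, the comparison is an equivalence of $\infty$-categories and hence of objects of $\mathrm{Pr}^L$. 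Finally $\mathrm{Sh}((L,fin),\mathcal{C})$ is dualizable — it is compactly generated for $\mathcal{C}=\mathrm{Sp}$, and in general one has $\mathrm{Sh}((L,fin),\mathcal{C})\simeq\mathrm{Sh}((L,fin),\mathrm{Sp})\otimes\mathcal{C}$ since the sheaf condition is a left-exact Bousfield localization and both localization and dualizability pass through $-\otimes\mathcal{C}$ — so this $\mathrm{Pr}^L$-colimit is a colimit in the full subcategory $\mathrm{Pr}^L_{dual}$ as well.

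The hard part is not any one step but keeping the two reductions honest: first, checking that the transition functor built into $\mathrm{Sh}((-,fin),\mathcal{C})$ really is the left adjoint of restriction, so that the abstract principle ``colimit in $\mathrm{Pr}^L$ $=$ limit of right adjoints'' is realized by the concrete restriction-of-sheaves functors; and second, the \v{C}ech-level bookkeeping in the gluing step. That last step is where the content sits, and it is exactly there that finiteness of the covers in $fin$ is used — drop it and a covering relation in $L$ need not be visible at any finite stage, and the argument collapses.
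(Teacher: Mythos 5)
Your argument is correct in substance but takes a genuinely different route from the paper. The paper first treats the bounded case $\mathrm{DLatt}_{bd}\to\mathrm{Pr}^L$ by composing three formal observations — $\mathrm{Ind}\colon\mathrm{DLatt}_{bd}\to\mathrm{Frm}$ and $\mathrm{Sh}\colon\mathrm{Frm}\to\mathrm{LTop}$ are left adjoints, and $\mathrm{LTop}\to\mathrm{Pr}^L$ preserves filtered colimits by Lurie's Theorem~6.3.3.1 — then lifts to $\mathrm{Pr}^L_\omega$ and tensors with $\mathrm{Sp}$ and $\mathcal{C}$. To descend from bounded to lower bounded it invokes the fully faithful left adjoint $(-)_\infty\colon\mathrm{DLatt}_{lb}\to{\mathrm{DLatt}_{bd}}_{/\mathbf 2}$, re-expresses $\mathrm{Sh}((D,fin),\mathrm{Sp})$ as the kernel of the open–closed sequence $\mathrm{Sh}((D,fin),\mathrm{Sp})\to\mathrm{Sh}((D_\infty,fin),\mathrm{Sp})\to\mathrm{Sp}$, and appeals to Efimov's Proposition~1.67 that filtered colimits of short exact sequences in $\mathrm{Pr}^L_{dual}$ are exact. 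Your proof never touches $\mathrm{LTop}$, $(-)_\infty$, or Efimov's lemma: you compute the colimit in $\mathrm{Pr}^L$ directly by passing to right adjoints, identifying it with the full subcategory of $\mathrm{Fun}(L^{op},\mathcal{C})$ whose restrictions along all $\phi_i$ are sheaves, and then checking by hand that this subcategory equals $\mathrm{Sh}((L,fin),\mathcal{C})$ using that every finite cover in $L$ lifts to a finite stage. That is a cleaner and more self-contained argument for this particular functor, and it makes transparent exactly where finiteness of covers enters; the paper's route buys more modularity (the Efimov lemma is reusable elsewhere and the bounded case piggybacks on general topos-theoretic machinery), whereas yours buys concreteness. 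Two small inaccuracies worth repairing: $\mathrm{Pr}^L_{dual}$ is a \emph{non-full} subcategory of $\mathrm{Pr}^L$ (morphisms are strongly continuous left adjoints), so you should remark that the transition functors are strongly continuous — this holds because restriction preserves filtered colimits, as the sheaf condition for $fin$ is a collection of finite-limit conditions (the paper's Corollary~\ref{compgen}, Lemma~\ref{compmor}). And the paper defines the functor $\mathrm{Sh}((-,fin),\mathcal{C})$ on $\mathrm{DLatt}_{lb}$ via the $(-)_\infty$-kernel construction precisely because lattice maps in $\mathrm{DLatt}_{lb}$ need not give geometric morphisms; your transition functors (sheafified left Kan extensions) are indeed the same ones, but a sentence reconciling the two descriptions would make the proof fully rigorous against the paper's definitions.
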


Here with $\mathrm{Pr}^L_{dual}$ we mean the $\infty$-category of stable, dualizable $\infty$-categories. Any finitary localizing invariant $F : \mathrm{Cat}^{perf} \rightarrow \mathcal{E}$ in the sense of \cite{blumberg_gepner_tabuada}, such as for example non-connective algebraic $K$-theory or topological Hochschild homology has a unique extension $F^{cont} : \mathrm{Pr}^L_{dual} \rightarrow \mathcal{E}$, such that for compactly generated stable $\infty$-categories $\mathcal{C}$ we have $F^{cont}( \mathcal{C} ) = F( \mathcal{C}^\omega )$. \cite{efimov2025ktheorylocalizinginvariantslarge} We obtain the useful corollary.

\begin{corollary}
Let $F$ be a finitary localizing invariant $ \mathrm{Cat}^{perf} \rightarrow  \mathcal{E}$, such as $K$-theory or $THH$. Then the functor
$$ F^{cont}( \mathrm{Sh}((-,fin),\mathcal{C})) : \mathrm{DLatt}_{lb} \rightarrow \mathcal{E}$$
preserves filtered colimits.
\end{corollary}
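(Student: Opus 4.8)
The plan is to exhibit the functor in the statement as a composite of two functors that each preserve filtered colimits; the conclusion is then formal. Explicitly, $F^{cont}(\mathrm{Sh}((-,fin),\mathcal{C}))$ is the composite
$$\mathrm{DLatt}_{lb} \xrightarrow{\ \mathrm{Sh}((-,fin),\mathcal{C})\ } \mathrm{Pr}^L_{dual} \xrightarrow{\ F^{cont}\ } \mathcal{E},$$
where the first arrow takes values in $\mathrm{Pr}^L_{dual}$ rather than merely in $\mathrm{Pr}^L$ because, for $\mathcal{C}$ dualizable and stable, the sheaf $\infty$-category of a locally coherent space is again dualizable and stable --- a fact already needed for the statement of Theorem \ref{sheavesfiltered} to make sense.

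First I would invoke Theorem \ref{sheavesfiltered}, which says precisely that $\mathrm{Sh}((-,fin),\mathcal{C}) : \mathrm{DLatt}_{lb} \to \mathrm{Pr}^L_{dual}$ preserves filtered colimits. Next I would use the corresponding property of Efimov's continuous extension \cite{efimov2025ktheorylocalizinginvariantslarge}: the extension $F^{cont}$ recalled above is not merely \emph{an} extension of the finitary invariant $F$, it is the unique \emph{continuous} localizing invariant on $\mathrm{Pr}^L_{dual}$ agreeing with $F$ on compactly generated categories, so in particular $F^{cont}$ commutes with filtered colimits in $\mathrm{Pr}^L_{dual}$. (As always in this setting, this presupposes that $\mathcal{E}$ admits filtered colimits, which is part of the hypotheses on a localizing invariant.) Composing the two, $F^{cont} \circ \mathrm{Sh}((-,fin),\mathcal{C})$ preserves filtered colimits, which is the assertion.

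The only point that warrants a sentence of care is the compatibility of the two notions of ``filtered colimit'' at the intermediate category: given a filtered diagram $i \mapsto L_i$ in $\mathrm{DLatt}_{lb}$ with colimit $L$, Theorem \ref{sheavesfiltered} identifies $\mathrm{Sh}((L,fin),\mathcal{C})$ with $\colim_i \mathrm{Sh}((L_i,fin),\mathcal{C})$ computed in $\mathrm{Pr}^L_{dual}$ (equivalently, with the limit over $i$ of the right-adjoint diagram in $\mathrm{Pr}^R$), and these are exactly the filtered colimits of dualizable categories to which Efimov's continuity statement applies. I do not expect any genuine obstacle in the corollary itself: all the substance sits upstream, in the dualizability of $\mathrm{Sh}(X,\mathcal{C})$ and in the filtered-colimit statement of Theorem \ref{sheavesfiltered}, together with Efimov's general theory; given those, the corollary is a one-line composition.
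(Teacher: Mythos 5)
Your proof is correct and follows the same route the paper takes: the corollary is nothing more than composing the filtered-colimit-preserving sheaf functor with the filtered-colimit-preserving localizing invariant $F^{cont}$, and this is exactly the ``In particular'' clause of Theorem \ref{sheavesfiltereddualizablelowerbounded}. One small bookkeeping remark: you cite Theorem \ref{sheavesfiltered} for the claim that $\mathrm{Sh}((-,fin),\mathcal{C}) \colon \mathrm{DLatt}_{lb} \to \mathrm{Pr}^L_{dual}$ preserves filtered colimits, which mirrors the introduction's own labeling, but in the body of the paper that reference number actually attaches to the bounded $\mathrm{Spc}$-valued version landing in $\mathrm{Pr}^L_{\omega}$; the lower-bounded, $\mathcal{C}$-valued, $\mathrm{Pr}^L_{dual}$-valued statement you need is Theorem \ref{sheavesfiltereddualizablelowerbounded}, which is obtained from \ref{sheavesfiltered} via $-\otimes\mathcal{C}$, Corollary \ref{sheavesfiltereddualizable}, the $(-)_\infty$ adjunction, and Lemma \ref{efimovslemma}. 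This does not affect the soundness of your argument; it only means the single step you outsource to Theorem \ref{sheavesfiltered} actually carries the full weight of the theorem chain \ref{sheavesfiltered}$\to$\ref{sheavesfiltereddualizable}$\to$\ref{sheavesfiltereddualizablelowerbounded}.
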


This means the computation of the $K$-theory of these categories can be reduced to the case of finite distributive lattices, which under Birkhoff's theorem correspond to Alexandroff topologies on finite posets. We use this to give two different characterizations of the value of $\mathrm{Sh}(X,\mathcal{C})$ under a finitary localizing invariant. For one, for any coherent space $X$, there is a natural map $X^{const} \rightarrow X$ that equips $X$ with a profinite topology, called the \emph{constructible topology}.

\begin{theorem}[See Theorem \ref{ktheorycoherent} and Corollary \ref{ktheorycoherentvaluesspectra}]
Let $X$ be a coherent space. The natural map $X^{const} \rightarrow X$ induces an equivalence
$$F^{cont}( \mathrm{Sh}(X,\mathcal{C})) \simeq F^{cont}( \mathrm{Sh}(X^{const},\mathcal{C})) $$
for any finitary localizing invariant $F$. In particular, if $F$ has values in spectra, then for all $n \in \mathbb{Z}$ we have
$$ \pi_n F^{cont}( \mathrm{Sh}(X,\mathcal{C})) \cong C(X^{const}; \pi_n(F^{cont}(\mathcal{C}) )$$
where $\pi_n(F^{cont}(\mathcal{C}))$ is equipped with the discrete topology.
\end{theorem}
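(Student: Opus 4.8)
The plan is to reduce, through Stone duality, to the case of a finite distributive lattice using the Corollary, to settle that case by a recollement (semiorthogonal decomposition) argument, and then to read off homotopy groups from a filtered colimit of finite products. A coherent space $X$ corresponds to the bounded distributive lattice $L := \mathcal{K}^o(X)$, and it is classical that the clopen subsets of $X^{const}$ are exactly the Boolean subalgebra of $\mathcal{P}(X)$ generated by the compact opens; that is, $X^{const} = \mathrm{Spec}\,B(L)$, where $B(L)$ is the Booleanization of $L$ (the left adjoint of $\mathrm{BoolAlg}\hookrightarrow\mathrm{DLatt}_b$ applied to $L$), and $X^{const}\to X$ is dual to the unit $L\to B(L)$. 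Using the site presentation $\mathrm{Sh}(X,\mathcal{C})\simeq\mathrm{Sh}((L,fin),\mathcal{C})$, pullback of sheaves along $X^{const}\to X$ becomes a natural transformation between the functors $L\mapsto F^{cont}(\mathrm{Sh}((L,fin),\mathcal{C}))$ and $L\mapsto F^{cont}(\mathrm{Sh}((B(L),fin),\mathcal{C}))$ on $\mathrm{DLatt}_b$. Both preserve filtered colimits: the first by the Corollary, the second because $B$, being a left adjoint, preserves filtered colimits. Since every bounded distributive lattice is the filtered colimit of its finitely generated — hence finite — bounded sublattices, it suffices to prove the comparison map is an equivalence for $L$ finite.

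Assume $L$ is finite. By Birkhoff duality $\mathrm{Spec}\,L$ is a finite poset $P$ with its Alexandroff topology, $B(L)\cong\mathcal{P}(P)$, and $X^{const}=\mathrm{Spec}\,B(L)$ is the discrete space on the $|P|$ points of $P$; thus $\mathrm{Sh}(X^{const},\mathcal{C})\simeq\mathcal{C}^{|P|}$ and the comparison map is the functor $\mathrm{Sh}(P,\mathcal{C})\to\mathcal{C}^{|P|}$ whose $p$-th component is the stalk at $p$. To see $F^{cont}$ turns this into an equivalence, order $P$ as $p_1,\dots,p_n$ with each $p_k$ maximal in $P\setminus\{p_1,\dots,p_{k-1}\}$, so that $V_k:=\{p_1,\dots,p_k\}$ is open in $P$ and $\{p_k\}$ is closed in $V_k$. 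The associated recollement is a localization sequence $\mathrm{Sh}(V_{k-1},\mathcal{C})\xrightarrow{j_!}\mathrm{Sh}(V_k,\mathcal{C})\xrightarrow{i^*}\mathcal{C}$ of dualizable stable $\infty$-categories, which $F^{cont}$ sends to a fiber sequence; this fiber sequence splits because $i^*$ has the section $i_*$ and $j_!$ the retraction $j^*$, and because $j^*i_*\simeq 0$ the pair $(F^{cont}(j^*),F^{cont}(i^*))$ is the splitting equivalence $F^{cont}(\mathrm{Sh}(V_k,\mathcal{C}))\simeq F^{cont}(\mathrm{Sh}(V_{k-1},\mathcal{C}))\oplus F^{cont}(\mathcal{C})$. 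Descending from $k=n$ yields $F^{cont}(\mathrm{Sh}(P,\mathcal{C}))\simeq\bigoplus_{p\in P}F^{cont}(\mathcal{C})$, and unwinding the induction the projection onto the $p_k$ summand is $F^{cont}$ of ``restrict to the open $V_k$, then take the stalk at $p_k$'', which agrees with the stalk functor at $p_k$ on $P$ because the minimal open neighbourhood of $p_k$ already lies in $V_k$. So the comparison map is precisely this equivalence; being a natural transformation of filtered-colimit-preserving functors that is an equivalence on all finite lattices, it is an equivalence for every coherent space $X$.

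For the second statement, let $F$ take values in spectra and write $X^{const}=\lim_i S_i$ as the cofiltered limit of finite sets dual to $B(L)=\colim_i B(L_i)$ over the finite sublattices $L_i\subseteq L$. The first part together with the Corollary gives $F^{cont}(\mathrm{Sh}(X,\mathcal{C}))\simeq\colim_i\bigoplus_{s\in S_i}F^{cont}(\mathcal{C})$. Applying $\pi_n$, which commutes with filtered colimits and with finite coproducts of spectra, gives $\pi_n F^{cont}(\mathrm{Sh}(X,\mathcal{C}))\cong\colim_i\mathrm{Map}_{\mathrm{Set}}(S_i,\pi_n F^{cont}(\mathcal{C}))$; since every continuous map from the compact space $X^{const}$ to a discrete set factors through one of the finite quotients $S_i$, this colimit is the group $C(X^{const};\pi_n F^{cont}(\mathcal{C}))$ of continuous functions with discrete coefficients.

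The crux, and the step I expect to require the most care, is the finite case: not the existence of the recollement filtration, which is standard, but the identification of the splitting it produces (a priori dependent on the chosen ordering of $P$) with the canonical pullback along $X^{const}\to X$, so that the comparison map is pinned down as an equivalence in a way that is automatically compatible with the transition maps and hence survives the filtered colimit. A subsidiary point, which I would import from Efimov's theory of localizing invariants on dualizable $\infty$-categories, is that $F^{cont}$ carries these sheaf-theoretic recollements of not-necessarily-compactly-generated categories to fiber sequences.
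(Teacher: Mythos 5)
Your argument is correct and follows essentially the same route as the paper's proof of Theorem \ref{ktheorycoherent} and Corollary \ref{ktheorycoherentvaluesspectra}: reduce to finite distributive lattices via filtered-colimit preservation (Theorem \ref{sheavesfiltereddualizablelowerbounded}), use Birkhoff duality to identify both sheaf categories with presheaf categories on a finite poset, compute both sides as $\bigoplus_{p\in P}F^{cont}(\mathcal{C})$ via a semi-orthogonal decomposition (the paper cites Proposition \ref{ktheorysemiorthogonal} / Corollary \ref{semiorthogonaldecomposition}, while you re-derive the same splitting by an explicit one-point-at-a-time recollement induction), and for the homotopy-group statement identify the filtered colimit $\colim_i\mathrm{Map}(S_i,\pi_nF^{cont}(\mathcal{C}))$ with $C(X^{const};\pi_nF^{cont}(\mathcal{C}))$ (the paper packages this through $M(D)$ and Proposition \ref{motivesforboolean}, but the underlying computation is the same). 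One small slip to fix: with the paper's convention that Alexandroff opens are \emph{lower}-closed, each $p_k$ should be chosen \emph{minimal} (not maximal) in $P\setminus\{p_1,\dots,p_{k-1}\}$ so that the $V_k$ are open and $p_k$ is maximal (hence closed) in $V_k$; with that adjustment your filtration and stalk identification go through verbatim.
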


This means the computation of the $K$-theory of these categories reduces to the case of compact Hausdorff spaces, assuming one understands the space corresponding to the constructible topology reasonably well. Since this might not always be useful in practice, we give a different description in terms of valuations.

\begin{definition}
Let $D$ be lower bounded distributive lattice and $A$ an abelian group. An $A$-valued valuation on $D$ is a function $\mu : D \rightarrow A$ such that
\begin{enumerate}
\item $\mu(0) = 0$
\item \emph{Modularity:} For all $U, V \in D$ it holds that $\mu(U) + \mu(V) = \mu(U \vee V) + \mu(U \wedge V)$.
\end{enumerate}
Given a lower bounded distributive lattice $D$, there exists a universal valuation $\mu_{univ} : D \rightarrow M(D)$ defined by the property that whenever $\mu : D \rightarrow A$ is a valuation, there exists a unique group homomorphism $M(D) \rightarrow A$ extending $\mu$ along $\mu_{univ}$. We call $M(D)$ the \emph{module of D-motives}. The group $M(D)$ can alternatively be described as the free abelian group $\mathbb{Z}[D]$ modulo the relations:
\begin{enumerate}
\item $[0] = 0$
\item $[U] + [V] = [U \vee V] + [U \wedge V]$ for all $U, V \in D$.
\end{enumerate}
Write $\mathcal{M}(D)$ for the Moore spectrum corresponding to $M(D)$. We call this $\mathcal{M}(D)$ the spectrum of motives.
\end{definition}

The abelian group $M(D)$ is always a free group, which we show in Section \ref{sectionvaluations}, by reducing the statement to Nöbeling's theorem, and hence $\mathcal{M}(D)$ is in fact a wedge of sphere spectra indexed by a choice of basis of $M(D)$. The assignment of an abelian group to its Moore spectrum is usually not functorial; however, the issue of functoriality of the assignment $D \mapsto \mathcal{M}(D)$ can be clarified quite substantially by the following result, which we develop in Section \ref{functoriality}. 

\begin{theorem}[See Theorem \ref{spaceofidempotents}] \label{spaceofidempotentsintro}
There is an adjunction
\[\begin{tikzcd}
	{\mathrm{DLatt}_{lb}} & {\mathrm{CAlg}^{nu}(\mathrm{Sp})}
	\arrow[""{name=0, anchor=center, inner sep=0}, "{\mathcal{M}}", curve={height=-12pt}, from=1-1, to=1-2]
	\arrow[""{name=1, anchor=center, inner sep=0}, "{\mathrm{Idem}}", curve={height=-12pt}, from=1-2, to=1-1]
	\arrow["\dashv"{anchor=center, rotate=-90}, draw=none, from=0, to=1]
\end{tikzcd}\]
such that for a lower bounded distributive lattice $D$, the value $\mathcal{M}(D)$ is the Moore spectrum to the module of motives $M(D)$.
\end{theorem}

The question of when a commutative ring $R$ lifts to an $E_\infty$-ring spectrum with underlying spectrum given by the Moore spectrum of $R$ is of independent interest, see e.g.\ \cite{carmeli2024mapssphericalgrouprings}. Theorem \ref{spaceofidempotentsintro} provides us with many such examples. With that being said, let us now state the main theorem.

\begin{theorem} \label{ktheorylocallycoherent}
Let $X$ be a locally coherent space. Then
$$F^{cont}( \mathrm{Sh}(X,\mathcal{C})) \simeq \mathcal{M}(\mathcal{K}^o(X)) \otimes F^{cont}(\mathcal{C}) \simeq \bigoplus F^{cont}(\mathcal{C})$$
for any finitary localizing invariant $F$, where $\mathcal{K}^o(X)$ is the lower bounded distributive lattice of compact open subsets of $X$, and the wedge sum on the right is indexed by a choice of basis of $M(\mathcal{K}^o(X))$. In particular, if $F$ has values in spectra, then for all $n \in \mathbb{Z}$ we have
$$ \pi_n F^{cont}( \mathrm{Sh}(X,\mathcal{C})) \cong M(\mathcal{K}^o(X)) \otimes_\mathbb{Z} \pi_n(F^{cont}(\mathcal{C}) ).$$
\end{theorem}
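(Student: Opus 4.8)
The plan is to regard both sides of the asserted equivalence as functors $\mathrm{DLatt}_{lb}\to\mathcal{E}$ that preserve filtered colimits, construct a natural comparison map between them, and then verify that it is an equivalence on finite distributive lattices, where both sides become explicitly computable. Under the duality $X\leftrightarrow\mathcal{K}^o(X)$ together with $\mathrm{Sh}(X,\mathcal{C})\simeq\mathrm{Sh}((\mathcal{K}^o(X),fin),\mathcal{C})$, the left-hand side is the functor $\Phi\colon D\mapsto F^{cont}(\mathrm{Sh}((D,fin),\mathcal{C}))$, which preserves filtered colimits by the Corollary that $F^{cont}(\mathrm{Sh}((-,fin),\mathcal{C}))$ does. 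For the right-hand side, $M(-)\colon\mathrm{DLatt}_{lb}\to\mathrm{Ab}$ preserves filtered colimits — immediate from the presentation by generators and relations, since $\mathbb{Z}[-]$ and the quotient by the modularity relations both commute with filtered colimits — and the Moore spectrum functor $\mathcal{M}(-)\colon\mathrm{Ab}\to\mathrm{Sp}$ preserves filtered colimits as well, because a filtered colimit of the connective spectra $\mathcal{M}(A_i)$ is connective with integral homology $\colim A_i$ concentrated in degree $0$, hence is $\mathcal{M}(\colim A_i)$ by the Hurewicz theorem (alternatively, one may use directly that every $M(D)$ is free, so $\mathcal{M}(D)$ is a wedge of spheres). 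Since $-\otimes F^{cont}(\mathcal{C})$ preserves colimits, the right-hand side preserves filtered colimits too. A finitely generated distributive lattice is finite, so $\mathrm{DLatt}_{lb}=\mathrm{Ind}(\mathrm{DLatt}_{lb}^{fin})$ and a filtered-colimit-preserving functor on it is left Kan extended from $\mathrm{DLatt}_{lb}^{fin}$; hence it suffices to give a \emph{natural} equivalence of the two functors restricted to finite distributive lattices — a pointwise equivalence there would not be enough.

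\emph{The comparison map.} The functor $\Phi$ kills the initial (one-element) lattice, since $\mathrm{Sh}(\emptyset,\mathcal{C})=0$, and carries the pullback squares in $\mathrm{DLatt}_{lb}$ coming from open gluings $X=U\cup V$ to cartesian (equivalently cocartesian) squares in $\mathcal{E}$: this is $F^{cont}$ applied to the Mayer–Vietoris localization sequences associated to the extension-by-zero functors $j_!$. Together with the maps $F^{cont}(\mathrm{Sh}(U,\mathcal{C}))\to F^{cont}(\mathrm{Sh}(X,\mathcal{C}))$ for $U\in\mathcal{K}^o(X)$, this exhibits $\Phi$ as, in effect, an $F^{cont}(\mathcal{C})$-valued valuation on $\mathcal{K}^o(X)$ with value $F^{cont}(\mathcal{C})$ on the two-element lattice $\mathcal{K}^o(\mathrm{pt})$, and the universal property of the module of motives (the functoriality of $\mathcal{M}(-)$ recorded in the footnote and developed in Section~\ref{functoriality}) then produces a natural transformation $\mathcal{M}(\mathcal{K}^o(X))\otimes F^{cont}(\mathcal{C})\to F^{cont}(\mathrm{Sh}(X,\mathcal{C}))$.

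\emph{Equivalence on finite lattices, and conclusion.} Let $D$ be finite. By Birkhoff's theorem $D\cong\mathrm{Down}(P)$ with $P$ the poset of join-irreducibles, and $\mathrm{Sh}((D,fin),\mathcal{C})\simeq\mathrm{Fun}(P,\mathcal{C})$, the sheaves on the Alexandroff space of $P$. Induct on $|P|$: removing an element $p\in P$ that is extremal for the Alexandroff topology gives one of the standard recollements of $\mathrm{Fun}(P,\mathcal{C})$, i.e. a Verdier sequence with terms $\mathcal{C}$ and $\mathrm{Fun}(P\setminus\{p\},\mathcal{C})$; it splits because the quotient functor admits a fully faithful section, so $\Phi(D)\simeq F^{cont}(\mathcal{C})\oplus\Phi(\mathrm{Down}(P\setminus\{p\}))\simeq\bigoplus_{p\in P}F^{cont}(\mathcal{C})$. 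On the other side $M(\mathrm{Down}(P))$ is free on the principal down-sets $\{\downarrow p\}_{p\in P}$ — either by Möbius inversion on $P$ (the valuation $S\mapsto\sum_{p\in S}[p]\in\mathbb{Z}[P]$ is universal), or as a special case of the general freeness result proved via Nöbeling's theorem — so $\mathcal{M}(\mathcal{K}^o(X))\otimes F^{cont}(\mathcal{C})\simeq\bigoplus_{p\in P}F^{cont}(\mathcal{C})$, and one checks that the comparison map realizes this identification. The colimit argument then yields the first equivalence for every locally coherent $X$; the remaining assertions follow because $M(\mathcal{K}^o(X))$ is free, so $\mathcal{M}(\mathcal{K}^o(X))$ is a wedge of spheres, $\mathcal{M}(\mathcal{K}^o(X))\otimes F^{cont}(\mathcal{C})$ is a coproduct of copies of $F^{cont}(\mathcal{C})$, and $\pi_n$ commutes with that coproduct, giving $\pi_n F^{cont}(\mathrm{Sh}(X,\mathcal{C}))\cong M(\mathcal{K}^o(X))\otimes_{\mathbb{Z}}\pi_n F^{cont}(\mathcal{C})$.

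\emph{Main obstacle.} The formal skeleton above is routine; the real content is twofold. First, one must pin down the comparison map so that it is genuinely natural: this means making precise the universal property of $\mathcal{M}(-)$ as the recipient of spectrum-valued valuations (in the $\infty$-categorical sense) and identifying it with the Mayer–Vietoris/excision structure of $\Phi$ coming from the functors $j_!$. Second, the recollement induction requires care — one must verify that the resulting splitting of $\Phi(D)$ into $\bigoplus_{p\in P}F^{cont}(\mathcal{C})$ agrees, naturally in $D$, with the Möbius-theoretic basis of $M(\mathrm{Down}(P))$, so that the object-wise equivalence on $\mathrm{DLatt}_{lb}^{fin}$ genuinely upgrades to a natural one and the left Kan extension argument applies.
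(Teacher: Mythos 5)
Your proposal takes a genuinely different route from the paper, and it is worth spelling out what each buys and where your sketch leaves real work undone.

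\emph{Different reduction strategy.} The paper does not construct a comparison map $\mathcal{M}(\mathcal{K}^o(X))\otimes F^{cont}(\mathcal{C})\to F^{cont}(\mathrm{Sh}(X,\mathcal{C}))$ at all. It chains together object-wise identifications along a sequence of reductions: locally coherent $\Rightarrow$ coherent, via the Verdier sequence $\mathrm{Sh}((D,fin),\mathcal{C})\to\mathrm{Sh}((D_\infty,fin),\mathcal{C})\to\mathcal{C}$ and the splitting $M(D_\infty)\cong M(D)\oplus\mathbb{Z}$; coherent $\Rightarrow$ profinite, via the constructible topology (Theorem~\ref{ktheorycoherent}, whose naturality is easy because the comparison there is $F^{cont}(\mathrm{Sh}(-,\mathcal{C}))\to F^{cont}(\mathrm{Sh}(\mathrm{Bool}(-),\mathcal{C}))$, a genuine natural transformation induced by $D\to\mathrm{Bool}(D)$); and finally profinite $\Rightarrow$ finite discrete, via the filtered colimit $\mathrm{Sh}(X;\mathcal{C})\simeq\colim_i\bigoplus_{X_i}\mathcal{C}$, where the equivalence $\bigoplus_{X_i}F(\mathcal{C})\simeq(\bigoplus_{X_i}\mathbb{S})\otimes F(\mathcal{C})$ is explicit and the colimit is literally the definition of $\mathcal{M}(\mathcal{K}^o(X))$. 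Each step is an object-level computation where the coherence data is essentially manifest. You instead skip the constructible topology entirely, work on all finite distributive lattices at once, and propose to build a comparison map first and then verify it is an equivalence on finite lattices via the recollement induction. That is a cleaner logical skeleton, but it front-loads exactly the hard part.

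\emph{Where your sketch has gaps.} First, the clause ``the Moore spectrum functor $\mathcal{M}(-)\colon\mathrm{Ab}\to\mathrm{Sp}$ preserves filtered colimits'' has a more basic problem: as the paper notes at the start of Section~\ref{functoriality}, there is no functor $\mathrm{Ab}\to\mathrm{Sp}$ realizing Moore spectra. The paper's $\mathcal{M}$ is a functor $\mathrm{DLatt}_{lb}\to\mathrm{CAlg}^{nu}(\mathrm{Sp})$, built as the left adjoint to the idempotent functor and shown only afterwards to agree value-wise with the Moore spectrum of $M(D)$. So your filtered-colimit-preservation of $\mathcal{M}(\mathcal{K}^o(-))$ should be argued directly from that construction (or via the free/wedge-of-spheres description), not via a purported functor on abelian groups. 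Second, your comparison map is supposed to come from ``the universal property of $\mathcal{M}(-)$ as the recipient of spectrum-valued valuations.'' The paper's universal property is not of that form: $\mathcal{M}$ is left adjoint to $\mathrm{Idem}$, so a map out of $\mathcal{M}(D)$ into an $E_\infty$-ring spectrum $A$ corresponds to a lattice map $D\to\mathrm{Idem}(A)$, not to an ``additive-on-$D$'' assignment. It is not clear that $F^{cont}(\mathrm{Sh}(X,\mathcal{C}))$ carries a ring structure of the needed kind, nor that the maps $F^{cont}(j_!j^*)$ assemble into a lattice map into its idempotents. You correctly flag this as the central obstacle; as far as the paper is concerned, it is an obstacle the authors sidestep rather than overcome, and making your route precise would amount to a piece of new work (essentially: a localizing invariant applied to $\mathrm{Sh}(-,\mathcal{C})$ factors through the additivization of $\mathrm{DLatt}_{lb}$). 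Third, the naturality concern you raise in the recollement induction is real: the splitting of $\Phi(D)$ into $\bigoplus_{p\in P}F^{cont}(\mathcal{C})$ obtained by successively removing extremal elements of $P$ depends a priori on the chosen ordering, and matching it coherently to a basis of $M(D)$ for varying $D$ is precisely what the paper avoids by computing on finite \emph{sets} (profinite case), where the decomposition is canonical. So your approach is plausible in spirit, but it is not a shortcut; the paper's detour through the constructible topology is what makes the coherence and naturality issues tractable.
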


\begin{remark}
The above results indicate that locally coherent spaces can be thought of as zero dimensional objects from the perspective of $K$-theory. In some sense, this is disappointing, as the homotopy theory of these spaces, even just for finite posets, is very rich. (E.g. every homotopy type of a finite $CW$-complex can be obtained from a finite poset, see \cite{McCord_finite_topological}.) It also means that no straightforward extension of Theorem \ref{ktheorylch} exists in the non-Hausdorff context, at least not framed in terms of sheaf cohomology.
\end{remark}

\begin{remark}
We note that if $\mathcal{C}$ is even compactly generated, the usage of dualizable $\infty$-categories could in theory be ignored, as the category $\mathrm{Sh}(X,\mathcal{C})$ is then also compactly generated for $X$ locally coherent and we have
$$ F^{cont}( \mathrm{Sh}((-,fin),\mathcal{C})) \simeq F( \mathrm{Sh}((-,fin),\mathcal{C})^{\omega}).$$
However, the formal theory of dualizable $\infty$-categories allows for a much cleaner, conceptual proof than a corresponding argument using the category $\mathrm{Cat}^{perf}$ instead.
\end{remark}

\subsection{Motivation}

When it comes to the study of the algebraic $K$-theory of $\infty$-categories of sheaves on spaces, locally coherent spaces play a special role. The largest natural class of spaces $X$ for which the $\infty$-category $\mathrm{Sh}(X)$ is compactly assembled (and hence its stabilization is dualizable), is that of \emph{stably locally compact spaces}. (See \cite[Corollary 3.3.3.]{anel_lejay_exponentiable}.) A space $X$ called stably locally compact if $X$ is sober, $\Omega(X)$ is a continuous poset and the way-below relation $U \ll V$ between opens $U,V$ is stable under intersection.  This includes all locally compact Hausdorff spaces. A space is stably locally compact iff it is a proper quotient of a locally coherent space, hence locally coherent spaces can be thought of as ``free'' spaces from the perspective of algebraic $K$-theory. The choice of such a quotient map can in fact be made naturally. If $X$ is stably locally compact, then there is a natural double adjunction
\[\begin{tikzcd}
	{\Omega(X)} & {\mathrm{Ind}(\Omega(X)),}
	\arrow[""{name=0, anchor=center, inner sep=0}, "{\hat{y}}", curve={height=-12pt}, from=1-1, to=1-2]
	\arrow[""{name=0p, anchor=center, inner sep=0}, phantom, from=1-1, to=1-2, start anchor=center, end anchor=center, curve={height=-12pt}]
	\arrow[""{name=1, anchor=center, inner sep=0}, curve={height=12pt}, from=1-1, to=1-2]
	\arrow[""{name=1p, anchor=center, inner sep=0}, phantom, from=1-1, to=1-2, start anchor=center, end anchor=center, curve={height=12pt}]
	\arrow[""{name=2, anchor=center, inner sep=0}, from=1-2, to=1-1]
	\arrow[""{name=2p, anchor=center, inner sep=0}, phantom, from=1-2, to=1-1, start anchor=center, end anchor=center]
	\arrow[""{name=2p, anchor=center, inner sep=0}, phantom, from=1-2, to=1-1, start anchor=center, end anchor=center]
	\arrow["\dashv"{anchor=center, rotate=-90}, draw=none, from=0p, to=2p]
	\arrow["\dashv"{anchor=center, rotate=-90}, draw=none, from=2p, to=1p]
\end{tikzcd}\]
where $\Omega(X)$ is the frame of opens of $X$. The functor $\hat{y}$ only preserves binary meets and not necessarily the top element, but becomes a frame homomorphism when considered as a functor to
$$\mathrm{Ind}(\Omega(X))_{/ \hat{y}(1) } \cong \mathrm{Ind}(\Omega(X)_{\ll}),$$
where $\Omega(X)_{\ll}$ is the lower bounded distributive lattice of all opens $U$ such that $U \ll 1$. This translates under Stone duality to a proper continuous map from the associated locally coherent space corresponding to $\Omega(X)_{\ll}$ into $X$. On $K$-theory, this means that there is a natural map
$$K( \mathrm{Sh}(X ; \mathcal{C}) ) \rightarrow K(  \mathrm{Sh}((\Omega(X)_{\ll}, fin) , \mathcal{C}) ) \simeq \mathcal{M}( \Omega(X)_{\ll} ) \otimes K( \mathcal{C} ).$$
In other words, we can think of the computation of the $K$-theory of locally coherent spaces as a building block for a deeper understanding of the $K$-theory of categories of sheaves on spaces in general. Beyond this abstract justification, we want to provide two more concrete and interesting examples.

\begin{example}
A highly interesting example appears in the context of scissors congruence $K$-theory, as developed by Zakharevich \cite{zakharevich2011scissorscongruencektheory}. We follow the description given in \cite{malkiewich2024higherscissorscongruence}. Fix a classical geometry $X = E^n, S^n$ or $H^n$ (euclidean, spherical or hyperbolic) and consider the collection $D(X)$ of $n$-dimensional polytopes in $X$. Then $D(X)$ is a lower bounded distributive lattice, and we can consider the associated locally coherent space $X_{\mathrm{Poly}}$ under Stone duality. This space comes with a proper, continuous and surjective map
$$X_{\mathrm{Poly}} \rightarrow X$$
which we will refer to as the \emph{universal cutting space} over $X$. (It effectively slices $X$ at each point along any polytope to achieve a totally disconnected space.) The abelian group $M(D(X))$ is referred to as $\mathrm{Pt}(X)$ and called the \emph{polytope module}. It can be shown that $\mathrm{Pt}(X)$ is a free abelian group using Nöbeling's theorem \cite{NÖBELING1968/69}, see also \cite{asgeirsson:LIPIcs.ITP.2024.6}. A computation due to Malkiewich and Zakharevich (See \cite[ Theorem 1.10]{malkiewich2024higherscissorscongruence}) shows that the non-equivariant scissors congruence $K$-theory splits as
$$\mathcal{K}^{sci}(X,1) \simeq \bigoplus \mathbb{S}$$
where the sum is taken over a choice of basis of $\mathrm{Pt}(X)$. (The $1$ refers to the fact that we are considering the \emph{non-equivariant} situation.)  It is an immediate consequence of Theorem \ref{ktheorylocallycoherent} that we have equivalences
$$ \mathrm{THH}( \mathrm{Sh}(X_{\mathrm{Poly}},\mathrm{Sp}) ) \simeq \mathcal{M}(D(X)) \simeq \mathcal{K}^{sci}(X,1).$$
This (non-equivariant) result suggests that scissors congruence $K$-theory, or more generally the $K$-theory of assemblers, could be thought of as being part of the study of $\mathrm{THH}$ of dualizable categories, thus potentially linking two distinct fields. We discuss this in more detail in Section \ref{scissorscongr}.
\end{example}

\begin{example}
Given any measurable locale $L$, represented by a complete Boolean algebra $B$ in the sense of \cite{Pavlov_2022}, for example obtained from a (nice enough) measure space $(X,\mathcal{F}, \mu)$, we have an associated hyperstonean space $\mathrm{Stone}(L)$ (its locale is simply obtained by considering the frame $\mathrm{Ind}(B)$). Then the dual of $L^\infty(L) = C^b(L;\mathbb{C})$ is given by the Banach space of bounded, finitely additive, complex valued valuations on $M$, see \cite{Toland2020}. As such we have that 
$$L^\infty(L)^* \subset \mathrm{Hom}_{\mathrm{Ab}}( M(B), \mathbb{C} )$$
can be identified as a subset. The fact that
$$K_0(\mathrm{Sh}(\mathrm{Stone}(L);\mathrm{Sp}) ) \cong M(B)$$
is itself the universal recipient of valuations on $B$
suggests a tight analogy between the von Neumann algebra $L^\infty(L)$ and the dualizable $\infty$-category $\mathrm{Sh}(\mathrm{Stone}(L);\mathrm{Sp})$. For a longer discussion see Section \ref{algebraicktheorymeasure}. An extension of this connection to not necessarily commutative von Neumann algebras and a certain class of dualizable $\infty$-categories seems plausible and should be explored in future work.
\end{example}

\subsection{Acknowledgements}

The author thanks Thomas Nikolaus, Maxime Ramzi, Marc Hoyois, Phil Pützstück, Benjamin Dünzinger, Holger Reich, Chris Huggle and David Wärn for feedback and discussions that were helpful for the results of this paper. Special thanks goes to Cary Malkiewich for his explanations on scissors congruence $K$-theory.

The author would also like to thank the Isaac Newton Institute for Mathematical Sciences, Cambridge, for support and hospitality during the programme New horizons for equivariance in homotopy theory, where work on this paper was undertaken. This work was supported by EPSRC grant EP/Z000580/1.

\section{Locales and Grothendieck topologies on posets}

We will use the language of frames and locales throughout this paper. For general resources, see \cite{johnstone1982stone} and \cite{picado_pultr}. A join-semilattice $(D, \leq)$ is a poset closed under finite joins (= coproducts). A distributive lattice $(D,\leq)$ is a poset that admits joins of two elements, written as $U \vee V$, and meets, written as $U \wedge V$, and the relation
$$U \wedge (V \vee W) = (U \wedge V) \vee (U \wedge W)$$
holds, for any $U,V,W \in D$. A distributive lattice is called \emph{lower bounded} if it has a bottom element $0$, and furthermore \emph{bounded} if it also has a top element $1$.
A functor $f : D \rightarrow D'$ between distributive lattices is called a \emph{lattice homomorphism} if $f$ preserves meet and joins. We denote the categories of lower bounded, respectively bounded, distributive lattices by
$$ \mathrm{DLatt}_{lb} ~\text{ and }~ \mathrm{DLatt}_{bd},$$
where morphisms are lattice homomorphisms that preserve the bottom element, respectively the bottom and top element.

A \emph{frame} is a poset $(F, \leq )$ which is cartesian closed and presentable when viewed as a category. We note that in particular any frame is a bounded distributive lattice. A \emph{frame homomorphism} $f^* : F \rightarrow F'$ is a functor preserving colimits and finite limits. This defines the category $\mathrm{Frm}$ of frames and frame homomorphisms. The category of locales is defined as the opposite category $\mathrm{Loc} = \mathrm{Frm}^{op}$. We refer to the corresponding frame $F$ of a locale $L$ as the \emph{frame of opens} of $L$, and sometimes use the notation $F = \Omega(L)$. There exists an adjunction, \cite[II.4.6]{picado_pultr},
\[\begin{tikzcd}
	{\mathrm{Top}} & {\mathrm{Loc}}
	\arrow[""{name=0, anchor=center, inner sep=0}, "\Omega", curve={height=-6pt}, from=1-1, to=1-2]
	\arrow[""{name=1, anchor=center, inner sep=0}, "{\mathrm{pts}}", curve={height=-6pt}, from=1-2, to=1-1]
	\arrow["\dashv"{anchor=center, rotate=-90}, draw=none, from=0, to=1]
\end{tikzcd}\]
where the left adjoint $\Omega$ sends a topological space $X$ to the frame of open sets of $X$. This adjunction becomes an equivalence on the full subcategories of sober spaces and spatial locales.

The functor $\mathrm{pts}$ can be understood in the following way. Let $\mathbf{2} = \{0 \leq 1\}$. Clearly, $\mathbf{2} = \Omega( \mathrm{pt})$, where $\mathrm{pt}$ is the one-point space. Given a locale $L$ with associated frame $F$, the set of points is defined as
$$ \mathrm{pts}(L) = \mathrm{Hom}_{\mathrm{Loc}}(\mathrm{pt},L) = \mathrm{Hom}_{\mathrm{Frm}}(F,\mathbf{2}).$$
It is equipped with a natural topology, by associating to opens $U$ of $L$ the corresponding sets $\Sigma_U = \{ p \in \mathrm{pts}(L) ~|~ U \in p^*(1) \}$. Since a frame homomorphism $p^* : F \rightarrow \mathbf{2}$ is completely determined by the set $p^*(1) \subset F$, points can be equivalently described by \emph{completely prime filters}. These are subsets $\mathcal{F} \subset F$ such that:
\begin{enumerate}
\item $\mathcal{F}$ is upward closed, i.e. if $U \in \mathcal{F}$, $U \leq V$, then $V \in \mathcal{F}$.
\item $\mathcal{F}$ is closed under meets, i.e. if $U \in \mathcal{F}$, $V \in \mathcal{F}$, then $U \wedge V \in \mathcal{F}$.
\item $\mathcal{F}$ is completely prime, i.e. if $\bigvee_{i \in I} U_i \in \mathcal{F}$, then there exists $i \in I$ such that $U_i \in \mathcal{F}$.
\end{enumerate}
For details, see \cite[II.3+4]{picado_pultr}. Given any locale $L$, represented by a frame $F$, and an open $U$, there exist two associated frames $F_{/U}$ and $F_{U/}$, the \emph{open}, respectively \emph{closed}, sublocales of $L$ associated to $U$. These come with adjunctions
\[\begin{tikzcd}
	{F_{/U}} & F & {F_{U/}}
	\arrow[""{name=0, anchor=center, inner sep=0}, "{i_!}", curve={height=-12pt}, hook, from=1-1, to=1-2]
	\arrow[""{name=1, anchor=center, inner sep=0}, "{i_*}"', curve={height=12pt}, hook, from=1-1, to=1-2]
	\arrow[""{name=2, anchor=center, inner sep=0}, "{i^*}"{description}, from=1-2, to=1-1]
	\arrow[""{name=3, anchor=center, inner sep=0}, "{c^*}", curve={height=-12pt}, from=1-2, to=1-3]
	\arrow[""{name=4, anchor=center, inner sep=0}, "{c_*}"{description}, hook', from=1-3, to=1-2]
	\arrow["\dashv"{anchor=center, rotate=-90}, draw=none, from=0, to=2]
	\arrow["\dashv"{anchor=center, rotate=-90}, draw=none, from=2, to=1]
	\arrow["\dashv"{anchor=center, rotate=-90}, draw=none, from=3, to=4]
\end{tikzcd}\]
where $i^* = - \wedge U$ and $c^* = - \vee U$ are frame homomorphisms, and the functor $c_*$ preserves all non-empty suprema. This generalizes the well-known open-closed decomposition of a topological space $X$, given an open subset and its closed complement.

Locales can be thought of as $0$-topoi, and just like topoi arise naturally from Grothendieck topologies. If $P$ is a poset, we denote the meet (in other words product) of two elements $p,q \in P$ by $p \wedge q$, if it exists.

\begin{definition}
Let $(P,\leq)$ be a poset with binary meets. A \emph{Grothendieck pretopology} $\tau$ on $P$ is for each $p \in P$ a collection of subsets of $P_{/p}$ called \emph{coverings}. We use the notation $\{p_i \leq p ~|~i \in I\}$ for a such a subset of $P_{/p}$. Moreover, the collection of all coverings needs to satisfy the conditions:
\begin{itemize}
\item \emph{Identities:} For all $p \in P$, the set $\{ p \leq p \}$ is a covering.
\item \emph{Stability under base change:} If $\{p_i \leq p ~|~i \in I\}$ is a covering and $q \leq p$, then $\{p_i \wedge q \leq q ~|~i \in I\}$ is a covering.
\item \emph{Locality:} If $\{p_i \leq p ~|~i \in I\}$ is a covering and $\{p_{ij} \leq p_i ~|~j \in J_i\}$ is a covering for each $i \in I$, then $\{p_{ij} \leq p ~|~i \in I,j \in J_i\}$ is a covering.
\end{itemize}
We call $(P,\leq)$ together with a Grothendieck pretopology $\tau$ a \emph{locally cartesian 0-site.}
We further call $(P, \tau)$ \emph{coherent}, if all coverings in $\tau$ are finite.
\end{definition}

\begin{example}
A central example will be the coherent site $(D,fin)$ obtained from a lower bounded distributive lattice $D$ with coverings $\{d_i \leq d ~|~ i \in I\}$ whenever $I$ finite and $d = \bigvee_{i \in I} d_i \in D$.
\end{example}

The definition of a Grothendieck pretopology on a poset is of course a special case of that of a Grothendieck pretopology on a category, but it will suffice for the purposes of this paper.\footnote{The requirement that finite meets exist could be relaxed, but this would cause issues later on as soon as we deal with sheaves valued in the $\infty$-category of spaces.} Given any locally cartesian $0$-site $(P,\tau)$, we have an associated locale, given by the frame of \emph{truth-valued or propositional sheaves} on $P$.

\begin{definition} Let $(P,\tau)$ be a locally cartesian $0$-site. A functor $f: P^{op} \rightarrow \mathbf{2}$ is called a \emph{propositional sheaf} if for all coverings $\{p_i \leq p ~|~i \in I\}$ we have
$$f(p) = \bigwedge_{i \in I} f(p_i).$$
The locale $L(P,\tau)$, generated by $(P,\tau)$, is defined by the corresponding frame given by the sub-poset
$$\mathrm{Sh}((P,\tau), \mathbf{2}) \subset \mathrm{Fun}(P^{op}, \mathbf{2})$$
spanned by propositional sheaves.
\end{definition}

Any propositional sheaf $f : P^{op} \rightarrow \mathbf{2}$ is determined by the downward closed set $f^{-1}(1) \subset P$. Conversely, a downward closed set corresponds to a propositional sheaf iff it is also closed under coverings. The inclusion
$$\mathrm{Sh}((P,\tau), \mathbf{2}) \subset \mathrm{Fun}(P^{op}, \mathbf{2})$$
preserves limits, hence there exists a ``propositional sheafification'' left adjoint $(-)^{sh}$, which sends a downward closed subset $\mathcal{F} \subset P$ to its closure under coverings, i.e.
$$\mathcal{F}^{sh} = \{p \in P ~|~ \exists \text{ covering } \{p_i \leq p ~|~ i \in I \} \text{ s.t. } p_i \in \mathcal{F} \text{ for all } i \in I \}.$$
It can be checked that $\mathcal{F}^{sh}$ is the smallest propositional sheaf containing $\mathcal{F}$.

\begin{lemma} \label{leftadjointgeometricmorphism}
The functor $(-)^{sh} : \mathrm{Fun}(P^{op}, \mathbf{2}) \rightarrow \mathrm{Sh}((P,\tau), \mathbf{2})$
preserves finite meets.
\end{lemma}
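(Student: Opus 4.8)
The plan is to reduce the statement about preservation of finite meets to two separate verifications: that $(-)^{sh}$ preserves the top element, and that it preserves binary meets. The first is immediate, since the top functor $P^{op} \to \mathbf{2}$ corresponds to the downward closed set $P$ itself, which is already a propositional sheaf (it is trivially closed under coverings), so $P^{sh} = P$. For binary meets, note that under the identification of $\mathrm{Fun}(P^{op},\mathbf{2})$ with downward closed subsets of $P$, the meet of two functors corresponds to the intersection of the associated downward closed sets, and the partial order corresponds to inclusion. So I must show that for downward closed $\mathcal{F}, \mathcal{G} \subseteq P$ we have
$$(\mathcal{F} \cap \mathcal{G})^{sh} = \mathcal{F}^{sh} \cap \mathcal{G}^{sh}.$$
Since $(-)^{sh}$ is a left adjoint and therefore monotone, the inclusion $\subseteq$ is automatic: $\mathcal{F} \cap \mathcal{G} \subseteq \mathcal{F}$ gives $(\mathcal{F}\cap\mathcal{G})^{sh} \subseteq \mathcal{F}^{sh}$, and likewise for $\mathcal{G}$, so $(\mathcal{F}\cap\mathcal{G})^{sh} \subseteq \mathcal{F}^{sh}\cap\mathcal{G}^{sh}$. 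The content is the reverse inclusion.

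For the reverse inclusion, take $p \in \mathcal{F}^{sh} \cap \mathcal{G}^{sh}$. By the explicit formula for sheafification, there is a covering $\{p_i \leq p \mid i \in I\}$ with all $p_i \in \mathcal{F}$, and a covering $\{q_j \leq p \mid j \in J\}$ with all $q_j \in \mathcal{G}$. The idea is to refine both covers simultaneously: for each pair $(i,j)$ form the meet $p_i \wedge q_j$, which exists since $P$ has binary meets. I claim $\{p_i \wedge q_j \leq p \mid (i,j) \in I \times J\}$ is a covering of $p$. This follows from the pretopology axioms: base change the cover $\{q_j \leq p\}$ along $p_i \leq p$ to get that $\{p_i \wedge q_j \leq p_i \mid j \in J\}$ is a covering of $p_i$ for each $i$; then apply locality to the cover $\{p_i \leq p \mid i \in I\}$ together with these covers of the $p_i$ to conclude $\{p_i \wedge q_j \leq p \mid (i,j) \in I\times J\}$ is a covering of $p$. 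Now each $p_i \wedge q_j$ lies in $\mathcal{F}$ (as $p_i \in \mathcal{F}$ and $\mathcal{F}$ is downward closed) and in $\mathcal{G}$ (as $q_j \in \mathcal{G}$ and $\mathcal{G}$ is downward closed), hence in $\mathcal{F}\cap\mathcal{G}$. Thus $p$ is covered by elements of $\mathcal{F}\cap\mathcal{G}$, i.e. $p \in (\mathcal{F}\cap\mathcal{G})^{sh}$, which is exactly what we needed.

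The proof is essentially a bookkeeping argument, and I do not anticipate a serious obstacle; the one point requiring a little care is the correct invocation of the base-change and locality axioms to produce the common refinement $\{p_i \wedge q_j \leq p\}$, since one must make sure the meets are taken in the slice over $p$ and that the indexing of the refined cover matches the form required by the locality axiom. It is worth remarking that this is the place where the hypothesis that $P$ has binary meets (rather than merely being an arbitrary poset) is genuinely used — without it one could not form $p_i \wedge q_j$ — which is consistent with the footnote in the paper flagging that relaxing this hypothesis causes trouble for sheaves of spaces.
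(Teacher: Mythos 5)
Your proof is correct and follows essentially the same route as the paper: identify meets with intersections of downward closed sets, observe the top element is already a sheaf, get the easy inclusion from functoriality, and establish the reverse inclusion by forming the common refinement $\{p_i \wedge q_j \leq p\}$ via base change and locality. Your write-up spells out the invocation of the pretopology axioms in a bit more detail than the paper, but the argument is the same.
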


\begin{proof} Note that meets in both posets are just given by intersection. It is clear that the terminal object, which corresponds to the downward closed subset given by $P$ itself is already a propositional sheaf, so $1^{sh} = 1$. Now let $\mathcal{F}, \mathcal{G} \subset P$ be downward closed subsets. We have
$$ ( \mathcal{F}  \wedge \mathcal{G} )^{sh} \leq
\mathcal{F}^{sh} \wedge \mathcal{G}^{sh}  $$
simply because $(-)^{sh}$ is a functor. Let $p \in \mathcal{F}^{sh} \wedge \mathcal{G}^{sh}$, which is equivalent to saying that there are coverings $\{p_i \leq p ~|~ i \in I \}$ and $\{q_j \leq p ~|~ j \in J \}$, with $p_i \in \mathcal{F}$ for all $i \in I$ and $q_j \in \mathcal{G}$ for all $j \in J$. Then the set  $\{p_i \wedge q_j \leq p ~|~ i \in I, j \in J \}$ is a covering with $p_i \wedge q_j \in \mathcal{F}  \wedge \mathcal{G}$, by application of stability under base change and locality.
\end{proof}

It follows that $\mathrm{Sh}((P,\tau), \mathbf{2})$ is a frame and that $(-)^{sh} : \mathrm{Fun}(P^{op}, \mathbf{2}) \rightarrow \mathrm{Sh}((P,\tau), \mathbf{2})$ is a homomorphism of frames. The following is a standard result in locale theory that we wanted to spell out in detail.

\begin{proposition}[Basis theorem] \label{basistheorem}
Let $F$ be a frame and $P \subset F$ a subset closed under meets, such that for all $U \in F$ there exists $B_i \in P$ such that $U = \bigvee B_i$ in $F$. We call such $P$ a \emph{basis} for $F$.  Equip $P$ with the Grothendieck pretopology induced from $F$, that is $\{B_i \leq B ~|~ i \in I \}$ is a covering iff $\bigvee_{i \in I} B_i = B$ in $F$. Then the functor
$$\begin{array}{rcl} h : F & \rightarrow & \mathrm{Sh}((P,\tau), \mathbf{2}) \\
U & \mapsto & h_U = \mathrm{Hom}_F( -, U)
\end{array}$$
is an isomorphism.
\end{proposition}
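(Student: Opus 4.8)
The plan is to exhibit $h$ as a bijection which is both order-preserving and order-reflecting; since an order-isomorphism of posets automatically preserves and reflects all joins and meets, and frames are exactly certain posets with frame homomorphisms being the maps preserving finite meets and arbitrary joins, this will immediately upgrade to an isomorphism of frames. Throughout I identify a propositional sheaf $f : P^{op} \rightarrow \mathbf{2}$ with the downward closed subset $f^{-1}(1) \subseteq P$, and use the characterization recalled before Lemma \ref{leftadjointgeometricmorphism}: a downward closed subset is a propositional sheaf precisely when it is closed under coverings.

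First I would check that $h$ is well-defined, i.e. that $h_U = \mathrm{Hom}_F(-,U)$ is a propositional sheaf for each $U \in F$. Its associated downward closed set is $P_{\leq U} = \{B \in P \mid B \leq U\}$, which is visibly downward closed; and if $\{B_i \leq B \mid i \in I\}$ is a covering with $B_i \leq U$ for all $i$, then $B = \bigvee_i B_i \leq U$, so $B \in P_{\leq U}$. The key elementary observation is that, since every element of $F$ is a join of elements of $P$, each $U \in F$ satisfies $U = \bigvee P_{\leq U}$ (the join is $\leq U$ trivially, and $\geq U$ because $U$ is already a join of elements of $P$, each of which then lies in $P_{\leq U}$). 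Hence $h$ is order-reflecting: if $P_{\leq U} \subseteq P_{\leq V}$ then $U = \bigvee P_{\leq U} \leq \bigvee P_{\leq V} = V$. It is order-preserving by construction, and injective because $U$ is recovered from $h_U$.

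It remains to prove surjectivity, and this is where the frame distributivity law does the real work. Given a propositional sheaf $\mathcal{F} \subseteq P$, set $U \defeq \bigvee_{B \in \mathcal{F}} B \in F$; I claim $P_{\leq U} = \mathcal{F}$. The inclusion $\mathcal{F} \subseteq P_{\leq U}$ is immediate. Conversely, let $C \in P$ with $C \leq U$. Using that $F$ is a frame (infinite distributivity of $\wedge$ over $\bigvee$),
\[
C = C \wedge U = C \wedge \bigvee_{B \in \mathcal{F}} B = \bigvee_{B \in \mathcal{F}} (C \wedge B).
\]
Each $C \wedge B$ lies in $P$ (as $P$ is closed under meets) and satisfies $C \wedge B \leq B \in \mathcal{F}$, hence $C \wedge B \in \mathcal{F}$ because $\mathcal{F}$ is downward closed. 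Therefore $\{C \wedge B \leq C \mid B \in \mathcal{F}\}$ is a covering of $C$ in $\tau$ all of whose members lie in $\mathcal{F}$, and since $\mathcal{F}$ is closed under coverings we get $C \in \mathcal{F}$. Thus $h_U = \mathcal{F}$.

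Finally, $h$ is an order-isomorphism of posets between two frames, hence an isomorphism of frames. The only genuinely substantive step is surjectivity, and within it the invocation of frame distributivity to rewrite an arbitrary basis element below $U$ as a join of elements of $\mathcal{F}$ so that closure under coverings can be applied; everything else is bookkeeping with downward closed subsets of $P$.
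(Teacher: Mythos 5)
Your proof is correct and follows essentially the same strategy as the paper's: the assignment $f \mapsto \bigvee f^{-1}(1)$ is precisely the paper's inverse $\varphi$, and checking that $h$ and $\varphi$ are mutually inverse is the same argument, just phrased as an order-isomorphism rather than an explicit two-sided inverse. One difference worth flagging to your credit: in the surjectivity direction the paper simply asserts that for a propositional sheaf $f$ one has ``$f(B) = 1$ iff $B \leq \bigvee f^{-1}(1)$'' without elaborating, whereas you correctly identify that the nontrivial implication needs the frame distributivity law to rewrite $B$ as $\bigvee_{C \in f^{-1}(1)} (B \wedge C)$, after which closure of $P$ under meets, downward closure of $f^{-1}(1)$, and closure under coverings combine to give $B \in f^{-1}(1)$ --- this makes explicit the one step the paper glosses over.
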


\begin{proof}
Let us first verify that $h$ is well-defined, i.e. for all $U \in F$, the functor $h_U$ satisfies the sheaf condition. Let $\{B_i \leq B ~|~ i \in I \}$ be a covering; in other words, $\bigvee_{i \in I} B_i = B$. Then we have the following chain of equivalent statements:
\[\begin{array}{rccccccl}
 & h_U(B_i) = 1 \text{ for all } i \in I &
\text{iff} & B_i \leq U \text{ for all } i \in I &
\text{iff} & B = \bigvee_{i \in I} B_i \leq U &
\text{iff} & h_U(B) = 1.
\end{array}\]
Now define the functor
$$\begin{array}{rcl}
\varphi: \mathrm{Sh}((P,\tau), \mathbf{2}) &\rightarrow & F \\
f & \mapsto &\bigvee f^{-1}(1).
\end{array}$$
Let us show that $\varphi$ and $h$ are inverse to another. Let $U \in F$. Then 
$$\varphi(h_U) = \bigvee_{B \leq U, ~ B \in P} B = U$$
since any open is obtained as a union of elements of $P$. Conversely, let $f$ be a propositional sheaf. Recall $\varphi(f) = \bigvee f^{-1}(1) \in F$. 
Let $B \in P$. Then 
\[\begin{array}{rcccl}
f(B) = 1 &
\text{iff} & B \leq \bigvee f^{-1}(1) &
\text{iff} & h_{\varphi(f)}(B) = 1,
\end{array}\]
hence $f = h_{\varphi(f)}$.
\end{proof}

\begin{corollary}
The frame $F$ of any locale is obtained as the frame of propositional sheaves on a locally cartesian $0$-site.
\end{corollary}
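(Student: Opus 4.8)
The plan is to deduce this directly from the Basis theorem, Proposition \ref{basistheorem}, applied with the tautological choice of basis $P = F$. The point is that a frame $F$ is in particular a bounded distributive lattice, so as a poset it admits all finite meets and is therefore eligible to serve as the underlying poset of a locally cartesian $0$-site once we equip it with a Grothendieck pretopology.

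First I would observe that $F$ is a basis for itself in the precise sense required by Proposition \ref{basistheorem}: it is (trivially) closed under meets, and every $U \in F$ is the join $\bigvee\{U\}$ of a family of elements of $F$. Hence the pretopology $\tau$ on $P = F$ induced from $F$ — in which $\{B_i \leq B \mid i \in I\}$ is declared a covering exactly when $\bigvee_{i \in I} B_i = B$ — is a Grothendieck pretopology, and $(F,\tau)$ is a locally cartesian $0$-site. Proposition \ref{basistheorem} then supplies an isomorphism of frames $h : F \to \mathrm{Sh}((F,\tau),\mathbf{2})$, $U \mapsto \mathrm{Hom}_F(-,U)$, which is exactly the assertion. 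I would close with the remark that, while $P = F$ always works, in practice one prefers a smaller basis: for a spatial locale coming from a space $X$ one may take $P$ to be any basis of the topology closed under finite intersections, thereby recovering the identification $\mathrm{Sh}(X,\mathbf{2}) \cong \mathrm{Sh}((P,\tau),\mathbf{2})$ used in the introduction.

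There is essentially no obstacle here, since all the content lies in Proposition \ref{basistheorem} and the corollary is merely the observation that its hypotheses are met vacuously by $P = F$. The only point deserving a moment's attention is confirming that the induced pretopology on all of $F$ genuinely satisfies the three pretopology axioms; identities and locality are immediate from the behaviour of joins, and stability under base change is precisely the infinite distributive law $q \wedge \bigvee_{i} p_i = \bigvee_{i}(q \wedge p_i)$ valid in any frame.
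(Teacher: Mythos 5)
Your proposal is correct and is exactly the paper's argument: the paper's proof is the one-line instruction ``Choose $B = F$ in the basis theorem,'' which is precisely your tautological choice of $P = F$. Your extra remarks verifying the pretopology axioms and noting that in practice one prefers a smaller basis are sound but are elaborations on the same route, not a different one.
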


\begin{proof} Choose $B = F$ in the basis theorem.
\end{proof}

\begin{example} \label{indfinitetoplogy} Let $D$ be a lower bounded distributive lattice. Then $\mathrm{Ind}(D)$ is a frame. It is clear that $y : D \rightarrow \mathrm{Ind}(D)$ identifies $D$ with a basis of $\mathrm{Ind}(D)$, whose induced coverage is just $fin$. Hence we have that
$$\mathrm{Sh}((D,fin), \mathbf{2}) \cong \mathrm{Ind}(D).$$
\end{example}

\section{Coherent spaces via finitary Grothendieck topologies}

Coherent spaces appear naturally in a variety of contexts that deal with finitary properties. They are also called spectral spaces, as they appear as spectra of commutative rings. A comprehensive account can be found for example in \cite{Dickmann_Schwartz_Tressl_2019}. Let us begin with a definition.

\begin{definition}
A frame $F$ is called \emph{coherent} if it is of the form $F = \mathrm{Ind}(D)$ for some bounded distributive lattice $D$. A locale $L$ is called \emph{coherent} if its frame of open sets is coherent.
\end{definition}

A sober space $X$ such that $\Omega(X)$ is a coherent locale is also called \emph{coherent} or \emph{spectral space}. We will give a variety of different characterizations in the following. Before we do so, let us introduce an important adjunction.

\begin{theorem}[Stone duality for coherent locales] \label{stonedualitycoherent}
There is an adjunction
\[\begin{tikzcd}[ampersand replacement=\&, column sep=small]
	{\mathrm{DLatt}_{bd}} \& {\mathrm{Frm}.}
	\arrow[""{name=0, anchor=center, inner sep=0}, "{\mathrm{Ind}}", curve={height=-12pt}, from=1-1, to=1-2]
	\arrow[""{name=1, anchor=center, inner sep=0}, "{\mathrm{forget}}", curve={height=-12pt}, from=1-2, to=1-1]
	\arrow["\dashv"{anchor=center, rotate=-90}, draw=none, from=0, to=1]
\end{tikzcd}\]
Moreover, $\mathrm{Ind}$ identifies ${\mathrm{DLatt}_{bd}}$ with the subcategory of $\mathrm{Frm}$ given by coherent frames and coherent frame homomorphisms.
\end{theorem}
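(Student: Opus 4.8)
The plan is to establish the adjunction first and then upgrade it to an equivalence onto the subcategory of coherent frames. For the adjunction, I would verify the universal property directly: given a bounded distributive lattice $D$ and a frame $F$, I claim that frame homomorphisms $\mathrm{Ind}(D) \to F$ correspond naturally to bounded-lattice homomorphisms $D \to \mathrm{forget}(F)$. In one direction, restrict along the unit $y : D \to \mathrm{Ind}(D)$; since $y$ preserves finite meets and finite joins (including $0$ and $1$), the composite is a morphism in $\mathrm{DLatt}_{bd}$. In the other direction, given $g : D \to F$ preserving finite meets and joins, extend it to $\mathrm{Ind}(D)$ by the formula $\tilde g(\bigvee_i d_i) = \bigvee_i g(d_i)$, using that every element of $\mathrm{Ind}(D)$ is a filtered join of elements of $D$; one checks this is well-defined (independent of the presentation as a filtered join), preserves arbitrary joins by cofinality/associativity of colimits, and preserves finite meets because meets distribute over the filtered joins in $\mathrm{Ind}(D)$ and $g$ preserves finite meets. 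These two assignments are mutually inverse and natural, giving the adjunction $\mathrm{Ind} \dashv \mathrm{forget}$.

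Next I would show the unit $y : D \to \mathrm{forget}(\mathrm{Ind}(D))$ is an isomorphism, i.e. that $\mathrm{Ind}$ is fully faithful. Full faithfulness is essentially the content of Example \ref{indfinitetoplogy} together with the basis theorem: $D$ sits inside $\mathrm{Ind}(D)$ as the sub-poset of compact elements, and a lattice homomorphism $D \to D'$ clearly determines and is determined by its filtered-join extension. More carefully, $\mathrm{Hom}_{\mathrm{Frm}}(\mathrm{Ind}(D), \mathrm{Ind}(D')) \cong \mathrm{Hom}_{\mathrm{DLatt}_{bd}}(D, \mathrm{forget}\,\mathrm{Ind}(D'))$ by the adjunction, and I must identify $\mathrm{forget}\,\mathrm{Ind}(D') $ on the nose with $D'$ in the relevant range — this uses that the compact elements of $\mathrm{Ind}(D')$ are exactly (the image of) $D'$, which holds because $D'$ is closed under finite joins so filtered joins from $D'$ that happen to be compact already lie in $D'$.

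For the final claim — that $\mathrm{Ind}$ identifies $\mathrm{DLatt}_{bd}$ with the coherent frames and \emph{coherent} frame homomorphisms — I would argue as follows. By definition a coherent frame is one of the form $\mathrm{Ind}(D)$, so essential surjectivity onto objects is immediate. It remains to identify the morphisms: a frame homomorphism $f^* : \mathrm{Ind}(D) \to \mathrm{Ind}(D')$ is called coherent precisely when it carries compact elements to compact elements, and I must check this is exactly the condition for $f^*$ to lie in the image of $\mathrm{Ind}$ applied to a morphism of $\mathrm{DLatt}_{bd}$. If $f^* = \mathrm{Ind}(g)$ then $f^*(d) = g(d) \in D'$ is compact, so $f^*$ is coherent. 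Conversely, if $f^*$ is coherent, its restriction to $D$ lands in the compact elements $D'$ of $\mathrm{Ind}(D')$ and is a bounded-lattice homomorphism (it preserves finite meets and joins since $f^*$ does, and $0,1$ since $f^*$ is a frame homomorphism), and by the adjunction/full-faithfulness $f^*$ is its $\mathrm{Ind}$-extension. Hence $\mathrm{Ind}$ is an equivalence onto this subcategory.

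The main obstacle I anticipate is the careful bookkeeping in the well-definedness of the extension $\tilde g$ and the verification that it preserves finite meets: one needs that in $\mathrm{Ind}(D)$ a finite meet of filtered joins is computed as the filtered join of the pairwise meets (which is where distributivity of $D$ and the fact that we took the Ind-completion as a \emph{lattice}, not merely as a poset, genuinely enter), and that $g$ preserving binary meets in $D$ then suffices. Everything else — preservation of joins, naturality, the triangle identities — is formal once this point is pinned down, and the identification of compact elements of $\mathrm{Ind}(D)$ with $D$ is standard.
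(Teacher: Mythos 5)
Your step 1, establishing the adjunction $\mathrm{Ind} \dashv \mathrm{forget}$, is correct and follows the standard argument (the paper itself delegates the proof to Johnstone, and this is precisely Johnstone's approach). Your step 3, identifying the essential image with coherent frames and coherent homomorphisms, is also essentially right. But step 2 contains a genuine error.

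You claim the unit $y \colon D \to \mathrm{forget}(\mathrm{Ind}(D))$ is an isomorphism and hence that $\mathrm{Ind}$ is fully faithful as a functor $\mathrm{DLatt}_{bd} \to \mathrm{Frm}$. Neither of these is true. The unit sends $d$ to the principal ideal $\downarrow d$; unless $D$ is already complete, $\mathrm{Ind}(D)$ contains non-principal ideals, so the unit is injective but not surjective. Correspondingly, $\mathrm{Ind}$ is \emph{not} fully faithful: the adjunction gives
$\mathrm{Hom}_{\mathrm{Frm}}(\mathrm{Ind}(D), \mathrm{Ind}(D')) \cong \mathrm{Hom}_{\mathrm{DLatt}_{bd}}(D, \mathrm{forget}\,\mathrm{Ind}(D'))$,
and the codomain on the right is strictly larger than $\mathrm{Hom}_{\mathrm{DLatt}_{bd}}(D,D')$ because $D'$ is only the poset of \emph{compact} elements of $\mathrm{Ind}(D')$. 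A concrete counterexample: with $D = \{0 < x < 1\}$ the free bounded distributive lattice on one generator (already a finite frame, so $\mathrm{Ind}(D) = D$), a frame homomorphism $\mathrm{Ind}(D) \to \mathrm{Ind}(D')$ is a free choice of image for $x$, and sending $x$ to a non-compact element gives a frame map not in the image of $\mathrm{Ind}$. This is exactly why the theorem's conclusion speaks of the \emph{non-full} subcategory with coherent frame homomorphisms as morphisms — a point your step 2 effectively contradicts. The fix is small: what you actually need is (i) faithfulness, which follows since $\mathrm{Ind}(g)$ restricted to compacts is $g$, and (ii) that a coherent frame homomorphism $f^*$ is determined by its restriction $g = f^*|_D$, which holds because every element of $\mathrm{Ind}(D)$ is a directed join of compacts and both $f^*$ and $\mathrm{Ind}(g)$ preserve directed joins. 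Your step 3 already contains the substance of this; just delete the full-faithfulness claim and the appeal to an invertible unit, and replace the phrase ``by the adjunction/full-faithfulness $f^*$ is its $\mathrm{Ind}$-extension'' with the directed-join argument.

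One smaller remark on your final paragraph: the preservation of finite meets by the extension $\tilde g$ uses frame distributivity (infinite distributivity) in the \emph{target} $F$, combined with $g$ preserving binary meets in $D$. Distributivity of $D$ itself enters at an earlier stage — it is what guarantees that $\mathrm{Ind}(D)$ is a frame at all — not in the verification that $\tilde g$ preserves meets. Your phrasing conflates these two uses.
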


Here, a frame homomorphism $f^* : F \rightarrow F'$ is called \emph{coherent} if it preserves compact objects. For a proof see \cite[p. 59]{johnstone1982stone}. We note that Johnstone requires all distributive lattices to be bounded.

Since points of a locale $L$ are given by frame homomorphisms $\Omega(L) \rightarrow \mathbf{2}$, by the adjunction we obtain for a bounded distributive lattice $D$ the isomorphism
$$\mathrm{pts}(\mathrm{Ind}(D)) = \mathrm{Hom}_{\mathrm{Frm}}( \mathrm{Ind}(D), \mathbf{2}) \cong \mathrm{Hom}_{\mathrm{DLatt}_{bd}}( D, \mathbf{2}),$$
hence it makes sense to refer to the set $\mathrm{Hom}_{\mathrm{DLatt}_{bd}}( D, \mathbf{2})$ as the set of points of $D$.

\begin{remark} The adjunction above gives $\mathrm{Ind}$ the structure of a comonad on the category $\mathrm{Frm}$, or dually the structure of a monad on $\mathrm{Loc}$. We can identify $\mathrm{DLatt}_{bd} \cong \mathrm{Ind}(\mathrm{DLatt}_{fin})$ where $\mathrm{DLatt}_{fin} \cong \mathrm{FinFrm}$ is the category of finite distributive lattices, equivalently finite frames. Under duality, this means we can interpret $\mathrm{Ind}$ as the codensity monad for the inclusion
$$\mathrm{FinLoc} \rightarrow \mathrm{Loc}$$
where $\mathrm{FinLoc}$ is the category of locales corresponding to finite frames. This codensity monad is a localic analog of the ultrafilter monad $\beta$ on the category of sets, whose algebras are compact Hausdorff spaces. \cite{leinster2013codensityultrafiltermonad}

We note that if $X$ is a locally compact, compact, and quasi-separated locale, it acquires the structure of an algebra for the monad $\mathrm{Ind}$. \cite[Section 3.3]{anel_lejay_exponentiable}
\end{remark}

\begin{theorem} Let $L$ be a locale. The following are equivalent.
\begin{enumerate}
\item $L$ is coherent.
\item $L$ is spatial and its corresponding topological space is compact, sober, and has a basis of compact open sets, which is closed under finite intersections.
\item $L$ is obtained as the spectrum of a commutative ring.
\item $L$ is obtained as an inverse limit of a diagram of finite posets equipped with the Alexandroff topology (in the category of locales).
\end{enumerate}
\end{theorem}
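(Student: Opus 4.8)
The plan is to establish $(1)\Leftrightarrow(2)$, $(1)\Leftrightarrow(4)$, and $(2)\Leftrightarrow(3)$, moving freely between a locale $L$, its frame of opens $\Omega(L)$, and --- once $L$ is known to be spatial --- its space of points $X=\mathrm{pts}(L)$. The essential inputs are the basis theorem (Proposition \ref{basistheorem}), Stone duality for coherent locales (Theorem \ref{stonedualitycoherent}), Birkhoff's representation theorem for finite distributive lattices, and Hochster's theorem on spectral spaces.

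For $(1)\Rightarrow(2)$ I would write $\Omega(L)=\mathrm{Ind}(D)$ with $D\in\mathrm{DLatt}_{bd}$ and first check that $\mathrm{Ind}(D)$ is spatial: by the adjunction of Theorem \ref{stonedualitycoherent} the points of $\mathrm{Ind}(D)$ are the prime filters of $D$, and given $U\not\le V$ in $\mathrm{Ind}(D)$ one produces a separating point by applying the prime ideal theorem for distributive lattices to some $d\in D$ with $d\le U$, $d\not\le V$ and to the ideal $V$. On the spatial side the basic opens $\Sigma_d$ ($d\in D$) form a basis closed under finite intersection, as $\Sigma_d\cap\Sigma_e=\Sigma_{d\wedge e}$, and each $\Sigma_d$ --- in particular $X=\Sigma_1$ --- is compact because the corresponding element of $\mathrm{Ind}(D)$ is compact; sobriety of $X=\mathrm{pts}(L)$ is automatic. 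For $(2)\Rightarrow(1)$, let $X$ carry a basis $\mathcal{B}$ of compact opens closed under finite intersection and put $P=\{\,B_1\cup\dots\cup B_n : B_i\in\mathcal{B}\,\}\cup\{\emptyset\}$. Since $\mathcal{B}$ is closed under finite intersections, $P$ is a sublattice of $\Omega(X)$ consisting of compact opens, it is still a basis, and it is bounded because $X$ is compact, hence a finite union of basis elements. The basis theorem gives $\Omega(X)\cong\mathrm{Sh}((P,\tau),\mathbf{2})$ for the Grothendieck pretopology $\tau$ on $P$ induced from $\Omega(X)$; since every element of $P$ is compact, every covering in $\tau$ contains a finite subcovering, so a downward closed subset of $P$ is a sheaf for $\tau$ if and only if it is a sheaf for $fin$. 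Hence $\mathrm{Sh}((P,\tau),\mathbf{2})=\mathrm{Sh}((P,fin),\mathbf{2})\cong\mathrm{Ind}(P)$ by Example \ref{indfinitetoplogy}, and since $L$ is spatial, $\Omega(L)\cong\Omega(X)\cong\mathrm{Ind}(P)$, so $L$ is coherent.

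For $(1)\Leftrightarrow(4)$ I would use that $\mathrm{Ind}\colon\mathrm{DLatt}_{bd}\to\mathrm{Frm}$, being a left adjoint (Theorem \ref{stonedualitycoherent}), preserves all colimits; that $\mathrm{DLatt}_{bd}=\mathrm{Ind}(\mathrm{DLatt}_{fin})$, so every bounded distributive lattice is the filtered colimit of its finite sublattices; and that, by Birkhoff, a finite distributive lattice is exactly $\Omega$ of a finite poset with the Alexandroff topology. Given $(1)$, write $\Omega(L)=\mathrm{Ind}(D)$ and $D=\colim_i D_i$ with the $D_i$ finite; then $\Omega(L)\cong\colim_i\mathrm{Ind}(D_i)$ in $\mathrm{Frm}$, and passing to $\mathrm{Loc}=\mathrm{Frm}^{op}$ exhibits $L$ as the inverse limit, over the resulting cofiltered index category, of the finite Alexandroff locales whose frames of opens are the $D_i$. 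Conversely, if $L=\lim_i X_i$ in $\mathrm{Loc}$ with each $X_i$ a finite poset carrying the Alexandroff topology, then $\Omega(L)\cong\colim_i\Omega(X_i)$ in $\mathrm{Frm}$; each $\Omega(X_i)$ is a finite, hence coherent, frame, and every frame homomorphism between finite frames is automatically coherent, so by the equivalence in Theorem \ref{stonedualitycoherent} this is $\mathrm{Ind}$ applied to a diagram in $\mathrm{DLatt}_{bd}$, and its colimit $\Omega(L)$ is again coherent. Finally, $(3)\Rightarrow(2)$ is the usual description of the Zariski spectrum --- $\mathrm{Spec}(R)$ is compact and sober, and the principal opens $D(f)$ form a basis of compact opens with $D(f)\cap D(g)=D(fg)$ --- while $(2)\Rightarrow(3)$ is Hochster's theorem, which I would invoke as a black box.

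The only substantial external ingredient is Hochster's theorem, used for $(2)\Rightarrow(3)$; every other implication is a formal consequence of the basis theorem and Stone duality. The step requiring the most care is the comparison of Grothendieck pretopologies in $(2)\Rightarrow(1)$: one must verify that on a basis of compact opens the pretopology induced by the ambient frame has exactly the same propositional sheaves as the finitary pretopology $fin$, and this is the one place where compactness of the basis elements is used in an essential way.
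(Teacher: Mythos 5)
Your proposal is correct and follows essentially the same overall route as the paper: $(1)\Leftrightarrow(4)$ via the Stone adjunction and preservation of filtered colimits, $(2)\Leftrightarrow(3)$ by citing Hochster's theorem, and the Zariski basics for $(3)\Rightarrow(2)$. The one place where you go beyond the paper is $(1)\Leftrightarrow(2)$: the paper simply cites Johnstone \cite[p.~65ff]{johnstone1982stone}, while you actually give the argument --- spatiality of $\mathrm{Ind}(D)$ via the prime ideal theorem for distributive lattices for $(1)\Rightarrow(2)$, and the basis theorem (Proposition \ref{basistheorem}) together with the observation that compactness of the basis elements lets one replace the induced pretopology by the finitary one for $(2)\Rightarrow(1)$. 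Both halves of that argument are correct and filling them in is a genuine improvement in self-containedness, though not a different strategy.
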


\begin{proof} The equivalence of $(1)$ and $(2)$ is treated in \cite[p. 65ff]{johnstone1982stone}. The equivalence of $(2)$ and $(3)$ is due to Hochster, and will not be relevant for the rest of this paper. We refer the reader to \cite[Section 12.6]{Dickmann_Schwartz_Tressl_2019}, in case of interest.

The equivalence of $(1)$ and $(4)$ is a consequence of the Stone duality provided above. Note that any bounded distributive lattice is given as $D \cong \colim_{i \in I} D_i$ with $D_i$ finite distributive lattices, and $I$ a filtered diagram (e.g. let $I$ be the set of finite sublattices of $D$). Since $\mathrm{Ind} : \mathrm{DLatt}_{bd} \rightarrow \mathrm{Frm}$ is a left adjoint, it sends this diagram to
$$ \mathrm{Ind}(D) \cong \mathrm{Ind}( \colim_{i \in I} D_i ) \cong \colim_{i \in I} \mathrm{Ind}(D_i) \cong \colim_{i \in I} D_i$$
with the colimit in the third and fourth terms taken in the category of frames. For the last isomorphism, we used that finite distributive lattices are already complete under filtered colimits. We will see in Section \ref{alexandroff} that finite frames correspond precisely to spaces obtained from equipping finite posets with the Alexandroff topology. Thus, passing to the category of locales, we thus see that the corresponding locale is obtained as an inverse limit of finite posets. Conversely, assume  $L$ is an inverse limit of finite posets, or equivalently its frame is given as a filtered colimit $F = \colim_{i \in I} D_i$ of finite frames. Note that a frame homomorphism between finite frames is automatically coherent, hence the entire diagram comes from a diagram of bounded distributive lattices, therefore, again using that $\mathrm{Ind}$ preserves colimits, the frame of $L$ is in the essential image of the functor $\mathrm{Ind}$. \end{proof}

\begin{remark}
Since the functor $\mathrm{pts}: \mathrm{Loc} \rightarrow \mathrm{Top}$ preserves limits, it follows that a coherent space $X$ is also an inverse limit of finite posets in $\mathrm{Top}$.
\end{remark}

\subsection{Locally coherent spaces}

We can loosen the requirement for $D$ to be a bounded distributive lattice to just be lower bounded, i.e. we do not require $D$ to have a top element. Geometrically, this corresponds to the removal of compactness from our requirements.

\begin{definition}
A frame $F$ is called \emph{locally coherent} if it is of the form $F = \mathrm{Ind}(D)$ for some lower bounded distributive lattice $D$. A locale $L$ is called \emph{locally coherent} if its frame of open sets is locally coherent.
\end{definition}

\begin{theorem}
Let $L$ be a locale. The following are equivalent.
\begin{enumerate}
\item $L$ is locally coherent.
\item The frame of $L$ is compactly generated, and its compact generators are closed under meets.
\item $L$ is is spatial and its corresponding topological space is sober and has a basis of compact open sets, which is closed under finite intersections.
\item $L$ is obtained from a coherent, locally cartesian $0$-site.
\end{enumerate}
\end{theorem}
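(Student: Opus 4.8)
The plan is to prove the cycle of implications $(1)\Leftrightarrow(2)$, $(1)\Rightarrow(4)\Rightarrow(2)$, and $(1)\Rightarrow(3)\Rightarrow(2)$; since $(2)\Rightarrow(1)$ is half of the first equivalence, this closes all the loops. The organising idea is that each of the four conditions is a restatement of the single assertion that $\Omega(L)$ is an \emph{algebraic} frame --- one that is compactly generated --- whose compact elements are moreover closed under binary meets, and that such frames are exactly the frames $\mathrm{Ind}(D)$ for $D\in\mathrm{DLatt}_{lb}$. For $(1)\Leftrightarrow(2)$: if $\Omega(L)=\mathrm{Ind}(D)$ then, $D$ being a poset, its image under the Yoneda embedding is precisely the collection of compact objects of $\mathrm{Ind}(D)$ (a retract of a principal ideal in a poset is again a principal ideal); these generate under directed joins, and they are closed under binary meets because $D\hookrightarrow\mathrm{Ind}(D)$ preserves finite limits and $\downarrow\! d_1\cap\downarrow\! d_2=\,\downarrow\!(d_1\wedge d_2)$. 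Conversely, given $(2)$, let $D$ be the poset of compact elements of $\Omega(L)$; one checks that $0$ is compact, that compact elements are closed under binary joins in any frame, that they are closed under binary meets by hypothesis, and that distributivity is inherited, so $D\in\mathrm{DLatt}_{lb}$; the comparison map $\mathrm{Ind}(D)\to\Omega(L)$, $I\mapsto\bigvee I$, is surjective by compact generation and injective because an ideal is directed and consists of compact elements, hence it is an isomorphism of frames.

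The implication $(1)\Rightarrow(4)$ is immediate from Example~\ref{indfinitetoplogy}: $\mathrm{Ind}(D)\cong\mathrm{Sh}((D,fin),\mathbf{2})$ and $(D,fin)$ is a coherent locally cartesian $0$-site. For $(4)\Rightarrow(2)$, write $F=\mathrm{Sh}((P,\tau),\mathbf{2})$, which is a frame whose sheafification $(-)^{sh}$ is left exact by Lemma~\ref{leftadjointgeometricmorphism}. Since $(-)^{sh}$ preserves joins, every sheaf $\mathcal{F}$ equals $\bigvee_{p\in\mathcal{F}}(\downarrow\! p)^{sh}$, so the sheafified representables generate $F$ under joins. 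The crucial point is that each $(\downarrow\! p)^{sh}$ is \emph{compact}: if $(\downarrow\! p)^{sh}\le\bigvee_\alpha\mathcal{G}_\alpha$ for a directed family of sheaves, then $p\in(\bigcup_\alpha\mathcal{G}_\alpha)^{sh}$, so there is a covering $\{p_i\le p\mid i\in I\}$ with all $p_i\in\bigcup_\alpha\mathcal{G}_\alpha$; coherence of $\tau$ forces $I$ to be finite, so directedness places every $p_i$ in a single $\mathcal{G}_\alpha$, and $\mathcal{G}_\alpha$, being a sheaf, then contains $p$, i.e.\ $(\downarrow\! p)^{sh}\le\mathcal{G}_\alpha$. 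Finally $(\downarrow\! p)^{sh}\wedge(\downarrow\! q)^{sh}=(\downarrow\! p\cap\downarrow\! q)^{sh}=(\downarrow\!(p\wedge q))^{sh}$, since meets of sheaves are intersections and $(-)^{sh}$ preserves finite meets, so these compact generators are closed under binary meets; this is $(2)$.

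For $(1)\Rightarrow(3)$: every prime filter $\mathcal{P}$ of $D$ defines a frame homomorphism $\mathrm{Ind}(D)\to\mathbf{2}$ sending an ideal $I$ to $1$ exactly when $I\cap\mathcal{P}\ne\emptyset$ (primeness of $\mathcal{P}$ yields preservation of joins, closure under meets yields preservation of binary meets), and by the prime ideal theorem for distributive lattices these homomorphisms separate the elements of $D$, so $L$ is spatial. The space $\mathrm{pts}(L)$ is automatically sober, the sets $\Sigma_d$ for $d\in D$ form a basis of open sets, each $\Sigma_d$ is compact (again by the prime ideal theorem, since a cover of $\Sigma_d$ by sets $\Sigma_{d_i}$ corresponds to $d$ lying in the ideal generated by the $d_i$), and $\Sigma_{d_1}\cap\Sigma_{d_2}=\Sigma_{d_1\wedge d_2}$; this gives $(3)$. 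For $(3)\Rightarrow(2)$: in a sober space a compact open set is a compact element of the frame of opens, and conversely a compact element is a finite union of basic compact opens and hence itself a compact open set, so the compact elements of $\Omega(X)$ are exactly the compact open sets; they generate since they contain a basis, and the intersection of two compact opens is a finite union of members of the given basis (which is closed under finite intersection), hence again a compact open. This is $(2)$.

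The step I expect to need the most care is $(4)\Rightarrow(2)$ --- in particular verifying that the sheafified representables $(\downarrow\! p)^{sh}$, together with their finite joins and the bottom element, really are all the compact elements, and isolating the fact that finiteness of the coverings in $\tau$ is exactly what makes these elements compact. A secondary point to flag is that, just as in the classical coherent case, $(1)\Rightarrow(3)$ is the only implication that is not constructive: it rests on the prime ideal theorem for distributive lattices, a form of the ultrafilter lemma.
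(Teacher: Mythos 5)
Your proposal is essentially correct and covers the same implications as the paper, but there are two places where it genuinely diverges and one small gap worth flagging.

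First, for spatiality you prove $(1)\Rightarrow(3)$ directly: you construct points as prime filters on $D$, invoke the prime ideal theorem to separate, and check compactness of the $\Sigma_d$ by hand. The paper instead proves $(2)\Rightarrow(3)$ by citing the general fact (Picado--Pultr VII.6.3.4) that a \emph{continuous} frame is automatically spatial, noting that a compactly generated frame is in particular continuous. Both routes rest on some choice principle, but yours isolates exactly which one (the distributive-lattice prime ideal theorem) while the paper's is slicker and delegates the work to domain-theoretic generalities. Neither is wrong; yours is more hands-on and self-contained, the paper's is shorter.

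Second, your closing leg runs $(4)\Rightarrow(2)$, whereas the paper proves $(4)\Rightarrow(1)$ directly. The content is the same --- the key observation in both is that coherence of $\tau$ (finiteness of coverings) makes the sheafification $(-)^{sh}$ preserve compact objects --- but the bookkeeping differs. This is where the small gap sits: in your $(4)\Rightarrow(2)$ you establish that the sheafified representables $(\downarrow p)^{sh}$ are compact, generate under joins, and are closed under binary meets, and you then declare ``this is $(2)$''. But in your own $(2)\Rightarrow(1)$ you read $(2)$ as the stronger statement that \emph{all} compact elements of $\Omega(L)$ are closed under meets (which is also how the paper reads it, since they take $F^\omega$ to be the full sub-poset of compact objects). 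The two readings are in fact equivalent: every compact element is a finite join of sheafified representables (by compactness against the generating family), the bottom element $0$ is compact, and by distributivity $\bigl(\bigvee_i s_i\bigr)\wedge\bigl(\bigvee_j t_j\bigr)=\bigvee_{i,j}(s_i\wedge t_j)$ is again a finite join of sheafified representables, hence compact. But this bridging step needs to be said. The paper sidesteps the issue by explicitly setting $D$ to be the closure under finite joins of the images of representables and then verifying closure under meets of \emph{that} $D$ by the same distributivity calculation --- so your instinct at the end of the proposal, that $(4)\Rightarrow(2)$ is the place needing the most care, was exactly right, and the gap is precisely the ``finite joins'' closure you did not take.

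Your $(1)\Leftrightarrow(2)$ is the same argument as the paper's, a bit more spelled out (the paper declares it ``mostly formal''), and your $(1)\Rightarrow(4)$ and $(3)\Rightarrow(2)$ match the paper.
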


\begin{proof}
The equivalence of $(1)$ and $(2)$ is mostly formal. Let $F$ be the frame of $L$. If $(1)$ holds and we have $F = \mathrm{Ind}(D)$ for some lower bounded distributive lattice $D$, we have that the set of compact opens $F^\omega \cong D$ is closed under meets. By definition, $D$ generates $F$. Conversely, given $(2)$, the sub-poset $F^\omega \subset F$ given by compact objects is always closed under finite colimits (= joins). The condition that it is closed under meets simply implies that $F^\omega$ is a sub-lattice of $F$, in particular lower bounded and distributive. Compact generation now says that $F = \mathrm{Ind}(F^\omega)$.

To see $(2)$ implies $(3)$, the only statement missing is that $L$ is spatial. This follows from the general fact that a locale with continuous frame is automatically spatial, see \cite{picado_pultr}[VII, 6.3.4], and a compactly generated frame is in particular continuous. The converse $(3)$ implies $(2)$ is clear.

To see $(1)$ implies $(4)$, as shown in Example \ref{indfinitetoplogy} we have that $\mathrm{Ind}(D) \cong \mathrm{Sh}((D,fin), \mathbf{2}),$ where $(D,fin)$ is a coherent, locally cartesian $0$-site. Conversely, for $(4)$ implies $(1)$, assume $(P,\tau)$ is a coherent, locally cartesian $0$-site. We have the adjunction

\[\begin{tikzcd}
	{\mathrm{Sh}((P,\tau), \mathbf{2})} & {\mathrm{Fun}(P^{op}, \mathbf{2}).}
	\arrow[""{name=0, anchor=center, inner sep=0}, curve={height=12pt}, hook', from=1-1, to=1-2]
	\arrow[""{name=1, anchor=center, inner sep=0}, "(-)^{sh}"', curve={height=12pt}, from=1-2, to=1-1]
	\arrow["\dashv"{anchor=center, rotate=-90}, draw=none, from=1, to=0]
\end{tikzcd}\]
Since the sheaf condition for a coherent locally cartesian $0$-site is a collection of finite limit conditions, the sub-poset of propositional sheaves is closed under filtered colimits (= directed suprema). This implies that $(-)^{sh}$ preserves compact objects. In particular, the compact generators given by the image of the Yoneda embedding $y : P \rightarrow \mathrm{Fun}(P^{op}, \mathbf{2})$ are sent to compact generators. Compact objects are always closed under finite colimits. Let $D \subset \mathrm{Sh}((P,\tau), \mathbf{2})$ be the closure under finite joins of images of representables. We have that $\mathrm{Ind}(D) \cong \mathrm{Sh}((P,\tau), \mathbf{2})$. We are left to argue that $D$ is closed under finite meets. Let $U_i , i = 1, \hdots, n$ and $V_j, j =  1, \hdots, m$ be finite collections of elements of $P$. Using that $(-)^{sh}$ preserves finite meets (see Lemma \ref{leftadjointgeometricmorphism}), we have that
$$ \left( \bigvee_{i = 1}^n ( y_{U_i})^{sh} \right) \wedge \left( \bigvee_{j = 1}^m ( y_{V_j} )^{sh} \right) = \bigvee_{i = 1}^n \bigvee_{j = 1}^m ( y_{U_i} )^{sh} \wedge ( y_{V_j} )^{sh} = \bigvee_{i = 1}^n \bigvee_{j = 1}^m ( y_{(U_i \wedge V_j)} )^{sh}.$$
\end{proof}

\begin{remark}
There is already a notion of coherent $\infty$-topos as defined by Lurie \cite[Definition A.2.0.12.]{lurieSAG}. Interpreting locales as $0$-topoi, which sit fully faithfully inside the $\infty$-category of $\infty$-topoi, it would be more consistent to call what we have defined as locally coherent locales simply ``coherent locales'', and specify compactness when one wants to talk about what we have defined in the previous section as a coherent locale (Or refer to them as spectral spaces). However, the term coherent space was already defined and used much earlier, e.g. in \cite{johnstone1982stone}, and the author felt inclined to avoid this clash with classical terminology.
\end{remark}

\begin{example} \label{polytopelattice}
Let $X$ be a classical euclidean, spherical or hyperbolic geometry, by which we mean that $X = E^n, S^n$ or $H^n$ for some fixed $n$. A geometric $n$-simplex is the convex hull of $n+1$-points of non-trivial measure (meaning the $n+1$-points do not lie on an $n-1$-dimensional subspace). A polytope in $X$ is a finite union of geometric $n$-simplices. Consider the poset $D(X)$ of $n$-dimensional polytopes, with morphisms being inclusions. This poset is a distributive lattice, but does not have a top element (however, we allow $\emptyset$ to be the bottom element).
\end{example}

We have seen in the previous section that the category $\mathrm{CohSp}$ is anti-equivalent to the category $\mathrm{DLatt}_{bd}$. Unfortunately, the relationship between locally coherent spaces and lower bounded distributive lattices is not so straightforward. This is because homomorphisms of lower bounded distributive lattices do not necessarily induce continuous maps. However, a useful trick to reduce statements about locally coherent spaces is the following. Consider the adjunction
\[\begin{tikzcd}
	{\mathrm{DLatt}_{lb}} & {\mathrm{DLatt}_{bd}}
	\arrow[""{name=0, anchor=center, inner sep=0}, "{(-)_\infty}", curve={height=-12pt}, from=1-1, to=1-2]
	\arrow[""{name=1, anchor=center, inner sep=0}, "forget", curve={height=-12pt}, from=1-2, to=1-1]
	\arrow["\dashv"{anchor=center, rotate=-90}, draw=none, from=0, to=1]
\end{tikzcd}\]
where $(-)_\infty$ is the functor that adds a top element $\infty$ to a given distributive lattice. Since for a given lower bounded distributive lattice $D$, there is always a bounded lattice homomorphism $D_\infty \rightarrow \mathbf{2} = \{0 \leq 1\}$, that sends every element $\neq \infty$ to $0$, we get by formal nonsense a lift of the adjunction:
\[\begin{tikzcd}
	{\mathrm{DLatt}_{lb}} & {{\mathrm{DLatt}_{bd}}_{/\mathbf{2}}} & {(\mathrm{CohSp}_{\mathrm{pt} / })^{op}}
	\arrow[""{name=0, anchor=center, inner sep=0}, "{(-)_\infty}", curve={height=-12pt}, hook, from=1-1, to=1-2]
	\arrow[""{name=1, anchor=center, inner sep=0}, "R", curve={height=-12pt}, from=1-2, to=1-1]
	\arrow["\cong", from=1-2, to=1-3]
	\arrow["\dashv"{anchor=center, rotate=-90}, draw=none, from=0, to=1]
\end{tikzcd}\]
where $R( p :  D \rightarrow \mathbf{2} ) = p^{-1}(0)$ selects the ideal of elements of $D$ that are sent to zero, and $(-)_\infty$ becomes fully faithful. This allows us to define the category of locally coherent spaces as the full subcategory
$$
\mathrm{LocCohSp} \subset \mathrm{CohSp}_{\mathrm{pt} / }.
$$
spanned by the image of $(-)_\infty$ and we have a corresponding Stone duality with lower bounded distributive lattices. However, we note that the resulting maps do not simply correspond to continuous maps, but rather partially defined coherent maps, with open support.

Given a lower bounded distributive lattice $D$, we then have an open-closed decomposition
\begin{equation} \mathrm{Ind}(D) \rightarrow \mathrm{Ind}(D_\infty) \rightarrow \mathbf{2}. \label{openclosedtopelement}
\end{equation}
which identifies $\mathrm{Ind}(D)$ with $\mathrm{Ind}(D_\infty)_{/U}$, where $U = \bigvee_{V \in D} V$.

\begin{remark}
If $L$ is a locally coherent locale, and $L = \mathrm{Ind}(D)$ with $D$ a lower bounded distributive lattice, then points of $L$ can be described purely in terms of $D$ as \emph{prime filters} on $D$. A filter $\mathcal{F} \subset D$ is called prime, if:
\begin{itemize}
\item For all $U, V \in D$ such that $U \vee V \in \mathcal{F}$, then $U \in \mathcal{F}$ or $V \in \mathcal{F}$.  
\end{itemize} 
\end{remark}

\subsection{Boolean algebras and profiniteness}

We now turn to the subclass of coherent spaces given by Stone spaces, also called profinite sets. Algebraically speaking, this corresponds to considering Boolean algebras instead of bounded distributive lattices. Since the algebraic theory of Boolean algebras is obtained from the theory of bounded distributive lattices by adding the single operation of negation together with additional axioms, we have an adjunction

\[\begin{tikzcd}
	{\mathrm{DLatt}_{bd}} & {\mathrm{BAlg}}
	\arrow[""{name=0, anchor=center, inner sep=0}, "{\mathrm{Bool}}", curve={height=-12pt}, from=1-1, to=1-2]
	\arrow[""{name=1, anchor=center, inner sep=0}, "\mathrm{forget}", curve={height=-12pt}, from=1-2, to=1-1]
	\arrow["\dashv"{anchor=center, rotate=-90}, draw=none, from=0, to=1]
\end{tikzcd}\]

Since it is only a property for a distributive lattice to be a Boolean algebra, the right adjoint forgetful functor is in fact fully faithful.	The distributive lattice $\mathbf{2}$ is Boolean, therefore we have for $D$ a bounded distributive lattice a natural isomorphism
$$ \mathrm{pts}(D) = \mathrm{Hom}_{\mathrm{DLatt}_{bd}}(D, \mathbf{2}) \cong \mathrm{Hom}_{\mathrm{DLatt}_{bd}}(\mathrm{Bool}(D), \mathbf{2}) \cong \mathrm{pts}(\mathrm{Bool}(D)).$$
Thinking topologically, if $X$ is a coherent space with set of compact opens given by $D$, this means that the coherent space corresponding to $\mathrm{Bool}(D)$ is given by equipping the space $X$ with a new topology. We call this the \emph{constructible topology} and write $X^{cons}$. Let us summarize some known results:

\begin{proposition}[See \cite{stacks-project}, 5.23]
Let $L$ be a coherent locale. The following are equivalent:
\begin{enumerate}
\item $L$ is obtained as an inverse limit of finite sets with the discrete topology (in the category of locales).
\item The space of points of $L$ is Hausdorff.
\item The space of points of $L$ is totally disconnected.
\item Every compact open of $L$ has a complement.
\item The frame of $L$ is given as $\mathrm{Ind}(D)$, where $D$ is a boolean algebra.
\item There are no non-trivial specializations between points.
\item The constructible topology on the space of points of $L$ equals the given topology.
\end{enumerate}
\end{proposition}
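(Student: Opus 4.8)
The plan is to prove the seven conditions equivalent by establishing a convenient cycle of implications, leaning heavily on the Stone duality of Theorem \ref{stonedualitycoherent} to move between the localic/topological side and the algebraic side of bounded distributive lattices. Throughout write $F = \mathrm{Ind}(D)$ with $D = F^\omega$ the bounded distributive lattice of compact opens, so that $L$ corresponds to $D$ under Stone duality and $\mathrm{pts}(L) \cong \mathrm{Hom}_{\mathrm{DLatt}_{bd}}(D,\mathbf{2})$.

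First I would treat the algebraic heart of the statement, the equivalence of $(4)$ and $(5)$: a compact open $U \in D$ has a complement in $D$ iff the principal ideal/filter picture lets one define $\neg U$, and $D$ is a Boolean algebra precisely when \emph{every} element is complemented. This is elementary lattice theory (complements in a distributive lattice are unique when they exist, so "every element complemented" is exactly "Boolean"). Next, $(5) \Leftrightarrow (1)$: by the argument already given in the proof that coherent locales are inverse limits of finite posets, $D \cong \colim_i D_i$ over its finite sublattices, and $\mathrm{Bool}(D) \cong \colim_i \mathrm{Bool}(D_i)$; a finite distributive lattice is Boolean iff it is a finite power of $\mathbf{2}$, i.e. corresponds to a finite discrete space, so $D$ Boolean is equivalent to $F$ being a filtered colimit of finite Boolean algebras, dually $L$ an inverse limit of finite discrete sets. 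For $(5) \Leftrightarrow (7)$: the constructible topology $X^{cons}$ was defined precisely as the space of points of $\mathrm{Ind}(\mathrm{Bool}(D))$, and the comparison map $X^{cons} \to X$ is a continuous bijection which is a homeomorphism iff the identity $\mathrm{Bool}(D) \to \mathrm{Bool}(D)$-side map is already surjective on the relevant opens, i.e. iff $D \to \mathrm{Bool}(D)$ is an isomorphism, i.e. iff $D$ is already Boolean.

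For the purely topological conditions $(2),(3),(6)$ I would argue as follows. The equivalence $(6) \Leftrightarrow (5)$ uses sobriety: specializations between points of a spectral space correspond to inclusions of prime filters on $D$, and $D$ is Boolean iff its prime filters are exactly the ultrafilters, which are pairwise incomparable; conversely if there are no nontrivial specializations, every prime filter is maximal, and a bounded distributive lattice all of whose prime filters are maximal is Boolean (dualize the standard fact that a commutative ring is von Neumann regular / zero-dimensional iff every prime is maximal, or argue directly with the prime ideal theorem that $U \vee \neg U = 1$ can be forced). Then $(6) \Leftrightarrow (3)$ is the statement that a sober space with a basis of quasi-compact opens is totally disconnected iff it has no nontrivial specializations (a nontrivial specialization gives an irreducible, hence connected, two-point-or-larger subset; conversely without specializations the constructible and given topology agree and the space is profinite). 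Finally $(2) \Leftrightarrow (3)$ for the space of points: a Hausdorff space is totally disconnected, and a quasi-compact sober space with basis of quasi-compact opens that is totally disconnected is a Stone space hence Hausdorff — here one invokes that such a space is profinite, or equivalently that $(3) \Rightarrow (1) \Rightarrow (2)$ since an inverse limit of finite discrete spaces is compact Hausdorff.

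The main obstacle I anticipate is the careful handling of the equivalences involving specializations and total disconnectedness, $(2) \Leftrightarrow (3) \Leftrightarrow (6)$, where one must be a little careful that "the space of points" genuinely reflects the localic statements (this is fine since a coherent locale is spatial) and that total disconnectedness of a possibly-non-Hausdorff space behaves as expected; the cleanest route is probably to route everything through $(5)$ and $(7)$, i.e. show $(5)$ is equivalent to each of $(1),(4),(7)$ by the algebra above, and then close the loop $(5) \Rightarrow (1) \Rightarrow (2) \Rightarrow (3) \Rightarrow (6) \Rightarrow (5)$, so that the only genuinely topological implications needed are the easy $(1)\Rightarrow(2)\Rightarrow(3)$ and the "no specializations $\Rightarrow$ prime filters are maximal $\Rightarrow$ $D$ Boolean" argument for $(6)\Rightarrow(5)$, plus $(3)\Rightarrow(6)$ which is immediate since a specialization $x \rightsquigarrow y$ with $x \ne y$ witnesses a failure of total disconnectedness. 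Citing \cite{stacks-project} (Tag on spectral spaces) for the topological equivalences and \cite{johnstone1982stone} for the localic Stone duality keeps the write-up short.
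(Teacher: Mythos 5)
The paper gives no proof of this proposition; it is stated as known and cited to the Stacks project (Tag 0901 and nearby). So there is no ``paper's route'' to compare against: what matters here is whether your argument is actually correct, and it has one clear error and one significant gap.

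The error is in your argument for $(2) \Rightarrow (3)$: you assert ``a Hausdorff space is totally disconnected.'' This is false ($\mathbb{R}$ is Hausdorff and connected). The implication you need \emph{is} true for the class of spaces at hand, but for a different reason: if $X$ is a coherent (spectral) space that is Hausdorff, then every quasi-compact open is a quasi-compact subset of a compact Hausdorff space and hence closed, so the quasi-compact opens form a basis of clopen sets; a $T_1$ space with a clopen basis is totally disconnected. Alternatively, you could bypass $(3)$ entirely by routing $(2) \Rightarrow (6)$, which is trivial since in a Hausdorff space points are closed and hence there are no non-trivial specializations, and pick up $(3)$ separately via $(1) \Rightarrow (3)$ and $(3) \Rightarrow (6)$. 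Either fix works; as written, the cycle $(5) \Rightarrow (1) \Rightarrow (2) \Rightarrow (3) \Rightarrow (6) \Rightarrow (5)$ relies on a false general statement.

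The gap is in $(6) \Rightarrow (5)$, which is the genuinely non-formal implication of the proposition and which you only gesture at (``dualize the von Neumann regular fact, or argue directly with the prime ideal theorem''). A correct argument goes as follows: suppose $U \in D$ is not complemented and $0 \neq U \neq 1$. Let $F = \{W : W \vee U = 1\}$ (a filter) and let $J$ be the ideal generated by $U$ together with the pseudocomplement ideal $\{V : V \wedge U = 0\}$. One checks $F \cap J = \emptyset$: an element of the intersection would have $W \vee U = 1$ and $W \leq V \vee U$ with $V \wedge U = 0$, forcing $V \vee U = 1$ and exhibiting $V$ as a complement of $U$. By the prime filter theorem choose a prime filter $P \supseteq F$ with $P \cap J = \emptyset$; then $U \notin P$ and $W \wedge U \neq 0$ for all $W \in P$, so the filter generated by $P \cup \{U\}$ is proper and extends to a prime filter $Q \supsetneq P$. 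This is a non-trivial specialization, contradicting $(6)$. You should spell this out rather than leaving it as a pointer, since it is the only place the prime ideal theorem (i.e.\ choice) genuinely enters, and it is the implication a reader is least likely to find ``standard.''
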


We refer to a locale with any of the above properties as a Stone locale, or also as a profinite space. We will use the following elementary fact about profinite spaces.

\begin{lemma} \label{profinitecontinuous}
Let $X$ be a profinite space given as a cofiltered limit $X = \lim_{i \in I} X_i$, with $X_i$ being finite, discrete sets.\footnote{The limit can be taken either in the category of topological spaces or locales, the distinction does not matter here.} Let $S$ be an (arbitrary) discrete set. Then
$$ \mathrm{Map}( X, S ) \cong \mathrm{colim}_{i \in I} \mathrm{Map}( X_i, S ).$$
\end{lemma}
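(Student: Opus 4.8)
The plan is to prove that continuous maps from a cofiltered limit of finite discrete sets to an arbitrary discrete set factor through a finite stage, and that the resulting map from the colimit is a bijection.

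First I would establish surjectivity of the natural map $\colim_{i} \Map(X_i, S) \to \Map(X, S)$. Given a continuous $f : X \to S$, for each $s \in S$ the preimage $f^{-1}(s)$ is clopen in $X$. Since $X = \lim_i X_i$ is profinite, the clopen subsets of $X$ are exactly the preimages of subsets of the $X_i$ under the projection maps $\pi_i : X \to X_i$, and moreover since $X$ is compact, $f^{-1}(s)$ is empty for all but finitely many $s$ — here I use that the $f^{-1}(s)$ partition $X$ into clopens and compactness forces this partition to be finite, say with nonzero pieces indexed by $s_1, \dots, s_n$. Each $f^{-1}(s_k)$ is a clopen, hence of the form $\pi_{i_k}^{-1}(A_k)$ for some $i_k \in I$ and $A_k \subseteq X_{i_k}$; here I should be slightly careful and note that since $X$ is compact and the $X_i$ are finite, a clopen is pulled back from a single finite stage rather than needing a union over many stages (this uses that a single clopen subset, being compact, is a finite union of basic clopens, each pulled back from some stage, and then one passes to a common refinement in the cofiltered index category). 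Choosing a common upper bound $i \geq i_1, \dots, i_n$ in the cofiltered poset $I$, all the $A_k$ pull back to subsets of $X_i$, and since they still partition (the image of $X$ in) $X_i$ after possibly shrinking, one defines $f_i : X_i \to S$ on the relevant subsets (extending arbitrarily on points of $X_i$ not in the image of $X$, which does not affect the class in the colimit), giving a preimage of $f$.

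Next I would establish injectivity. Suppose $f_i : X_i \to S$ and $g_j : X_j \to S$ become equal after composing with $\pi_i$, $\pi_j$ respectively. Passing to a common lower bound $k \geq i, j$, we may assume $i = j = k$, so $f_k \circ \pi_k = g_k \circ \pi_k$ as maps $X \to S$. Then $f_k$ and $g_k$ agree on the image $\pi_k(X) \subseteq X_k$. The standard fact that in a cofiltered limit of finite sets the image $\pi_k(X)$ equals the intersection $\bigcap_{l \geq k} \mathrm{im}(X_l \to X_k)$, which is a cofiltered intersection of nonempty finite sets hence stabilizes: there is some $l \geq k$ with $\mathrm{im}(X_l \to X_k) = \pi_k(X)$. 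Then $f_k$ and $g_k$ agree on the image of the transition map $X_l \to X_k$, so $f_k$ and $g_k$ pulled back to $X_l$ are equal, meaning they represent the same element of the colimit.

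The main obstacle is the careful handling of the two "image stabilization" / "clopen is pulled back from a finite stage" facts: both rely on compactness of $X$ together with the cofilteredness of $I$ and finiteness of the $X_i$, and one must make sure to phrase the arguments so they work with the limit taken either in $\mathrm{Top}$ or in $\mathrm{Loc}$ (for the localic version, "clopen of $X$" and "image of $X$ in $X_i$" should be interpreted via the frame maps, and compactness of the locale plus the fact that $\mathrm{Ind}$ commutes with the relevant colimits does the job, as in the proof of the earlier theorem characterizing coherent locales as inverse limits of finite posets). A clean way to sidestep point-set subtleties entirely is to observe that $S = \colim_{S_0 \subseteq S \text{ finite}} S_0$, that $\Map(X, -)$ commutes with this filtered colimit because $X$ is compact, and that for finite $S_0$ the statement $\Map(X, S_0) \cong \colim_i \Map(X_i, S_0)$ is the assertion that the contravariant representable $\Map(-, S_0)$ sends the cofiltered limit $X = \lim_i X_i$ to a filtered colimit, which for finite target follows from Stone duality (finite sets being finite Boolean algebras / finite coherent locales, and $\mathrm{Ind}$ preserving filtered colimits as used repeatedly above). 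I would present the Stone-duality argument as the main line and relegate the hands-on description to a remark.
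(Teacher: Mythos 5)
Your hands-on argument (surjectivity via factoring $f$ through a finite stage, plus an explicit injectivity check via image stabilization) matches the paper's proof in spirit, with one worthwhile addition: the paper only spells out the surjectivity direction, namely that any continuous $f:X\to S$ factors through some $X_{i_0}$, leaving the injectivity of the comparison map $\colim_i \mathrm{Map}(X_i,S)\to\mathrm{Map}(X,S)$ implicit in the identification $\mathcal{K}^o(X)\cong\colim_i\mathcal{P}(X_i)$ of Boolean algebras. Your image-stabilization argument closes that gap cleanly; an alternative closer to the paper's phrasing is to note that the finitely many fibre sets $f_k^{-1}(s)$ and $g_k^{-1}(s)$ agree after pulling back to $X$, hence agree at some finite stage by injectivity of the filtered colimit of Boolean algebras. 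You also correctly flag (with ``after possibly shrinking'') the point the paper glosses over: the pulled-back sets $A_k\subset X_i$ a priori partition only the image $\pi_i(X)$, not $X_i$ itself, so one either extends arbitrarily off the image or passes to a larger stage where the partition relations hold in $\mathcal{P}(X_i)$. Your proposed ``main line'' — write $S=\colim_{S_0\subset S \text{ finite}}S_0$, commute $\mathrm{Map}(X,-)$ past this filtered colimit using compactness of $X$, and then use that for finite $S_0$ the functor $\mathrm{Map}(-,S_0)$ is corepresented in $\mathrm{BAlg}^{op}$ by the compact object $\mathcal{P}(S_0)$ — is a genuinely slicker route that the paper does not take. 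It buys you two things: it sidesteps all the partition bookkeeping, and it makes the finiteness hypotheses structurally transparent (compactness of $X$ is used exactly once, to reduce to finite $S$; finiteness of the $X_i$ and cofilteredness of $I$ are absorbed into the Stone duality statement $\mathcal{K}^o(X)\cong\colim_i\mathcal{P}(X_i)$). Both routes hinge on the same core fact, so nothing structurally new is happening, but your Yoneda/compact-object phrasing also makes the localic footnote in the statement a non-issue, since it never leaves the category of Boolean algebras.
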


\begin{proof}
A map $f$ from $X$ into $S$ is continuous iff it is locally constant, since $S$ is discrete. As such for any given $f$ there is a partition of $X$ into closed and open subsets $U_j, j \in J$, such that $f|_{U_j}$ is a constant function. Since $X$ is compact, this partition can be chosen to be finite. Furthermore, since $X$ is Hausdorff, an open subset is closed iff it is compact. Hence for $f$ we have a finite partition into compact open subsets $U_j, j \in J$, on which $f$ is constant.

Now note that under Stone duality the Boolean algebra of compact open subsets of $X$ is obtained as the filtered colimit of the Boolean algebras of compact open subsets of $X_i$. Hence for our finite (!) collection $U_j, j \in J$ we can find a given index $i_0 \in I$ such that all $U_j$ are obtained as preimages of compact open subsets under the structure map $X \rightarrow X_{i_0}$. But this just means that $f$ factors through $X \rightarrow X_{i_0}$.
\end{proof}

We will also make use of the following non-trivial fact about continuous functions with values in the integers $\mathbb{Z}$ on a profinite space.

\begin{theorem}[Nöbeling's Theorem, see \cite{asgeirsson:LIPIcs.ITP.2024.6}] \label{noebelingstheorem}
Let $X$ be a profinite space. Then $C(X, \mathbb{Z})$ is a free abelian group.
\end{theorem}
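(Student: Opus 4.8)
The plan is to follow Nöbeling's original argument, which is also the route taken in the formalization \cite{asgeirsson:LIPIcs.ITP.2024.6}. First I would fix a concrete model. Every profinite space is homeomorphic to a closed subspace of a product of finite discrete sets, and each finite discrete set embeds into some $\{0,1\}^n$; so, after well-ordering a suitable index set, we may assume that $X \subseteq \{0,1\}^\kappa$ is a closed subspace for some ordinal $\kappa$. Everything then reduces to a transfinite induction on $\kappa$; the base case $\kappa = 0$ is trivial, since then $X$ is empty or a point.

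For the inductive step, write $X_j \subseteq \{0,1\}^{\{i\,:\,i<j\}}$ for the image of $X$ under the projection forgetting the coordinates $\ge j$, so that $X_0$ is a point, $X_\kappa = X$, and the pullback $\pi_j^*$ identifies $C(X_j,\mathbb{Z})$ with the subgroup $M_j \subseteq C(X,\mathbb{Z})$ of functions depending only on the coordinates $< j$. The first point is that $(M_j)_{j \le \kappa}$ is a continuous filtration of $C(X,\mathbb{Z})$: a continuous $f \colon X \to \mathbb{Z}$ has finite image and its finitely many fibres are clopen, hence cut out by finitely many coordinates, so $f \in M_j$ for some $j \le \kappa$; and at a limit ordinal $\lambda$ the same reasoning gives $M_\lambda = \bigcup_{j < \lambda} M_j$ (this is the mechanism behind Lemma \ref{profinitecontinuous}, applied now to a transfinite tower of profinite sets).

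The second point handles successors. Let $Y_j \subseteq X_j$ be the closed \emph{doubling locus}, consisting of those $x \in X_j$ whose fibre in $X_{j+1}$ under the coordinate-forgetting map $\pi \colon X_{j+1} \to X_j$ has two elements. There is a short exact sequence
$$ 0 \longrightarrow C(X_j,\mathbb{Z}) \xrightarrow{\ \pi^*\ } C(X_{j+1},\mathbb{Z}) \xrightarrow{\ \partial\ } C(Y_j,\mathbb{Z}) \longrightarrow 0 $$
in which $\partial(f)(x) = f(x,1) - f(x,0)$ for $x \in Y_j$, writing $(x,\epsilon)$ for the point of $X_{j+1}$ extending $x$ by $\epsilon$ in coordinate $j$: injectivity of $\pi^*$ is surjectivity of $\pi$; $\ker \partial$ consists of the functions constant on the fibres of $\pi$, which descend continuously because $\pi$ is a quotient map of compact Hausdorff spaces; and $\partial$ is surjective because any $g \in C(Y_j,\mathbb{Z})$ extends to a continuous function on $X_j$, whose $\pi$-pullback multiplied by the $j$-th coordinate function is then sent to $g$ by $\partial$. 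Since $Y_j$ sits inside a cube of ordinal length $j < \kappa$, the inductive hypothesis shows $C(Y_j,\mathbb{Z})$ is free, hence $M_{j+1}/M_j \cong C(Y_j,\mathbb{Z})$ is free. A continuous filtration whose successive quotients are free has free total group --- choose bases of the quotients, lift them, and verify that the union is a basis --- so $C(X,\mathbb{Z})$ is free and the induction closes. (Equivalently, one can make these lifted bases explicit as Nöbeling's set of \emph{good} squarefree monomials $e_{i_1} \cdots e_{i_n}$ with $i_1 < \dots < i_n$ in the coordinate functions; this is the concrete packaging used in the cited formalization.)

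I expect the main obstacle to be the accumulation of bookkeeping rather than a single hard idea: one must verify the short exact sequence above with care (continuity of $\partial f$ and of the descended function, surjectivity of $\partial$), check that $Y_j$ really is profinite --- though, notably, one does \emph{not} need it to be clopen in $X_j$, since the sequence splits simply because free abelian groups are projective --- and organise the transfinite induction together with the ``continuous filtration with free quotients is free'' lemma so that the limit stages genuinely assemble. In the explicit version, the same difficulty reappears as the need to prove that the good monomials are simultaneously spanning and $\mathbb{Z}$-linearly independent, and it is this combinatorial and inductive core that was the substance of the cited formalization.
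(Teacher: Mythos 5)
The paper does not prove Nöbeling's theorem --- it is stated and used as a black box, with a citation to the Lean formalization of Asgeirsson. So there is no proof in the paper to compare against; your argument stands on its own.

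Your reconstruction of Nöbeling's proof is essentially correct. The transfinite filtration of $C(X,\mathbb{Z})$ by the subgroups $M_j$ of functions depending only on coordinates $< j$, the continuity of that filtration (a continuous integer-valued function on a profinite set has finite image with clopen fibres, hence factors through a finite stage), the identification $M_{j+1}/M_j \cong C(Y_j,\mathbb{Z})$ via the difference map $\partial$, and the closing lemma that a continuous filtration with free successive quotients has free union --- this is precisely the structure of the classical argument and of the cited formalization. Two points that you assert rather than prove are worth a line each. First, surjectivity of $\partial$ uses a Tietze-type extension fact: a continuous $\mathbb{Z}$-valued map on the closed subspace $Y_j$ of the profinite space $X_j$ extends continuously to $X_j$. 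This is true --- each of the finitely many clopen fibres of such a map is the trace on $Y_j$ of a clopen subset of $X_j$, and one can disjointify these to build an extension --- but it is not completely formal. Second, the filtration lemma as usually stated begins at the zero subgroup, whereas when $X$ is nonempty your $M_0$ is $\mathbb{Z}$, the constants; this is of course harmless (prepend a trivial step), but should be noted. Your remark that $Y_j$ need not be clopen is correct and slightly more than needed: you never actually use that the short exact sequence splits, only that its cokernel is free, which the filtration lemma consumes directly. With those glosses supplied, the proof is sound.
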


\begin{example}
A useful example of a profinite space is the Stone-\v{C}ech compactification $\beta(S)$ of a set $S$. Consider a finite partition $\Pi = \{U_1, U_2, \hdots, U_n\}$ of $S$, by which we mean that
$$S = \bigcup_{i = 1}^n U_i$$
and the $U_i$ are pairwise disjoint. We have a natural map $S \rightarrow \Pi$ that sends an element $s \in S$ to the unique set $U_i$ that contains $s$. Conversely, any map $S \rightarrow F$ with $F$ a finite set induces a finite partition on $S$. Note that the set of finite partitions of $S$ is ordered: We can say $\Pi \leq \Pi'$ if every element $U$ of $\Pi$ is obtained as a union of elements in $\Pi'$.

We can form the colimit and observe that we have an isomorphism
$$\mathcal{P}(S) \cong \mathrm{colim}_{ \Pi \text{ finite partition of } S} \mathcal{P}(\Pi),$$
in the category of Boolean algebras, where $\mathcal{P}(S)$ is the powerset algebra of the set $S$ (and similarly for $\Pi$). This isomorphism sends a subset $U \subset S$ to the singleton $\{U\}$ of the partition $\{U,U^c\}$ of $S$ and its inverse takes the union of the elements of a subset of a partition. Define $\beta(S)$ to be the profinite space associated to $\mathcal{P}(S)$. Under Stone-duality, this translates to the homeomorphism
$$\beta(S) \cong \lim_{ \Pi \text{ finite partition of } S} \Pi.$$
with the frame of $\beta(S)$ obtained as $\mathrm{Ind}(\mathcal{P}(S))$.

The identity on $\mathcal{P}(S)$ extends to a frame homomorphism $\mathrm{Ind}(\mathcal{P}(S)) \rightarrow \mathcal{P}(S)$, or dually a continuous map $S^{disc} \rightarrow \beta(S)$, where we equip $S$ with the discrete topology. We leave it the reader to verify that this map is an open, dense inclusion. Furthermore observe that the space $\beta(S)$ can be obtained from the finite disjoint covering topology on $\mathcal{P}(S)$, more or less by definition.
\end{example}

\subsection{Generalities on sheaves and higher topoi} \label{generalitiestopoi}

We will use the language of higher topoi and categories of sheaves with values in the $\infty$-category of spaces as developed by Lurie, \cite{luriehtt}. Given an $\infty$-category $\mathcal{C}$ we refer to $\mathrm{PSh}(\mathcal{C}) = \mathrm{Fun}(\mathcal{C}^{op}, \mathrm{Spc})$ as the $\infty$-category of presheaves. If $\mathcal{C}$ is equipped with a Grothendieck topology $\tau$ we refer to $\mathrm{Sh}(\mathcal{C}, \tau) \subset \mathrm{PSh}(C)$ as the full subcategory of sheaves with respect to $\tau$. In case of a locale or space $X$ we simply write $\mathrm{Sh}(X)$ for sheaves with respect to the canonical topology on the corresponding frame. 

Given a locale $L$ and an open $U$, we have adjunctions
\[\begin{tikzcd}
	{\mathrm{Sh}(L_{U/})} & {\mathrm{Sh}(L)_{/y_U}} & {\mathrm{Sh}(L)} & {\mathrm{Sh}(L_{U/})}
	\arrow["\simeq"{description}, draw=none, from=1-1, to=1-2]
	\arrow[""{name=0, anchor=center, inner sep=0}, "{i_!}", curve={height=-12pt}, hook, from=1-2, to=1-3]
	\arrow[""{name=1, anchor=center, inner sep=0}, "{i_*}"', curve={height=12pt}, hook, from=1-2, to=1-3]
	\arrow[""{name=2, anchor=center, inner sep=0}, "{i^*}"{description}, from=1-3, to=1-2]
	\arrow[""{name=3, anchor=center, inner sep=0}, "{c^*}", curve={height=-12pt}, from=1-3, to=1-4]
	\arrow[""{name=4, anchor=center, inner sep=0}, "{c_*}", hook', from=1-4, to=1-3]
	\arrow["\dashv"{anchor=center, rotate=-90}, draw=none, from=0, to=2]
	\arrow["\dashv"{anchor=center, rotate=-90}, draw=none, from=2, to=1]
	\arrow["\dashv"{anchor=center, rotate=-90}, draw=none, from=3, to=4]
\end{tikzcd}\]
corresponding to the open-closed decomposition of $L$ given by $U$ and its closed complement. The functors $i_!, i_*$ and $c^*$ are fully faithful and $c_*$ preserves filtered colimits, see \cite[Section 6.3.5 and Section 7.3.2]{luriehtt}.

Particularly useful will be the so-called comparison lemma, which is an $\infty$-categorical generalization of the Basis Theorem \ref{basistheorem}.

\begin{lemma}[Comparison lemma, see \cite{hoyois_2014}, Lemma C.3] \label{comparisonlemma}
Let $(P, \tau)$ be a locally cartesian $0$-site and $u : P_0 \subset P$ a subset of $P$ such that:
\begin{itemize}
\item Every object in $P$ can be covered by objects in $P_0$.
\item $P_0$ is closed under meets in $P$.
\end{itemize}
Let $\tau_0$ be the induced Grothendieck topology on $P_0$ by restriction. Then the induced adjunction
\[\begin{tikzcd}
	{\mathrm{Sh}(P, \tau)} & {\mathrm{Sh}(P_0, \tau_0)}
	\arrow[""{name=0, anchor=center, inner sep=0}, "{u^*}", curve={height=-12pt}, from=1-1, to=1-2]
	\arrow[""{name=1, anchor=center, inner sep=0}, "{u_*}", curve={height=-12pt}, from=1-2, to=1-1]
	\arrow["\dashv"{anchor=center, rotate=-90}, draw=none, from=0, to=1]
\end{tikzcd}\]
is an equivalence of $\infty$-categories of sheaves.
\end{lemma}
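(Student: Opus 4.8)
The plan is to realize the displayed adjunction as the restriction to sheaves of a presheaf-level adjunction and then to check that it is an adjoint equivalence there. At the level of presheaves one has the restriction functor $u^{*}\colon\mathrm{PSh}(P)\to\mathrm{PSh}(P_{0})$, whose right adjoint $u_{*}$ is right Kan extension along $u$; since $P$ and $P_{0}$ are posets, $u_{*}$ is computed pointwise by $u_{*}F_{0}(p)=\lim_{p'\in P_{0}^{\le p}}F_{0}(p')$, where $P_{0}^{\le p}:=\{p'\in P_{0}:p'\le p\}$. Because $\tau_{0}$ is by definition the topology induced from $\tau$, every $\tau_{0}$-covering family is a $\tau$-covering family, so $u^{*}$ carries $\tau$-sheaves to $\tau_{0}$-sheaves with no work; the displayed adjunction on sheaf categories then exists once we also know that $u_{*}$ carries $\tau_{0}$-sheaves to $\tau$-sheaves, and it is an equivalence once we know that its unit $\mathrm{id}\to u_{*}u^{*}$ and counit $u^{*}u_{*}\to\mathrm{id}$ are equivalences.

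The counit is immediate: for $p_{0}\in P_{0}$ the poset $P_{0}^{\le p_{0}}$ has $p_{0}$ itself as a terminal object — this uses only that $P_{0}\subseteq P$ is a full subposet — so $u^{*}u_{*}F_{0}(p_{0})=\lim_{P_{0}^{\le p_{0}}}F_{0}=F_{0}(p_{0})$, and in particular $u_{*}$ is fully faithful. The crux is the single fact $(\star)$: \emph{for every $\tau$-sheaf $G$ and every $p\in P$ the canonical comparison $G(p)\to\lim_{p'\in P_{0}^{\le p}}G(p')$ is an equivalence.} Here is how I would prove it, and it is exactly here that both hypotheses on $P_{0}$ are used. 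Let $R$ be the sieve on $p$ generated by $P_{0}^{\le p}$. Since $P_{0}$ covers $P$, the object $p$ admits a covering family consisting of elements of $P_{0}$, and this family lies in $R$, so $R$ is a $\tau$-covering sieve; as $G$ is a $\tau$-sheaf, $G(p)\simeq\lim_{r\in R}G(r)$. It remains to observe that the inclusion of posets $P_{0}^{\le p}\hookrightarrow R$ is an initial functor, so that $\lim_{R}G=\lim_{P_{0}^{\le p}}G$: for each $r\in R$ the relevant comma poset is $P_{0}^{\le r}=\{p'\in P_{0}:p'\le r\}$, which is nonempty because $P_{0}$ covers $r$ and closed under binary meets because $P_{0}$ is, hence cofiltered, hence weakly contractible. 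Applied to $G=F$, $(\star)$ is precisely the assertion that the unit $F\to u_{*}u^{*}F$ is an equivalence. The sheaf-preservation of $u_{*}$ is proved by a parallel computation: given a $\tau$-covering family of $p$, refine it to a covering family $\{q_{j}\le p\}$ with $q_{j}\in P_{0}$ (possible as $P_{0}$ covers $P$); then for each $p'\in P_{0}^{\le p}$ the family $\{p'\wedge q_{j}\le p'\}$ is a $\tau_{0}$-cover, one feeds in the $\tau_{0}$-sheaf condition for $F_{0}$, uses that $-\wedge q\colon P_{0}^{\le p}\to P_{0}^{\le q}$ is right adjoint to the (initial) inclusion to evaluate the resulting limits, and commutes limits to identify $u_{*}F_{0}(p)$ with the \v{C}ech descent object of $u_{*}F_{0}$ along $\{q_{j}\le p\}$; a standard argument that descent along a generating family of covering sieves implies the sheaf condition then finishes the claim.

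Granting these, $u^{*}\dashv u_{*}$ is an adjoint equivalence between $\mathrm{Sh}(P,\tau)$ and $\mathrm{Sh}(P_{0},\tau_{0})$, which is the assertion. The main obstacle is the package $(\star)$ together with its parallel form for $u_{*}$: the content is that the comma poset $P_{0}^{\le p}$ already records all of the $\tau$-local structure of a sheaf at $p$, and that this is stable under passing to covers. This is the step that genuinely uses that $P_{0}$ covers $P$ — so that the sieve generated by $P_{0}^{\le p}$ is covering and arbitrary covers can be refined through $P_{0}$ — and that $P_{0}$ is closed under meets — so that the comma posets $P_{0}^{\le r}$ that appear are cofiltered and the comparison functors are initial; everything else is formal manipulation of adjunctions and (co)limits. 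This is the classical comparison lemma of Grothendieck in its $\infty$-categorical form, so in practice I would simply invoke \cite[Lemma C.3]{hoyois_2014}.
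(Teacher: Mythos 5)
The paper does not actually prove this lemma — it states it as a citation to Hoyois \cite[Lemma C.3]{hoyois_2014} and notes explicitly that what is stated is a special case of that result. You also end by invoking the same citation, so at the level of what the paper commits to, the two treatments agree. What you have added is an outline of the standard argument behind the comparison lemma, and the outline is essentially correct: identify the adjunction on sheaves as the restriction of the presheaf-level restriction/right-Kan-extension adjunction, observe that for $p_0 \in P_0$ the indexing poset $(P_0^{\leq p_0})^{\mathrm{op}}$ has an initial object so the counit is an equivalence, and then reduce the unit and sheaf-preservation claims to a cofinality statement about $P_0^{\leq p} \hookrightarrow R$ for the $\tau$-covering sieve $R$ generated by $P_0^{\leq p}$, whose fibers $P_0^{\leq r}$ are cofiltered precisely because $P_0$ covers $P$ and is closed under meets. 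This is how Hoyois's proof runs.

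Two small points of caution worth flagging in your sketch. First, the claim that ``$u^*$ carries $\tau$-sheaves to $\tau_0$-sheaves with no work'' because every $\tau_0$-cover is a $\tau$-cover is a bit too quick: the $\tau$-sheaf condition for $G$ gives a limit over a sieve in $P$, whereas the $\tau_0$-sheaf condition for $u^*G$ asks for a limit over the corresponding sieve in $P_0$, and identifying the two is exactly the same initiality argument you later apply for the unit. Second, the cofilteredness of $P_0^{\leq r}$ requires it to be nonempty, and this is where the hypothesis that $P_0$ covers $P$ actually enters: one must rule out (or handle) the degenerate case of objects covered only by the empty family. Neither of these affects the overall correctness of the approach; they are just details that a full proof, as in Hoyois, must address and which the paper sidesteps by citing.
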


We note that Lemma \ref{comparisonlemma} as presented is a very special case of the comparison lemma discussed by Hoyois \cite{hoyois_2014}, however it will be the only case we need during this paper. As a warning to the reader: In the case of $1$-topoi a generalization of Lemma \ref{comparisonlemma} avoiding the requirement of closure under meets is possible, however, when dealing with the corresponding $\infty$-topoi this runs into issues related to hypercompletion, see e.g. \cite{dyckerhoff2025hypersheavesbases}.

\begin{proposition}
Let $X$ be a locally coherent space, represented by the frame $\mathrm{Ind}(D)$ for a lower bounded distributive lattice $D$. Then there is an equivalence
$$\mathrm{Sh}(X) \cong \mathrm{Sh}(D,fin)$$
where on the right-hand side $D$ is equipped with the finite covering topology.
\end{proposition}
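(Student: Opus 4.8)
The plan is to deduce this directly from the Comparison Lemma \ref{comparisonlemma}, which is the $\infty$-categorical upgrade of the reasoning used to prove the propositional statement in Example \ref{indfinitetoplogy}. By the conventions of Section \ref{generalitiestopoi}, $\mathrm{Sh}(X)$ means sheaves on the frame $F = \mathrm{Ind}(D)$ with respect to its canonical topology $\tau_{can}$, for which a family $\{U_i \leq U ~|~ i \in I\}$ is a covering precisely when $\bigvee_{i \in I} U_i = U$ in $F$. A frame has binary meets, and $\tau_{can}$ satisfies the three pretopology axioms, so $(F, \tau_{can})$ is a locally cartesian $0$-site.

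First I would take $P_0 = D \subset F$, via the canonical embedding $y : D \hookrightarrow \mathrm{Ind}(D)$ that identifies $D$ with the subposet $F^\omega$ of compact elements, and verify the two hypotheses of Lemma \ref{comparisonlemma}. For the first, every $U \in F = \mathrm{Ind}(D)$ is the (filtered) join of the elements of $D$ lying below it, so it is covered by objects of $D$. For the second, $D$ is closed under meets in $F$: the Yoneda embedding $D \to \mathrm{PSh}(D)$ preserves all limits, and $\mathrm{Ind}(D)$ is closed in $\mathrm{PSh}(D)$ under finite limits since filtered colimits commute with finite limits, so the binary meet of two elements of $D$ computed in $F$ is just their meet in $D$. (This is the same observation used in the characterization of locally coherent locales.) The Comparison Lemma therefore gives an equivalence $\mathrm{Sh}(F, \tau_{can}) \simeq \mathrm{Sh}(D, \tau_0)$, where $\tau_0$ is the restriction of $\tau_{can}$ to $D$: a family $\{d_i \leq d ~|~ i \in I\}$ is a $\tau_0$-covering iff $\bigvee_{i \in I} d_i = d$ holds in $\mathrm{Ind}(D)$.

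It remains to identify $\mathrm{Sh}(D, \tau_0)$ with $\mathrm{Sh}(D, fin)$. Every $fin$-covering is a $\tau_0$-covering, so $\mathrm{Sh}(D, \tau_0) \subseteq \mathrm{Sh}(D, fin)$. Conversely, suppose $\bigvee_{i \in I} d_i = d$ in $\mathrm{Ind}(D)$. Since $d \in D = F^\omega$ is a compact object of $\mathrm{Ind}(D)$, and the join $\bigvee_{i \in I} d_i$ is the filtered colimit of the finite partial joins $\bigvee_{j \in J} d_j$ over finite $J \subseteq I$, the identity of $d$ factors through some finite stage; that is, $\bigvee_{j \in J} d_j = d$ for some finite $J \subseteq I$. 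Hence every $\tau_0$-covering is refined by an $fin$-covering, and since the sheaf condition depends only on covering sieves, an $fin$-sheaf is automatically a $\tau_0$-sheaf. This gives $\mathrm{Sh}(D, \tau_0) = \mathrm{Sh}(D, fin)$ and completes the argument.

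I do not expect a serious obstacle here: the proof is an assembly of standard facts, with all the real work front-loaded into the Comparison Lemma. The only points needing genuine care are checking that $D$ is closed under meets inside $\mathrm{Ind}(D)$ — so that Lemma \ref{comparisonlemma} applies at all — and the final matching of the restricted topology $\tau_0$ with the finite covering topology, where compactness of the elements of $D$ in $\mathrm{Ind}(D)$ is exactly what lets one pass to finite subcoverings without changing the category of sheaves.
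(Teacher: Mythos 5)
Your proof is correct and follows the same route the paper takes: apply the Comparison Lemma to the inclusion $D \hookrightarrow \mathrm{Ind}(D)$ and identify the induced topology with the finite covering topology. The paper states this in one line; you supply the details it leaves implicit, in particular that $D$ is closed under meets inside $\mathrm{Ind}(D)$ and that compactness of the elements of $D$ is exactly what makes the restricted topology agree (at the level of covering sieves) with $fin$.
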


\begin{proof}
This follows immediately from Lemma \ref{comparisonlemma} by applying it to the inclusion $D \rightarrow \mathrm{Ind}(D)$. The induced topology on $D$ is simply the finite covering topology.
\end{proof}

We note that the sheaf condition for the finite covering topology reduces to the following two conditions. A presheaf $\mathcal{F} : D^{op} \rightarrow \mathrm{Spc}$ is a sheaf for the finite covering topology iff:
\begin{itemize}
\item $\mathcal{F}( 0 ) \simeq 1$
\item For all $U, V \in D$ the square
\[\begin{tikzcd}
	{\mathcal{F}(U \vee V)} & {\mathcal{F}(U)} \\
	{\mathcal{F}(V)} & {\mathcal{F}(U \wedge V)}
	\arrow[from=1-1, to=1-2]
	\arrow[from=1-1, to=2-1]
	\arrow[from=1-2, to=2-2]
	\arrow[from=2-1, to=2-2]
\end{tikzcd}\]
is a pullback.
\end{itemize}
In the case of coherent spaces, this description already appeared in \cite{luriehtt} as Theorem 7.3.5.2.

\begin{corollary} \label{compgen}
Let $X$ be a locally coherent space. Then $\mathrm{Sh}(X)$ is compactly generated.
\end{corollary}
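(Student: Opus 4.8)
The plan is to invoke the equivalence $\mathrm{Sh}(X)\simeq \mathrm{Sh}(D,fin)$ from the preceding proposition, where $D=\mathcal{K}^o(X)$ is the lower bounded distributive lattice of compact opens of $X$ and $fin$ is the finite covering topology, and then to exploit the fact that $fin$ is \emph{finitary}: the sheaf condition is a conjunction of finite limit conditions, so the sheaf subcategory is stable under filtered colimits inside presheaves. From this, compact generation follows by a standard localization argument.

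First I would record that the inclusion $i:\mathrm{Sh}(D,fin)\hookrightarrow \mathrm{PSh}(D)$ preserves filtered colimits. By the description of the sheaf condition recalled just above, a presheaf $\mathcal{F}$ is a $fin$-sheaf iff $\mathcal{F}(0)\simeq \ast$ and, for every pair $U,V\in D$, the square with vertices $\mathcal{F}(U\vee V),\mathcal{F}(U),\mathcal{F}(V),\mathcal{F}(U\wedge V)$ is a pullback; these are conditions expressed by finite limits in $\mathrm{Spc}$, which commute with filtered colimits, so a filtered colimit of $fin$-sheaves formed in $\mathrm{PSh}(D)$ is again a $fin$-sheaf. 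Consequently the sheafification functor $L$, left adjoint to $i$, preserves compact objects, since for compact $c\in\mathrm{PSh}(D)$ the functor $\mathrm{Map}(Lc,-)\simeq\mathrm{Map}(c,i(-))$ preserves filtered colimits. In fact one gets concrete compact generators for free: each representable $y_d=\mathrm{Map}_D(-,d)$ is already a $fin$-sheaf (the topology is subcanonical), because $y_d(0)=\ast$ as $0\le d$, and for any $U,V$ the comparison map $y_d(U\vee V)\to y_d(U)\times_{y_d(U\wedge V)}y_d(V)$ is an equivalence, all spaces in sight being either $\ast$ or $\emptyset$ and $U\vee V\le d$ holding iff $U\le d$ and $V\le d$. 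Since $y_d$ is compact in $\mathrm{PSh}(D)$ and $i$ preserves filtered colimits, $y_d$ remains compact in $\mathrm{Sh}(D,fin)$.

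It then remains to check that the $y_d$ generate. The $\infty$-category $\mathrm{Sh}(D,fin)$ is presentable, being a topological (in particular accessible, left exact) localization of the presheaf $\infty$-category $\mathrm{PSh}(D)$. Given any sheaf $\mathcal{F}$, write $i\mathcal{F}$ as a colimit of representables in $\mathrm{PSh}(D)$ and apply the colimit-preserving functor $L$; since $\mathcal{F}\simeq L i\mathcal{F}$ and $L y_d\simeq y_d$, this exhibits $\mathcal{F}$ as a colimit of representables in $\mathrm{Sh}(D,fin)$. Hence $\{y_d:d\in D\}$ is a small set of compact generators, so $\mathrm{Sh}(X)\simeq\mathrm{Sh}(D,fin)$ is compactly generated. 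I do not anticipate a real obstacle; the only point that needs care is bookkeeping about where (co)limits are computed — filtered colimits in $\mathrm{Sh}(D,fin)$ agree with those in $\mathrm{PSh}(D)$, and this is precisely what the finitariness of the covering topology provides.
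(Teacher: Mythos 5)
Your proof is correct and follows essentially the same route as the paper: pass to $\mathrm{Sh}(D,fin)$, observe that the inclusion into presheaves preserves filtered colimits because the $fin$-sheaf condition is a conjunction of finite-limit conditions, deduce that sheafification preserves compacts, and conclude compact generation from the compact generators of the presheaf category. The only addition is your explicit check that the finite covering topology is subcanonical (so the $y_d$ are themselves sheaves); the paper phrases the generators as sheafifications of representables without noting this, but the substance of the argument is the same.
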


\begin{proof}
By the previous proposition, we have $\mathrm{Sh}(X) \cong \mathrm{Sh}(D,fin)$. Consider the adjunction
\[\begin{tikzcd}
	{\mathrm{Sh}(D,fin)} & {\mathrm{PSh}(D)}
	\arrow[""{name=0, anchor=center, inner sep=0}, curve={height=12pt}, hook, from=1-1, to=1-2]
	\arrow[""{name=1, anchor=center, inner sep=0}, "(-)^{sh}"', curve={height=12pt}, from=1-2, to=1-1]
	\arrow["\dashv"{anchor=center, rotate=-90}, draw=none, from=1, to=0]
\end{tikzcd}\]
Since $\mathrm{Sh}(D,fin)$ is generated under colimits by the images of representables, it suffices to argue that $(-)^{sh}$ preserves compact object, which would follow from the claim that the inclusion of sheaves into presheaves preserves filtered colimits, \cite[Lemma 5.5.1.4.]{luriehtt}. This claim is true as the sheaf condition for the finite covering topology is given by a collection of finite limit conditions, which are stable under filtered colimits.
\end{proof}

\begin{lemma} \label{compmor}
Let $f : D \rightarrow D'$ be a homomorphism of lower bounded distributive lattices. Then the induced left adjoint functor
$$f^* : \mathrm{Sh}(D,fin) \rightarrow \mathrm{Sh}(D',fin)$$
preserves compact objects.
\end{lemma}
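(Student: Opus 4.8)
The plan is to reduce the statement to a concrete identification of the compact objects on both sides. First I would note that, by the description of the finite covering topology given above, the $\infty$-categories $\mathrm{Sh}(D,fin)$ and $\mathrm{Sh}(D',fin)$ are compactly generated (Corollary \ref{compgen}), and more precisely that a set of compact generators is given by the (sheafifications of the) representable presheaves $y_U$ for $U \in D$, respectively $y_V$ for $V \in D'$. Indeed, each $y_U$ is a compact object of $\mathrm{PSh}(D)$, and since the inclusion $\mathrm{Sh}(D,fin) \hookrightarrow \mathrm{PSh}(D)$ preserves filtered colimits (the argument in the proof of Corollary \ref{compgen}), its left adjoint $(-)^{sh}$ preserves compact objects, so $(y_U)^{sh}$ is compact in $\mathrm{Sh}(D,fin)$. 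Hence the subcategory of compact objects $\mathrm{Sh}(D,fin)^\omega$ is the idempotent completion of the full subcategory spanned by finite colimits of the $(y_U)^{sh}$.

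The key step is then to observe that $f^*$ carries this generating set to the generating set: since $f : D \to D'$ is a map of $0$-sites (it preserves finite joins, hence covers, and preserves finite meets, so it is continuous and cocontinuous between the sites), the induced geometric morphism has $f^* \circ (-)^{sh}_D \simeq (-)^{sh}_{D'} \circ f_{\mathrm{PSh}}^*$, where $f_{\mathrm{PSh}}^* : \mathrm{PSh}(D) \to \mathrm{PSh}(D')$ is left Kan extension along $f$. But left Kan extension sends the representable $y_U$ to the representable $y_{f(U)}$. Therefore $f^*\big( (y_U)^{sh} \big) \simeq \big( y_{f(U)} \big)^{sh}$, which is one of our chosen compact generators of $\mathrm{Sh}(D',fin)$, in particular compact. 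Since $f^*$ is a left adjoint it preserves all colimits, and of course it preserves finite colimits and retracts, so it sends finite colimits of the $(y_U)^{sh}$ to finite colimits of the $(y_{f(U)})^{sh}$ and retracts thereof — i.e.\ it sends $\mathrm{Sh}(D,fin)^\omega$ into $\mathrm{Sh}(D',fin)^\omega$.

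The main obstacle I anticipate is purely bookkeeping: making precise that the functor $f^*$ appearing in the statement (the pullback along the morphism of sites) is indeed the composite $(-)^{sh}_{D'} \circ f_{\mathrm{PSh}}^* \circ \iota$ restricted to sheaves, i.e.\ that the square relating presheaf left Kan extension, sheafification, and $f^*$ commutes. This is standard — it is the functoriality of the sheafification adjunction in the site — but it requires checking that $f$ really is a morphism of sites in the relevant sense, which here amounts to exactly the two facts that $f$ preserves finite joins (so covers go to covers) and preserves finite meets (so the base-change condition needed to get a geometric morphism is satisfied, matching the hypothesis on $P_0$ in Lemma \ref{comparisonlemma}). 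Once that compatibility is in hand, the preservation of compact objects is immediate from "$f^*$ takes generators to generators and is exact and cocontinuous," with no further computation needed.
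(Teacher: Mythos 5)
Your proof is correct and follows essentially the same approach as the paper: the paper's proof simply observes that $f^*$ sends representables to representables by construction, that these are compact generators, and that $f^*$ preserves colimits and retracts. Your writeup unpacks the same argument in more detail (explicitly tracing through the commutation of sheafification with left Kan extension), but the underlying idea is identical.
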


\begin{proof}
This is immediate as $f^*$ sends representables to representables by construction. These are compact generators, and $f^*$ preserves colimits (in particular finite colimits) and retracts. 
\end{proof}

We will make use of some properties of the $\infty$-category of $\infty$-topoi. We refer to $\mathrm{RTop}$ as the (large) $\infty$-category of $\infty$-topoi and right adjoints $f_*$ of geometric morphisms between them. The following is a special case of Lurie \cite{luriehtt}, Proposition 6.4.5.7. and Definition 6.4.5.8.
\begin{theorem} \label{adjunctionlocalestopoi}
There is an adjunction
\[\begin{tikzcd}
	{\mathrm{Loc}} & {\mathrm{RTop}}
	\arrow[""{name=0, anchor=center, inner sep=0}, "{\mathrm{Sh}(-)}"', curve={height=12pt}, hook, from=1-1, to=1-2]
	\arrow[""{name=1, anchor=center, inner sep=0}, "{\mathrm{Sub}(1)}"', curve={height=12pt}, from=1-2, to=1-1]
	\arrow["\dashv"{anchor=center, rotate=-90}, draw=none, from=1, to=0]
\end{tikzcd}\]
\end{theorem}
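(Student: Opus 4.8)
The plan is to build the two functors by hand and then compute one mapping space, reducing the statement to the $\infty$-categorical Diaconescu theorem; since this is carried out in \cite{luriehtt}, Section 6.4.5, I would only lay out the skeleton of the argument.

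First I would describe the functor $\mathrm{Sh}(-) : \mathrm{Loc} \to \mathrm{RTop}$. For a locale $L$ with frame of opens $\Omega(L)$, equipped with its canonical topology, one forms the $\infty$-topos $\mathrm{Sh}(L)$; a locale map $f : L \to L'$, i.e.\ a frame homomorphism $f^* : \Omega(L') \to \Omega(L)$, induces a geometric morphism $\mathrm{Sh}(L) \to \mathrm{Sh}(L')$ whose inverse image is the sheafification of left Kan extension along $f^*$, left exact because $f^*$ preserves finite meets. In the other direction, for an $\infty$-topos $\mathcal{X}$ the poset $\mathrm{Sub}_{\mathcal{X}}(1)$ of subobjects of the terminal object is a frame --- finite meets are pullbacks, arbitrary joins are images of coproducts, and the infinite distributive law holds since colimits in a topos are universal --- and a geometric morphism $f : \mathcal{X} \to \mathcal{Y}$ has left exact, colimit-preserving inverse image $f^*$, which restricts to a frame homomorphism $\mathrm{Sub}_{\mathcal{Y}}(1) \to \mathrm{Sub}_{\mathcal{X}}(1)$, i.e.\ a locale map $\mathrm{Sub}(1)(\mathcal{X}) \to \mathrm{Sub}(1)(\mathcal{Y})$. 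This is the functor $\mathrm{Sub}(1) : \mathrm{RTop} \to \mathrm{Loc}$.

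Next I would identify $\mathrm{Map}_{\mathrm{RTop}}(\mathrm{Sh}(L), \mathcal{X})$ with $\mathrm{Map}_{\mathrm{Loc}}(L, \mathrm{Sub}(1)(\mathcal{X}))$, naturally in $L$ and $\mathcal{X}$. In one direction, a morphism $\mathrm{Sh}(L) \to \mathcal{X}$ in $\mathrm{RTop}$ is a geometric morphism with inverse image $g^* : \mathcal{X} \to \mathrm{Sh}(L)$, and restricting $g^*$ to subobjects of the terminal object yields a frame homomorphism $\mathrm{Sub}_{\mathcal{X}}(1) \to \mathrm{Sub}_{\mathrm{Sh}(L)}(1) \cong \Omega(L)$. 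For the converse I would invoke the universal property of the sheaf $\infty$-topos on a $0$-site (the $\infty$-categorical Diaconescu theorem): geometric morphisms $\mathcal{X} \to \mathrm{Sh}(L)$ correspond to flat functors $F : \Omega(L) \to \mathcal{X}$ carrying covering families to effective epimorphisms. Since $\Omega(L)$ is a meet-semilattice with top, flatness here is just preservation of finite meets and the top element, which forces $F$ to land in $\mathrm{Sub}_{\mathcal{X}}(1)$ --- from $U \wedge U = U$ the diagonal of $F(U)$ is an equivalence --- and on subterminal objects the covering condition says precisely that $F$ sends covering joins to joins. As every join is a covering in the canonical topology on a frame, such $F$ are exactly the frame homomorphisms $\Omega(L) \to \mathrm{Sub}_{\mathcal{X}}(1)$. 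This produces the adjunction, and taking $\mathcal{X} = \mathrm{Sh}(L)$ the unit is the identification $\Omega(L) \cong \mathrm{Sub}_{\mathrm{Sh}(L)}(1)$ (a subobject of the terminal sheaf is exactly the representable of an open), so $\mathrm{Sh}(-)$ is fully faithful.

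The step I expect to be the main obstacle is not any of the pointwise verifications above but the coherence of $\mathrm{Sub}(1) : \mathrm{RTop} \to \mathrm{Loc}$ as a functor of $\infty$-categories --- that ``restrict the inverse image to subterminal objects'' organizes into a genuine functor rather than an assignment on objects and $1$-morphisms --- together with a precise formulation of the $\infty$-Diaconescu theorem for $0$-sites. Both are exactly what \cite{luriehtt}, Proposition 6.4.5.7 and the surrounding material provide, which is why in the paper I would content myself with citing it.
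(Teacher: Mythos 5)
The paper does not prove this statement at all; it is simply cited as a special case of \cite{luriehtt}, Proposition 6.4.5.7 and Definition 6.4.5.8. You likewise close by saying you would content yourself with citing that reference, so in spirit your approach matches the paper's. However, your unfolding of the HTT argument has a direction error that must be fixed.

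The paper's theorem asserts $\mathrm{Sub}(1) \dashv \mathrm{Sh}(-)$ in $\mathrm{RTop}$, i.e.\ $\mathrm{Sh}(-) : \mathrm{Loc} \to \mathrm{RTop}$ is the \emph{right} adjoint, with adjunction formula
$\mathrm{Map}_{\mathrm{RTop}}(\mathcal{X}, \mathrm{Sh}(L)) \simeq \mathrm{Map}_{\mathrm{Loc}}(\mathrm{Sub}(1)(\mathcal{X}), L)$.
This is what the paper actually needs: it is used to deduce that $\mathrm{Sh}(-) : \mathrm{Frm} \simeq \mathrm{Loc}^{op} \to \mathrm{RTop}^{op} \simeq \mathrm{LTop}$ is a \emph{left} adjoint and hence preserves colimits, a key step in Theorem \ref{sheavesfiltered}. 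You instead claim to identify $\mathrm{Map}_{\mathrm{RTop}}(\mathrm{Sh}(L), \mathcal{X})$ with $\mathrm{Map}_{\mathrm{Loc}}(L, \mathrm{Sub}(1)(\mathcal{X}))$, which would give the opposite adjunction $\mathrm{Sh}(-) \dashv \mathrm{Sub}(1)$. That adjunction is false in general: take $L = \mathrm{pt}$ and $\mathcal{X} = \mathrm{Fun}(BG,\mathrm{Spc})$ for a nontrivial group $G$; then $\mathrm{Sub}(1)(\mathcal{X}) \cong \Omega(\mathrm{pt})$, so the right-hand side is a point, while $\mathrm{Map}_{\mathrm{RTop}}(\mathrm{Spc}, \mathcal{X})$ is the space of points of $\mathcal{X}$, which is not contractible.

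The underlying confusion is visible inside your argument: your ``one direction'' paragraph restricts the inverse image of a morphism $\mathrm{Sh}(L) \to \mathcal{X}$ in $\mathrm{RTop}$ (i.e.\ a left exact left adjoint $g^* : \mathcal{X} \to \mathrm{Sh}(L)$) to subterminals, whereas your ``converse'' paragraph invokes Diaconescu for geometric morphisms $\mathcal{X} \to \mathrm{Sh}(L)$ --- these have inverse image going the other way, $\mathrm{Sh}(L) \to \mathcal{X}$. These are mapping spaces in opposite directions, so the two paragraphs do not fit together. The Diaconescu step is in fact the correct ingredient, and it yields
$\mathrm{Map}_{\mathrm{RTop}}(\mathcal{X}, \mathrm{Sh}(L)) \simeq \{\text{frame homs } \Omega(L) \to \mathrm{Sub}_{\mathcal{X}}(1)\} \simeq \mathrm{Map}_{\mathrm{Loc}}(\mathrm{Sub}(1)(\mathcal{X}), L)$,
which is exactly the adjunction $\mathrm{Sub}(1) \dashv \mathrm{Sh}(-)$. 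The observation that flatness of a poset-indexed functor forces it to land in subterminals (via idempotence $U \wedge U = U$) is good and is precisely what is needed. Finally, the identification $\Omega(L) \cong \mathrm{Sub}_{\mathrm{Sh}(L)}(1)$ is the \emph{counit}, not the unit, of the correct adjunction, and its being an equivalence is what makes the right adjoint $\mathrm{Sh}(-)$ fully faithful. So: replace every occurrence of $\mathrm{Map}_{\mathrm{RTop}}(\mathrm{Sh}(L), \mathcal{X})$ by $\mathrm{Map}_{\mathrm{RTop}}(\mathcal{X}, \mathrm{Sh}(L))$, swap unit and counit, and the skeleton you wrote becomes a faithful outline of Lurie's argument.
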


We will also use the following statement about cofiltered limits in the $\infty$-category $\mathrm{RTop}$. Here, $\widehat{\mathrm{Cat}_\infty}$ refers to the (very large) $\infty$-category of large $\infty$-categories.

\begin{theorem}[See \cite{luriehtt}, Theorem 6.3.3.1.] \label{cofilteredlimitstopoi}
The category $\mathrm{RTop}$ of $\infty$-topoi and right adjoints of geometric morphisms between them has all cofiltered limits, and the forget functor $\mathrm{RTop} \rightarrow \widehat{\mathrm{Cat}_\infty}$ preserves them.
\end{theorem}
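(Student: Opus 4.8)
The plan is to build the limit explicitly in $\widehat{\mathrm{Cat}_\infty}$ and then check that the result is again an $\infty$-topos with the right universal property in $\mathrm{RTop}$. So let $\alpha\mapsto\mathcal{X}_\alpha$ be a cofiltered diagram in $\mathrm{RTop}$, with transition maps the right adjoints $\phi_{\beta\alpha}\colon\mathcal{X}_\alpha\to\mathcal{X}_\beta$ of the geometric morphisms. Each $\phi_{\beta\alpha}$ is accessible and preserves all small limits, so the limit $\mathcal{X}\defeq\lim_\alpha\mathcal{X}_\alpha$ formed in $\widehat{\mathrm{Cat}_\infty}$ is simultaneously the limit in $\mathrm{Pr}^R$: in particular $\mathcal{X}$ is presentable, its objects are the compatible families $(M_\alpha)$ with $\phi_{\beta\alpha}M_\alpha\simeq M_\beta$, small limits in $\mathcal{X}$ are computed componentwise, and each projection $\pi_\alpha\colon\mathcal{X}\to\mathcal{X}_\alpha$ is accessible and limit-preserving, hence has a left adjoint $\pi_\alpha^{*}$. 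It then remains to prove that $\mathcal{X}$ is an $\infty$-topos, that each $\pi_\alpha$ underlies a geometric morphism, and that the cone $(\pi_\alpha)_\alpha$ is already a limit cone in $\mathrm{RTop}$; the last assertion is exactly that the forgetful functor creates the limit.

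The heart of the matter is that $\mathcal{X}$ is an $\infty$-topos, which I would establish using the descent characterization: a presentable $\infty$-category $\mathcal{E}$ is an $\infty$-topos if and only if the functor $\mathcal{E}^{op}\to\widehat{\mathrm{Cat}_\infty}$, $X\mapsto\mathcal{E}_{/X}$ (with pullback as transition maps), preserves limits. For $X=(X_\alpha)\in\mathcal{X}$ there is a natural equivalence $\mathcal{X}_{/X}\simeq\lim_\alpha(\mathcal{X}_\alpha)_{/X_\alpha}$ (a morphism into $X$ is a compatible family of morphisms into the $X_\alpha$), so the slice functor of $\mathcal{X}$ is the pointwise limit over $\alpha$ of the composites $S_\alpha\circ\pi_\alpha^{op}$, where $S_\alpha$ is the slice functor of $\mathcal{X}_\alpha$. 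Each $S_\alpha$ preserves limits because $\mathcal{X}_\alpha$ satisfies descent, so it would be enough to know that each $\pi_\alpha$ preserves colimits. \emph{This is precisely the main obstacle}: the transition functors $\phi_{\beta\alpha}$, being merely right adjoints of geometric morphisms, need not preserve colimits (global-sections functors already fail to), so colimits in $\mathcal{X}$ are not componentwise and the $\pi_\alpha$ are not obviously colimit-preserving.

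To get around this I would pass to a presentation by sites. Using that every $\infty$-topos is an accessible left-exact localization of a presheaf $\infty$-category, and that a diagram of geometric morphisms can, after a suitable enlargement of the chosen sites, be arranged to be induced by a diagram of morphisms of sites, realize the diagram as coming from a filtered diagram of small $\infty$-sites $(\mathcal{C}_\alpha,\tau_\alpha)$ with $\mathrm{Sh}(\mathcal{C}_\alpha,\tau_\alpha)\simeq\mathcal{X}_\alpha$. Set $\mathcal{C}\defeq\colim_\alpha\mathcal{C}_\alpha$ with the induced topology $\tau$. Then $\mathrm{PSh}(\mathcal{C})\simeq\lim_\alpha\mathrm{PSh}(\mathcal{C}_\alpha)$ tautologically (functors out of a colimit of $\infty$-categories form a limit of functor $\infty$-categories), and a presheaf on $\mathcal{C}$ is a sheaf exactly when each of its restrictions to the $\mathcal{C}_\alpha$ is one, because every $\tau$-covering sieve is generated by a $\tau_\alpha$-covering sieve at some stage $\alpha$; hence $\mathrm{Sh}(\mathcal{C},\tau)\simeq\lim_\alpha\mathrm{Sh}(\mathcal{C}_\alpha,\tau_\alpha)\simeq\mathcal{X}$. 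In this model the $\pi_\alpha$ become restriction functors along $\mathcal{C}_\alpha\to\mathcal{C}$, which have both adjoints and therefore preserve colimits, so the descent computation above goes through: $\mathcal{X}$ is an $\infty$-topos, and each $\pi_\alpha$ is the pushforward of a geometric morphism.

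It remains to upgrade the limit cone to one in $\mathrm{RTop}$. Given an $\infty$-topos $\mathcal{Y}$ and a compatible family of geometric morphisms $\mathcal{Y}\to\mathcal{X}_\alpha$, the $\mathrm{Pr}^R$ universal property yields a unique accessible limit-preserving $g_*\colon\mathcal{Y}\to\mathcal{X}$ over the given right adjoints; its left adjoint $g^*\colon\mathcal{X}\to\mathcal{Y}$ satisfies $g^*\pi_\alpha^{*}\simeq(\,\text{the given left-exact functors }\mathcal{X}_\alpha\to\mathcal{Y}\,)$, and since finite limits in $\mathcal{X}$ are componentwise while $\mathcal{X}$ is generated under colimits by the images of the $\pi_\alpha^{*}$, it follows that $g^*$ is left exact. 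Thus $g_*$ lies in $\mathrm{RTop}$ and is the unique such lift, so the forgetful functor $\mathrm{RTop}\to\widehat{\mathrm{Cat}_\infty}$ creates cofiltered limits, which gives both existence and the asserted preservation. The only genuinely non-formal inputs are the compatible site presentation of the diagram and, through it, the fact that colimits in the limit $\infty$-topos can be controlled componentwise after all.
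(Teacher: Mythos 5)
The paper does not prove this statement: it is cited directly from Lurie's HTT (Theorem 6.3.3.1) and used as a black box, so there is no in-paper argument to compare against. Evaluating your proposal on its own terms, you correctly identify the obstacle in the naive approach — the projections $\pi_\alpha$ of a $\mathrm{Pr}^R$-limit have no reason to preserve colimits — but your proposed fix does not remove it. The claim that in the site model the sheaf restriction $\pi_\alpha : \mathrm{Sh}(\mathcal{C},\tau) \to \mathrm{Sh}(\mathcal{C}_\alpha,\tau_\alpha)$ ``has both adjoints and therefore preserves colimits'' conflates the presheaf-level restriction (which does have both Kan-extension adjoints) with the sheaf-level one. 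The latter is precisely the direct image of the geometric morphism induced by $u_\alpha : \mathcal{C}_\alpha \to \mathcal{C}$; it has a further right adjoint only when $u_\alpha$ is cocontinuous, which essentially never holds in a filtered colimit of sites, because a covering of an object of $\mathcal{C}_\alpha$ typically involves objects that only appear at a later stage $\beta$ and hence pulls back to a non-covering sieve on $\mathcal{C}_\alpha$. Concretely, let $\mathcal{C}_n$ be the powerset lattice of $2^n$ with the finite-cover topology and transitions given by preimage along $2^{n+1}\to 2^n$; then $\mathrm{Sh}(\colim_n\mathcal{C}_n,\tau)\simeq\mathrm{Sh}(2^{\mathbb{N}})$ and $\pi_0$ is global sections over the Cantor set, which fails to preserve coproducts. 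So the descent verification does not go through, and your closing remark that ``colimits in the limit $\infty$-topos can be controlled componentwise after all'' is false.

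Independently of that, the descent detour is superfluous once you have a site presentation: if you knew $\mathcal{X}\simeq\mathrm{Sh}(\mathcal{C},\tau)$ for a small $\infty$-site, then $\mathcal{X}$ is an $\infty$-topos by the site/localization characterization, and each sheaf restriction $\pi_\alpha$ is a priori the direct image of a geometric morphism, with no need to verify colimit preservation. The genuine content — which you assert in a single clause — is that the cofiltered diagram in $\mathrm{RTop}$ can be presented by a compatible filtered diagram of small sites at all. This requires choosing one regular cardinal $\kappa$ for which every $\mathcal{X}_\alpha$ is $\kappa$-compactly generated and every $f_{\alpha\beta,*}$ (hence every $f_{\alpha\beta}^*$) is $\kappa$-accessible, so that the $f_{\alpha\beta}^*$ preserve $\kappa$-compacts and give morphisms of sites $\mathcal{X}_\beta^\kappa\to\mathcal{X}_\alpha^\kappa$; smallness of the index category is what lets such a $\kappa$ exist, and this cardinal bookkeeping is exactly where the theorem earns its keep. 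After that, one must still verify $\mathrm{Sh}(\colim_\alpha\mathcal{C}_\alpha,\tau)\simeq\lim_\alpha\mathrm{Sh}(\mathcal{C}_\alpha,\tau_\alpha)$ with projections matching the given $(f_\alpha)_*$, and your final argument that $g^*$ is left exact from ``finite limits are componentwise and $\mathcal{X}$ is generated by $\pi_\alpha^*$'' needs more care, since that class of generators is not closed under pullback. As written, the proposal identifies the right strategic landmarks but leaves the two load-bearing claims — the compatible site presentation and the colimit behaviour of the projections — either unproved or false.
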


\subsection{Alexandroff spaces and Birkhoff's theorem} \label{alexandroff}

Let $P$ be a poset. We can equip $P$ with the Alexandroff topology, where the open sets are given by lower closed subsets of $P$. Let us write $P_{Alex}$ for the resulting topological space. The corresponding frame of opens is equivalently given as $\mathrm{Fun}(P^{op},\mathbf{2})$ and the natural monotone map that associates to $p \in P$ the set $p\hspace{-0.5ex}\downarrow ~= \{ q \in P ~|~ q \leq p \}$ corresponds to the $0$-categorical Yoneda embedding
$$y : P \rightarrow  \mathrm{Fun}(P^{op},\mathbf{2}).$$
In the following, given an $\infty$-topos $\mathcal{X}$, we denote its hypercompletion by $\mathcal{X}^{hyp}$, see \cite[Section 6.5.2]{luriehtt}.

\begin{proposition}[\cite{Aoki_2023}, Example A.11.]
Let $P$ be a poset. Then there is a natural equivalence of $\infty$-categories
$$ \mathrm{PSh}(P) \simeq \mathrm{Sh}( P_{Alex} )^{hyp}$$
given by right Kan extending the composition
$$ P \xrightarrow{y} \mathrm{Fun}(P^{op},\mathbf{2}) \rightarrow \mathrm{Sh}( P_{Alex} ).$$
\end{proposition}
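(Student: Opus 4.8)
The plan is to realise the equivalence as a geometric embedding and then identify it with the hypercompletion. Write $\mathcal{O}(P_{Alex})=\mathrm{Fun}(P^{op},\mathbf{2})$ for the frame of down-sets and $p\!\downarrow$ for the principal down-set of $p\in P$; this is the smallest open containing $p$, and it is \emph{supercompact}: since $p\in p\!\downarrow$, any covering sieve of $p\!\downarrow$ must contain a member containing $p$, hence must contain $p\!\downarrow$ itself, hence is the maximal sieve. Consequently sheafification does not alter the value of a presheaf at $p\!\downarrow$, so the restriction functor $R\colon\mathrm{Sh}(P_{Alex})\to\mathrm{PSh}(P)$, $R(\mathcal{G})(p)=\mathcal{G}(p\!\downarrow)$, preserves colimits (it preserves limits automatically, being an evaluation). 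Its right adjoint is the pointwise right Kan extension along $y^{op}\colon P^{op}\hookrightarrow\mathcal{O}(P_{Alex})^{op}$, namely $E(F)(U)=\lim_{p\in U}F(p)$; since $E$ sends the representable presheaf of $p$ to the representable sheaf $y_{p\!\downarrow}$, this is exactly the functor obtained by right Kan extending the composite $P\xrightarrow{y}\mathrm{Fun}(P^{op},\mathbf{2})\to\mathrm{Sh}(P_{Alex})$.

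First I would check that $E$ lands in sheaves: writing $R$ also for the restriction $\mathrm{PSh}(\mathcal{O}(P_{Alex}))\to\mathrm{PSh}(P)$ (of which $E$ is the right adjoint), it suffices that $R$ send every covering-sieve inclusion $S\hookrightarrow y_{U}$ to an equivalence. One computes that $R(y_{U})(p)$ is $\ast$ if $p\in U$ and $\emptyset$ otherwise, while $R(S)(p)=\lvert N\{V\in S : p\in V\}\rvert$; for $p\in U$ the indexing poset is cofiltered (a sieve is closed under intersections) and hence contractible, so $R(S)\to R(y_{U})$ is an equivalence, and thus $E(F)$ is local for covering sieves, i.e. a sheaf. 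Now $R\colon\mathrm{Sh}(P_{Alex})\to\mathrm{PSh}(P)$ is a colimit-preserving left-exact functor of $\infty$-topoi, hence the inverse image of a geometric morphism whose direct image is $E$; the counit $RE\to\mathrm{id}$ is an equivalence since $E(F)(p\!\downarrow)=\lim_{q\le p}F(q)=F(p)$, so $E$ is fully faithful and exhibits $\mathrm{PSh}(P)$ as a left-exact localization of $\mathrm{Sh}(P_{Alex})$ with reflector $R$.

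It then remains to show that this localization inverts exactly the $\infty$-connective morphisms, so that it coincides with the hypercompletion. For one inclusion: $R$ is colimit-preserving and left exact, hence preserves $n$-connectivity for all $n$ and so $\infty$-connectivity, and $\mathrm{PSh}(P)$ is hypercomplete (connectivity and equivalences are detected objectwise and $\mathrm{Spc}$ is hypercomplete), so $R$ inverts all $\infty$-connective maps. Conversely, if $Rg$ is an equivalence then, since $R$ commutes with $0$-truncation (being an inverse image), $R(\pi_{0}g)$ is identified with $\pi_{0}(Rg)$, an equivalence of $0$-truncated objects; granting the base case that $R$ is conservative on $0$-truncated objects, $\pi_{0}g$ is an equivalence, so $g$ is an effective epimorphism, and applying this to $\Delta_{g}$ (whose $R$-image is $\Delta_{Rg}$, again an equivalence) and iterating shows $g$ is $\infty$-connective. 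Hence $R$ is, up to equivalence, the hypercompletion, and $E$ furnishes the desired equivalence $\mathrm{PSh}(P)\simeq\mathrm{Sh}(P_{Alex})^{hyp}$.

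The one genuinely combinatorial input — and the step I expect to be the main obstacle — is the base case: a sheaf of sets $\mathcal{H}$ on $P_{Alex}$ satisfies $\mathcal{H}(U)=\lim_{p\in U}\mathcal{H}(p\!\downarrow)$ for every open $U$. The sheaf condition for the cover $\{p\!\downarrow\}_{p\in U}$ identifies $\mathcal{H}(U)$ with the families $(s_{p})_{p\in U}$ agreeing on the pairwise intersections $p\!\downarrow\cap q\!\downarrow$, and one checks — using $p\!\downarrow\cap q\!\downarrow=q\!\downarrow$ when $q\le p$, together with separatedness of $\mathcal{H}$ on the cover $\{r\!\downarrow : r\le p,\ r\le p'\}$ of $p\!\downarrow\cap p'\!\downarrow$ — that these are precisely the families compatible along the poset $U$. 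This is the only place the order structure of $P$ enters; everything else is a formal interplay between supercompactness of the minimal opens, geometric morphisms, and the description of the hypercompletion as the localization at the $\infty$-connective maps, and in particular it is this last step that explains why the hypercompletion genuinely appears in the statement.
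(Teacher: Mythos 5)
The paper does not prove this proposition: it is imported wholesale from \cite{Aoki_2023} (Example~A.11), and the only place the paper argues anything in this vicinity is Corollary~\ref{finiteposet}, which for \emph{finite} $P$ deduces hypercompleteness of $\mathrm{Sh}(P_{Alex})$ from homotopy-dimension bounds (\cite{luriehtt}, 7.2.1.12 and 7.2.3.6) rather than from the general equivalence you are proving. So I am assessing your argument on its own.

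Your proof is correct, and it is a reasonable self-contained route to the statement. The skeleton is: exhibit $R\colon\mathrm{Sh}(P_{Alex})\to\mathrm{PSh}(P)$, $\mathcal{G}\mapsto\mathcal{G}(-\!\downarrow)$, as an accessible left-exact localization with fully faithful right adjoint $E=j_*$, then identify this localization with the hypercompletion by showing $R$ inverts precisely the $\infty$-connective maps. The individual steps check out: supercompactness of the $p\!\downarrow$ is what makes $R(S)\to R(y_U)$ an equivalence for every covering sieve, so $E$ lands in sheaves and $R$ preserves colimits; the counit $RE\to\mathrm{id}$ is an equivalence because $p$ is terminal in $p\!\downarrow$; one inclusion of classes follows because $R$ preserves connectivity and $\mathrm{PSh}(P)$ is hypercomplete; the converse inclusion follows by inducting through the diagonals $\Delta_g$, using that inverse-image functors commute with truncation, once one knows $R$ is conservative on $0$-truncated objects. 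Your combinatorial base case (that a set-valued sheaf on $P_{Alex}$ is determined by its restriction to the $p\!\downarrow$, via separatedness on the cover $\{r\!\downarrow:r\le p,\ r\le q\}$ of $p\!\downarrow\cap q\!\downarrow$) is the right lemma and is correctly argued.

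Two cosmetic caveats, neither of which affects correctness. First, the opening claim that "sheafification does not alter the value at $p\!\downarrow$" is true, but you should treat it as a heuristic: the actual proof that $R$ is a left adjoint is the later verification that $E$ lands in sheaves, and that is the argument to lean on. Second, identifying $E$ with "the functor obtained by right Kan extending the composite $P\to\mathrm{Sh}(P_{Alex})$" solely because both send $y_p$ to $y_{p\!\downarrow}$ is not a justification: agreement on representables does not determine a functor out of $\mathrm{PSh}(P)$, and the pointwise right Kan extension of that composite along the Yoneda embedding is in fact a different functor (for $P$ a two-element discrete poset it is already constant). What you have actually produced is $j_*$, the right Kan extension along $j^{\mathrm{op}}\colon P^{\mathrm{op}}\hookrightarrow\mathcal{O}(P_{Alex})^{\mathrm{op}}$, which — read as a functor into $\mathrm{Sh}(P_{Alex})^{hyp}$ — is equivalently the \emph{colimit-preserving} extension of $p\mapsto y_{p\!\downarrow}$. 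This is almost certainly what the (somewhat loosely phrased) statement intends, but the phrase "right Kan extension" should be taken to mean $j_*$ rather than a Kan extension along the Yoneda embedding of $P$.
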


We need the following special case if the poset $P$ is \emph{finite}.

\begin{corollary} \label{finiteposet}
Let $P$ be a \emph{finite} poset. Then $\mathrm{Sh}( P_{Alex} )$ is hypercomplete. In particular, we have a natural equivalence
$$ \mathrm{PSh}(P) \simeq \mathrm{Sh}( P_{Alex} ).$$
\end{corollary}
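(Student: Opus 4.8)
The plan is to deduce the corollary from the preceding proposition by showing that for a finite poset $P$ the $\infty$-topos $\mathrm{Sh}(P_{Alex})$ is already hypercomplete, so that the hypercompletion appearing in the proposition is the identity. Recall that an $\infty$-topos is hypercomplete precisely when every $\infty$-connective morphism is an equivalence, equivalently when Whitehead's theorem holds: a morphism that induces isomorphisms on all homotopy sheaves is an equivalence. The key structural fact I would use is that $\mathrm{Sh}(P_{Alex})$ has enough points, and in fact only finitely many "relevant" points — for a finite poset the points of the Alexandroff locale are exactly the elements $p \in P$, with stalk functors given by evaluation-type functors. So I would first record that the family of stalk functors $\{x_p^* : \mathrm{Sh}(P_{Alex}) \to \mathrm{Spc}\}_{p \in P}$ is jointly conservative.

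The main step is then the homotopy-dimension / Postnikov-completeness argument. The cleanest route: an $\infty$-topos of the form $\mathrm{Sh}(X)$ where $X$ is a space with a finite basis closed under intersections (here the basis is $\{p\hspace{-0.5ex}\downarrow\}_{p \in P}$, which is finite since $P$ is finite) has finite covering dimension — indeed, one can cite that $\mathrm{Sh}(P_{Alex})$ is locally of homotopy dimension $\leq n$ for $n = $ length of the longest chain in $P$, or simply that it is of finite homotopy dimension. By \cite[Corollary 7.2.1.12]{luriehtt}, an $\infty$-topos that is locally of finite homotopy dimension is hypercomplete. Alternatively, and perhaps more self-containedly, since $\mathrm{PSh}(P) = \mathrm{Fun}(P^{op},\mathrm{Spc})$ is visibly hypercomplete (limits and colimits, hence homotopy sheaves, are computed pointwise on the finite poset $P$, and a pointwise-equivalence is an equivalence), and the proposition identifies $\mathrm{PSh}(P)$ with $\mathrm{Sh}(P_{Alex})^{hyp}$, it suffices to show the localization $\mathrm{Sh}(P_{Alex}) \to \mathrm{Sh}(P_{Alex})^{hyp}$ is an equivalence; this follows once we know $\mathrm{Sh}(P_{Alex})$ is hypercomplete, closing the loop. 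I would present the finite-homotopy-dimension argument as the primary one and mention the pointwise-computation observation as the conceptual reason $\mathrm{PSh}(P)$ is hypercomplete.

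Having established hypercompleteness, the second assertion is immediate: the proposition gives $\mathrm{PSh}(P) \simeq \mathrm{Sh}(P_{Alex})^{hyp} = \mathrm{Sh}(P_{Alex})$, and the equivalence is the stated one induced by right Kan extension along $y$. I would also note in passing that this recovers the classical fact that sheaves on a finite topological space are just presheaves on the specialization poset, consistent with Corollary \ref{finiteposet} feeding into the later reduction to finite distributive lattices.

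The step I expect to be the main obstacle is pinning down the hypercompleteness cleanly without circularity: one must be careful that the cited statement (finite homotopy dimension $\Rightarrow$ hypercomplete) is being applied to $\mathrm{Sh}(P_{Alex})$ and not secretly to its hypercompletion, and one must justify the finite-homotopy-dimension bound — the honest justification is that $P_{Alex}$ admits a basis by the finitely many opens $p\hspace{-0.5ex}\downarrow$ each of which is "contractible" in the relevant sense (the topos $\mathrm{Sh}((P_{Alex})_{/p\downarrow})$ has a terminal-like point), giving a bound by the chain length of $P$. Everything else is formal once the proposition is invoked.
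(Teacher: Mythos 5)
Your proposal follows essentially the same route as the paper: both reduce the statement to Lurie's \cite[Corollary 7.2.1.12]{luriehtt} (an $\infty$-topos of homotopy dimension $\leq n$ is hypercomplete), and you correctly identify that the real work is to bound the homotopy dimension of $\mathrm{Sh}(P_{Alex})$.

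Where you diverge from the paper is precisely at the step you flag as ``the main obstacle.'' You gesture at a direct argument (that the finitely many basic opens $p{\downarrow}$ each have a generic point, giving a bound by chain length), and your alternative route via $\mathrm{PSh}(P)$ is, as you note yourself, circular, since the proposition only identifies $\mathrm{PSh}(P)$ with $\mathrm{Sh}(P_{Alex})^{hyp}$ and not with $\mathrm{Sh}(P_{Alex})$. The paper short-circuits all of this by citing \cite[Theorem 7.2.3.6]{luriehtt}: a \emph{paracompact} topological space of covering dimension $\leq n$ has $\mathrm{Sh}(X)$ of homotopy dimension $\leq n$. A finite topological space is trivially paracompact (every cover is finite) and has finite covering dimension, so the hypotheses are automatic. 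This is the missing citation that turns your tentative ``one can cite that $\mathrm{Sh}(P_{Alex})$ is locally of homotopy dimension $\leq n$'' into a complete argument. If you wanted to make your more self-contained basis argument rigorous, you would need to actually verify the homotopy-dimension bound by hand, which is more work than the two citations and is what the covering-dimension theorem is packaging for you. Once the bound is in place, the rest of your write-up (applying the preceding proposition to get $\mathrm{PSh}(P) \simeq \mathrm{Sh}(P_{Alex})^{hyp} = \mathrm{Sh}(P_{Alex})$) matches the paper.
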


\begin{proof}
The proof of this reduces to two facts.
\begin{enumerate}
\item \cite{luriehtt}, Corollary 7.2.1.12. An $\infty$-topos $\mathcal{X}$ which is homotopy dimension $\leq n$ for some $n$ is hypercomplete.
\item \cite{luriehtt}, Theorem 7.2.3.6. Let $X$ be a paracompact topological space of covering dimension $\leq n$. Then the $\infty$-topos $\mathrm{Shv}(X)$ has homotopy dimension $\leq n$.
\end{enumerate}
It is clear that the latter conditions are satisfied for a finite poset.
\end{proof}

Given a \emph{finite} poset $(P,\leq)$, its frame of opens with respect to the Alexandroff topology, $\mathrm{Fun}(P^{op},\mathbf{2})$ is a finite distributive lattice, or equivalently a finite frame (the existence of all colimits and limits in this case is automatic).

Conversely, given a finite frame $D$, we can associate to it its poset of points $\mathrm{pts}(D)$ together with the specialization order. Recall that in a general frame $F$, a point is given as a frame homomorphism $F \rightarrow \mathbf{2}$, or equivalently as a completely prime filter $\mathcal{F}$ on $F$. In a finite frame, any such prime filter $\mathcal{F}$ must have a minimal element $p$, which is an irreducible element in $D$, i.e. an element $U \in D$ such that whenever $U = V_1 \vee V_2$, then $U \leq V_1$ or $U \leq V_2$. Hence we can identify the poset of points with a sub-poset $\mathrm{pts}(D) \rightarrow D$ spanned by the irreducible elements. 

\begin{theorem}[Birkhoff, see \cite{grätzer2011lattice} II 1.3]
There is an equivalence of categories
$$\mathrm{pts} : \mathrm{DLatt}_{fin} \simeq \mathrm{FinPoset}^{op} : \mathcal{O}$$
given by sending a finite distributive lattice to its poset of points, and by sending a finite poset to the frame obtained from the Alexandroff topology. 
\end{theorem}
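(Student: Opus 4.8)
The plan is to realize $\mathcal{O}$ and $\mathrm{pts}$ as mutually inverse functors, reducing everything to the two elementary facts about a finite distributive lattice $D$ underlying the \emph{fundamental theorem of finite distributive lattices}: every element of $D$ is the join of the join-irreducibles below it, and every join-irreducible element is \emph{join-prime}. To a finite poset $P$, $\mathcal{O}$ assigns $\mathcal{O}(P) = \mathrm{Fun}(P^{op},\mathbf{2})$, the lattice of lower subsets, with $\vee, \wedge, 0, 1$ given by $\cup, \cap, \emptyset, P$; this is a finite distributive lattice, as already observed. A monotone map $f : P \to Q$ pulls lower subsets back to lower subsets, and $f^{-1} : \mathcal{O}(Q) \to \mathcal{O}(P)$ preserves $\cup$, $\cap$, $\emptyset$, and the top element, so $\mathcal{O} : \mathrm{FinPoset}^{op} \to \mathrm{DLatt}_{fin}$ is a functor. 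Conversely, $\mathrm{pts}(D) = \mathrm{Hom}_{\mathrm{DLatt}_{bd}}(D, \mathbf{2}) \subseteq \mathrm{Fun}(D,\mathbf{2})$ carries the pointwise order, which the discussion preceding the statement identifies with the induced order on the sub-poset $J(D) \subseteq D$ of nonzero join-irreducible elements via $p \mapsto \min p^{-1}(1)$; a homomorphism $g : D \to D'$ induces the order-preserving map $g^{*} = (-\circ g) : \mathrm{pts}(D') \to \mathrm{pts}(D)$, so $\mathrm{pts} : \mathrm{DLatt}_{fin} \to \mathrm{FinPoset}^{op}$ is a functor.

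Next I would construct the two natural isomorphisms. Let $\varepsilon_D : D \to \mathcal{O}(\mathrm{pts}(D)) = \mathcal{O}(J(D))$ be $\varepsilon_D(d) = \{\, j \in J(D) \mid j \leq d \,\}$, a lower subset of $J(D)$; it visibly preserves $0$, the top element, and binary meets. The heart of the proof is the pair of statements: (i) $d = \bigvee \{\, j \in J(D) \mid j \leq d \,\}$ for every $d \in D$, proved by well-founded induction on the finite poset $D$ (immediate if $d$ is $0$ or join-irreducible, and otherwise $d = d_1 \vee d_2$ with $d_1, d_2 < d$, so one applies the inductive hypothesis to $d_1$ and $d_2$); and (ii) every $j \in J(D)$ is join-prime, since $j \leq d_1 \vee d_2$ gives $j = j \wedge (d_1 \vee d_2) = (j \wedge d_1) \vee (j \wedge d_2)$ by distributivity, forcing $j \leq d_1$ or $j \leq d_2$, and this extends to finite joins by induction. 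By (i), $\varepsilon_D$ is injective and preserves binary joins — the only nonobvious inclusion, $\varepsilon_D(d_1 \vee d_2) \subseteq \varepsilon_D(d_1) \cup \varepsilon_D(d_2)$, being exactly (ii) — and by (i) and (ii) together it is surjective: a lower subset $S \subseteq J(D)$ is recovered as $\varepsilon_D(\bigvee S)$, since a join-irreducible below $\bigvee S$ lies below some element of $S$ by (ii), hence lies in $S$ as $S$ is lower in $J(D)$. Thus $\varepsilon_D$ is an isomorphism in $\mathrm{DLatt}_{bd}$. Dually, $\eta_P : P \to \mathrm{pts}(\mathcal{O}(P))$ sends $p$ to the point of $\mathcal{O}(P)$ sending a lower subset $S$ to $1$ precisely when $p \in S$; under the identification $\mathrm{pts}(\mathcal{O}(P)) = J(\mathcal{O}(P))$ this is $p \mapsto p{\downarrow} = \{\, q \in P \mid q \leq p \,\}$. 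Every lower subset of $P$ is the union of the principal lower subsets it contains, so a join-irreducible lower subset has a unique maximal element and is therefore principal; conversely each $p{\downarrow}$ is join-irreducible. Since $p \leq q \iff p{\downarrow} \subseteq q{\downarrow}$, $\eta_P$ is an order isomorphism.

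Finally I would check that $\eta$ and $\varepsilon$ are natural, a routine diagram chase from the definitions of $f^{-1}$ and $g^{*}$, and conclude that $\mathcal{O}$ and $\mathrm{pts}$ are inverse equivalences of categories. The only genuinely nonformal ingredient — and where I expect the real content to sit — is statement (ii), the join-primeness of join-irreducibles: this is precisely where distributivity enters, and without it $\varepsilon_D$ is neither surjective nor join-preserving. As a byproduct, combining this theorem with the Stone duality of Theorem \ref{stonedualitycoherent} — under which $\mathrm{Ind}$ restricts to the identity on finite distributive lattices and every frame homomorphism between finite frames is automatically coherent — one recovers the identification $\mathrm{DLatt}_{fin} \simeq \mathrm{FinFrm}$ of finite frames with the Alexandroff frames of finite posets, as needed for the results of the earlier sections.
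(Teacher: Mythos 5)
The paper gives no proof of this statement --- it is cited as Birkhoff's theorem from Gr\"atzer's \emph{Lattice Theory} (II.1.3) --- so there is no internal proof to compare against. Your argument is a correct, self-contained proof along the standard lines: you realize $\mathcal{O}$ and $\mathrm{pts}$ as inverse equivalences, construct the unit and counit $\eta_P : P \to \mathrm{pts}(\mathcal{O}(P))$ and $\varepsilon_D : D \to \mathcal{O}(J(D))$, and reduce everything to the two classical lemmas underlying the fundamental theorem of finite distributive lattices: every $d \in D$ is the join of the join-irreducibles below it (your (i), established by well-founded induction on the finite poset $D$), and join-irreducibles of a distributive lattice are join-prime (your (ii), the one place distributivity genuinely enters). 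The verification that $\varepsilon_D$ is a lattice isomorphism, and the identification of the join-irreducibles of $\mathcal{O}(P)$ with the principal down-sets via uniqueness of maximal elements, are both in order, and the naturality checks you defer to a diagram chase are indeed routine.

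One small slip worth flagging, though it does not affect correctness: the pointwise order on $\mathrm{Hom}_{\mathrm{DLatt}_{bd}}(D, \mathbf{2})$ is the \emph{opposite} of the order $J(D)$ inherits as a sub-poset of $D$ under the bijection $p \mapsto \min p^{-1}(1)$. Indeed, $p \leq q$ pointwise means $p^{-1}(1) \subseteq q^{-1}(1)$, and writing these filters as up-sets of their minimal elements this unwinds to $\min q^{-1}(1) \leq \min p^{-1}(1)$. (The paper's own remark about the ``specialization order'' is equally ambiguous on this point.) You then use the sub-poset order consistently --- notably in showing $\eta_P(p) = p{\downarrow}$ is an order isomorphism, where $p \leq q$ iff $p{\downarrow} \subseteq q{\downarrow}$ --- so nothing downstream depends on the mistaken identification, but the sentence asserting that the pointwise order agrees with the sub-poset order should be dropped or corrected.
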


Under this correspondence, the category of sheaves on a finite frame has a particularly simple description, which follows directly from Corollary \ref{finiteposet}.

\begin{proposition} \label{sheavesonfiniteframe}
Suppose $D$ is a finite frame. Then there is a natural equivalence
$$\mathrm{Sh}(D) \simeq \mathrm{Fun}( \mathrm{pts}(D)^{op}, \mathrm{Spc} )$$
obtained by restricting a sheaf $G : D \rightarrow \mathrm{Spc}$ along the inclusion $\mathrm{pts}(D) \rightarrow D$.
\end{proposition}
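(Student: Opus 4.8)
The plan is to reduce the statement to Birkhoff's theorem together with Corollary \ref{finiteposet}, and then to unwind what the resulting equivalence actually does on objects. First I would invoke Birkhoff: a finite frame $D$ is isomorphic to the frame $\mathcal{O}(P) = \mathrm{Fun}(P^{op},\mathbf{2})$ of lower sets of the finite poset $P = \mathrm{pts}(D)$, the isomorphism sending a point $p \in P$ to the principal lower set $p\hspace{-0.5ex}\downarrow$. As recalled just before the statement, $p\hspace{-0.5ex}\downarrow$ is precisely the irreducible element of $D$ whose unique maximal point is $p$, so under $D \cong \mathcal{O}(P)$ the inclusion $\mathrm{pts}(D) \hookrightarrow D$ spanned by the irreducibles is carried onto the $0$-categorical Yoneda embedding $y : P \hookrightarrow \mathrm{Fun}(P^{op},\mathbf{2})$. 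In particular $\mathrm{Sh}(D) \simeq \mathrm{Sh}(P_{Alex})$, and what remains is to identify this equivalence, postcomposed with Corollary \ref{finiteposet}, with restriction along $y$.

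Next I would use Corollary \ref{finiteposet}, which supplies a natural equivalence $\mathrm{PSh}(P) \simeq \mathrm{Sh}(P_{Alex})$ obtained by Kan-extending the composite $P \xrightarrow{y} \mathrm{Fun}(P^{op},\mathbf{2}) \to \mathrm{Sh}(P_{Alex})$ (the second map sending an open to the sheaf it represents). Being built from this composite, the equivalence fits into a commuting triangle with it; in particular it carries the representable presheaf $h_p = \mathrm{Map}_P(-,p)$ to the sheaf represented by the open $p\hspace{-0.5ex}\downarrow$. Consequently, for a sheaf $G$ on $P_{Alex}$, the presheaf $\widehat{G}$ corresponding to $G$ under the inverse equivalence satisfies
$$\widehat{G}(p) \simeq \mathrm{Map}_{\mathrm{PSh}(P)}(h_p, \widehat{G}) \simeq \mathrm{Map}_{\mathrm{Sh}(P_{Alex})}(\text{sheaf repr.\ by } p\hspace{-0.5ex}\downarrow,\, G) \simeq G(p\hspace{-0.5ex}\downarrow),$$
so $\widehat{G}$ is exactly the restriction of $G$ along $y$. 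Transporting back along the isomorphism of the first step, the composite equivalence $\mathrm{Sh}(D) \simeq \mathrm{Sh}(P_{Alex}) \simeq \mathrm{PSh}(P) = \mathrm{Fun}(\mathrm{pts}(D)^{op}, \mathrm{Spc})$ is precisely restriction of a sheaf $G : D \to \mathrm{Spc}$ along $\mathrm{pts}(D) \hookrightarrow D$; naturality is inherited from the naturality in Birkhoff's theorem and in Corollary \ref{finiteposet}.

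The only point that genuinely demands attention is the bookkeeping in the first step: one has to check that the two incarnations of the map $P \to D$ agree — the topological one, sending $p$ to the smallest open of $P_{Alex}$ containing $p$, and the lattice-theoretic one, sending $p$ to the associated irreducible element of $D$. This falls out immediately from the explicit description of Birkhoff duality. Everything else is formal, the key formal input being that an equivalence of $\infty$-categories presented as a Kan extension of a functor $\iota$ commutes with $\iota$ on representables, so that its inverse is computed by mapping out of the images of the representables.
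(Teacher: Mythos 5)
Your argument is correct, and it is the same route the paper intends: the paper gives no explicit proof for this proposition beyond the remark that it ``follows directly from Corollary~\ref{finiteposet}'', so your job was precisely to unwind that. You do this cleanly: Birkhoff identifies $D$ with $\mathrm{Fun}(P^{op},\mathbf{2})$ for $P=\mathrm{pts}(D)$ and carries the inclusion of irreducibles onto the Yoneda map $p\mapsto p\hspace{-0.5ex}\downarrow$, Corollary~\ref{finiteposet} (via the Aoki proposition preceding it) gives the equivalence $\mathrm{PSh}(P)\simeq\mathrm{Sh}(P_{Alex})$, and the identification of the inverse with restriction along $y$ follows because the equivalence sends $h_p$ to the sheaf represented by $p\hspace{-0.5ex}\downarrow$, so mapping out of these representables computes $\widehat G(p)\simeq G(p\hspace{-0.5ex}\downarrow)$. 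The one place you elide is the handedness of the Kan extension — the paper says \emph{right} Kan extension (of a presheaf along $y^{op}:P^{op}\to\Omega(P_{Alex})^{op}$), not the left Kan extension of $\iota$ along Yoneda — but since both functors agree on representables and both are inverse to restriction, your computation of $\widehat G$ is unaffected; it is worth being explicit that one is invoking the fact that the equivalence restricts to $\iota$ on representables, which holds for either description. A minor point you implicitly use and could flag: for a frame, the representable presheaf at an open is already a sheaf (it is subterminal and the Čech condition reduces to $\bigvee V_i\le U$), so ``the sheaf represented by $p\hspace{-0.5ex}\downarrow$'' needs no sheafification.
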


We will need a last remark about the Booleanization of a finite frame.

\begin{proposition} \label{finiteboolean}
Let $D$ be a finite frame. Then $$\mathrm{Bool}(D) \cong \mathcal{P}( \mathrm{pts}(D) )$$
is the set of subsets of the set of points of $D$.
\end{proposition}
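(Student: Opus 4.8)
The plan is to combine Birkhoff's theorem with the structure of finite Boolean algebras. By Birkhoff's theorem, a finite frame $D$ is equivalent to the Alexandroff frame $\mathcal{O}(\mathrm{pts}(D))$ on the finite poset $\mathrm{pts}(D)$, so the elements of $D$ are precisely the lower-closed subsets of the poset $\mathrm{pts}(D)$. On the other side, the Booleanization functor $\mathrm{Bool} : \mathrm{DLatt}_{bd} \to \mathrm{BAlg}$ is left adjoint to the (fully faithful) forgetful functor, so for any bounded distributive lattice $D$ and any Boolean algebra $B$ we have $\mathrm{Hom}_{\mathrm{BAlg}}(\mathrm{Bool}(D), B) \cong \mathrm{Hom}_{\mathrm{DLatt}_{bd}}(D, B)$. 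I would use this to identify $\mathrm{Bool}(D)$ as the free Boolean algebra on the lattice $D$ and then compute it directly.

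The key steps, in order: First, observe that for a finite poset $P$, the powerset $\mathcal{P}(P)$ is a Boolean algebra and there is a lattice homomorphism $\mathcal{O}(P) = \mathrm{Fun}(P^{op}, \mathbf{2}) \hookrightarrow \mathcal{P}(P)$ given by the inclusion of lower-closed subsets into all subsets; this preserves $0$, $1$, finite meets and finite joins. Second, I claim this exhibits $\mathcal{P}(P)$ as $\mathrm{Bool}(\mathcal{O}(P))$, i.e. it is initial among lattice homomorphisms from $\mathcal{O}(P)$ into Boolean algebras. To check the universal property: given a lattice homomorphism $g : \mathcal{O}(P) \to B$ with $B$ Boolean, note that every singleton-generated "interval" — concretely, for $p \in P$ the set $U_p = \{q : q \leq p\}$ and $U_p' = \{q : q < p\} = \bigcup_{q < p} U_q$ — gives an element $\{p\} = U_p \setminus U_p' \in \mathcal{P}(P)$, and these atoms $\{p\}$ generate $\mathcal{P}(P)$ as a Boolean algebra. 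One then defines the extension $\tilde g : \mathcal{P}(P) \to B$ on atoms by $\tilde g(\{p\}) = g(U_p) \wedge \neg g(U_p')$ and extends by finite joins; uniqueness is forced because $\mathcal{P}(P)$ is atomic and any Boolean homomorphism extending $g$ must send $\{p\}$ to that element. Checking that $\tilde g$ is a well-defined Boolean homomorphism and that it restricts to $g$ on lower-closed sets is a finite verification using that $g$ preserves meets and joins. Third, combine with Birkhoff: since $D \cong \mathcal{O}(\mathrm{pts}(D))$ as frames, we get $\mathrm{Bool}(D) \cong \mathrm{Bool}(\mathcal{O}(\mathrm{pts}(D))) \cong \mathcal{P}(\mathrm{pts}(D))$.

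The main obstacle I expect is the bookkeeping in verifying that the candidate map $\tilde g$ on $\mathcal{P}(P)$ is genuinely a Boolean homomorphism — in particular that it is well-defined independent of how a subset is written as a join of atoms, and that $\tilde g \circ (\text{inclusion}) = g$; the second point requires showing $g(U) = \bigvee_{p \in U} \bigl(g(U_p) \wedge \neg g(U_p')\bigr)$ for every lower-closed $U$, which follows by induction on $|U|$ using that $U = U_{p} \vee (U \setminus \{p\})$ for a maximal element $p$ of $U$ and that this join is "almost disjoint" in the appropriate sense inside $B$. An alternative, perhaps cleaner, route avoiding the explicit universal-property computation: use that $\mathrm{pts}$ is a natural isomorphism on the relevant subcategories (via the chain of Stone dualities already established — $\mathrm{DLatt}_{bd} \simeq \mathrm{CohSp}^{op}$ and the subcategory of Boolean algebras corresponding to Stone spaces), together with the fact that the constructible topology on the finite space $\mathrm{pts}(D)$ is the discrete topology (a finite space with no nontrivial specializations is discrete), so $\mathrm{Bool}(D)$ is the Boolean algebra of clopen subsets of the discrete finite space $\mathrm{pts}(D)$, which is exactly $\mathcal{P}(\mathrm{pts}(D))$. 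I would likely present this second argument as the main proof and relegate the explicit computation to a remark.
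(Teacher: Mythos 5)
Your proposal is correct, and both of your routes work. The paper's actual proof is closest in flavor to your first argument, but streamlines it: rather than verifying the universal property of Booleanization by hand (which requires checking that the candidate extension $\tilde g$ is well-defined and a Boolean homomorphism, i.e.\ that the $g(U_p)\wedge\neg g(U_p')$ are pairwise disjoint and exhaustive), the paper first uses the identification $\mathrm{pts}(D) \cong \mathrm{pts}(\mathrm{Bool}(D))$ together with Stone representation to regard $\mathrm{Bool}(D)$ as a sub-Boolean-algebra of $\mathcal{P}(P)$ containing the Alexandroff opens $\Omega(P_{\mathrm{Alex}})$, and then only needs to prove this subalgebra contains every singleton. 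That it does by precisely the induction you sketch, writing $\{q \in P : q \leq p\} = \{p\} \cup \{q \in P : q < p\}$ and inducting on a height function. Your preferred second route --- that $\mathrm{pts}(D)$ with the constructible topology is a finite Hausdorff, hence discrete, space, so $\mathrm{Bool}(D)$ is the algebra of clopens $\mathcal{P}(\mathrm{pts}(D))$ --- is also correct and somewhat slicker, at the cost of leaning on the cited characterization of Boolean coherent locales rather than being self-contained. One practical reason the paper prefers the explicit height-function argument is that the same induction is reused verbatim in the proof of Proposition \ref{motivesonbooleanization} to show that a valuation on $\Omega(P_{\mathrm{Alex}})$ extends uniquely over all singletons, so the paper gets to set up that bookkeeping once and use it twice.
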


\begin{proof}
We can write $D = \Omega(P_{\mathrm{Alex}})$ where $P = \mathrm{pts}(D)$. Since $\mathrm{Bool}(D)$ is given as a topology on $P$ as a set, which contains $\Omega(P_{\mathrm{Alex}})$, we only need to show that all singletons $\{p\}$ for $p \in P$ lie in $\mathrm{Bool}(D)$.

Define the \emph{height function} $ht : P \rightarrow \mathbb{N}$ by induction. The height of a minimal element is defined to be $0$. Then the subset of height $n$ elements of $P$ is defined as the set of elements of $p \in P$ such that there exists an element $q \leq p$ of height $n-1$ and there does not exist another element $q'$ such that $p \leq q' \leq q$. It is clear that $ht$ is a strictly increasing map.

Suppose $p$ is an element of height $n$ and let $p \hspace{-0.5ex}\downarrow ~= \{ q \in P ~|~ q \leq p \}$ be the basic open subset of $P_{\mathrm{Alex}}$ associated to $p$. Then
$$p \hspace{-0.5ex}\downarrow ~= \{p\} \cup \{ q \in P ~|~ q \lneq p \}$$
decomposes into two disjoints set, where the second set only contains elements of height $\leq \{n-1\}$. The minimal elements of $P$ give basic opens, hence $\{p\} \in \mathrm{Bool}(D)$ is given. Now proceed by induction on the height to see that $\mathrm{Bool}(D)$ contains all singletons.
\end{proof}

\subsection{Valuations and motives for distributive lattices}
\label{sectionvaluations}

\begin{definition}
Let $D$ be a lower bounded, distributive lattice and $A$ an abelian group. An $A$-valued valuation on $D$ is a function $\mu : D \rightarrow A$ such that
\begin{enumerate}
\item $\mu(0) = 0$
\item \emph{Modularity:} For all $U, V \in D$ it holds that $\mu(U) + \mu(V) = \mu(U \vee V) + \mu(U \wedge V)$.
\end{enumerate}
We denote the set of $A$-valued valuations on $D$ by $\mathrm{Val}(D;A)$.
\end{definition}

For a fixed abelian group $A$, the assignment of $D$ to $\mathrm{Val}(D;A)$ is a contravariant functor. If $f : D \rightarrow D'$ is a homomorphism of lower bounded distributive lattices and $\mu : D' \rightarrow A$ is a valuation, then the map $f^* \mu = \mu \circ  f  : D \rightarrow A$ is again a valuation. This defines a map $f^* : \mathrm{Val}(D';A) \rightarrow \mathrm{Val}(D;A)$. The expression $\mathrm{Val}(D;A)$ is also clearly covariantly functorial in the abelian group $A$.

Given a lower bounded distributive lattice $D$, there exists a universal valuation $\mu_{univ} : D \rightarrow M(D)$, where $M(D)$ is the free abelian group $\mathbb{Z}[D]$ modulo the relations:
\begin{enumerate}
\item $[0] = 0$
\item $[U] + [V] = [U \vee V] + [U \wedge V]$ for all $U, V \in D$.
\end{enumerate}
It is universal in the sense that whenever $\mu : D \rightarrow A$ is a valuation, there exists a unique group homomorphism $M(D) \rightarrow A$ extending $\mu$ along $\mu_{univ}$. We call $M(D)$ the \emph{module of D-motives}. We summarize this in the following statement.

\begin{proposition}
The map $\mu_{univ} : D \rightarrow M(D)$ induces for each abelian group $A$ a natural bijection
$$\mathrm{Val}(D; A) \cong \mathrm{Hom}_{\mathrm{Ab}}( M(D); A ).$$
\end{proposition}

In particular, by the Yoneda embedding, $M : \mathrm{DLatt}_{lb} \rightarrow \mathrm{Ab}$ is a functor, a fact that is also clear from the presentation.

\begin{example}
Let $B$ be a Boolean algebra. Then $\mu : B \rightarrow A$ is a valuation iff it is a \emph{finitely additive} function, i.e. $\mu(0) = 0$ and $\mu(U \vee V) = \mu(U) + \mu(V)$ whenever $U,V$ are disjoint, by which we mean $U \wedge V = 0$.
\end{example}

\begin{example} \label{finiteset}
Let $S$ be a finite set and let $D = \mathcal{P}(S)$ be the boolean algebra of subsets of $S$. Then it is clear that a valuation $\mu$ is determined by its one-element sets $\{s\}$ for $s \in S$, as any other set is a finite disjoint union of those. Hence we have that
$$M(\mathcal{P}(S)) \cong \mathbb{Z}[S]$$
is the free abelian group with basis $S$.
\end{example}

\begin{example}
Let $X$ be a classical geometry, i.e. $X = E^n, S^n$ or $H^n$ (euclidean, spherical or hyperbolic). As discussed in Example \ref{polytopelattice} we have the lower bounded distributive lattice $D(X)$ of $n$-dimensional polytopes. The abelian group
$M(D(X))$ is also referred to as the \emph{polytope module} $\mathrm{Pt}(X)$ of $X$ in \cite{malkiewich2024higherscissorscongruence} and of central importance in scissors congruence $K$-theory, as it controls the homology of the scissors congruence $K$-theory spectrum of $X$, see e.g. \cite[Theorem 1.5]{malkiewich2024higherscissorscongruence}.
\end{example}

\begin{example}
Assume $X$ is a profinite space with $B =  \mathcal{K}^o(X)$ being the Boolean algebra of compact open subsets of $X$. Note that for profinite $X$ any compact open $U$ has a complement $U^c$ such that $X = U \amalg U^c$. Hence we can define the indicator function $1_U : X \rightarrow \mathbb{Z}$ as the function with value $1$ on $U$ and $0$ on the complement. This gives an assignment
$$\begin{array}{rcl}
\mu : B & \rightarrow & C(X; \mathbb{Z}) \\
U & \mapsto & 1_U.
\end{array}$$
It is easy to check that this is in fact a valuation on $B$, hence by universality we obtain a group homomorphism
$$M(B) \rightarrow C(X; \mathbb{Z}).$$
We will see that this is an isomorphism in Proposition \ref{motivesforboolean}.
\end{example}

\begin{example} \label{measurespacemotive} Let $L$ be a \emph{measurable locale} in the sense of \cite{Pavlov_2022}. For example $L$ could be obtained by considering a compact, $\sigma$-finite measure space $(X, \mathcal{L}, \mu)$ and let $\Omega(L)$ be the complete $\sigma$-algebra $\mathcal{L}/\mathcal{N}$, where $\mathcal{N}$ is the $\sigma$-ideal of $\mu$-null sets. Then $L^\infty(L)$ is defined as the set of bounded, complex valued continuous functions on $L$. (And agrees with $L^\infty(X, \mathcal{L}, \mu)$ for the given special case.)

Since $L$ is Boolean, any open $U$ gives a decomposition $L \cong L_{/U} \amalg L_{/U^{c}}$, thus allowing us to define the indicator function $\chi_U : L \rightarrow \mathbb{C}$ as the (continuous!) function with value $1$ on $U$ and $0$ on $U^c$.

One can then see that $M(\Omega(L))$ can be identified with the subset of $L^\infty(L)$ given by \emph{integral-valued step functions}, that is $\mathbb{Z}$-linear combinations of indicator functions. We see that $L^\infty(L)$ is in some sense a completed, $\mathbb{C}$-valued version of $M(\Omega(L))$.
\end{example}

\begin{lemma} \label{motivesfiltered}
The functor $M : \mathrm{DLatt}_{lb} \rightarrow \mathrm{Ab}$ preserves filtered colimits.
\end{lemma}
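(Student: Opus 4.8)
The plan is to show that $M$ preserves filtered colimits by exploiting the explicit presentation of $M(D)$ as a quotient of $\mathbb{Z}[D]$, together with the fact that both the free abelian group functor $D \mapsto \mathbb{Z}[D]$ (really $\mathbb{Z}[-]$ applied to the underlying set) and the formation of quotients by generated subgroups commute with filtered colimits in $\mathrm{Ab}$. Concretely, suppose $D \cong \colim_{i \in I} D_i$ is a filtered colimit of lower bounded distributive lattices. The underlying set functor on $\mathrm{DLatt}_{lb}$ preserves filtered colimits (filtered colimits in categories of algebras for a finitary algebraic theory are computed on underlying sets), so the underlying set of $D$ is $\colim_{i} D_i$ as sets, and hence $\mathbb{Z}[D] \cong \colim_i \mathbb{Z}[D_i]$ in $\mathrm{Ab}$, since $\mathbb{Z}[-] : \mathrm{Set} \to \mathrm{Ab}$ is a left adjoint.

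Next I would identify the relation subgroups compatibly. Let $R_i \subseteq \mathbb{Z}[D_i]$ be the subgroup generated by the elements $[0_{D_i}]$ and $[U] + [V] - [U \vee V] - [U \wedge V]$ for $U, V \in D_i$, so that $M(D_i) = \mathbb{Z}[D_i]/R_i$, and similarly $R \subseteq \mathbb{Z}[D]$ with $M(D) = \mathbb{Z}[D]/R$. The structure maps $D_i \to D$ are lattice homomorphisms preserving $0$, so they carry generators of $R_i$ to generators of $R$; thus the $R_i$ assemble into a filtered system and there is a natural map $\colim_i R_i \to R$. I claim this map is an isomorphism onto $R$ (viewed inside $\colim_i \mathbb{Z}[D_i] = \mathbb{Z}[D]$): surjectivity holds because every generator of $R$ involves only finitely many elements of $D$, which by filteredness all come from a common $D_{i_0}$, where the corresponding generator of $R_{i_0}$ maps to it; for the image being exactly $R$ one notes each $R_i$ maps into $R$. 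Then, since filtered colimits in $\mathrm{Ab}$ are exact, applying $\colim_i$ to the short exact sequences $0 \to R_i \to \mathbb{Z}[D_i] \to M(D_i) \to 0$ yields $0 \to \colim_i R_i \to \mathbb{Z}[D] \to \colim_i M(D_i) \to 0$, and comparing with $0 \to R \to \mathbb{Z}[D] \to M(D) \to 0$ gives $\colim_i M(D_i) \cong M(D)$, naturally.

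I expect the only mildly delicate point to be verifying that the canonical map $\colim_i R_i \to \mathbb{Z}[D]$ is injective with image exactly $R$ — i.e. that no new relations are created and no relations collapse in the colimit. Injectivity is automatic since it is a colimit of injections along a filtered diagram inside the exact sequence comparison, and the image computation is the surjectivity argument above using that relations are ``finitely supported'' in $D$. Alternatively, and perhaps more cleanly, one can avoid the explicit bookkeeping entirely: the universal property says $\mathrm{Hom}_{\mathrm{Ab}}(M(D), A) \cong \mathrm{Val}(D; A)$ naturally, and $\mathrm{Val}(-; A)$ sends filtered colimits of lattices to limits of sets (a valuation on $\colim_i D_i$ is the same as a compatible family of valuations on the $D_i$, again because the defining conditions only involve finitely many lattice elements at a time); so $\mathrm{Hom}_{\mathrm{Ab}}(M(\colim_i D_i), A) \cong \lim_i \mathrm{Hom}_{\mathrm{Ab}}(M(D_i), A) \cong \mathrm{Hom}_{\mathrm{Ab}}(\colim_i M(D_i), A)$ for all $A$, and Yoneda in $\mathrm{Ab}$ finishes the proof. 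I would likely present this second argument as the main proof and relegate the presentation-level description to a remark.
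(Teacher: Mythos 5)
Your proposal is correct. Your first argument — passing to the presentation $M(D)=\mathbb{Z}[D]/R$, using that the underlying-set functor on $\mathrm{DLatt}_{lb}$ preserves filtered colimits (a finitary algebraic theory), that $\mathbb{Z}[-]$ is a left adjoint, and that filtered colimits in $\mathrm{Ab}$ are exact — is exactly the paper's proof, which states it in one sentence and leaves the bookkeeping implicit; you have simply spelled out the exact-sequence comparison. Your second argument, via $\mathrm{Hom}_{\mathrm{Ab}}(M(-),A)\cong\mathrm{Val}(-;A)$, the observation that $\mathrm{Val}(-;A)$ turns filtered colimits of lattices into limits of sets (because valuations are defined by finitary conditions and filtered colimits of lattices are computed on underlying sets), and Yoneda, is a genuinely different and arguably cleaner route: it avoids the relation-subgroup bookkeeping entirely, at the cost of invoking the universal property. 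Both are sound; the paper opts for the former in its terse form, so there is no gap — merely a choice of which universal description of $M(D)$ (generators-and-relations vs.\ representing object for valuations) to put at the center of the argument.

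One small remark on your presentation argument: you flag injectivity of $\colim_i R_i \to \mathbb{Z}[D]$ as a delicate point, but it is automatic once you invoke exactness of filtered colimits in $\mathrm{Ab}$ applied to the short exact sequences $0\to R_i\to\mathbb{Z}[D_i]\to M(D_i)\to 0$; the real content is identifying the image with $R$, which your "finitely supported relations" argument handles correctly. So the structure of your verification is fine, but the emphasis could be shifted.
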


\begin{proof}
This is immediate from the presentation of the abelian group $M(D)$ as a quotient of $\mathbb{Z}[D]$, together with the observation that filtered colimits in $ \mathrm{DLatt}_{lb}$ are computed as filtered colimits of underlying sets.
\end{proof}

\begin{proposition} \label{motivesforboolean}
Let $X$ be a profinite space, corresponding to the Boolean algebra $B = \mathcal{K}^o(X)$. Then the assignment of each compact open $U$ to its indicator function induces an isomorphism
$$M(B) \cong C(X, \mathbb{Z}),$$
where the right hand side is the set of continuous functions of $X$ into $\mathbb{Z}$ equipped with the discrete topology. In particular, $M(B)$ is a free abelian group.
\end{proposition}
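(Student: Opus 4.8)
The plan is to reduce the claim to the finite case via the filtered colimit presentation already established, and then handle finite Boolean algebras by hand. First I would write $X = \lim_{i \in I} X_i$ as a cofiltered limit of finite discrete sets, so that under Stone duality the Boolean algebra $B = \mathcal{K}^o(X)$ is the filtered colimit $B \cong \colim_{i \in I} \mathcal{P}(X_i)$ of the finite powerset algebras. By Lemma \ref{motivesfiltered}, $M$ preserves filtered colimits, so $M(B) \cong \colim_{i \in I} M(\mathcal{P}(X_i))$. By Example \ref{finiteset}, $M(\mathcal{P}(X_i)) \cong \mathbb{Z}[X_i]$, the free abelian group on the finite set $X_i$, and the transition map $M(\mathcal{P}(X_i)) \to M(\mathcal{P}(X_j))$ along a structure map $X_j \to X_i$ sends the basis element $\{x\}$ to the sum of the basis elements in its fibre (since the indicator of $\{x\}$ pulls back to the indicator of its preimage, which is a disjoint union of singletons). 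On the other hand, by Lemma \ref{profinitecontinuous} applied with $S = \mathbb{Z}$, we have $C(X,\mathbb{Z}) \cong \colim_{i\in I} C(X_i,\mathbb{Z}) \cong \colim_{i\in I}\mathbb{Z}[X_i]$ with exactly the same transition maps (a function on $X_i$ is pulled back to $X_j$, which on indicator functions of points is precisely the fibrewise sum).

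Next I would check that the valuation $\mu : B \to C(X,\mathbb{Z})$, $U \mapsto 1_U$, is compatible with these two colimit presentations, i.e. that at each finite stage it restricts to the identification $M(\mathcal{P}(X_i)) \cong \mathbb{Z}[X_i] \cong C(X_i,\mathbb{Z})$ from Example \ref{finiteset}. This is a diagram chase: $\mu_{univ}$ sends $[U]$ to the class of $1_U$, and for $U$ a compact open pulled back from a subset $U_i \subset X_i$, the indicator $1_U$ on $X$ is the pullback of the indicator of $U_i$ on $X_i$; decomposing $U_i$ into points shows the induced map agrees with the canonical isomorphism at level $i$. Passing to the colimit, $\mu$ induces an isomorphism $M(B) \xrightarrow{\sim} C(X,\mathbb{Z})$.

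Finally, freeness: a filtered colimit of free abelian groups along maps sending basis elements to $\mathbb{Z}$-linear combinations of basis elements need not be free in general, so I cannot conclude freeness purely formally from the colimit description — this is the main obstacle. Instead I would invoke Nöbeling's theorem (Theorem \ref{noebelingstheorem}), which states precisely that $C(X,\mathbb{Z})$ is a free abelian group for any profinite space $X$; combined with the isomorphism $M(B) \cong C(X,\mathbb{Z})$ just established, this gives that $M(B)$ is free. So the only genuinely hard input is Nöbeling's theorem itself, which is cited as a black box; everything else is the two filtered-colimit identifications and a compatibility check between them.
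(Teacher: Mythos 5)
Your proposal is correct and follows essentially the same argument as the paper: present $B$ as the filtered colimit $\colim_i \mathcal{P}(X_i)$, use Lemma \ref{motivesfiltered} and Example \ref{finiteset} to identify $M(B) \cong \colim_i \mathbb{Z}[X_i]$, match this with $C(X,\mathbb{Z})$ via Lemma \ref{profinitecontinuous}, and deduce freeness from N\"obeling's theorem. The extra detail you supply about the transition maps and the compatibility of the valuation $U \mapsto 1_U$ with the two colimit presentations is correct and makes the identification more explicit than the paper's terse chain of isomorphisms, but it is the same route.
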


\begin{proof}
Let $X = \lim_{i \in I} X_i$ be a cofiltered limit with $X_i$ finite, discrete sets. This corresponds under Stone duality to
$$B = \colim_{i \in I} \mathcal{P}(X_i)$$
in the category of Boolean algebras. Using Lemma \ref{motivesfiltered} we have
$$M(B) \cong \colim_{i \in I} M(\mathcal{P}(X_i)) \cong \colim_{i \in I} \mathbb{Z}[X_i] \cong \colim_{i \in I} C(X_i, \mathbb{Z}) \cong C(X, \mathbb{Z})$$
where the last isomorphism is provided by Lemma \ref{profinitecontinuous}. The statement about freeness of $M(B)$ follows from Nöbeling’s Theorem, see Theorem \ref{noebelingstheorem}.
\end{proof}

\begin{proposition} \label{motivesonbooleanization}
Let $D$ be a bounded distributive lattice. The natural homomorphism
$D \rightarrow \mathrm{Bool}(D)$ induces an isomorphism
$$M(D) \cong M(\mathrm{Bool}(D)).$$
\end{proposition}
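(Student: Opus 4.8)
The plan is to reduce to the case of a \emph{finite} distributive lattice by a filtered-colimit argument, and then to settle the finite case explicitly using Birkhoff's theorem.

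First I would observe that both functors in sight commute with filtered colimits: $\mathrm{Bool}$ does because it is a left adjoint, and $M$ does by Lemma \ref{motivesfiltered}. Any bounded distributive lattice $D$ is the filtered colimit $D \cong \colim_i D_i$ of its finite sublattices (a distributive lattice on finitely many generators is finite), and the map $D \to \mathrm{Bool}(D)$, being the unit of the adjunction, is natural in $D$. Hence $M(D) \to M(\mathrm{Bool}(D))$ is the filtered colimit of the maps $M(D_i) \to M(\mathrm{Bool}(D_i))$, and since a filtered colimit of isomorphisms is an isomorphism, it is enough to prove the statement when $D$ is finite.

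Now I would turn to the finite case. Let $D$ be finite and write $P = \mathrm{pts}(D)$ for its poset of irreducible elements, viewed as a sub-poset of $D$. By Proposition \ref{finiteboolean}, $\mathrm{Bool}(D) \cong \mathcal{P}(P)$, and tracing through that identification together with Birkhoff's theorem, the unit $D \to \mathrm{Bool}(D)$ is identified with the inclusion of $D$, as the lattice of down-sets $P_{\leq d} := \{q \in P : q \leq d\}$, into $\mathcal{P}(P)$. Combining with $M(\mathcal{P}(P)) \cong \mathbb{Z}[P]$ from Example \ref{finiteset} (under which a subset $T$ goes to $\sum_{p \in T}[p]$), the composite $M(D) \to M(\mathrm{Bool}(D)) \cong \mathbb{Z}[P]$ becomes the map $\bar\mu$ induced by the valuation $\mu \colon D \to \mathbb{Z}[P]$, $d \mapsto \sum_{p \in P_{\leq d}}[p]$. (That $\mu$ is a valuation uses distributivity: for irreducible $p$ one has $p \leq U \vee V$ iff $p \leq U$ or $p \leq V$, from which modularity follows.) It then remains to see that $\bar\mu$ is an isomorphism.

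For this I would construct an explicit inverse. Set $p^{-} := \bigvee\{q \in P : q < p\} \in D$ for each $p \in P$ (so $p^{-} = 0$ if $p$ is minimal), and define $\beta \colon \mathbb{Z}[P] \to M(D)$ by $\beta([p]) = [p] - [p^{-}]$. Using $q \leq p^{-} \iff q < p$ for irreducible $q$, one checks $\bar\mu \circ \beta = \mathrm{id}$ directly. For $\beta \circ \bar\mu = \mathrm{id}$ it suffices, as $M(D)$ is generated by the classes $[d]$, to prove $[d] = \sum_{p \in P_{\leq d}}([p] - [p^{-}])$ in $M(D)$ for all $d$; I would do this by induction on $|P_{\leq d}|$, splitting off a maximal element $m$ of $P_{\leq d}$ and applying the modularity relation to $d = d' \vee m$, $d' \wedge m = m^{-}$ (where $d' = \bigvee(P_{\leq d}\setminus\{m\})$). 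The main obstacle is precisely this finite case: on the one hand identifying the unit $D \to \mathrm{Bool}(D)$ with the down-set inclusion, and on the other carrying out the induction, where the combinatorics of finite distributive lattices and the modularity relation do the essential work; everything else is formal.
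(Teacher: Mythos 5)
Your proposal is correct, and the reduction to finite $D$ by filtered colimits is exactly the paper's. Where you diverge is in how the finite case is settled. The paper writes $D = \Omega(P_{\mathrm{Alex}})$ and argues via the universal property of $M$: it suffices to show that a valuation $\mu \colon D \to A$ extends uniquely to $\mathrm{Bool}(D) \cong \mathcal P(P)$, which (after identifying $M(\mathcal P(P)) \cong \mathbb Z[P]$) reduces to showing $\mu$ determines the ``virtual values'' $\mu(\{p\})$; this is done by induction on the height function $ht(p)$, writing $p\!\downarrow\, = \{p\} \sqcup \{q : q \lneq p\}$. You instead identify the comparison map $M(D) \to \mathbb Z[P]$ concretely as $[d] \mapsto \sum_{p \in P_{\leq d}}[p]$ and construct a two-sided inverse $\beta([p]) = [p] - [p^-]$, where $p^- = \bigvee\{q \in P : q < p\}$, verifying $\beta \circ \bar\mu = \mathrm{id}$ by induction on $|P_{\leq d}|$ using modularity applied to $d = d' \vee m$, $d' \wedge m = m^-$ for a maximal $m \in P_{\leq d}$. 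The paper's route is shorter on the page, but it really only spells out the \emph{uniqueness} of the extension (i.e.\ surjectivity of $M(D) \to M(\mathrm{Bool}(D))$), leaving the existence half — that the recursively defined $g(p)$ assembles into a valuation on $\mathcal P(P)$ that actually restricts to $\mu$ on all down-sets — implicit; your explicit $\beta$ handles both directions at once and so is arguably the more airtight version of essentially the same combinatorial content (Birkhoff's theorem, irreducibles, and the modularity relation).
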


\begin{proof}
By Lemma \ref{motivesfiltered} it suffices to prove the claim for a finite frame $D$, in which case we can write $D = \Omega(P_{\mathrm{Alex}})$ for some finite poset $(P,\leq)$.

By the universal property of the module of motives $M(D)$, the claim is equivalent to showing that any valuation $\mu : D \rightarrow A$ there exists a unique extension $\mu_{\mathrm{Bool}} : \mathrm{Bool}(D) \rightarrow A$ along the homomorphism $D \rightarrow \mathrm{Bool}(D)$. By Proposition \ref{finiteboolean} and Example \ref{finiteset} we have
$$M( \mathrm{Bool}(D) ) \cong M( \mathcal{P}(P)) \cong \mathbb{Z}[P].$$
This means we are left to show that a valuation $\mu: \Omega(P_{\mathrm{Alex}}) \rightarrow A$ uniquely determines the values on the (not necessarily open) singleton sets $\{p\}$ for $p \in P$. We can do this by induction.

Take the height function $ht : P \rightarrow \mathbb{N}$ defined in the proof of Proposition \ref{finiteboolean}. Suppose $p$ is an element of height $n$ and let $p \downarrow = \{ q \in P ~|~ q \leq p \}$ be the basic open subset of $P_{\mathrm{Alex}}$ associated to $p$. Then
$$ p \downarrow = \{p\} \cup \{ q \in P ~|~ q \lneq p \}$$
decomposes into two disjoints set, where the second set only contains elements of height $\leq \{n-1\}$. Now assume $\mu: \Omega(P_{\mathrm{Alex}}) \rightarrow A$  is a valuation. The minimal elements of $P$ give basic opens, hence $\mu(\{p\})$ is given. Now proceed by induction on the height to see that $\mu$ is completely determined on all singletons.
\end{proof}

\begin{lemma} \label{splitofmot}
Let $D$ be a lower bounded distributive lattice. Then
$$M(D_{\infty}) \cong M(D) \oplus \mathbb{Z},$$
where the split is induced by the natural homomorphisms $D \rightarrow D_\infty \rightarrow \mathbf{2}$ and $\mathbf{2} \rightarrow D_\infty$.
\end{lemma}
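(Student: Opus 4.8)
The plan is to work directly from the presentation of $M(D_\infty)$ as $\mathbb{Z}[D_\infty]$ modulo the two families of valuation relations, and to observe that adjoining the top element $\infty$ to $D$ contributes exactly one new free generator, namely $[\infty]$.

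First I would record that, as a set, $D_\infty = D \sqcup \{\infty\}$, so $\mathbb{Z}[D_\infty] = \mathbb{Z}[D] \oplus \mathbb{Z}\cdot[\infty]$, and that the bottom element of $D_\infty$ is that of $D$. Then I would run through the defining relations $[U]+[V] = [U\vee V] + [U\wedge V]$ for $U,V \in D_\infty$, distinguishing three cases. If $U,V \in D$, this is exactly one of the relations defining $M(D)$, since $D$ is a sublattice of $D_\infty$. If one of them, say $V$, equals $\infty$, then $U \vee \infty = \infty$ and $U \wedge \infty = U$, so the relation reads $[U] + [\infty] = [\infty] + [U]$ and is vacuous; the case $U = V = \infty$ is vacuous for the same reason. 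Together with $[0_{D_\infty}] = [0_D] = 0$, this shows that the subgroup of relations inside $\mathbb{Z}[D_\infty]$ is already contained in $\mathbb{Z}[D]$ and there coincides with the subgroup defining $M(D)$. Hence $M(D_\infty) = \bigl(\mathbb{Z}[D] \oplus \mathbb{Z}\cdot[\infty]\bigr) / \bigl(\text{relations of } M(D)\bigr) \cong M(D) \oplus \mathbb{Z}$.

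Finally I would match this decomposition with the maps in the statement: the first summand is the image of $M(D) \to M(D_\infty)$ induced by the inclusion $D \hookrightarrow D_\infty$; the second summand is the image of $M(\mathbf{2}) \xrightarrow{\ \cong\ } \mathbb{Z}\cdot[\infty]$, since the homomorphism $\mathbf{2} \to D_\infty$ sends the top element to $\infty$; and the projection onto $\mathbb{Z}$ is $M$ applied to $D_\infty \to \mathbf{2}$, which kills every element of $D$ and sends $\infty$ to $1$. The composite $\mathbf{2} \to D_\infty \to \mathbf{2}$ being the identity then confirms that $\mathbb{Z}$ is a genuine direct summand realized by precisely these maps. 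I do not expect a real obstacle here; the only point requiring care is the bookkeeping that no relation of $M(D_\infty)$ mixes $[\infty]$ nontrivially with $\mathbb{Z}[D]$, which is exactly the algebraic shadow of $\infty$ being absorbing for $\vee$ and neutral for $\wedge$. (Alternatively, one could argue via the universal property: a valuation on $D_\infty$ is the same as a valuation on $D$ together with an arbitrary choice of value at $\infty$, so $\mathrm{Val}(D_\infty;A) \cong \mathrm{Val}(D;A) \times A$ naturally in $A$, and Yoneda gives the claim.)
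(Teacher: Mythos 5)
Your proof is correct. The paper disposes of the lemma in two lines: it observes that $\mathbf{2} \to D_\infty \to \mathbf{2}$ exhibits $\mathbf{2}$ as a retract of $D_\infty$ in $\mathrm{DLatt}_{lb}$, so $M(\mathbf{2}) \cong \mathbb{Z}$ is a retract of $M(D_\infty)$, and then asserts that identifying the kernel of $M(D_\infty) \to M(\mathbf{2})$ with $M(D)$ is straightforward. Your argument goes a more explicit route: you work directly with the presentation $\mathbb{Z}[D_\infty]/(\text{relations})$ and verify case by case that every valuation relation involving $\infty$ is vacuous (because $\infty$ is absorbing for $\vee$ and neutral for $\wedge$), so the relation subgroup already sits inside $\mathbb{Z}[D]$ and coincides with the relation subgroup for $M(D)$. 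This is precisely the computation the paper waves away as ``straightforward,'' and it has the added benefit of simultaneously proving that $M(D) \to M(D_\infty)$ is injective and that its image is the kernel of the retraction — both needed for the claimed splitting and neither entirely immediate from the retract structure alone. Your alternative via the universal property, $\mathrm{Val}(D_\infty;A) \cong \mathrm{Val}(D;A) \times A$ followed by Yoneda, is the cleanest formulation of the same observation and would be a perfectly good replacement for the paper's proof.
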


\begin{proof}
It is clear that the maps $\mathbf{2} \rightarrow D_\infty$ and $D_\infty \rightarrow \mathbf{2}$ establish $\mathbf{2}$ as a retract of $D_\infty$ in the category $\mathrm{DLatt}_{lb}$. The identification of the resulting kernel with $M(D)$ is also straightforward.
\end{proof}

\begin{corollary} \label{motivesisfree}
Let $D$ be a lower bounded distributive lattice. Then $M(D)$ is a free abelian group.
\end{corollary}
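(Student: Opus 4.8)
The plan is to reduce the statement to the case of Boolean algebras, where it is already established. Given a lower bounded distributive lattice $D$, first adjoin a top element to obtain the bounded distributive lattice $D_\infty$. Since $D_\infty$ lies in $\mathrm{DLatt}_{bd}$, Proposition \ref{motivesonbooleanization} applies and yields an isomorphism $M(D_\infty) \cong M(\mathrm{Bool}(D_\infty))$. The lattice $\mathrm{Bool}(D_\infty)$ is a Boolean algebra, so by Stone duality it is realized as the Boolean algebra $\mathcal{K}^o(X)$ of compact open subsets of a profinite space $X$; Proposition \ref{motivesforboolean} then identifies $M(\mathrm{Bool}(D_\infty)) \cong C(X,\mathbb{Z})$, which is free by Nöbeling's theorem (Theorem \ref{noebelingstheorem}). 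Hence $M(D_\infty)$ is a free abelian group.

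To pass from $D_\infty$ back to $D$, I would invoke Lemma \ref{splitofmot}, which provides a splitting $M(D_\infty) \cong M(D) \oplus \mathbb{Z}$. Thus $M(D)$ is a direct summand, and in particular a subgroup, of the free abelian group $M(D_\infty)$; since every subgroup of a free abelian group is free, $M(D)$ is free.

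I do not anticipate any real obstacle here: the substantive input (Nöbeling's theorem) has already been used for the Boolean case in Proposition \ref{motivesforboolean}, and what remains is bookkeeping — checking that $D_\infty$ genuinely is a bounded distributive lattice so that Proposition \ref{motivesonbooleanization} applies, and that freeness descends to the direct summand $M(D)$. One might instead hope to write $D$ as a filtered colimit of finite sublattices and apply Lemma \ref{motivesfiltered}, but this fails because a filtered colimit of free abelian groups need not be free; the detour through $D_\infty$ and its Booleanization is precisely what circumvents this difficulty.
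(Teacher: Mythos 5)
Your argument is correct and follows essentially the same route as the paper's own proof: Lemma~\ref{splitofmot} reduces to the bounded case, Proposition~\ref{motivesonbooleanization} reduces to the Boolean case, and Proposition~\ref{motivesforboolean} together with N\"obeling's theorem finishes it. Your spelled-out version, including the explicit passage through Stone duality and the observation that freeness descends to a direct summand, is exactly what the paper's terse proof is invoking.
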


\begin{proof}
Lemma \ref{splitofmot} reduces the statement from lower bounded distributive lattices to bounded distributive lattices, Proposition \ref{motivesonbooleanization} further reduces the statement to Boolean algebras and this case is covered by Proposition \ref{motivesforboolean}.
\end{proof}

The module of motives $M(D)$ can also be characterized via another universal property. Observe that $M(D)$ has a (well-defined and non-unital) commutative ring structure given by $[U] \cdot [V] = [U \wedge V]$. If $D$ has a top element as well, then $M(D)$ is a commutative ring with unit given by $[1]$. Homomorphisms in $\mathrm{DLatt}_{lb}$, i.e. homomorphisms of lattices that preserve the bottom element, evidently induce ring homomorphisms. Conversely, given any non-unital commutative ring $R$, we observe that the set of idempotent elements
$$ \mathrm{Idem}(R) = \{ p \in R ~|~ p^2 = p \} $$
obtains the structure of a lower bounded distributive lattice, with the operations
\begin{itemize}
\item $0$ is the bottom element,
\item $p \wedge q = p q$,
\item $p \vee q = p + q - pq$.
\end{itemize}
These two functors determine each other. Let $\mathrm{CAlg}^{nu}$ be the category of non-unital commutative rings.
\begin{theorem} \label{idempotents}
There exists an adjunction
\[\begin{tikzcd}
	{\mathrm{DLatt}_{lb}} & {\mathrm{CAlg}^{nu}}
	\arrow[""{name=0, anchor=center, inner sep=0}, "{M}", curve={height=-12pt}, from=1-1, to=1-2]
	\arrow[""{name=1, anchor=center, inner sep=0}, "{\mathrm{Idem}}", curve={height=-12pt}, from=1-2, to=1-1]
	\arrow["\dashv"{anchor=center, rotate=-90}, draw=none, from=0, to=1]
\end{tikzcd}\]
\end{theorem}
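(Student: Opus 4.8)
The plan is to exhibit, for every lower bounded distributive lattice $D$ and every non-unital commutative ring $R$, a bijection
$$\mathrm{Hom}_{\mathrm{CALg}^{nu}}(M(D), R) \cong \mathrm{Hom}_{\mathrm{DLatt}_{lb}}(D, \mathrm{Idem}(R))$$
that is natural in both variables, which exhibits $M$ as left adjoint to $\mathrm{Idem}$. First I would record that $\mathrm{Idem}(R)$, with $0$ as bottom element, $p \wedge q = pq$ and $p \vee q = p + q - pq$, is indeed an object of $\mathrm{DLatt}_{lb}$: that $pq$ and $p + q - pq$ are again idempotent is a direct computation in a commutative ring using $p^2 = p$, $q^2 = q$ and commutativity, and the lattice axioms (associativity, commutativity, absorption, distributivity) follow by the same kind of elementary manipulation. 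Since a non-unital ring homomorphism $\varphi$ sends idempotents to idempotents and commutes with $+$, $\cdot$ and $0$, the assignment $R \mapsto \mathrm{Idem}(R)$ is functorial, giving $\mathrm{Idem} : \mathrm{CALg}^{nu} \to \mathrm{DLatt}_{lb}$.

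The key observation is that a non-unital ring homomorphism $g : M(D) \to R$ is the same datum as a valuation $\mu : D \to R$ satisfying $\mu(U \wedge V) = \mu(U)\mu(V)$ for all $U, V \in D$. Indeed, by the universal property of $M(D)$ the underlying abelian group homomorphisms $M(D) \to R$ correspond exactly to valuations $\mu : D \to R$ via $\mu = g \circ \mu_{univ}$; and since both sides of the identity $g(xy) = g(x)g(y)$ are biadditive, multiplicativity of $g$ need only be checked on the generators $[U]$, where, using $[U]\cdot[V] = [U \wedge V]$, it becomes precisely $\mu(U \wedge V) = \mu(U)\mu(V)$. Now any such multiplicative valuation automatically takes values in $\mathrm{Idem}(R)$, because $\mu(U) = \mu(U \wedge U) = \mu(U)^2$, and modularity together with multiplicativity forces $\mu(U \vee V) = \mu(U) + \mu(V) - \mu(U \wedge V) = \mu(U) + \mu(V) - \mu(U)\mu(V)$; together with $\mu(0) = 0$ this says exactly that $\mu$ is a morphism $D \to \mathrm{Idem}(R)$ in $\mathrm{DLatt}_{lb}$. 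Conversely, composing any lattice homomorphism $D \to \mathrm{Idem}(R)$ with the inclusion $\mathrm{Idem}(R) \hookrightarrow R$ yields a valuation that is multiplicative for $\wedge$. These two passages are visibly mutually inverse, so the displayed bijection holds.

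Naturality in $R$ is immediate, as both sides are formed by post-composition, and naturality in $D$ follows from the functoriality of $M$ recorded earlier; hence we obtain the adjunction. Unwinding the correspondence, the unit is the map $D \to \mathrm{Idem}(M(D))$ sending $U \mapsto [U]$ (well-defined since $[U] \cdot [U] = [U \wedge U] = [U]$), and the counit $M(\mathrm{Idem}(R)) \to R$ is the ring homomorphism determined by $[p] \mapsto p$. I expect the only mildly laborious point to be the bookkeeping in the first step, namely checking that $p + q - pq$ genuinely defines a join and that the distributive-lattice axioms hold in $\mathrm{Idem}(R)$; but this is entirely elementary commutative algebra, and everything after that is formal.
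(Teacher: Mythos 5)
Your proof is correct, but it takes a genuinely different route from the paper. You give a direct, elementary verification: you check that $\mathrm{Idem}(R)$ is a lower bounded distributive lattice by hand, use the universal property of $M(D)$ to identify abelian group maps $M(D) \to R$ with valuations, observe that imposing multiplicativity on the ring generators $[U]$ is equivalent (via biadditivity) to requiring $\mu(U\wedge V) = \mu(U)\mu(V)$, and then note that such multiplicative valuations are forced to land in $\mathrm{Idem}(R)$ and to preserve joins, yielding the natural bijection $\mathrm{Hom}_{\mathrm{CALg}^{nu}}(M(D),R) \cong \mathrm{Hom}_{\mathrm{DLatt}_{lb}}(D,\mathrm{Idem}(R))$. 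This is exactly the ``direct check'' the paper explicitly declines to carry out: it instead constructs, via Lawvere theories and models in the $\infty$-category of spaces, a Moore-spectrum-valued adjunction $\mathcal{M} \dashv \mathrm{Idem}$ between $\mathrm{DLatt}_{lb}$ and non-unital $E_\infty$-ring spectra (Theorem~\ref{spaceofidempotents}), and then obtains Theorem~\ref{idempotents} as a corollary by restricting to connective rings and composing with the $\pi_0 \dashv \mathrm{inclusion}$ adjunction. Your argument is shorter and entirely self-contained, requiring nothing beyond the definition of $M(D)$ and some bookkeeping with idempotents. The paper's detour through higher algebra is not gratuitous, though: it needs the spectrum-level functor $\mathcal{M}$ anyway to make the assignment $D \mapsto \mathcal{M}(D)$ functorial and multiplicative (which Moore spectra are not in general), and once that machinery is in place the classical statement falls out for free. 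Your approach proves the $1$-categorical theorem but would not, on its own, produce the spectral enhancement that the rest of the paper relies on.

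One small expository remark: you take for granted, as the paper does, that the multiplication $[U]\cdot[V] = [U\wedge V]$ is well-defined on the quotient $M(D)$ and that lattice homomorphisms induce ring homomorphisms $M(h)$. Both are easy (the latter by the same biadditivity reduction you already use), but since you are aiming at a self-contained direct proof it would be worth a sentence each. Everything else checks out, including the identification of the unit $U \mapsto [U]$ and counit $[p] \mapsto p$.
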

While this theorem can be checked directly, this adjunction will arise as a direct corollary of the adjunction constructed later in Theorem \ref{spaceofidempotents}. On a philosophical side, one could say that since the universal valuation of a (lower bounded) distributive lattice $D$ is determined by the concept of idempotent elements, this means that the concept of valuations arises as well naturally from that of idempotents.

\section{Presentable, compactly generated and dualizable $\infty$-categories}

We will denote the (very large) $\infty$-category of presentable $\infty$-categories and left adjoint functors by $\mathrm{Pr}^L$. We write  $\mathrm{Pr}^L_{ca}$ for the (non-full) subcategory of compactly assembled presentable $\infty$-categories and left adjoint functors, where corresponding right adjoints preserve filtered colimits, and $\mathrm{Pr}^L_{\omega}$ for the (full) subcategory of $\mathrm{Pr}^L_{ca}$ spanned by the compactly generated presentable $\infty$-categories. For resources discussing compactly assembled $\infty$-categories see \cite{krause_nikolaus_puetzstueck} and \cite{ramzi2024dualizablepresentableinftycategories}. We cite the following statements.

\begin{proposition}[See \cite{luriehtt} Theorem 5.5.3.18.] \label{collectionofcolimitspresentable}
The $\infty$-category $\mathrm{Pr}^L$ has all colimits and the composite
$$\mathrm{Pr}^L \simeq (\mathrm{Pr}^R)^{op} \xrightarrow{forget} (\widehat{\mathrm{Cat}}_\infty)^{op} $$
preserves and creates colimits.
\end{proposition}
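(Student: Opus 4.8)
This statement is \cite{luriehtt}, Theorem 5.5.3.18 recast through the self-duality of presentable $\infty$-categories, so the plan is to reduce to the dual assertion about \emph{limits} in $\mathrm{Pr}^R$. Recall first the equivalence $\mathrm{Pr}^L \simeq (\mathrm{Pr}^R)^{op}$ (\cite{luriehtt}, Corollary 5.5.3.4): by the adjoint functor theorem (\cite{luriehtt}, Corollary 5.5.2.9) a functor between presentable $\infty$-categories preserves small colimits iff it admits a right adjoint, and an accessible functor between them preserves small limits iff it admits a left adjoint, and passing to adjoints identifies $\mathrm{Pr}^L$ with $(\mathrm{Pr}^R)^{op}$. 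Under this identification the composite in the statement becomes the forgetful functor $(\mathrm{Pr}^R)^{op} \to (\widehat{\mathrm{Cat}}_\infty)^{op}$, so it suffices to show that $\mathrm{Pr}^R$ admits all small limits and that the forgetful functor $\mathrm{Pr}^R \to \widehat{\mathrm{Cat}}_\infty$ preserves and creates them.

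Given a small diagram $p \colon K \to \mathrm{Pr}^R$, I would form $\mathcal{C} = \lim_{k \in K} p(k)$ in $\widehat{\mathrm{Cat}}_\infty$ with projections $\pi_k \colon \mathcal{C} \to p(k)$, and then check in turn: (i) $\mathcal{C}$ is accessible, since a limit of accessible $\infty$-categories along accessible functors is accessible (see \cite{luriehtt}, \S 5.4), the transition functors of $p$ being right adjoints and hence accessible; (ii) $\mathcal{C}$ admits all small limits, computed levelwise, and each $\pi_k$ preserves them, because in $\widehat{\mathrm{Cat}}_\infty$ a limit of $\infty$-categories admitting small limits along functors preserving them again admits small limits, detected by the projections; (iii) therefore $\mathcal{C}$ is an accessible $\infty$-category admitting all small limits, hence presentable (an accessible $\infty$-category has all small limits iff it has all small colimits, \cite{luriehtt}, Corollary 5.5.2.4); (iv) each $\pi_k$ is accessible and limit-preserving, hence a right adjoint, so $(\mathcal{C}, (\pi_k))$ is a cone in $\mathrm{Pr}^R$. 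That this cone is a limit, and that limits are created, then follows formally: any cone $(\mathcal{D} \to p(k))_k$ in $\mathrm{Pr}^R$ is in particular a cone in $\widehat{\mathrm{Cat}}_\infty$, hence factors uniquely as $\mathcal{D} \to \mathcal{C} \to p(k)$, and the comparison functor $\mathcal{D} \to \mathcal{C}$ is accessible (its components into the $p(k)$ are) and limit-preserving (likewise), hence lies in $\mathrm{Pr}^R$; conversely a functor out of $\mathcal{D}$ defining a cone in $\mathrm{Pr}^R$ is determined by its underlying functor in $\widehat{\mathrm{Cat}}_\infty$, which gives the creation statement.

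Dualizing back, a colimit in $\mathrm{Pr}^L$ of a diagram $q \colon K \to \mathrm{Pr}^L$ is obtained by passing to right adjoints to get $q^R \colon K^{op} \to \mathrm{Pr}^R$, forming $\lim q^R$ there, and reading off the underlying colimit; by the previous paragraph this limit and its universal property are visible in $\widehat{\mathrm{Cat}}_\infty$, which is exactly the assertion that $\mathrm{Pr}^L \simeq (\mathrm{Pr}^R)^{op} \to (\widehat{\mathrm{Cat}}_\infty)^{op}$ preserves and creates colimits. The only genuinely nontrivial points I expect are (i) and (iii): the accessibility of a limit of accessible $\infty$-categories rests on the machinery of \cite{luriehtt}, \S 5.4, and the implication from ``accessible and complete'' to ``presentable'' must be invoked correctly; everything else is formal bookkeeping with adjoints. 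Since the proposition is quoted verbatim from \cite{luriehtt}, Theorem 5.5.3.18, all of these inputs are available off the shelf, so in the paper this can simply be cited.
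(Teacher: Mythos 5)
Your sketch correctly reproduces Lurie's own argument (reduce via $\mathrm{Pr}^L \simeq (\mathrm{Pr}^R)^{op}$ to limits in $\mathrm{Pr}^R$, which are computed in $\widehat{\mathrm{Cat}}_\infty$ using accessibility of limits of accessible categories and the fact that an accessible complete $\infty$-category is presentable), and the paper indeed does not prove this but simply cites \cite{luriehtt}, Theorem 5.5.3.18, exactly as you note at the end. So the proposal matches the paper's treatment.
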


\begin{proposition}[See \cite{luriehtt} Proposition 5.5.7.6.]
The $\infty$-category $\mathrm{Pr}^L_{\omega}$ has all colimits and the forget functor
$$\mathrm{Pr}^L_\omega \rightarrow \mathrm{Pr}^L$$
preserves and creates colimits.
\end{proposition}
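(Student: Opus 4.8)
The plan is to deduce both assertions from the equivalence $\mathrm{Pr}^L_\omega \simeq \mathrm{Cat}_\infty^{\mathrm{rex},\mathrm{idem}}$, where $\mathrm{Cat}_\infty^{\mathrm{rex},\mathrm{idem}}$ denotes the $\infty$-category of small, idempotent-complete $\infty$-categories that admit finite colimits, with morphisms the finite-colimit-preserving functors; the equivalence is implemented by $\mathcal{C} \mapsto \mathcal{C}^\omega$ with inverse $\mathcal{E} \mapsto \mathrm{Ind}(\mathcal{E})$ (this is essentially \cite{luriehtt}, Proposition~5.5.7.6 together with its neighbours). First I would observe that $\mathrm{Cat}_\infty^{\mathrm{rex},\mathrm{idem}}$ is cocomplete: it is a reflective localization of the presentable $\infty$-category $\mathrm{Cat}_\infty^{\mathrm{rex}}$ of small finitely cocomplete $\infty$-categories, the reflector being idempotent completion (which preserves finite colimits), so a colimit is computed by forming it in $\mathrm{Cat}_\infty^{\mathrm{rex}}$ and idempotent-completing. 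Transporting along the equivalence gives all small colimits in $\mathrm{Pr}^L_\omega$.

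For the second assertion I would note that under the equivalence the forgetful functor becomes $\mathrm{Ind} \colon \mathrm{Cat}_\infty^{\mathrm{rex},\mathrm{idem}} \to \mathrm{Pr}^L$, so it is enough to show that this preserves small colimits; since every small colimit is generated by filtered colimits, binary coproducts and pushouts, I would treat these three cases separately. Filtered colimits are easy: for a filtered diagram $(\mathcal{C}_i)$ in $\mathrm{Pr}^L_\omega$ the transition maps have right adjoints preserving filtered colimits, the colimit $\colim_i \mathcal{C}_i$ taken in $\mathrm{Pr}^L$ is again compactly generated with compact objects $\colim_i \mathcal{C}_i^\omega$ (a filtered colimit of idempotent-complete finitely cocomplete $\infty$-categories is again of this type), and this already represents the colimit in $\mathrm{Pr}^L_\omega$. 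Coproducts are also easy: the coproduct in $\mathrm{Pr}^L$ of a family is the product $\infty$-category, its compact objects are the families of compact objects that are zero in all but finitely many coordinates, and this is exactly the coproduct of the $\mathcal{C}_i^\omega$ in $\mathrm{Cat}_\infty^{\mathrm{rex},\mathrm{idem}}$.

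The hard part will be pushouts. A pushout in a cocomplete $\infty$-category is the geometric realization of a bar-type simplicial object built from coproducts of the three categories involved; in $\mathrm{Pr}^L$ these coproducts are products. So I would reduce the pushout case to the facts that $\mathrm{Ind}$ preserves finite products ($\mathrm{Ind}(\mathcal{E} \times \mathcal{F}) \simeq \mathrm{Ind}(\mathcal{E}) \times \mathrm{Ind}(\mathcal{F})$, which is elementary) and that $\mathrm{Ind}$ preserves geometric realizations of such bar objects — this last point, where the interaction between idempotent completion, the product/tensor structure and sifted colimits genuinely has to be checked, is the main obstacle and is the technical content of \cite{luriehtt}, \S 5.5.7. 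Granting it, the pushout of $\mathrm{Ind}(\mathcal{B}^\omega) \leftarrow \mathrm{Ind}(\mathcal{A}^\omega) \to \mathrm{Ind}(\mathcal{C}^\omega)$ formed in $\mathrm{Pr}^L$ agrees with $\mathrm{Ind}$ of the corresponding pushout in $\mathrm{Cat}_\infty^{\mathrm{rex},\mathrm{idem}}$. Putting the three cases together, the colimit of any small diagram in $\mathrm{Pr}^L_\omega$, formed in $\mathrm{Pr}^L$, is compactly generated and is a colimit in $\mathrm{Pr}^L_\omega$; hence the forgetful functor creates, and in particular preserves, small colimits.
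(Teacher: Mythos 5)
The paper itself gives no proof of this statement; it is a direct citation to \cite{luriehtt} Proposition 5.5.7.6, so there is no in-paper argument to compare against. Your framework is, nevertheless, essentially Lurie's: identify $\mathrm{Pr}^L_\omega$ with $\mathrm{Cat}_\infty^{\mathrm{rex},\mathrm{idem}}$ via $\mathrm{Ind}$ and $(-)^\omega$, observe the latter is cocomplete, and reduce the problem to showing $\mathrm{Ind}$ preserves small colimits, which you break into filtered colimits, coproducts, and pushouts. That decomposition and the reflective-localization argument for cocompleteness of $\mathrm{Cat}_\infty^{\mathrm{rex},\mathrm{idem}}$ are both fine.

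Two points to fix or be aware of. First, a small slip in the coproduct case: the compact objects of $\prod_i \mathcal{C}_i$ are the tuples that are \emph{initial} (not zero) in all but finitely many coordinates, together with retracts thereof; the proposition is not restricted to pointed or stable presentable $\infty$-categories, so ``zero'' is not available. Relatedly, in the filtered case you assert without comment that a filtered colimit (in $\mathrm{Cat}_\infty$) of small idempotent-complete finitely cocomplete $\infty$-categories is again idempotent complete. Unlike in ordinary category theory, a coherent idempotent is not a finite amount of data (the simplicial set $\mathrm{Idem}$ is not finite), so this is a genuine point that needs an argument or a citation; alternatively you can sidestep it by explicitly idempotent-completing the colimit and checking this does not change the $\mathrm{Ind}$-category. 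Second, and more importantly, the pushout step is where the entire mathematical content of the proposition lives, and you explicitly grant it (``the technical content of \S 5.5.7''). That is honest, but it means what you have written is a correct roadmap to Lurie's proof rather than an independent proof. Since the paper itself merely cites Lurie, this does not undermine anything, but you should not mistake the sketch for a self-contained argument: the deferred step — that a $\Delta^{\mathrm{op}}$-indexed colimit in $\mathrm{Pr}^L$ of compactly generated categories along compact-object-preserving left adjoints is again compactly generated with the expected compact objects — is the whole ballgame.
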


\begin{proposition}[\cite{ramzi2024dualizablepresentableinftycategories} Proposition 2.49.]
The $\infty$-category $\mathrm{Pr}^L_{ca}$ has all colimits and the forget functor
$$\mathrm{Pr}^L_{ca} \rightarrow \mathrm{Pr}^L$$
preserves and creates colimits.
\end{proposition}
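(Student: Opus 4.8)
The plan is to deduce the whole statement from the structural description of colimits in $\mathrm{Pr}^L$ recorded in Proposition \ref{collectionofcolimitspresentable}, isolating one genuinely substantive point. Since $\mathrm{Pr}^L$ is cocomplete, it suffices to prove that the forgetful functor $\mathrm{Pr}^L_{ca} \to \mathrm{Pr}^L$ creates colimits; the existence of colimits in $\mathrm{Pr}^L_{ca}$ and their preservation by the forgetful functor then follow formally. So fix a small diagram $p : K \to \mathrm{Pr}^L_{ca}$, set $\mathcal{C} := \colim_{\mathrm{Pr}^L} p$ with its cocone $(u_k : p(k) \to \mathcal{C})_{k \in K}$, and recall from Proposition \ref{collectionofcolimitspresentable} that $\mathcal{C}$ is computed as the limit in $\widehat{\mathrm{Cat}}_\infty$ of the diagram of right adjoints $p^R : K^{op} \to \widehat{\mathrm{Cat}}_\infty$, the right adjoint of $u_k$ being the projection $\pi_k : \mathcal{C} \to p(k)$.

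First I would pin down the behaviour of filtered colimits in $\mathcal{C}$. Every transition functor of $p^R$ is the right adjoint of a morphism of $\mathrm{Pr}^L_{ca}$, hence preserves filtered colimits; therefore the limit $\mathcal{C} = \lim p^R$ admits filtered colimits, computed componentwise, and the projections $\pi_k$ preserve and jointly detect them. In particular each $u_k$ is a morphism of $\mathrm{Pr}^L_{ca}$, since $u_k^R = \pi_k$ preserves filtered colimits. The same observation gives the universal property: belonging to $\mathrm{Pr}^L_{ca}$ is a property of a morphism of $\mathrm{Pr}^L$ (namely, that its right adjoint preserve filtered colimits), so $\mathrm{Map}_{\mathrm{Pr}^L_{ca}}(-, \mathcal{E})$ is a union of connected components of $\mathrm{Map}_{\mathrm{Pr}^L}(-, \mathcal{E})$ for each $\mathcal{E} \in \mathrm{Pr}^L_{ca}$; and a map $f : \mathcal{C} \to \mathcal{E}$ of $\mathrm{Pr}^L$ lies in $\mathrm{Pr}^L_{ca}$ if and only if $f^R$ preserves filtered colimits, which by the joint detection of filtered colimits by the $\pi_k$ happens if and only if each $\pi_k \circ f^R = (f \circ u_k)^R$ does, i.e. if and only if each composite $f \circ u_k$ lies in $\mathrm{Pr}^L_{ca}$. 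Feeding this into the equivalence $\mathrm{Map}_{\mathrm{Pr}^L}(\mathcal{C}, \mathcal{E}) \simeq \lim_k \mathrm{Map}_{\mathrm{Pr}^L}(p(k), \mathcal{E})$ yields $\mathrm{Map}_{\mathrm{Pr}^L_{ca}}(\mathcal{C}, \mathcal{E}) \simeq \lim_k \mathrm{Map}_{\mathrm{Pr}^L_{ca}}(p(k), \mathcal{E})$, so as soon as $\mathcal{C} \in \mathrm{Pr}^L_{ca}$ the cocone $(u_k)$ is a colimit cocone there.

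The one remaining and genuinely substantive point is that $\mathcal{C}$ is itself compactly assembled. For this I would invoke an intrinsic characterization of compact assembly that is visibly stable under the limits appearing above — for instance, that a presentable category $\mathcal{D}$ is compactly assembled if and only if every object of $\mathcal{D}$ is the filtered colimit of the diagram of its \emph{compact morphisms}, a morphism $x \to y$ being \emph{compact} when every map from $y$ into a filtered colimit factors, essentially uniquely, through some stage. One then shows that a morphism of $\mathcal{C} = \lim p^R$ is compact precisely when each of its components in the $p(k)$ is, and combines this with the componentwise computation of filtered colimits and a cofinality argument to see that every object of $\mathcal{C}$ is rebuilt from its compact morphisms. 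This verification is the main obstacle, and is exactly the content of \cite[Proposition 2.49]{ramzi2024dualizablepresentableinftycategories}, to which I would refer for the details; I do not expect a materially shorter route. Granting it, the pieces assemble: $\mathcal{C}$ lies in $\mathrm{Pr}^L_{ca}$, its cocone consists of morphisms of $\mathrm{Pr}^L_{ca}$ and is a colimit cocone there, and hence $\mathrm{Pr}^L_{ca}$ is cocomplete with the forgetful functor to $\mathrm{Pr}^L$ preserving and creating colimits.
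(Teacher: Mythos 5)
The paper does not prove this statement; it is imported verbatim as a citation to Ramzi's Proposition~2.49, so there is no in-paper argument to compare yours against. What you have written is a proposal for how one might prove it, and I should evaluate it on its own terms.

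Your formal reduction is sound: by Lurie's description of colimits in $\mathrm{Pr}^L$ as limits in $\widehat{\mathrm{Cat}}_\infty$ of the diagram of right adjoints, the projections $\pi_k$ preserve and jointly detect filtered colimits, the cocone legs $u_k$ lie in $\mathrm{Pr}^L_{ca}$, and the universal property descends because ``lying in $\mathrm{Pr}^L_{ca}$'' is a mapping-space condition detected by the $\pi_k$. You have correctly isolated the one substantive claim: that $\mathcal{C} = \lim p^R$ is itself compactly assembled.

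However, the strategy you sketch for that claim contains a step that is false as stated. You assert that a morphism of $\mathcal{C} = \lim p^R$ is compact \emph{precisely} when each component in $p(k)$ is. The ``only if'' direction is fine (the $\pi_k$ preserve filtered colimits), but the ``if'' direction fails whenever the indexing category $K^{op}$ is not filtered, because the stages witnessing compactness in the individual $p(k)$ need not be jointly bounded. A concrete counterexample arises already for an infinite coproduct: take $\mathcal{C} = \prod_{n \in \mathbb{N}} \mathrm{Sp}$, the colimit of the constant $\mathbb{N}$-indexed discrete diagram at $\mathrm{Sp}$. The identity on the object $(\mathbb{S})_{n}$ has compact components (each is the identity on a compact object), but it is not a compact morphism of $\prod_n \mathrm{Sp}$: mapping into the filtered system $z_i = (z_i^n)_n$ with $z_i^n = \mathbb{S}$ for $n \le i$ and $z_i^n = 0$ otherwise gives $\colim_i z_i = (\mathbb{S})_n$, yet the identity does not factor through any finite stage. (That $\prod_n \mathrm{Sp}$ is nonetheless compactly assembled is true, but its compact morphisms are those that are compact componentwise \emph{and} eventually zero; your biconditional would mischaracterize them.) Since the only route you offer to the substantive point is this incorrect characterization followed by a deferral to the original reference, the proposal does not constitute a proof. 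A correct argument would instead establish compact assembly of $\mathcal{C}$ via one of the stable characterizations --- e.g.\ existence of a left adjoint to $\colim \colon \mathrm{Ind}(\mathcal{C}) \to \mathcal{C}$, or exhibiting $\mathcal{C}$ as a retract of a compactly generated category in $\mathrm{Pr}^L_{ca}$ --- rather than by componentwise compactness of morphisms.
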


We note that in particular, the inclusion $\mathrm{Pr}^L_{\omega} \rightarrow \mathrm{Pr}^L_{ca}$ preserves all colimits (in fact, it has a right adjoint given by $\mathrm{Ind}((-)^{\omega})$.

By Theorem \ref{cofilteredlimitstopoi} we know that a cofiltered limit of $\infty$-topoi is computed as the limit of the underlying large categories along the diagram of right adjoint functors between them. Using that colimits in $\mathrm{Pr}^L \simeq (\mathrm{Pr}^R)^{op}$ are computed by passing to right adjoints and then computing the limit in $\widehat{\mathrm{Cat}}_\infty$, see \cite[Theorem 5.5.3.18.]{luriehtt}, we get equivalently that the forget functor
$$\mathrm{LTop} \rightarrow \mathrm{Pr}^L$$
preserves filtered colimits, where $\mathrm{LTop} \simeq \mathrm{RTop}^{op}$ is the (large) $\infty$-category of $\infty$-topoi and left adjoints $f*$ of geometric morphisms between them.

\begin{theorem} \label{sheavesfiltered}
The functor
$$ \mathrm{Sh}((-,fin),\mathrm{Spc}) : \mathrm{DLatt}_{bd} \rightarrow \mathrm{Pr}^{L}_{\omega} $$
preserves filtered colimits.
\end{theorem}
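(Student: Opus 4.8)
The plan is to strip away the higher-categorical layers using the structural results cited above, reducing to an elementary statement about the finitary sites $(D,fin)$. Throughout write $D=\colim_{i\in I}D_i$ for a filtered diagram in $\mathrm{DLatt}_{bd}$, a colimit computed on underlying sets.

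First I would pass from $\mathrm{Pr}^L_\omega$ to $\mathrm{Pr}^L$. By Corollary \ref{compgen} all of the categories $\mathrm{Sh}((D_i,fin),\mathrm{Spc})$ and $\mathrm{Sh}((D,fin),\mathrm{Spc})$ lie in $\mathrm{Pr}^L_\omega$, and by Lemma \ref{compmor} the transition functors, together with the comparison functors into $\mathrm{Sh}((D,fin),\mathrm{Spc})$, preserve compact objects, hence are morphisms in $\mathrm{Pr}^L_\omega$. Since $\mathrm{Pr}^L_\omega\to\mathrm{Pr}^L$ preserves and creates colimits, it suffices to show the canonical functor $\colim_i \mathrm{Sh}((D_i,fin),\mathrm{Spc})\to\mathrm{Sh}((D,fin),\mathrm{Spc})$ is an equivalence in $\mathrm{Pr}^L$. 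Now this functor factors through $\mathrm{LTop}$ as $D\mapsto\mathrm{Sh}(\mathrm{Ind}(D))$ (using $\mathrm{Ind}$, the equivalence $\mathrm{Frm}^{op}\simeq\mathrm{Loc}$, the left adjoint $\mathrm{Sh}(-)\colon\mathrm{Loc}\to\mathrm{RTop}$ of Theorem \ref{adjunctionlocalestopoi}, the equality $\mathrm{LTop}=\mathrm{RTop}^{op}$, and the equivalence $\mathrm{Sh}((D,fin),\mathrm{Spc})\simeq\mathrm{Sh}(\mathrm{Ind}(D))$ proved above via the comparison lemma), followed by $\mathrm{LTop}\to\mathrm{Pr}^L$. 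As the latter functor preserves filtered colimits, and as $\mathrm{Ind}$ is a left adjoint (Theorem \ref{stonedualitycoherent}) so that $\mathrm{Ind}(D)=\colim_i\mathrm{Ind}(D_i)$ in $\mathrm{Frm}$, it is enough to show that $\mathrm{Sh}(\mathrm{Ind}(D))$ is the cofiltered limit of the $\mathrm{Sh}(\mathrm{Ind}(D_i))$ in $\mathrm{RTop}$; by Theorem \ref{cofilteredlimitstopoi} this may be checked on underlying $\infty$-categories. Putting all of this together, the theorem reduces to the claim that
$$\Psi\colon\ \mathrm{Sh}((D,fin),\mathrm{Spc})\ \longrightarrow\ \lim_{i\in I^{op}}\mathrm{Sh}((D_i,fin),\mathrm{Spc}),\qquad \mathcal{F}\longmapsto(\mathcal{F}\circ f_i^{op})_i,$$
is an equivalence of $\infty$-categories, where $f_i\colon D_i\to D$ and, for $i\le j$, the transition functor is restriction $\mathcal{F}\mapsto\mathcal{F}\circ f_{ij}^{op}$. (These restrictions are exactly the direct-image functors of the relevant geometric morphisms, since the induced inverse images send a representable $y_U$ to $y_{f(U)}$, which is already a $fin$-sheaf.)

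To prove $\Psi$ is an equivalence, recall that $\mathrm{Fun}(-,\mathrm{Spc})$ sends the filtered colimit of posets $D=\colim_iD_i$ to a cofiltered limit, so $\mathrm{PSh}(D)\simeq\lim_i\mathrm{PSh}(D_i)$ along restriction; under this identification both source and target of $\Psi$ are full subcategories of $\mathrm{PSh}(D)$, so it remains to see that they contain the same presheaves, i.e. that $\mathcal{F}\in\mathrm{PSh}(D)$ is a $fin$-sheaf if and only if each $\mathcal{F}\circ f_i^{op}$ is a $fin$-sheaf on $D_i$. The forward implication holds because a lattice homomorphism carries a finite covering $\{d_k\le d,\ \bigvee_k d_k=d\}$ to a finite covering. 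For the converse I would use the explicit sheaf condition recalled in the text: $\mathcal{F}$ is a sheaf for $fin$ iff $\mathcal{F}(0)\simeq 1$ and the square through $\mathcal{F}(U\vee V),\mathcal{F}(U),\mathcal{F}(V),\mathcal{F}(U\wedge V)$ is cartesian for all $U,V\in D$. Each instance of this involves only $0$, or a pair $U,V$ together with the derived elements $U\vee V,U\wedge V$; by filteredness there is an $i$ and $\tilde U,\tilde V\in D_i$ with $f_i(\tilde U)=U$ and $f_i(\tilde V)=V$, whence $f_i(\tilde U\vee\tilde V)=U\vee V$ and $f_i(\tilde U\wedge\tilde V)=U\wedge V$, so this instance of the sheaf condition for $\mathcal{F}$ is the image of the corresponding instance for $\mathcal{F}\circ f_i^{op}$, which holds by assumption.

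The only step carrying genuine content is this last verification, and its crux is the elementary observation that each instance of the \emph{finitary} sheaf condition is witnessed by finitely much data and therefore descends to a single stage of the filtered colimit — it is exactly the finiteness of coverings in $(D,fin)$ (the quasi-compactness packaged into $\mathrm{DLatt}_{bd}$) that makes this work. Everything preceding it is formal bookkeeping with the cited facts about colimits in $\mathrm{Pr}^L$, in $\mathrm{Pr}^L_\omega$, and in $\mathrm{LTop}/\mathrm{RTop}$.
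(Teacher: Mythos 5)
Your proof is correct and reaches the conclusion by a genuinely different route. The paper's own proof is purely formal: it observes that $\mathrm{Ind}\colon\mathrm{DLatt}_{bd}\to\mathrm{Frm}$ and the induced functor $\mathrm{Frm}\to\mathrm{LTop}$ are both left adjoints (the latter because $\mathrm{Sh}(-)\colon\mathrm{Loc}\to\mathrm{RTop}$ is the \emph{right} adjoint in Theorem~\ref{adjunctionlocalestopoi} — your parenthetical calls it the left adjoint, a slip, though you never actually use it), composes with the filtered-colimit-preserving functor $\mathrm{LTop}\to\mathrm{Pr}^L$ from Theorem~\ref{cofilteredlimitstopoi}, and lifts along $\mathrm{Pr}^L_\omega\to\mathrm{Pr}^L$. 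You follow the same outer skeleton but replace the black-box input at the second step with a direct verification: after using Theorem~\ref{cofilteredlimitstopoi} to reduce the claim to one about underlying $\infty$-categories, you identify both sides of the comparison functor with full subcategories of $\mathrm{PSh}(D)$ and then check that a presheaf on $D$ satisfies the finitary sheaf condition if and only if its restriction to each $D_i$ does, exploiting that every instance of the condition ($\mathcal{F}(0)\simeq 1$, or the cartesian square at a pair $U,V$) lifts to a single stage $D_i$ by filteredness. This makes explicit where the hypothesis of \emph{finite} coverings on a distributive lattice enters, something the paper's one-line adjoint argument hides; the trade-off is that your proof is considerably longer, replacing a formal fact (left adjoints preserve colimits) by a concrete computation.
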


\begin{proof}
The composite of the functors
$$ \mathrm{DLatt}_{bd} \rightarrow \mathrm{Frm} \rightarrow \mathrm{LTop} \rightarrow \mathrm{Pr}^L$$
preserve filtered colimits. This is because the first two functors are left adjoints by Theorem \ref{stonedualitycoherent} and Theorem \ref{adjunctionlocalestopoi}, and the third preserves filtered colimits by Theorem \ref{cofilteredlimitstopoi}.
By Corollary \ref{compgen} and Lemma \ref{compmor} it lifts to $\mathrm{Pr}^{L}_{\omega}$ along the colimit preserving functor $\mathrm{Pr}^{L}_{\omega} \rightarrow \mathrm{Pr}^L$ (by Lemma \ref{collectionofcolimitspresentable}), hence the claim follows.
\end{proof}

The $\infty$-category $\mathrm{Pr}^L$ comes with a symmetric monoidal structure referred to as the \emph{Lurie tensor product} $\otimes$, which preserves colimits in both variables, see \cite[4.8.1]{lurieha}. Given a site $(C, \tau)$ and a presentable $\infty$-category $\mathcal{E}$, we define the $\infty$-category of $\mathcal{E}$-valued sheaves as
$$\mathrm{Sh}((C, \tau), \mathcal{E}) = \mathrm{Sh}(C, \tau) \otimes \mathcal{E}.$$

\begin{example}
A pleasing special case is obtained by taking $\mathcal{E} = \mathbf{2}$, which is a presentable poset. In this case $\mathrm{Sh}(C, \tau) \otimes \mathbf{2}$ recovers the frame of propositional sheaves on $C$, hence our choice of notation does not clash. More generally, the functor $\mathrm{Sub}(1) : \mathrm{RTop} \rightarrow \mathrm{Loc}$ can be identified with the functor $- \otimes \mathbf{2}$.
\end{example}

\begin{example} \label{sheavesfiniteframepresentable}
Suppose $D$ is a finite frame and $\mathcal{C}$ a presentable $\infty$-category. Then it follows directly from Proposition \ref{sheavesonfiniteframe} that there is a natural equivalence
$$\mathrm{Sh}(D, \mathcal{C}) \simeq \mathrm{Fun}( \mathrm{pts}(D)^{op}, \mathcal{C} )$$
obtained by restricting a sheaf $G : D \rightarrow \mathcal{C}$ along the inclusion $\mathrm{pts}(D) \rightarrow D$.
\end{example}

\begin{remark}
As a further remark on the previous example, if $P$ is a finite poset and $\mathcal{C} = \mathrm{RMod}_R$ is the $\infty$-category of right $R$-modules for an $E_1$-ring spectrum $R$, we have that
$$\mathrm{Fun}( P^{op}, \mathrm{RMod}_R )$$
is compactly generated by the finite set of representables $R \otimes y_p$ for $p \in P$. Let $I(P, R)$ be the endomorphism ring spectrum of the compact (!) object $\bigoplus_{p \in P} y_p \otimes R $. It follows by the Schwede-Shipley recognition theorem, see \cite[Theorem 7.1.2.1]{lurieha}, that
$$\mathrm{Fun}( P^{op}, \mathrm{RMod}_R ) \simeq \mathrm{RMod}_{I(P, R)}.$$
An inspection reveals that the underlying $R$-module of $I(P, R)$ is simply the free $R$-module generated by the set $P_1 = \{(p,q) \in P^2 ~|~ p \leq q \}$. In the case of a discrete ring $R$, this recovers the classical incidence algebra of a finite poset, as originally defined by Rota, \cite{Rota1964}. See also \cite{SpiegelOdonnell1997}.
\end{remark}

\subsection{Dualizable $\infty$-categories}

We now turn to the stable context. Let $\mathrm{Pr}^L_{dual}$ be the full subcategory of $\mathrm{Pr}^L_{ca}$ spanned by \emph{stable} compactly assembled presentable $\infty$-categories, also called \emph{dualizable} $\infty$-categories. We cite two structural results about this $\infty$-category.

\begin{proposition}[\cite{krause_nikolaus_puetzstueck}, Prop 2.19.9.]
There is an adjunction
\[\begin{tikzcd}
	{\mathrm{Pr}^L_{ca}} & {\mathrm{Pr}^L_{dual}}
	\arrow[""{name=0, anchor=center, inner sep=0}, "{- \otimes \mathrm{Sp}}", curve={height=-12pt}, from=1-1, to=1-2]
	\arrow[""{name=1, anchor=center, inner sep=0}, curve={height=-12pt}, hook', from=1-2, to=1-1]
	\arrow["\dashv"{anchor=center, rotate=-90}, draw=none, from=0, to=1]
\end{tikzcd}\]
where the right adjoint is given as the (fully faithful) forget functor, and the left adjoint is given as the Lurie tensor product with the $\infty$-category of spectra. Furthermore, both $\infty$-categories equipped with the Lurie tensor product are symmetric monoidal $\infty$-categories such that $-\otimes-$ preserves colimits in each variable, and both the left and right adjoint functors in the diagram above are strong symmetric monoidal functors.
\end{proposition}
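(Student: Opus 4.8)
The plan is to deduce the statement from the general formalism of idempotent $E_\infty$-algebras, carried out inside the symmetric monoidal $\infty$-category $(\mathrm{Pr}^L_{ca}, \otimes)$. The first step is to promote $\mathrm{Pr}^L_{ca}$ to a presentably symmetric monoidal $\infty$-category. Here I would use that a presentable $\infty$-category is compactly assembled exactly when it is a dualizable object of $(\mathrm{Pr}^L, \otimes)$; dualizable objects are closed under the tensor product and contain the unit $\mathrm{Spc}$, and the strongly continuous functors (left adjoints whose right adjoint preserves filtered colimits) are likewise closed under $\otimes$, so that $\mathrm{Pr}^L_{ca} \subset \mathrm{Pr}^L$ is a symmetric monoidal subcategory. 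Since $\mathrm{Pr}^L_{ca} \to \mathrm{Pr}^L$ creates colimits (the proposition of \cite{ramzi2024dualizablepresentableinftycategories} cited above) and $\otimes$ preserves colimits in each variable on $\mathrm{Pr}^L$, the Lurie tensor product on $\mathrm{Pr}^L_{ca}$ preserves colimits in each variable as well. These structural facts I would import from \cite{ramzi2024dualizablepresentableinftycategories} and \cite{krause_nikolaus_puetzstueck}.

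Next I would observe that $\mathrm{Sp}$, being compactly generated, lies in $\mathrm{Pr}^L_{ca}$, and that the canonical equivalence $\mathrm{Sp} \otimes \mathrm{Sp} \simeq \mathrm{Sp}$, together with the unit $\Sigma^\infty_+ : \mathrm{Spc} \to \mathrm{Sp}$, exhibits $\mathrm{Sp}$ as an idempotent $E_\infty$-algebra object of $(\mathrm{Pr}^L_{ca}, \otimes)$; the one small check is that $\Sigma^\infty_+$ is strongly continuous, which holds because $\Omega^\infty$ preserves filtered colimits. The theory of idempotent algebras (Lurie, \cite{lurieha}, Section 4.8.2) then gives that extension of scalars
$$
- \otimes \mathrm{Sp} : \mathrm{Pr}^L_{ca} \longrightarrow \mathrm{Mod}_{\mathrm{Sp}}(\mathrm{Pr}^L_{ca})
$$
is a smashing reflective localization: it is left adjoint to a fully faithful inclusion and is strong symmetric monoidal, and on the local objects the relative tensor product $\otimes_{\mathrm{Sp}}$ coincides with the ambient $\otimes$, so that the inclusion is symmetric monoidal as well (strong on binary tensors, with unit $\mathrm{Sp}$).

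The third step is to identify $\mathrm{Mod}_{\mathrm{Sp}}(\mathrm{Pr}^L_{ca})$ with $\mathrm{Pr}^L_{dual}$. An object $\mathcal{C} \in \mathrm{Pr}^L_{ca}$ is local for the idempotent algebra $\mathrm{Sp}$ precisely when the unit map $\mathcal{C} \to \mathcal{C} \otimes \mathrm{Sp}$ is an equivalence; since $\mathcal{C} \otimes \mathrm{Sp} \simeq \mathrm{Sp}(\mathcal{C})$ with unit $\Sigma^\infty_+$, this holds exactly when $\mathcal{C}$ is stable, so the local objects are exactly the stable compactly assembled $\infty$-categories, i.e. $\mathrm{Pr}^L_{dual}$, and the induced morphisms are those of the full subcategory inclusion into $\mathrm{Pr}^L_{ca}$. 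As a reflective localization of a presentably symmetric monoidal $\infty$-category, $\mathrm{Pr}^L_{dual}$ is itself presentably symmetric monoidal, so $\otimes$ preserves colimits in each variable there too. Combining the three steps yields the adjunction together with the monoidality assertions, the left adjoint $-\otimes\mathrm{Sp}$ being strong symmetric monoidal by the idempotent-algebra formalism.

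I expect the main obstacle to be the structural input of the first step: that the Lurie tensor product of two compactly assembled presentable $\infty$-categories is again compactly assembled, and that this is compatible with the non-full morphism structure defining $\mathrm{Pr}^L_{ca}$. This is the one piece carrying genuine content, and I would cite it from \cite{ramzi2024dualizablepresentableinftycategories} and \cite{krause_nikolaus_puetzstueck}; granting it, the remainder is formal bookkeeping with the idempotent algebra $\mathrm{Sp}$, the only other thing needing a brief verification being that the localization unit $\Sigma^\infty_+$ is strongly continuous.
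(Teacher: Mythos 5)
The paper does not prove this proposition; it imports it verbatim from Krause--Nikolaus--P\"utzst\"uck, so there is no ``paper's proof'' to compare against directly. Your reconstruction via the idempotent $E_\infty$-algebra formalism of \cite[4.8.2]{lurieha} is the natural and, as far as I can see, correct route, and it is essentially the argument used in the literature. You correctly identify the nontrivial structural input: that $\mathrm{Pr}^L_{ca}$ inherits a symmetric monoidal structure from $\mathrm{Pr}^L$ (i.e., compactly assembled categories are closed under $\otimes$ \emph{and} the tensor of two strongly continuous functors is again strongly continuous --- the latter is not automatic and is the genuine content), and that colimits exist and are preserved by the tensor; granting these, the rest is formal. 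The identification of the local objects via $\mathcal{C}\otimes\mathrm{Sp}\simeq\mathrm{Sp}(\mathcal{C})$ and the observation that the unit map is $\Sigma^\infty_+$, with $\Omega^\infty$ preserving filtered colimits because $\mathbb{S}$ is compact, are both correct.

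One small point of phrasing: $\mathrm{Pr}^L_{ca}$ is a \emph{very large} $\infty$-category without a small set of generators, so calling it ``presentably symmetric monoidal'' is loose. What you actually need and use --- that $\mathrm{Pr}^L_{ca}$ admits colimits, that $\otimes$ preserves them in each variable, and that the reflective localization onto $\mathrm{Pr}^L_{dual}$ preserves these properties --- is true and is what you invoke from \cite{ramzi2024dualizablepresentableinftycategories} and \cite{krause_nikolaus_puetzstueck}. If you were writing this out in full you would want to replace ``presentable'' by the appropriate relative or bounded notion, or simply list the colimit and accessibility properties directly. With that caveat the argument is sound.
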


\begin{proposition}[See \cite{efimov2025ktheorylocalizinginvariantslarge},  Proposition 1.65.]
Colimits in the $\infty$-category $\mathrm{Pr}^L_{dual}$ exist and the forget functor $\mathrm{Pr}^L_{dual} \rightarrow \mathrm{Pr}^L$ preserves them.
\end{proposition}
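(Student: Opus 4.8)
The plan is to realize $\mathrm{Pr}^L_{dual}$ as a reflective subcategory of $\mathrm{Pr}^L_{ca}$ and then import the known behaviour of colimits in $\mathrm{Pr}^L_{ca}$. The adjunction of the immediately preceding Proposition exhibits the fully faithful inclusion $\mathrm{Pr}^L_{dual}\hookrightarrow\mathrm{Pr}^L_{ca}$ as the right adjoint of $-\otimes\mathrm{Sp}$, so $\mathrm{Pr}^L_{dual}$ is reflective in $\mathrm{Pr}^L_{ca}$, with reflector $L=-\otimes\mathrm{Sp}$ and unit the localization map $\mathcal{C}\to\mathcal{C}\otimes\mathrm{Sp}$; equivalently, a compactly assembled $\mathcal{C}$ is dualizable precisely when this unit is an equivalence, i.e.\ when $\mathcal{C}$ is a module over the idempotent algebra $\mathrm{Sp}$.

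First I would invoke the earlier-cited result of Ramzi that $\mathrm{Pr}^L_{ca}$ has all small colimits and that the forgetful functor $\mathrm{Pr}^L_{ca}\to\mathrm{Pr}^L$ preserves and creates them; hence for a small diagram $F\colon K\to\mathrm{Pr}^L_{dual}$ the colimit $\mathcal{C}:=\colim_K F$ formed in $\mathrm{Pr}^L_{ca}$ exists and coincides with the colimit of the underlying diagram in $\mathrm{Pr}^L$. Next I would check that $\mathcal{C}$ is again stable, so that it lies in $\mathrm{Pr}^L_{dual}$: since the Lurie tensor product on $\mathrm{Pr}^L_{ca}$ preserves colimits in each variable, $L=-\otimes\mathrm{Sp}$ preserves colimits, whence
$$\mathcal{C}\otimes\mathrm{Sp}\;\simeq\;\colim_K\bigl(F(-)\otimes\mathrm{Sp}\bigr)\;\simeq\;\colim_K F(-)\;\simeq\;\mathcal{C},$$
the middle equivalence because each $F(k)$ is already stable, so $F(k)\to F(k)\otimes\mathrm{Sp}$ is an equivalence; a naturality check identifies the resulting equivalence $\mathcal{C}\simeq\mathcal{C}\otimes\mathrm{Sp}$ with the unit, so $\mathcal{C}\in\mathrm{Pr}^L_{dual}$. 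Since $\mathrm{Pr}^L_{dual}\subset\mathrm{Pr}^L_{ca}$ is a full subcategory containing $\mathcal{C}=\colim_K F$, the object $\mathcal{C}$ is automatically a colimit of $F$ in $\mathrm{Pr}^L_{dual}$, giving existence. For preservation, write the forgetful functor $\mathrm{Pr}^L_{dual}\to\mathrm{Pr}^L$ as the composite $\mathrm{Pr}^L_{dual}\hookrightarrow\mathrm{Pr}^L_{ca}\to\mathrm{Pr}^L$: the first leg carries the $\mathrm{Pr}^L_{dual}$-colimit to $\mathcal{C}$, which is the $\mathrm{Pr}^L_{ca}$-colimit, and the second leg preserves colimits, so the composite does too.

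I expect no serious obstacle, because all the substantive input has been front-loaded into the preceding Proposition: that $\mathrm{Pr}^L_{ca}$ carries a symmetric monoidal structure with colimit-preserving tensor, that $\mathrm{Sp}$ is idempotent there, and that $\mathrm{Pr}^L_{dual}$ is exactly the subcategory of $\mathrm{Sp}$-modules (equivalently, of $(-\otimes\mathrm{Sp})$-local objects). Granting these, the argument is formal; the single point demanding care is that the colimit computed in $\mathrm{Pr}^L_{ca}$ remains stable, which is precisely the displayed computation and rests on $-\otimes\mathrm{Sp}$ commuting with that colimit. Were one to prove the statement from scratch, the genuine work would move to establishing idempotence of $\mathrm{Sp}$ in $\mathrm{Pr}^L_{ca}$ and the general principle that modules over an idempotent algebra form a reflective localization whose inclusion preserves colimits, at which point the difficulty is no worse than for the corresponding statements one floor down in $\mathrm{Pr}^L$.
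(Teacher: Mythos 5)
Your argument is correct, and it is essentially the standard one; the paper itself does not supply a proof here but cites Efimov directly, so there is no in-paper argument to compare against. The key structural input is exactly what you identify: $\mathrm{Pr}^L_{ca}$ is cocomplete with colimits preserved by the forgetful functor to $\mathrm{Pr}^L$, the tensor product preserves colimits in each variable, and $\mathrm{Pr}^L_{dual}$ is the full subcategory of fixed points (equivalently, local objects) for the idempotent $-\otimes\mathrm{Sp}$; from this, closure of $\mathrm{Pr}^L_{dual}$ under colimits taken in $\mathrm{Pr}^L_{ca}$ follows formally.

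One small remark: the naturality check you flag is not actually needed. Since stability is a property of a presentable $\infty$-category rather than additional structure, and $\mathcal{C}\otimes\mathrm{Sp}$ is always stable (it is the stabilization of $\mathcal{C}$), any equivalence $\mathcal{C}\simeq\mathcal{C}\otimes\mathrm{Sp}$ already shows that $\mathcal{C}$ is stable, hence lies in the full subcategory $\mathrm{Pr}^L_{dual}\subset\mathrm{Pr}^L_{ca}$. Identifying the equivalence with the unit is cleaner conceptually (it exhibits $\mathcal{C}$ as local for the smashing localization) but is not required for the conclusion. With that observation the proof becomes entirely formal given the preceding proposition, which is consistent with the paper's choice to simply cite the result.
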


The following statement about filtered colimits of \emph{bounded} distributive lattices is an immediate consequence of Theorem \ref{sheavesfiltered}, given the above two propositions.

\begin{corollary} \label{sheavesfiltereddualizable}
Let $\mathcal{C}$ be a dualizable, stable $\infty$-category. The functor
$$ \mathrm{Sh}((-,fin),\mathcal{C}) : \mathrm{DLatt}_{bd} \rightarrow \mathrm{Pr}^L_{dual} $$
preserves filtered colimits.
\end{corollary}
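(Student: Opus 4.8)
The plan is to deduce Corollary~\ref{sheavesfiltereddualizable} formally from Theorem~\ref{sheavesfiltered} by tensoring with $\mathcal{C}$ along the monoidal left adjoint $-\otimes\mathrm{Sp}$ and then $-\otimes\mathcal{C}$. First I would recall from the preceding proposition that $-\otimes\mathrm{Sp}:\mathrm{Pr}^L_{ca}\to\mathrm{Pr}^L_{dual}$ is a left adjoint, hence preserves all colimits, and in particular filtered ones; similarly the forgetful functor $\mathrm{Pr}^L_\omega\to\mathrm{Pr}^L_{ca}$ preserves colimits (it is the left adjoint noted after the propositions on colimits in $\mathrm{Pr}^L_{ca}$). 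Therefore the composite
\[
\mathrm{DLatt}_{bd}\xrightarrow{\mathrm{Sh}((-,fin),\mathrm{Spc})}\mathrm{Pr}^L_\omega\hookrightarrow\mathrm{Pr}^L_{ca}\xrightarrow{-\otimes\mathrm{Sp}}\mathrm{Pr}^L_{dual}
\]
preserves filtered colimits, and by the definition $\mathrm{Sh}((C,\tau),\mathcal{E})=\mathrm{Sh}(C,\tau)\otimes\mathcal{E}$ together with the fact that $\mathrm{Sh}(D,fin)$ is stable once tensored with $\mathrm{Sp}$, this composite is exactly $\mathrm{Sh}((-,fin),\mathrm{Sp})$.

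Next I would tensor once more with the fixed dualizable $\mathcal{C}$. The Lurie tensor product on $\mathrm{Pr}^L_{dual}$ preserves colimits in each variable (this is recorded in the proposition on the adjunction $-\otimes\mathrm{Sp}\dashv\mathrm{forget}$), so the functor $-\otimes\mathcal{C}:\mathrm{Pr}^L_{dual}\to\mathrm{Pr}^L_{dual}$ preserves filtered colimits. Composing,
\[
\mathrm{Sh}((-,fin),\mathcal{C})=\big(\mathrm{Sh}((-,fin),\mathrm{Sp})\big)\otimes_{\mathrm{Sp}}\mathcal{C}\simeq\mathrm{Sh}((-,fin),\mathrm{Spc})\otimes\mathcal{C}
\]
preserves filtered colimits as a functor $\mathrm{DLatt}_{bd}\to\mathrm{Pr}^L_{dual}$, which is the claim.

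The only genuinely non-formal input is Theorem~\ref{sheavesfiltered}; everything here is a diagram chase through adjunctions and the monoidality of the tensor products. The one point that deserves a sentence of care is the identification $\mathrm{Sh}((-,fin),\mathcal{C})\simeq\mathrm{Sh}((-,fin),\mathrm{Spc})\otimes\mathcal{C}$: since $\mathcal{C}$ is stable, $\mathcal{C}\simeq\mathrm{Sp}\otimes\mathcal{C}$, so $\mathrm{Sh}(D,fin)\otimes\mathcal{C}\simeq(\mathrm{Sh}(D,fin)\otimes\mathrm{Sp})\otimes_{\mathrm{Sp}}\mathcal{C}$, and the first factor $\mathrm{Sh}(D,fin)\otimes\mathrm{Sp}$ lands in $\mathrm{Pr}^L_{dual}$ (indeed in compactly generated stable $\infty$-categories, as $\mathrm{Sh}(D,fin)$ is compactly generated by Corollary~\ref{compgen}). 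I expect the main obstacle, such as it is, to be purely bookkeeping: making sure each forgetful or tensor functor in the chain is invoked with the colimit-preservation property that has actually been cited, rather than a stronger one; there is no real mathematical difficulty beyond Theorem~\ref{sheavesfiltered} itself.
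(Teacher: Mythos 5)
Your proposal is correct, and it is essentially the same argument that the paper leaves implicit (the paper simply asserts that the corollary is ``an immediate consequence of Theorem~\ref{sheavesfiltered}, given the above two propositions''). You are merely spelling out that implication: post-compose with the colimit-preserving inclusion $\mathrm{Pr}^L_\omega\hookrightarrow\mathrm{Pr}^L_{ca}$ and with $-\otimes\mathrm{Sp}$ and then $-\otimes_{\mathrm{Sp}}\mathcal{C}$, both of which preserve colimits by the cited propositions, and note that colimits in $\mathrm{Pr}^L_{dual}$ are detected after forgetting to $\mathrm{Pr}^L$. The only cosmetic quibble is that the map $\mathrm{Pr}^L_\omega\to\mathrm{Pr}^L_{ca}$ is a full-subcategory inclusion rather than a forgetful functor, but that does not affect the argument.
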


We would like to generalize Corollary \ref{sheavesfiltereddualizable} to lower bounded distributive lattices. The problem here is that homomorphisms of lower bounded distributive lattices do not necessarily induce geometric morphisms between their sheaf topoi (To be more precise, they produce \emph{partially defined} morphisms). In order to overcome this obstacle, we can use a trick. First, let us define what an exact sequence of dualizable $\infty$-categories is.

\begin{definition}
An exact sequence in $\mathrm{Pr}^L_{dual}$ is a fiber-cofiber sequence
$$ \mathcal{C} \xrightarrow{f_!} \mathcal{D} \xrightarrow{g^*} \mathcal{E}.$$
Concretely, this means that $f_!$ is fully faithful and $g^*$ is the (unique) Bousfield localization with $\mathrm{ker}(g^*) = \mathcal{C}$.
\end{definition}

\begin{example}
Let $L$ be locale and $U$ an open subset. Let $\mathcal{C}$ be a stable presentable $\infty$-category. Then the open-closed decomposition discussed in subsection \ref{generalitiestopoi} gives the sequence
$$ \mathrm{Sh}(L_{/U}, \mathcal{C}) \xrightarrow{i_!} \mathrm{Sh}(L, \mathcal{C}) \xrightarrow{c^*} \mathrm{Sh}(L_{U/}, \mathcal{C}).$$
If $\mathrm{Sh}(L, \mathcal{C})$ and $\mathrm{Sh}(L_{/U}, \mathcal{C})$ are in $\mathrm{Pr}^L_{dual}$ then so is $\mathrm{Sh}(L_{U/}, \mathcal{C})$ and the above sequence is an exact sequence in $\mathrm{Pr}^L_{dual}$.
\end{example}

We will need the following lemma due to Efimov.

\begin{lemma}[\cite{efimov2025ktheorylocalizinginvariantslarge}, Proposition 1.67] \label{efimovslemma}
A filtered colimit of short exact sequences in $\mathrm{Pr}^L_{dual}$ is again a short exact sequence.
\end{lemma}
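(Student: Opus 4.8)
The plan is to separate the two halves of the condition ``fiber--cofiber sequence'' and verify each after passing to the colimit. Write $I$ for the filtered indexing category, $\mathcal{C}_i \xrightarrow{(f_i)_!} \mathcal{D}_i \xrightarrow{(g_i)^*} \mathcal{E}_i$ for the diagram of short exact sequences, and put $\mathcal{C}_\infty = \colim_{i} \mathcal{C}_i$, and similarly $\mathcal{D}_\infty$, $\mathcal{E}_\infty$, with induced functors $F_! = \colim_i (f_i)_!$ and $G^* = \colim_i (g_i)^*$. The cofiber half is essentially automatic: colimits in $\mathrm{Pr}^L_{dual}$ are computed in $\mathrm{Pr}^L$, the cofiber of a map is a colimit, and colimits commute with colimits, so $\mathcal{E}_\infty \simeq \operatorname{cofib}(F_!)$; and the composite $\mathcal{C}_\infty \to \mathcal{E}_\infty$ is the colimit of the zero functors $\mathcal{C}_i \to \mathcal{E}_i$, hence zero. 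So the entire content is the fiber half: one must show that $F_!$ is fully faithful and that $\ker(G^*)$ is exactly its essential image, i.e.\ that the colimit sequence is also a fiber sequence in $\mathrm{Pr}^L_{dual}$.

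To establish this I would pass to the recollement data of each exact sequence. A short exact sequence is equivalently the data of the fully faithful left adjoint $(f_i)_!$ with right adjoint $(f_i)^*$, the Bousfield localization $(g_i)^*$ with fully faithful right adjoint $(g_i)_*$, and the fiber sequence of endofunctors of $\mathcal{D}_i$
\[ (f_i)_!(f_i)^* \longrightarrow \mathrm{id}_{\mathcal{D}_i} \longrightarrow (g_i)_*(g_i)^*. \]
Since $(f_i)_!$ and $(g_i)^*$ are morphisms in $\mathrm{Pr}^L_{ca}$, their right adjoints $(f_i)^*$ and $(g_i)_*$ preserve filtered colimits. The crucial step is then to see that this recollement data ``converges'': $F_!$ and $G^*$ are again morphisms in $\mathrm{Pr}^L_{ca}$ (being colimits of such in the cocomplete $\infty$-category $\mathrm{Pr}^L_{ca}$); their right adjoints $F^*$ and $G_*$ are computed as the maps induced on the presentations $\mathcal{C}_\infty \simeq \lim_i \mathcal{C}_i$, $\mathcal{D}_\infty \simeq \lim_i \mathcal{D}_i$, $\mathcal{E}_\infty \simeq \lim_i \mathcal{E}_i$ over the right adjoint transition functors, using that a colimit in $\mathrm{Pr}^L$ is a limit over right adjoints and that the relevant right-adjoint squares commute by mate calculus; and $F^*$, $G_*$ again preserve filtered colimits. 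Granting this, the fiber sequences above are natural in $i$, and since filtered colimits of stable $\infty$-categories are exact they assemble to a fiber sequence of endofunctors $F_!F^* \to \mathrm{id}_{\mathcal{D}_\infty} \to G_*G^*$. From this recollement everything else is formal: $G_*$ is still fully faithful, so $G^*$ is a Bousfield localization; $G^* F_! \simeq \colim_i (g_i)^*(f_i)_! = 0$, so its right adjoint $F^* G_*$ is zero, and applying $F^*$ to the fiber sequence together with the triangle identities forces the unit $\mathrm{id}_{\mathcal{C}_\infty} \to F^* F_!$ to be an equivalence, i.e.\ $F_!$ is fully faithful; and $\ker(G^*) = \ker(G_* G^*)$ is precisely the essential image of the colocalization $F_! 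F^*$, namely the essential image of $F_!$. This says exactly that the colimit sequence is short exact.

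The main obstacle is the ``convergence'' step, i.e.\ controlling how the filtered colimit interacts with the adjunctions, with the fully-faithfulness statements, and with the idempotent-completeness built into objects of $\mathrm{Pr}^L_{dual}$. This is not formal: $\mathrm{Pr}^L_{dual}$ is not stable, colimits in it are computed in $\mathrm{Pr}^L$, and colimits in $\mathrm{Pr}^L$ are limits over the right adjoints, so one is forced to track the left-adjoint (colimit) and right-adjoint (limit) descriptions simultaneously. I would discharge it by invoking the structure theory of filtered colimits in $\mathrm{Pr}^L_{ca}$ developed in \cite{ramzi2024dualizablepresentableinftycategories} and \cite{krause_nikolaus_puetzstueck}: such a colimit is again compactly assembled (stable if the pieces are), the cocone functors are strongly continuous, their right adjoints exhibit the colimit as the limit over right adjoints, and with respect to these the naturality and filtered-colimit-preservation claimed above hold. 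An alternative, more hands-on route is to reduce to the compactly generated case --- each $\mathcal{C}_i$ is a retract in $\mathrm{Pr}^L_{ca}$ of some $\mathrm{Ind}(\mathcal{A}_i)$, so that the sequence is a retract of $\mathrm{Ind}$ applied to a Verdier sequence $\mathcal{A}_i \to \mathcal{B}_i \to \mathcal{C}_i'$ in $\mathrm{Cat}^{perf}$; one then uses that filtered colimits in $\mathrm{Cat}^{perf}$ preserve Verdier sequences, that $\mathrm{Ind} \colon \mathrm{Cat}^{perf} \to \mathrm{Pr}^L_\omega$ preserves colimits and sends Verdier sequences to short exact sequences, and that a retract of a short exact sequence is short exact --- but making the retract reduction functorial at the level of sequences is itself the delicate point. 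Either way, the stable-categorical bookkeeping is routine; the genuine work lies in the behaviour of filtered colimits in the (non-stable) $\infty$-category $\mathrm{Pr}^L_{dual}$.
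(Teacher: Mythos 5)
First, a note on the comparison target: the paper does not prove this lemma. It is cited verbatim from Efimov, Proposition 1.67, so there is no internal proof in the paper to hold your attempt against; I can only assess your argument on its own merits.

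Your plan correctly isolates the content in the fiber half, but the assembly step you rely on does not go through, and this is a genuine gap rather than a deferred technicality. You want the recollement fiber sequences of endofunctors $(f_i)_!(f_i)^* \to \mathrm{id}_{\mathcal{D}_i} \to (g_i)_*(g_i)^*$ to be ``natural in $i$'' and pass to the colimit. For this you would need, for each transition $i \to j$, the compatibility $t^{\mathcal D}_{ij}\,(f_i)_!(f_i)^* \simeq (f_j)_!(f_j)^*\,t^{\mathcal D}_{ij}$. Using the given commutative square $t^{\mathcal D}_{ij}(f_i)_! \simeq (f_j)_! t^{\mathcal C}_{ij}$ and fully faithfulness of $(f_j)_!$, this is equivalent to the mate condition $t^{\mathcal C}_{ij}(f_i)^* \simeq (f_j)^* t^{\mathcal D}_{ij}$. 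But a morphism of short exact sequences in $\mathrm{Pr}^L_{dual}$ is, by definition, only a pair of commuting squares of strongly continuous functors; no compatibility of the right adjoints is assumed, and the mate condition is a genuinely extra Beck--Chevalley-type hypothesis. Concretely: take the morphism of short exact sequences from $0 \to \mathrm{Sp} \xrightarrow{\mathrm{id}} \mathrm{Sp}$ to $\mathrm{Sp} \xrightarrow{\iota_1} \mathrm{Sp}^2 \xrightarrow{\mathrm{pr}_2} \mathrm{Sp}$ with $t^{\mathcal C} = 0$, $t^{\mathcal D} = \Delta$ the diagonal (which is strongly continuous), $t^{\mathcal E} = \mathrm{id}$. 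Both squares commute, yet $t^{\mathcal C}(f_i)^* = 0$ while $(f_j)^* t^{\mathcal D} = \mathrm{pr}_1 \circ \Delta = \mathrm{id}$. The same obstruction arises if you try to assemble over the right-adjoint transition functors in the limit presentation: the compatibility needed there is again the mate square. So ``the fiber sequences are natural in $i$ and assemble'' is not something you can grant, and the phrase ``the relevant right-adjoint squares commute by mate calculus'' is doing illegitimate work --- only the squares involving the cocone right adjoints $\iota_i^R$ commute for free, not the squares relating $(f_i)^*$ to $(f_j)^*$ across a transition map.

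The lemma is nonetheless true, so a different route is needed. A cleaner reduction: since colimits commute, $\mathcal{E}_\infty \simeq \mathrm{cofib}(F_!)$ is automatic, and for a cofiber sequence in $\mathrm{Pr}^L_{dual}$ with fully faithful first map the fiber condition is automatic (the essential image of a fully faithful strongly continuous functor is a presentable full stable subcategory, and the cofiber is its Verdier quotient, whose kernel is that image). So the whole lemma reduces to: a filtered colimit in $\mathrm{Pr}^L_{dual}$ of fully faithful strongly continuous functors is fully faithful. That statement does not involve the mate condition across transitions and is provable via the explicit description of filtered colimits of compactly assembled categories (e.g.\ via $\kappa$-compact objects for $\kappa$ uniformly large, reducing to fully faithfulness of filtered colimits in $\mathrm{Cat}^{perf}_\kappa$), which is the content you would be importing from \cite{ramzi2024dualizablepresentableinftycategories} or \cite{krause_nikolaus_puetzstueck}. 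Your alternative sketch via retracts and $\mathrm{Ind}$ of Verdier sequences is in this spirit, and you are right that making the retraction functorial at the level of sequences is the delicate point there; but that route at least does not founder on a false naturality claim the way the recollement-assembly route does.
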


Using this result, we can extrapolate to get a version of Corollary \ref{sheavesfiltereddualizable} for the lower bounded case as well.

\begin{theorem} \label{sheavesfiltereddualizablelowerbounded}
Let $\mathcal{C}$ be a dualizable, stable $\infty$-category. The functor
$$ \mathrm{Sh}((-,fin),\mathcal{C}) : \mathrm{DLatt}_{lb} \rightarrow \mathrm{Pr}^L_{dual} $$
preserves filtered colimits. In particular, for any finitary localizing invariant $F : \mathrm{Pr}^L_{dual} \rightarrow \mathcal{E}$, the functor
$$F( \mathrm{Sh}((-,fin),\mathcal{C})) : \mathrm{DLatt}_{lb} \rightarrow \mathcal{E}$$ preserves filtered colimits.
\end{theorem}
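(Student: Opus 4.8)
The plan is to exhibit $\mathrm{Sh}((-,fin),\mathcal{C})$ on $\mathrm{DLatt}_{lb}$ as the fiberwise kernel of a natural transformation between two functors $\mathrm{DLatt}_{lb} \to \mathrm{Pr}^L_{dual}$ that both preserve filtered colimits, and then to conclude with Efimov's Lemma \ref{efimovslemma} (note that the fiber of a filtered-colimit-preserving map of functors need not itself preserve filtered colimits, so some such input is genuinely needed). The two functors are $D \mapsto \mathrm{Sh}((D_\infty,fin),\mathcal{C})$ and the constant functor $\underline{\mathcal{C}}$. The first preserves filtered colimits because $(-)_\infty : \mathrm{DLatt}_{lb} \to \mathrm{DLatt}_{bd}$ is a left adjoint (to the forgetful functor), hence colimit-preserving, and $\mathrm{Sh}((-,fin),\mathcal{C})$ on $\mathrm{DLatt}_{bd}$ preserves filtered colimits by Corollary \ref{sheavesfiltereddualizable}; the constant functor preserves filtered colimits since a filtered category is weakly contractible, so $\colim_i \mathcal{C} \simeq \mathcal{C}$.

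The natural transformation is produced from the open-closed decomposition \eqref{openclosedtopelement}. For $D \in \mathrm{DLatt}_{lb}$ let $p_D : D_\infty \to \mathbf{2}$ be the homomorphism in $\mathrm{DLatt}_{bd}$ sending every element $\neq \infty$ to $0$; these are natural in $D$, since for $f : D \to D'$ the homomorphism $f_\infty$ carries $D$ into $D' \subsetneq D'_\infty$, so $p_{D'} \circ f_\infty = p_D$. Applying the functor $\mathrm{Sh}((-,fin),\mathcal{C}) : \mathrm{DLatt}_{bd} \to \mathrm{Pr}^L_{dual}$ — which takes $\mathbf{2}$ to $\mathrm{Sh}(\mathrm{pt},\mathcal{C}) = \mathcal{C}$ — turns this into a natural transformation $c^* : \big(D \mapsto \mathrm{Sh}((D_\infty,fin),\mathcal{C})\big) \Rightarrow \underline{\mathcal{C}}$. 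For fixed $D$, the surjection $\mathrm{Ind}(p_D) : \mathrm{Ind}(D_\infty) \to \mathbf{2}$ is the restriction to the closed sublocale of $\mathrm{Ind}(D_\infty)$ complementary to $U = \bigvee_{V\in D} V$, and by \eqref{openclosedtopelement} the corresponding open part is $\mathrm{Ind}(D)$; the closed part $\mathrm{Ind}(D_\infty)_{U/}$ is a point, since its elements are the ideals of $D_\infty$ containing $D$, of which there are only two, $D$ and $D_\infty$. Hence, applying $\mathrm{Sh}(-,\mathcal{C})$, $c^*_D : \mathrm{Sh}((D_\infty,fin),\mathcal{C}) \to \mathcal{C}$ is a Bousfield localization whose kernel is $\mathrm{Sh}((D,fin),\mathcal{C})$, and we obtain an exact sequence in $\mathrm{Pr}^L_{dual}$
$$\mathrm{Sh}((D,fin),\mathcal{C}) \xrightarrow{\ i_!\ } \mathrm{Sh}((D_\infty,fin),\mathcal{C}) \xrightarrow{\ c^*_D\ } \mathcal{C}.$$
All three terms are dualizable: for any $E \in \mathrm{DLatt}_{lb}$ the category $\mathrm{Sh}((E,fin),\mathrm{Sp})$ is compactly generated and stable by Corollary \ref{compgen}, hence dualizable, so $\mathrm{Sh}((E,fin),\mathcal{C}) \simeq \mathrm{Sh}((E,fin),\mathrm{Sp}) \otimes_{\mathrm{Sp}} \mathcal{C}$ is dualizable as a tensor product of dualizable categories. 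Since $c^*$ is natural, its fiber is again a functor $\mathrm{DLatt}_{lb} \to \mathrm{Pr}^L_{dual}$, which one identifies with $\mathrm{Sh}((-,fin),\mathcal{C})$.

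Now let $D = \colim_{i\in I} D_i$ be a filtered colimit in $\mathrm{DLatt}_{lb}$. Passing to the colimit over $I$ of the exact sequences at the $D_i$, Efimov's Lemma \ref{efimovslemma} yields an exact sequence in $\mathrm{Pr}^L_{dual}$; its middle term is $\colim_i \mathrm{Sh}(((D_i)_\infty,fin),\mathcal{C}) \simeq \mathrm{Sh}((D_\infty,fin),\mathcal{C})$ (first step), its right term is $\colim_i \mathcal{C} \simeq \mathcal{C}$, and the canonical comparison with the exact sequence at $D$ is an equivalence on these last two terms. Since an exact sequence is in particular a fiber sequence, the comparison on first terms $\colim_i \mathrm{Sh}((D_i,fin),\mathcal{C}) \to \mathrm{Sh}((D,fin),\mathcal{C})$ is then also an equivalence, which is the first assertion. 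The addendum is immediate: a finitary localizing invariant $F$ preserves filtered colimits by definition, so $F \circ \mathrm{Sh}((-,fin),\mathcal{C})$ does as well.

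I expect the main obstacle to be organizational rather than conceptual: making precise that the open-closed decomposition \eqref{openclosedtopelement} is natural in $D$, so that the sequences above genuinely assemble into an exact sequence of functors on $\mathrm{DLatt}_{lb}$ with the correct first term, together with the small ideal-theoretic computation identifying the closed complement of $\bigvee_{V\in D} V$ in $\mathrm{Ind}(D_\infty)$ with a point. Once that is in place, the statement is a formal consequence of the bounded case (Corollary \ref{sheavesfiltereddualizable}) and Efimov's lemma.
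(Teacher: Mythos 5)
Your proof is correct and follows essentially the same route as the paper: express $\mathrm{Sh}((-,fin),\mathcal{C})$ on $\mathrm{DLatt}_{lb}$ as the fiberwise kernel, over the filtered-colimit-preserving bounded case, of the restriction to a point given by the open-closed decomposition of $\mathrm{Ind}(D_\infty)$, and then invoke Efimov's Lemma \ref{efimovslemma}. The paper packages the naturality slightly differently, working in the slice $\infty$-categories ${\mathrm{DLatt}_{bd}}_{/\mathbf{2}}$ and ${\mathrm{Pr}^L_{dual}}_{/\mathrm{Sp}}$ and then applying the kernel functor, whereas you build the natural transformation $c^*$ to the constant functor $\underline{\mathcal{C}}$ directly; these are equivalent formulations, and your explicit ideal-theoretic verification that the closed complement of $U=\bigvee_{V\in D}V$ in $\mathrm{Ind}(D_\infty)$ is a point is a useful spelling-out of what the paper leaves implicit in Display \eqref{openclosedtopelement}.
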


\begin{proof}
It suffices to prove the theorem for the case $\mathcal{C} = \mathrm{Sp}$. The filtered colimit preserving functor
$$ \mathrm{Sh}((-,fin),\mathrm{Sp}) : \mathrm{DLatt}_{bd} \rightarrow  \mathrm{Pr}^L_{dual} $$
canonically produces a filtered colimit presering functor
$$ \mathrm{Sh}((-,fin),\mathrm{Sp}) : {\mathrm{DLatt}_{bd}}_{/\mathbf{2}} \rightarrow  {\mathrm{Pr}^L_{dual}}_{/ \mathrm{Sp}}. $$
Precomposing with the fully faithful left adjoint $(-)_\infty : \mathrm{DLatt}_{lb} \rightarrow {\mathrm{DLatt}_{bd}}_{/\mathbf{2}}$ still preserves filtered colimits. Now, for a given lower bounded distributive lattice $D$ we have the short exact sequence
$$ \mathrm{Sh}((D, fin), \mathrm{Sp} ) \rightarrow \mathrm{Sh}((D_\infty, fin), \mathrm{Sp} ) \rightarrow \mathrm{Sp},$$
given by the open-closed decomposition of $\mathrm{Ind}(D_\infty)$, see Display \ref{openclosedtopelement}, hence we can identify the functor $\mathrm{Sh}((-, fin), \mathrm{Sp} )$ we are looking for with the composite
$$\mathrm{DLatt}_{lb} \xrightarrow{(-)_\infty} {\mathrm{DLatt}_{bd}}_{/\mathbf{2}} \rightarrow  {\mathrm{Pr}^L_{dual}}_{/ \mathrm{Sp}} \xrightarrow{\mathrm{ker}}  \mathrm{Pr}^L_{dual}. $$
By Lemma \ref{efimovslemma}, short exact sequences are stable under filtered colimits, and hence we conclude the theorem.
\end{proof}

\subsection{Localizing invariants and semiorthogonal decompositions}

We will follow the definition of localizing invariants laid out in \cite{blumberg_gepner_tabuada}. Let $\mathrm{Cat}^{perf}$ be the $\infty$-category of small, stable, idempotent complete $\infty$-categories. Note that $\mathrm{Cat}^{perf}$ is pointed, as the zero category $0$ is both the initial and terminal stable $\infty$-category. An \emph{exact sequence}, also Verdier sequence, is defined to be a fiber-cofiber sequence
$$\mathcal{A} \rightarrow \mathcal{B} \rightarrow \mathcal{C}.$$
Let $\mathcal{E}$ be a stable $\infty$-category. A \emph{localizing invariant} is a functor $F : \mathrm{Cat}^{perf} \rightarrow \mathcal{E}$ that preserves the zero object  and fiber-cofiber sequences. If $\mathcal{A}$ is furthermore presentable, we call $F$ a \emph{finitary} localizing invariant if $F$ preserves filtered colimits.

\begin{example}
Important examples of localizing invariants as discussed in \cite{blumberg_gepner_tabuada} are non-connective, algebraic $K$-theory and topological Hochschild homology,
$$\begin{array}{rcl}
K : \mathrm{Cat}^{perf} & \rightarrow & \mathrm{Sp} \\ 
\mathrm{THH} : \mathrm{Cat}^{perf} & \rightarrow & \mathrm{Sp}
\end{array}$$
as well as the universal localizing invariant
$$ \mathcal{U}_{loc} : \mathrm{Cat}^{perf} \rightarrow \mathrm{Mot}_{loc}.$$
All three of these are finitary invariants. An example of a non-finitary localizing invariant is topological cyclic homology $\mathrm{TC}$.
\end{example}

By analogy we call $F : \mathrm{Pr}^{L}_{dual} \rightarrow \mathcal{E}$ a \emph{localizing invariant of dualizable $\infty$-categories} if $F$ preserves the zero object and sends exact sequences to fiber sequences. Note that we have a functor $\mathrm{Ind} : \mathrm{Cat}^{perf} \rightarrow \mathrm{Pr}^{L}_{dual}$. The following theorem is due to Efimov.

\begin{theorem}[\cite{efimov2025ktheorylocalizinginvariantslarge}, Theorem 4.10]
Let $\mathcal{E}$ be a presentable, stable $\infty$-category. Then precomposition with $\mathrm{Ind}$ induces an equivalence
$$ \mathrm{Fun}_{loc, \omega}( \mathrm{Pr}^{L}_{dual}, \mathcal{E} ) \xrightarrow{ \cong } \mathrm{Fun}_{loc, \omega}( \mathrm{Cat}^{perf}, \mathcal{E} )$$
between the $\infty$-categories of finitary localizing invariants on $\mathrm{Pr}^{L}_{dual}$ and $\mathrm{Cat}^{perf}$ respectively.
\end{theorem}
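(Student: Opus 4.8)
The plan is to identify $\mathrm{Ind}$ with the inclusion of the compactly generated objects into $\mathrm{Pr}^L_{dual}$, to observe that this inclusion exhibits $\mathrm{Pr}^L_{dual}$ as (essentially) the idempotent completion of its compactly generated part, and then to check that under the resulting equivalence of plain functor categories the \emph{finitary localizing} invariants on the two sides correspond. Concretely: via $\mathrm{Ind}$, with inverse $(-)^\omega$, the category $\mathrm{Cat}^{perf}$ is equivalent to the full subcategory $\mathcal P\subset\mathrm{Pr}^L_{dual}$ of compactly generated stable $\infty$-categories, and under this equivalence a finitary localizing invariant on $\mathrm{Cat}^{perf}$ is the same datum as a functor $\mathcal P\to\mathcal E$ preserving $0$, preserving filtered colimits, and sending each exact sequence $\mathrm{Ind}(\mathcal A)\to\mathrm{Ind}(\mathcal B)\to\mathrm{Ind}(\mathcal C)$ (the $\mathrm{Ind}$ of a Verdier sequence) to a fiber sequence. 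In particular, restriction along $\mathrm{Ind}$ really does land among finitary localizing invariants on $\mathrm{Cat}^{perf}$, since $\mathrm{Ind}$ preserves $0$ and filtered colimits and carries a Verdier sequence $\mathcal A\to\mathcal B\to\mathcal C$ to an exact sequence in $\mathrm{Pr}^L_{dual}$ (the first map fully faithful, the second the Bousfield localization with kernel $\mathrm{Ind}(\mathcal A)$); so precomposition is a well-defined functor $\mathrm{Fun}_{loc,\omega}(\mathrm{Pr}^L_{dual},\mathcal E)\to\mathrm{Fun}_{loc,\omega}(\mathrm{Cat}^{perf},\mathcal E)$.

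The next step is to reduce the problem to a statement about idempotent completions. Every dualizable $\infty$-category is a retract, inside $\mathrm{Pr}^L_{dual}$, of one of the form $\mathrm{Ind}(\mathcal A)$ with $\mathcal A\in\mathrm{Cat}^{perf}$ --- a standard feature of compactly assembled categories, see \cite{krause_nikolaus_puetzstueck} and \cite{ramzi2024dualizablepresentableinftycategories} --- while idempotents split in $\mathrm{Pr}^L_{dual}$ because it is cocomplete. Hence $\mathcal P\hookrightarrow\mathrm{Pr}^L_{dual}$ exhibits $\mathrm{Pr}^L_{dual}$ as the idempotent completion of $\mathcal P$, and since $\mathcal E$ is idempotent complete (being presentable) restriction is already an equivalence $\mathrm{Fun}(\mathrm{Pr}^L_{dual},\mathcal E)\xrightarrow{\ \simeq\ }\mathrm{Fun}(\mathcal P,\mathcal E)\simeq\mathrm{Fun}(\mathrm{Cat}^{perf},\mathcal E)$ of \emph{all} functor categories. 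It therefore suffices to show that under this equivalence a functor $G$ on $\mathrm{Pr}^L_{dual}$ is a finitary localizing invariant if and only if $F:=G\circ\mathrm{Ind}$ is: the ``only if'' is the previous paragraph, and for ``if'' one checks that the essentially unique idempotent-completion extension $G$ of a finitary localizing $F$ preserves $0$ (immediate), filtered colimits, and exact sequences.

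For the filtered-colimit claim: a filtered colimit $\mathcal C=\colim_j\mathcal C_j$ in $\mathrm{Pr}^L_{dual}$ is computed in $\mathrm{Pr}^L$, and one chooses compactly generated resolutions of the $\mathcal C_j$ and of $\mathcal C$ compatibly with the diagram, so that $G(\mathcal C)\simeq\colim_j G(\mathcal C_j)$ reduces to the fact that $\mathcal D\mapsto F(\mathcal D^\omega)$ commutes with filtered colimits of compactly generated stable categories along compact-preserving maps, which holds since $F$ is finitary. For the exact-sequence claim: given an exact sequence $\mathcal C\to\mathcal D\to\mathcal E'$ of dualizable categories, one resolves it --- up to retracts and a filtered colimit --- by exact sequences of compactly generated stable categories, and concludes that $G$ sends it to a fiber sequence using the localizing property of $F$ on $\mathrm{Cat}^{perf}$ together with Lemma~\ref{efimovslemma} (filtered colimits of exact sequences remain exact) and the closure of fiber sequences in $\mathcal E$ under retracts and filtered colimits. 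That $G\circ\mathrm{Ind}\simeq F$ is then automatic, since $\mathrm{Ind}(\mathcal A)^\omega\simeq\mathcal A$.

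I expect the resolution step of the previous paragraph to be the main obstacle. A dualizable category typically has essentially no nonzero compact objects --- for instance $\mathrm{Sh}(\mathbb R,\mathrm{Sp})$ --- so the naive formula $\mathcal C\mapsto F(\mathcal C^\omega)$ is useless, and one must instead produce, sufficiently functorially, compactly generated resolutions of dualizable categories, of filtered diagrams of them, and of exact sequences of them, and control precisely how these resolutions interact with filtered colimits and fiber sequences. This is exactly where the finer structure theory of compactly assembled $\infty$-categories enters --- compact and trace-class morphisms, and the identity functor written as a filtered colimit of compact endofunctors --- and it is the technical heart of Efimov's argument.
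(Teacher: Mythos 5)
The paper does not prove this statement; it is quoted verbatim from Efimov. So I can only compare your attempt to what I understand Efimov's argument to be. Unfortunately there is a genuine gap, and it occurs precisely in the step you treat as a formality.

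Your claim that every dualizable $\mathcal{C}$ is a retract, \emph{inside} $\mathrm{Pr}^L_{dual}$, of some $\mathrm{Ind}(\mathcal{A})$ is false, and this sinks the idempotent-completion reduction. Indeed, suppose $\mathcal{C}$ were a retract of a compactly generated $\mathcal{D}$ via strongly continuous functors $i\colon\mathcal{C}\to\mathcal{D}$ and $r\colon\mathcal{D}\to\mathcal{C}$ with $r i\simeq\mathrm{id}$. Because $r$ is strongly continuous, its right adjoint $r^R$ preserves filtered colimits, so for $d\in\mathcal{D}^\omega$ the object $r(d)$ is compact in $\mathcal{C}$ (since $\mathrm{Map}_{\mathcal{C}}(r(d),-)\simeq\mathrm{Map}_{\mathcal{D}}(d,r^R(-))$ preserves filtered colimits); as $r$ also preserves colimits and $\mathcal{D}^\omega$ generates $\mathcal{D}$, the objects $r(\mathcal{D}^\omega)$ generate $\mathcal{C}$. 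Hence $\mathcal{C}$ itself would have to be compactly generated. But dualizable categories with no nonzero compact objects exist --- $\mathrm{Sh}(\mathbb{R};\mathrm{Sp})$, which you even mention --- so the retract claim fails. What \emph{is} true is that the canonical $\hat{\jmath}\colon\mathcal{C}\to\mathrm{Ind}(\mathcal{C})$ is a fully faithful, strongly continuous section, but the accompanying retraction is the colimit functor $k\colon\mathrm{Ind}(\mathcal{C})\to\mathcal{C}$, and $k$ is \emph{not} strongly continuous: its right adjoint is the Yoneda embedding $j$, which preserves filtered colimits only when every object of $\mathcal{C}$ is compact. Consequently $\mathrm{Pr}^L_{dual}$ is not the idempotent completion of the compactly generated full subcategory, and there is no equivalence of plain functor categories. (This should have been a warning sign: were your claim correct, the finitary and localizing hypotheses in the statement would be essentially decorative.)

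The structure that Efimov actually exploits is not a retraction but a Verdier (fiber--cofiber) sequence in $\mathrm{Pr}^L_{dual}$: for suitable regular $\kappa>\omega$ one has an exact sequence
$$\mathcal{C}\;\xrightarrow{\ \hat{\jmath}\ }\;\widehat{\mathcal{C}}_{\kappa}\;\longrightarrow\;\mathrm{Calk}_{\kappa}(\mathcal{C}),$$
with $\widehat{\mathcal{C}}_{\kappa}$ compactly generated and $\mathrm{Calk}_{\kappa}(\mathcal{C})$ again compactly generated up to size considerations. Because this is only an exact sequence, not a splitting, the \emph{localizing} hypothesis on $F$ is what allows one to define $F^{cont}(\mathcal{C})$ as the fiber of $F$ applied to the right-hand map; and the \emph{finitary} hypothesis is what guarantees that this definition is independent of the cardinal $\kappa$ and is functorial, using the fact that every dualizable category is a ($\kappa$-)filtered colimit of such pieces together with Efimov's lemma that filtered colimits of exact sequences in $\mathrm{Pr}^L_{dual}$ are exact (Lemma~\ref{efimovslemma}). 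So your instinct in the final paragraph --- that the delicate part is resolving a dualizable category by compactly generated ones --- is right, but the resolution is by an exact sequence rather than by a retraction, and this is precisely why the two hypotheses on $F$ are not optional.

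Two smaller remarks. First, your observation that $\mathrm{Ind}\colon\mathrm{Cat}^{perf}\to\mathrm{Pr}^L_{dual}$ is fully faithful is correct (strong continuity of a left adjoint out of a compactly generated source is equivalent to preservation of compact objects). Second, the direction that precomposition with $\mathrm{Ind}$ lands in finitary localizing invariants on $\mathrm{Cat}^{perf}$ is indeed the easy one, and your argument for it is fine.
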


We remark that the same results holds true when relaxing the requirement to $\kappa$-accessible localizing invariants for any regular cardinal $\kappa$. From here on, we will implicitly always identify any finitary localizing invariant with its extension to dualizable $\infty$-categories. We collect some results about the interaction of finitary localizing invariants with semi-orthogonal decompositions.

\begin{definition}[\cite{efimov2025ktheorylocalizinginvariantslarge}, Definition 1.80.]
Let $\mathcal{C}$ be a presentable stable $\infty$-category and $I$ a poset. An $I$-indexed semi-orthogonal decomposition of $\mathcal{C}$ is a collection of subcategories $\mathcal{C}_i, i \in I$ such that:
\begin{enumerate}
\item The inclusions $\mathcal{C}_i \subset \mathcal{C}$ are strongly continuous.
\item For $i \not\leq j, x \in \mathcal{C}_i, y \in \mathcal{C}_j$ we have
$$ \mathrm{Hom}_{\mathcal{C}}(x,y) = 0.$$
\item The categories $\mathcal{C}_i$ generate $\mathcal{C}$ as a localizing subcategory.
\end{enumerate} 
\end{definition}
We remark that in this situation, $\mathcal{C}$ is dualizable iff $\mathcal{C}_i$ are dualizable for all $i \in I$. The reason this notion is useful is the following.

\begin{proposition}[\cite{efimov2025ktheorylocalizinginvariantslarge}, Proposition 4.14.] \label{ktheorysemiorthogonal}
Let $\mathcal{C}$ be a dualizable $\infty$-category admitting a semi-orthogonal decomposition $C = \langle \mathcal{C}_i, i \in I \rangle$ for some poset $I$, and let $F : \mathrm{Cat}^{dual} \rightarrow \mathcal{E}$ be a finitary localizing invariant. Then the natural map
$$ \bigoplus_{i \in I} F( \mathcal{C}_i ) \rightarrow F(\mathcal{C})$$
is an isomorphism.
\end{proposition}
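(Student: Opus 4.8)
The plan is to bootstrap: first establish additivity for a two-term semi-orthogonal decomposition, then pass to arbitrary finite posets by induction, and finally to arbitrary posets by a filtered-colimit argument using that $F$ is finitary. For the two-term case, let $\mathcal{C} = \langle \mathcal{C}_0, \mathcal{C}_1 \rangle$ with, say, $\mathrm{Hom}_{\mathcal{C}}(\mathcal{C}_1, \mathcal{C}_0) = 0$. Unwinding the definition, the Hom-vanishing together with the generation axiom exhibits $\mathcal{C}_1 \hookrightarrow \mathcal{C}$ as the inclusion of the kernel of a Bousfield localization $\mathcal{C} \to \mathcal{C}/\mathcal{C}_1$ and identifies the composite $\mathcal{C}_0 \hookrightarrow \mathcal{C} \to \mathcal{C}/\mathcal{C}_1$ with an equivalence, so that $\mathcal{C}_1 \to \mathcal{C} \to \mathcal{C}_0$ is an exact sequence in $\mathrm{Pr}^L_{dual}$. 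The decisive point is that the inclusion $\mathcal{C}_0 \hookrightarrow \mathcal{C}$ is \emph{strongly continuous} --- this is part of the definition of a semi-orthogonal decomposition --- hence is a morphism in $\mathrm{Pr}^L_{dual}$, and it is a section of the quotient functor $\mathcal{C} \to \mathcal{C}_0$. Applying $F$ therefore produces a fiber sequence $F(\mathcal{C}_1) \to F(\mathcal{C}) \to F(\mathcal{C}_0)$ equipped with a section $F(\mathcal{C}_0) \to F(\mathcal{C})$, and since $\mathcal{E}$ is stable such a sequence splits; the resulting splitting is precisely the map $F(\mathcal{C}_0) \oplus F(\mathcal{C}_1) \to F(\mathcal{C})$ induced by the two inclusions. (This already covers incomparable indices, where $\mathcal{C} \simeq \mathcal{C}_0 \times \mathcal{C}_1$.)

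For an arbitrary finite poset $I$, I would induct on $|I|$. Choose a maximal element $m$ and set $I' = I \setminus \{m\}$, a downward-closed subset. The localizing subcategory $\mathcal{C}' \subseteq \mathcal{C}$ generated by $\{\mathcal{C}_i : i \in I'\}$ carries the restricted semi-orthogonal decomposition, is strongly continuously embedded in $\mathcal{C}$, and is therefore dualizable. Since $m$ is maximal, $m \not\leq i$ for all $i \in I'$, so $\mathrm{Hom}_{\mathcal{C}}(\mathcal{C}_m, \mathcal{C}_i) = 0$ and hence $\langle \mathcal{C}', \mathcal{C}_m \rangle$ is a two-term semi-orthogonal decomposition of $\mathcal{C}$. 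The two-term case gives $F(\mathcal{C}) \simeq F(\mathcal{C}') \oplus F(\mathcal{C}_m)$, the inductive hypothesis gives $F(\mathcal{C}') \simeq \bigoplus_{i \in I'} F(\mathcal{C}_i)$, and tracking the inclusion-induced maps combines these into the asserted equivalence.

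For a general poset $I$, I would realize $\mathcal{C}$ as the filtered colimit, over the finite subposets $S \subseteq I$, of the subcategories $\mathcal{C}_S = \langle \mathcal{C}_i : i \in S \rangle$, with transition maps the fully faithful, strongly continuous inclusions $\mathcal{C}_S \hookrightarrow \mathcal{C}_T$ for $S \subseteq T$. Each $\mathcal{C}_S$ is dualizable and carries the evident finite semi-orthogonal decomposition, so the finite case applies to it; and the colimit of this diagram taken in $\mathrm{Pr}^L_{dual}$ is again $\mathcal{C}$, since the $\mathcal{C}_i$ generate $\mathcal{C}$ as a localizing subcategory and, by Lemma \ref{efimovslemma}, the relevant exact sequences are preserved by filtered colimits. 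As $F$ is finitary it commutes with this colimit, so
$$ F(\mathcal{C}) \simeq \colim_{S} F(\mathcal{C}_S) \simeq \colim_{S} \bigoplus_{i \in S} F(\mathcal{C}_i) \simeq \bigoplus_{i \in I} F(\mathcal{C}_i). $$

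The main obstacle is not the bookkeeping but the structural input. One must know that each piece $\mathcal{C}_i$ and each down-set assembly $\mathcal{C}'$, $\mathcal{C}_S$ of a semi-orthogonal decomposition of a dualizable category is again dualizable and strongly continuously embedded, that a two-term semi-orthogonal decomposition yields a genuine exact sequence in $\mathrm{Pr}^L_{dual}$ whose quotient functor admits a strongly continuous section, and that the passage to infinite $I$ is compatible with filtered colimits in $\mathrm{Pr}^L_{dual}$. These facts belong to the formal theory of dualizable $\infty$-categories and to Efimov's analysis of filtered colimits of exact sequences; granting them, the remaining task --- checking that the abstract splittings agree with the natural map $\bigoplus_{i \in I} F(\mathcal{C}_i) \to F(\mathcal{C})$ --- is routine.
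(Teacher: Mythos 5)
The paper does not prove this statement; it is imported verbatim as Efimov's Proposition~4.14 and used as a black box, so there is no in-paper proof to compare against. Your sketch --- reduce to the two-term case where the semi-orthogonal decomposition yields a Verdier sequence in $\mathrm{Pr}^L_{dual}$ split by the strongly continuous inclusion $\mathcal{C}_0 \hookrightarrow \mathcal{C}$, then induct on finite posets by peeling off a maximal element, then pass to arbitrary $I$ via the filtered colimit $\mathcal{C} \simeq \colim_S \mathcal{C}_S$ over finite subposets using that $F$ is finitary --- is exactly the route the cited reference takes, and your treatment of each stage is correct. You are also right to flag the genuine structural inputs (that the $\mathcal{C}_i$ and the ``down-set'' subcategories $\mathcal{C}_S$ are dualizable and strongly continuously embedded, and that the filtered colimit of the $\mathcal{C}_S$ in $\mathrm{Pr}^L_{dual}$ recovers $\mathcal{C}$); one could tighten the last point by noting that the colimit maps fully faithfully and strongly continuously into $\mathcal{C}$ with essential image a localizing subcategory containing every $\mathcal{C}_i$, rather than invoking Lemma~\ref{efimovslemma} directly, but this is a cosmetic remark and your argument goes through.
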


\begin{corollary} \label{semiorthogonaldecomposition}
Let $P$ be a poset, $\mathcal{C}$ a dualizable $\infty$-category and $F : \mathrm{Cat}^{perf} \rightarrow \mathcal{E}$ a finitary localizing invariant. Then
$$F^{cont}( \mathrm{PSh}(P; \mathcal{C}) ) \cong \bigoplus_{p \in P} F^{cont}( \mathcal{C} ).$$
\end{corollary}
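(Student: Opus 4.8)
The plan is to exhibit on $\mathrm{PSh}(P;\mathcal{C})\simeq\mathrm{Fun}(P^{op},\mathcal{C})$ a semi-orthogonal decomposition indexed by the poset $P$ itself, each of whose pieces is equivalent to $\mathcal{C}$, and then to invoke Proposition \ref{ktheorysemiorthogonal}. Here I use the identification $\mathrm{PSh}(P;\mathcal{C})=\mathrm{PSh}(P)\otimes\mathcal{C}\simeq\mathrm{Fun}(P^{op},\mathcal{C})$.

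First I would check that $\mathrm{Fun}(P^{op},\mathcal{C})$ is dualizable: when $\mathcal{C}$ is stable it is equivalent to $\mathrm{Fun}(P^{op},\mathrm{Sp})\otimes_{\mathrm{Sp}}\mathcal{C}$, and $\mathrm{Fun}(P^{op},\mathrm{Sp})$ is compactly generated (the representables $y_p$ are compact since $\mathrm{Map}(y_p,-)\simeq\mathrm{ev}_p$ preserves all colimits), hence dualizable, while the Lurie tensor product of dualizable categories is dualizable. For each $p\in P$, let $i_p\colon\{p\}\hookrightarrow P^{op}$ and let $i_{p!}\colon\mathcal{C}\to\mathrm{Fun}(P^{op},\mathcal{C})$ denote left Kan extension, a left adjoint to $i_p^*=\mathrm{ev}_p$. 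The pointwise Kan extension formula gives $i_p^*i_{p!}\simeq\mathrm{id}_\mathcal{C}$, so $i_{p!}$ is fully faithful; set $\mathcal{C}_p:=i_{p!}(\mathcal{C})$. Each $\mathcal{C}_p\simeq\mathcal{C}$ is dualizable, and its inclusion into $\mathrm{Fun}(P^{op},\mathcal{C})$ is strongly continuous, being the left adjoint $i_{p!}$ whose right adjoint $\mathrm{ev}_p$ preserves all colimits (colimits in functor categories are computed pointwise).

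It then remains to verify the semi-orthogonality and generation axioms. The adjunction gives $\mathrm{Map}(i_{p!}x,i_{q!}y)\simeq\mathrm{Map}_\mathcal{C}(x,(i_{q!}y)(p))$, and the Kan extension formula shows $(i_{q!}y)(p)$ is $y$ if $p\le q$ and the zero object otherwise; hence $\mathrm{Hom}(\mathcal{C}_p,\mathcal{C}_q)=0$ whenever $p\not\le q$, which is exactly the semi-orthogonality condition with $P$ itself as the indexing poset. For generation, if $F\in\mathrm{Fun}(P^{op},\mathcal{C})$ is right orthogonal to every $\mathcal{C}_p$, then $\mathrm{Map}_\mathcal{C}(c,F(p))\simeq\mathrm{Map}(i_{p!}c,F)=0$ for all $c\in\mathcal{C}$ and all $p\in P$, so $F(p)=0$ for every $p$ and thus $F=0$; since a localizing subcategory of a presentable stable $\infty$-category that is generated by a set of objects is the whole category precisely when its right orthogonal vanishes, the $\mathcal{C}_p$ generate $\mathrm{Fun}(P^{op},\mathcal{C})$ as a localizing subcategory.

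Having established the semi-orthogonal decomposition $\mathrm{Fun}(P^{op},\mathcal{C})=\langle\mathcal{C}_p,\ p\in P\rangle$ with all pieces (and the total category) dualizable, Proposition \ref{ktheorysemiorthogonal} furnishes the natural isomorphism $\bigoplus_{p\in P}F^{cont}(\mathcal{C}_p)\xrightarrow{\cong}F^{cont}(\mathrm{Fun}(P^{op},\mathcal{C}))$, which is the assertion since $\mathcal{C}_p\simeq\mathcal{C}$. The one point I expect to need care is the generation axiom when $P$ is infinite, but it reduces cleanly to detecting the zero object pointwise as above; alternatively one could first reduce to finite subposets $Q\subseteq P$ using that $\mathrm{PSh}(-;\mathcal{C})$ sends the filtered colimit $P=\colim_Q Q$ to a filtered colimit along compact-object-preserving functors together with the finitariness of $F^{cont}$, though tracking the direct-sum splittings through the transition maps makes the direct argument preferable.
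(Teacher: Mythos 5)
Your proof is correct and follows essentially the same route as the paper: exhibit a $P$-indexed semi-orthogonal decomposition via the left Kan extensions along the point inclusions, verify strong continuity, generation, and orthogonality from the Kan extension formula, and apply Proposition \ref{ktheorysemiorthogonal}. The only cosmetic difference is that the paper reduces to $\mathcal{C}=\mathrm{Sp}$ and implicitly tensors the resulting decomposition with $\mathcal{C}$, whereas you work directly with $\mathrm{Fun}(P^{op},\mathcal{C})$ and spell out the dualizability and generation checks more fully.
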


\begin{proof} We claim that we have a $P$-indexed semi-orthogonal decomposition of $\mathrm{PSh}(P; \mathcal{C})$. It suffices to do the case for $\mathcal{C} = \mathrm{Sp}$. For $p \in P$ let
\[\begin{tikzcd}
	{\mathrm{Sp}} & {\mathrm{PSh}(P;\mathrm{Sp})}
	\arrow["{p_!}", curve={height=-12pt}, hook, from=1-1, to=1-2]
	\arrow["{p^*}", curve={height=-12pt}, from=1-2, to=1-1]
\end{tikzcd}\]
be the adjunction induced by the fully faithful functor $ p : \mathrm{pt} \rightarrow P$ that selects $p$. We thus get the $P$-indexed family of subcategories $p_!(\mathrm{Sp}) \subset \mathrm{PSh}(P; \mathrm{Sp} )$. These inclusions are all strongly continuous, as $p^*$ has the further right adjoint $p_*$. The image of $p_!(\mathbb{S})$ is the representable functor at $p$, hence we have that all of the $p_!(\mathrm{Sp})$ generate $\mathrm{PSh}(P; \mathrm{Sp} )$. Lastly, we have that $p^* q_! (Y) = q_!(Y)(p) = 0$ for $p \not\leq q$ by the standard formula for the value of left Kan extension, hence
$$ \mathrm{Hom}_{\mathrm{PSh}(P; \mathrm{Sp} )}(p_!(X), q_!(Y)) = \mathrm{Hom}_{\mathrm{Sp}}(X, p^* q_!(Y)) = 0.$$
\end{proof}

\section{The ring spectrum of motives} \label{functoriality}

We will also need a higher algebraic version of the module of motives. Given an abelian group $A$, write $HA$ for the Eilenberg-Maclane spectrum of $A$. We call a connective spectrum $X$ a \emph{Moore spectrum} for $A$ if $X \otimes H \mathbb{Z} \simeq HA$. Moore spectra for any given abelian group $A$ exist and are unique up to equivalence. However, there exists no functor from abelian groups to spectra realizing the Moore spectrum of an abelian group. The Moore spectrum $X$ of a free abelian group $A$ is given as a wedge sum of sphere spectra $\bigoplus \mathbb{S}$, indexed by a choice of basis of $A$. Observe that the subcategory $ \mathrm{MooreSp} \subset \mathrm{Sp}$ spanned by Moore spectra (for arbitrary abelian groups) is closed under filtered colimits, as tensoring with $H \mathbb{Z}$ and taking homotopy groups preserves filtered colimits. For more details, see \cite[II. 6.3]{schwede_symmetric}.

\begin{definition}
Let $D$ be a lower bounded distributive lattice. The \emph{spectrum of $D$-motives} $\mathcal{M}(D)$ is defined to be the  Moore spectrum associated to the module of motives $M(D)$.
\end{definition}
This definition has two issues:
\begin{itemize}
\item A priori, it is not clear if this is a functorial assignment, as the formation of Moore spectra is not functorial.
\item The module of motives $M(D)$ has a (well-defined and non-unital) commutative ring structure given by $[U] \cdot [V] = [U \wedge V]$. If $D$ has a top element as well, then $M(D)$ is a commutative ring with unit given by $[1]$. This structure carries over to the Moore spectrum $\mathcal{M}(D)$, a fact that is not obvious from the definition we gave.
\end{itemize}
We will rectify both of these shortcomings in this section. Before we do so, a few comments on algebra in a higher categorical setting. We call an $\infty$-category $\mathcal{C}$ with finite products \emph{cartesian}.

\begin{definition}[See also \cite{joyal2008}, \cite{cranch2010algebraic}, \cite{berman2019higher}, and \cite{Gepner_2015}, Appendix B]
An \emph{algebraic theory}, also called \emph{Lawvere theory}, is a cartesian $\infty$-category $\mathcal{L}$ together with a given object $x \in \mathcal{L}$ that generates $\mathcal{L}$ under finite products. If $\mathcal{C}$ is a cartesian category, a \emph{model} $M$ for $\mathcal{L}$ is a finite-product preserving functor $\mathcal{L} \rightarrow \mathcal{C}$. Write $\text{Mod}_\mathcal{L}(\mathcal{C})$ for the full subcategory of $\Fun(\mathcal{L}, \mathcal{C})$ consisting of the models.
\end{definition}

\begin{example}
We have the following useful Lawvere theories.
\begin{itemize}
\item The Lawvere theory for Boolean algebras is given by $\mathrm{Fin}_2$, the category of finite sets of cardinality $2^n$ for some $n \in \mathbb{N}$.
\item The Lawvere theory for bounded distributive lattices is given by the subcategory $\mathrm{Cubes}$ of $\mathrm{FinPoset}$ spanned by \emph{combinatorial cubes}, which are posets of the form $\mathbf{2}^n$ for some $n \in \mathbb{N}$, with $\mathbf{2} = \{0 \leq 1\}$.
\item The Lawvere theory for lower bounded distributive lattices is given by the subcategory $\mathrm{Cubes}_{root}$ of $\mathrm{FinPoset}_{*/}$ of finite pointed posets, spanned by \emph{rooted combinatorial cubes}, which are the combinatorial cubes $\mathbf{2}^n$ for some $n \in \mathbb{N}$, together with the chosen basepoint being the initial object.
\item The Lawvere theory for join-semilattices is given by the category $\mathrm{FinRel}$ of finite sets with relations as morphisms. This can be seen by observing that the free join-semilattice on $n$ generators is given by the powerset $\mathcal{P}(\mathbf{n})$ of a $n$-element set, and therefore a homomorphism  $\mathcal{P}(\mathbf{n}) \rightarrow \mathcal{P}(\mathbf{m})$ is determined by a choice of subset of $\mathbf{n}$ for each $i \in \mathbf{n}$, in other words a relation.
\end{itemize}
The Set-valued models for the above four Lawvere theories agree with the categories of Boolean algebras, bounded distributive lattices and lower bounded distributive lattices. Observe that we have cartesian functors
$$\mathrm{FinRel} \xrightarrow{\mathcal{P}} \mathrm{Cubes}_{root} \xrightarrow{forget} \mathrm{Cubes} \xrightarrow{forget} \mathrm{Fin}_2,$$
which correspond to the fact that the theories of join-semilattices, lower bounded distributive lattices, bounded distributive lattices and Boolean algebras are built successively from each other by adding operations and equations.
\end{example}

Consider the $\infty$-category $\text{Mod}_\mathcal{L}(\mathrm{Spc})$. The Yoneda embedding provides a functor
$$y : \mathcal{L}^{op} \rightarrow \text{Mod}_\mathcal{L}(\mathrm{Spc})$$
which is finite coproduct preserving, which we think of as the embedding of \emph{finitely generated free} models of $\mathcal{L}$. This functor has a useful universal property.

\begin{theorem}
Let $\mathcal{L}$ be a Lawvere theory, and $\mathcal{E}$ a category with small colimits. Then precomposition with $y$ induces an equivalence of categories
$$\mathrm{Fun}^L(  \mathrm{Mod}_\mathcal{L}(\mathrm{Spc}) , \mathcal{E} ) \simeq  \mathrm{Fun}^\amalg( \mathcal{L}^{op}, \mathcal{E} )$$
where $\mathrm{Fun}^L$ denotes the $\infty$-category of colimit preserving functors, and $\mathrm{Fun}^\amalg$ denotes the $\infty$-category of finite coproduct preserving functors.
\end{theorem}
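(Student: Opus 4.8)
The plan is to recognize $\mathrm{Mod}_{\mathcal{L}}(\mathrm{Spc})$ as the free sifted cocompletion $\mathrm{P}_{\Sigma}(\mathcal{L}^{op})$ of $\mathcal{L}^{op}$ and then quote its universal property. First I would observe that, since $\mathcal{L}$ is cartesian and generated under finite products by $x$, the opposite $\mathcal{L}^{op}$ is (essentially) small and admits finite coproducts. A model $M \colon \mathcal{L} \to \mathrm{Spc}$ is by definition a presheaf on $\mathcal{L}^{op}$, i.e.\ a functor $(\mathcal{L}^{op})^{op} = \mathcal{L} \to \mathrm{Spc}$, which sends finite coproducts in $\mathcal{L}^{op}$ (equivalently, finite products in $\mathcal{L}$) to finite products in $\mathrm{Spc}$. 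By the defining description of $\mathrm{P}_{\Sigma}$ (Lurie, \emph{HTT} 5.5.8.8) this is exactly the full subcategory $\mathrm{P}_{\Sigma}(\mathcal{L}^{op}) \subset \mathrm{PSh}(\mathcal{L}^{op})$, so $\mathrm{Mod}_{\mathcal{L}}(\mathrm{Spc}) \simeq \mathrm{P}_{\Sigma}(\mathcal{L}^{op})$ as full subcategories of $\mathrm{PSh}(\mathcal{L}^{op})$. Under this identification the functor $y$ of the statement is the corestriction of the Yoneda embedding $j \colon \mathcal{L}^{op} \to \mathrm{PSh}(\mathcal{L}^{op})$; this corestriction is legitimate because each corepresentable functor $\mathrm{Map}_{\mathcal{L}}(z,-) \colon \mathcal{L} \to \mathrm{Spc}$ preserves products, and $j$ preserves finite coproducts into $\mathrm{P}_{\Sigma}(\mathcal{L}^{op})$.

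With this identification in hand, the statement is precisely Lurie, \emph{HTT} Proposition 5.5.8.15: for a small $\infty$-category $\mathcal{C}$ with finite coproducts and any $\infty$-category $\mathcal{E}$ admitting small colimits, restriction along $j \colon \mathcal{C} \to \mathrm{P}_{\Sigma}(\mathcal{C})$ induces an equivalence $\mathrm{Fun}^{L}(\mathrm{P}_{\Sigma}(\mathcal{C}), \mathcal{E}) \xrightarrow{\ \simeq\ } \mathrm{Fun}^{\amalg}(\mathcal{C}, \mathcal{E})$ onto the finite-coproduct-preserving functors. Applying this with $\mathcal{C} = \mathcal{L}^{op}$ yields the theorem.

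If a self-contained argument is preferred over citing $\mathrm{P}_{\Sigma}$ wholesale, I would instead proceed through the free cocompletion $\mathrm{PSh}(\mathcal{L}^{op})$. By \emph{HTT} 5.1.5.6, restriction along $j$ gives an equivalence $\mathrm{Fun}^{L}(\mathrm{PSh}(\mathcal{L}^{op}), \mathcal{E}) \simeq \mathrm{Fun}(\mathcal{L}^{op}, \mathcal{E})$. Next one checks that $\mathrm{Mod}_{\mathcal{L}}(\mathrm{Spc}) \subset \mathrm{PSh}(\mathcal{L}^{op})$ is an accessible reflective subcategory: it is the subcategory of objects local with respect to the small set $S$ of maps $j(x)^{\sqcup n} \to j(x^{n})$ for $n \geq 0$ (for $n=0$ this is the map from the initial object to $j(1)$) that exhibit $x^{n}$ as the $n$-fold coproduct of $x$ in $\mathcal{L}^{op}$. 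A colimit-preserving functor out of $\mathrm{PSh}(\mathcal{L}^{op})$ factors (uniquely) through the localization $\mathrm{PSh}(\mathcal{L}^{op}) \to \mathrm{Mod}_{\mathcal{L}}(\mathrm{Spc})$ iff it inverts $S$, and translating this across the free cocompletion equivalence shows that the corresponding functor $\mathcal{L}^{op} \to \mathcal{E}$ must send the coproduct diagrams $x^{\sqcup n} \to x^{n}$ to colimit diagrams; since every finite coproduct in $\mathcal{L}^{op}$ is built from those of the generator $x$, this is equivalent to preservation of all finite coproducts. This gives the desired equivalence, with the comparison functor visibly being precomposition with $y$.

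I do not expect a genuine mathematical obstacle here: the result is a repackaging of standard facts about $\mathrm{P}_{\Sigma}$ and free cocompletions. The only point requiring care is the $\infty$-categorical bookkeeping — verifying that ``model of $\mathcal{L}$'' literally means ``finite-product-preserving presheaf on $\mathcal{L}^{op}$'', that $\mathcal{L}^{op}$ is small so that the reflective localization is accessible and $\mathrm{P}_{\Sigma}$ is well-behaved, and, on the self-contained route, correctly pinning down the localizing set $S$ and arguing that $S$-locality of a colimit-preserving functor forces preservation of \emph{all} finite coproducts and not merely of powers of the generator.
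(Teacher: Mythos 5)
Your proof is correct, and the core strategy — identify $\mathrm{Mod}_{\mathcal{L}}(\mathrm{Spc})$ with $\mathcal{P}_{\Sigma}(\mathcal{L}^{op})$ and invoke Lurie's universal property — is the same as the paper's. The interesting difference lies in how the identification is established. You use the defining description of $\mathcal{P}_{\Sigma}(\mathcal{C})$ (HTT 5.5.8.8) as the full subcategory of presheaves carrying finite coproducts of $\mathcal{C}$ to products in $\mathrm{Spc}$, which is literally the model condition once one unwinds $(\mathcal{L}^{op})^{op} = \mathcal{L}$; this is short and direct. The paper instead establishes $\mathcal{P}_{\Sigma}(\mathcal{L}^{op})^{op} \simeq \mathrm{Mod}_{\mathcal{L}}(\mathrm{Spc})^{op}$ through a chain of functor-category identifications, $\mathcal{P}_{\Sigma}(C)^{op} \simeq \mathrm{Fun}^{L}(\mathrm{Spc}, \mathcal{P}_{\Sigma}(C)^{op}) \simeq \cdots \simeq \mathrm{Fun}^{\amalg}(\mathcal{L}^{op}, \mathrm{Spc}^{op})$, applying the universal property of $\mathcal{P}_{\Sigma}$ a second time with target $\mathrm{Spc}^{op}$. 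Both are valid; yours is arguably cleaner and avoids the detour through opposite categories and the adjoint-functor-theorem bookkeeping needed to pass from $\mathrm{Fun}^{L}$ to $\mathrm{Fun}^{R}$, at the cost of needing to recall the exact definitional characterization of $\mathcal{P}_{\Sigma}$. Your self-contained fallback via localizing $\mathrm{PSh}(\mathcal{L}^{op})$ at the set $S = \{\,j(x)^{\sqcup n} \to j(x^{n}) : n \geq 0\,\}$ is also sound, and the caveat you flag (that $S$-locality must be shown to force preservation of \emph{all} finite coproducts, not just powers of $x$) is indeed the one point that needs the routine decomposition-of-colimits argument; the paper does not present this alternative.
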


\begin{proof}
This is more or less directly handled by Lurie in \cite[5.3.6.]{luriehtt}. Lurie shows that for a small category $C$ with finite coproducts there exists a functor $C \rightarrow \mathcal{P}_\Sigma(C)$ with the above universal property. To see that $\mathcal{P}_\Sigma(C)$ must agree with $\text{Mod}_\mathcal{L}(\mathrm{Spc})$ for $C = \mathcal{L}^{op}$, just observe that
$$\begin{array}{rlll} \mathcal{P}_\Sigma(C)^{op} & \simeq \mathrm{Fun}^L( \mathrm{Spc}, \mathcal{P}_\Sigma(C)^{op}) & \simeq \mathrm{Fun}^R( \mathcal{P}_\Sigma(C)^{op},  \mathrm{Spc} )^{op} & \\ & \simeq \mathrm{Fun}^L( \mathcal{P}_\Sigma(C), \mathrm{Spc}^{op} ) & \simeq  \mathrm{Fun}^\amalg( \mathcal{L}^{op}, \mathrm{Spc}^{op} ) & \hspace{-5ex} \simeq \text{Mod}_\mathcal{L}(\mathrm{Spc})^{op}.
\end{array}$$
\end{proof} 
In other words, any left adjoint functor from $\text{Mod}_\mathcal{L}(\mathrm{Spc})$ to any other cocomplete $\infty$-category $\mathcal{E}$ is induced by a co-model of $\mathcal{L}$ in $\mathcal{E}$. To be more precise, if $f : \mathcal{L}^{op} \rightarrow \mathcal{E}$ is finite coproduct preserving, then we have the induced adjunction
\[\begin{tikzcd}
	{\text{Mod}_\mathcal{L}(\mathrm{Spc})} & {\mathcal{E}}
	\arrow[""{name=0, anchor=center, inner sep=0}, "{f_!}", curve={height=-12pt}, from=1-1, to=1-2]
	\arrow[""{name=1, anchor=center, inner sep=0}, "{f^*}", curve={height=-12pt}, from=1-2, to=1-1]
	\arrow["\dashv"{anchor=center, rotate=-90}, draw=none, from=0, to=1]
\end{tikzcd}\]
where the right adjoint functor $\mathcal{E} \rightarrow \text{Mod}_\mathcal{L}(\mathrm{Spc})$ is given by
$$\begin{array}{rcl}
f^* : \mathcal{E} & \rightarrow & \text{Mod}_\mathcal{L}(\mathrm{Spc}) \\
e & \mapsto & \mathrm{Map}_\mathcal{E}( f(-), e )
\end{array}$$

\begin{example}
Let $f : \mathcal{L} \rightarrow \mathcal{L}'$ be a cartesian functor between Lawvere theories. By considering the composite
$$\mathcal{L}^{op} \xrightarrow{f} (\mathcal{L}')^{op} \xrightarrow{y} \text{Mod}_\mathcal{L'}(\mathrm{Spc}) $$
we get an induced adjunction
\[\begin{tikzcd}
	{\text{Mod}_\mathcal{L}(\mathrm{Spc})} & {\text{Mod}_\mathcal{L'}(\mathrm{Spc}).}
	\arrow[""{name=0, anchor=center, inner sep=0}, "{f_!}", curve={height=-12pt}, from=1-1, to=1-2]
	\arrow[""{name=1, anchor=center, inner sep=0}, "{f^*}", curve={height=-12pt}, from=1-2, to=1-1]
	\arrow["\dashv"{anchor=center, rotate=-90}, draw=none, from=0, to=1]
\end{tikzcd}\]
\end{example}

The Lawvere theory of Boolean algebras has a surprising feature: Any model for it in the $\infty$-category of spaces is automatically discrete.\footnote{We thank Maxime Ramzi and David Wärn for a discussion leading to this argument.} 

\begin{theorem}
Let $B : \mathrm{Fin}_2 \rightarrow \mathrm{Spc}$ be a model for the theory of Boolean algebras. Then the underlying space $B(\mathbf{2})$ is discrete. Moreover, there is an equivalence of $\infty$-categories
$$ \mathrm{BoolAlg} = \mathrm{Mod}_{\mathrm{BoolAlg}}(\mathrm{Set}) \simeq \mathrm{Mod}_{\mathrm{BoolAlg}}(\mathrm{Spc}).$$
\end{theorem}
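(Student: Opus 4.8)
The plan is to show that any model $B\colon\mathrm{Fin}_2\to\mathrm{Spc}$ takes values in discrete spaces, from which the equivalence of $\infty$-categories follows formally, since a finite-product-preserving functor out of a $1$-category into discrete spaces is the same datum as an ordinary model in $\mathrm{Set}$. The key point is that a Boolean algebra has an operation that witnesses every element as an idempotent in a strong sense: for $x \in B(\mathbf 2)$ one has the ``XOR'' or symmetric difference operation $x \mathbin{\triangle} y = (x \wedge \neg y)\vee(\neg x \wedge y)$, which makes $(B(\mathbf 2), \triangle, 0)$ an abelian group in which every element is $2$-torsion, i.e. $x \triangle x = 0$; and simultaneously $(B(\mathbf 2),\wedge,1)$ is a commutative monoid, with $\wedge$ distributing over $\triangle$. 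In other words the model structure equips the space $B(\mathbf 2)$ with the structure of an $\mathbb F_2$-algebra object in $\mathrm{Spc}$.

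So the first and main step is: an $\mathbb F_2$-algebra object in spaces (equivalently, an $E_1$-$\mathbb F_2$-algebra, or even just a unital magma object whose underlying space is a grouplike $E_1$-space annihilated by $2$) has discrete underlying space. I would argue this as follows: the underlying space of such a thing is a grouplike $E_1$-space, hence a loop space, so it suffices to show $\pi_n(B(\mathbf 2),0)=0$ for $n\ge 1$. The element $1$ gives a point, and multiplication by $1$ is the identity; consider instead that in an $\mathbb F_2$-algebra the squaring/Frobenius map $x\mapsto x\wedge x$ need not help directly — the cleaner route is the classical argument that a connective $\mathbb F_2$-algebra spectrum that is discrete in degree $0$ forces nothing, so instead I use: the group $\pi_n(B(\mathbf 2))$ is a module over $\pi_0$ of the $\mathbb F_2$-algebra, in particular an $\mathbb F_2$-vector space; but it is also, via the Eckmann--Hilton/additivity of two compatible operations, acted on in a way that pins it down. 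Concretely, writing $A = B(\mathbf 2)$, the two unital binary operations $\triangle$ and $\wedge$ on $A$ satisfy the interchange law up to coherent homotopy, and $\triangle$ is grouplike; a theorem (the ``$\mathbb F_2$-algebras in spaces are discrete'' statement, provable by noting $H\mathbb F_2$ is a solid/idempotent-type object, or directly: the free such gadget on a point is discrete and the forgetful functor is monadic over grouplike $E_1$-spaces with the monad preserving sifted colimits, so it suffices to check on free algebras, which are coproducts of copies of $\mathbb F_2$, hence discrete) gives the claim. I would present this via the monadic description: $\mathrm{Mod}_{\mathrm{BoolAlg}}(\mathrm{Spc})$ is the category of algebras for a sifted-colimit-preserving monad $T$ on $\mathrm{Spc}$, the free algebra $T(S)$ on a space $S$ is the free Boolean algebra on $\pi_0$-many generators (computed in $\mathrm{Set}$, as free objects for an ordinary algebraic theory) tensored appropriately — and crucially $T(\ast)$ is the two-element Boolean algebra, a discrete space; since every algebra is a sifted (indeed reflexive-coequalizer) colimit of free algebras $T(S)$ with $S$ a set, and since discreteness of the generators propagates, the underlying space of every model is discrete.

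Once discreteness is established, the second step is routine: restriction of scalars along $\mathrm{Set}\hookrightarrow\mathrm{Spc}$ gives a fully faithful functor $\mathrm{Mod}_{\mathrm{BoolAlg}}(\mathrm{Set})\to\mathrm{Mod}_{\mathrm{BoolAlg}}(\mathrm{Spc})$ (full faithfulness because $\mathrm{Set}\subset\mathrm{Spc}$ is fully faithful and mapping spaces of models are computed as limits of mapping spaces of the underlying objects over the diagram category $\mathrm{Fin}_2$, hence discrete), and the first step shows it is essentially surjective: given a model valued in spaces, each value $B(\mathbf 2^n)\simeq B(\mathbf 2)^n$ is discrete, so $B$ factors through $\mathrm{Set}$. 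Hence the functor is an equivalence.

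The main obstacle is the first step — proving that $\mathbb F_2$-algebra (or Boolean-algebra) objects in $\mathrm{Spc}$ are discrete. The subtlety is that ``every element is idempotent'' in the $\wedge$-monoid structure is not by itself enough to force discreteness (idempotent $E_1$-spaces need not be discrete without grouplikeness), so one genuinely needs the interaction of $\wedge$ with the grouplike $\triangle$: it is the combination ``$\mathbb F_2$-algebra'' that does it. I would handle this by reducing to free algebras via monadicity and the fact that the free Boolean algebra functor, being the $\mathrm{Spc}$-enriched left Kan extension of the ordinary (set-level) free functor along $\mathrm{Set}\hookrightarrow\mathrm{Spc}$, sends discrete spaces to discrete spaces and commutes with the sifted colimits through which general algebras are built; combined with the fact that the forgetful functor $\mathrm{Mod}_{\mathrm{BoolAlg}}(\mathrm{Spc})\to\mathrm{Spc}$ preserves sifted colimits, this forces every underlying space to be discrete. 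An alternative, perhaps slicker, route that avoids monadic bookkeeping is to observe directly that the ring object $B(\mathbf 2)$ is a module over the \emph{discrete} ring $\mathbb F_2$ in $\mathrm{Spc}$, i.e. an object of $\mathrm{Mod}_{\mathbb F_2}(\mathrm{Spc})$ where $\mathbb F_2$ is viewed as a discrete commutative ring object; since $\mathrm{Mod}$ over a discrete ring object in spaces is equivalent to the $1$-category of ordinary $\mathbb F_2$-modules (the Eilenberg--Mac Lane-type rigidity of discrete rings), the underlying space is discrete. I would state and use whichever of these is cleanest given the paper's conventions.
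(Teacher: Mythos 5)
There is a genuine gap, and in fact two of your three proposed routes to discreteness are not just incomplete but false as stated. The assertion that an object of $\mathrm{Mod}_{\mathbb{F}_2}(\mathrm{Spc})$ (with $\mathbb{F}_2$ a discrete ring object and $\mathrm{Spc}$ cartesian) must be discrete cannot be right: an $\mathbb{F}_2$-module object in $\mathrm{Spc}$ is exactly a grouplike $E_\infty$-space annihilated by $2$, and $K(\mathbb{Z}/2,1)$ is such a thing without being discrete. (More drastically, if ``modules over a discrete ring object are discrete'' were true, it would apply with $\mathbb{Z}$ in place of $\mathbb{F}_2$ and force \emph{every} grouplike $E_\infty$-space to be discrete.) The monadic route is circular: the bar resolution expresses a model $B$ as a sifted colimit of free algebras on its own underlying space $UB$, not on sets, so the claim ``$T(S) = T(\pi_0 S)$'' already presupposes that the underlying spaces of all models are discrete; and even if one had $B$ as a sifted colimit of free algebras on sets, sifted colimits in $\mathrm{Spc}$ do not preserve discreteness. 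Finally, your Eckmann--Hilton paragraph conflates idempotency ($x\cdot x = x$) with $2$-torsion ($x\triangle x = 0$). The XOR operation $\triangle$ is $2$-torsion, not idempotent, so Eckmann--Hilton applied to $\triangle$ only shows that $\pi_n(B(\mathbf{2}),0)$ is an $\mathbb{F}_2$-vector space, which is not enough. You also cannot apply Eckmann--Hilton simultaneously to $\triangle$ and $\wedge$, since they have different units.

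The paper's proof is cleaner and genuinely different. It applies Eckmann--Hilton to the \emph{idempotent} $E_\infty$-monoid structure $(B(\mathbf{2}),\vee,0)$: the induced group structure on $\pi_n(B(\mathbf{2}),0)$ for $n\ge 1$ agrees with $\vee$, and an idempotent group is trivial, so $0\colon\mathrm{pt}\to B(\mathbf{2})$ is a monomorphism. This only controls the basepoint $0$; the clever step is then to observe that in $\mathrm{Fin}_2$ the diagonal $\Delta\colon\mathbf{2}\to\mathbf{2}\times\mathbf{2}$ and the map $(\mathrm{id},0)\colon\mathbf{2}\to\mathbf{2}\times\mathbf{2}$ are both injections of a two-element set into a four-element set, hence differ by an automorphism of $\mathbf{4}$. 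Applying $B$, the diagonal of $B(\mathbf{2})$ is identified (up to equivalence) with $(\mathrm{id},\mathrm{const}_0)\colon B(\mathbf{2})\to B(\mathbf{2})\times B(\mathbf{2})$, which is a base change of the monomorphism $0\colon\mathrm{pt}\to B(\mathbf{2})$ along the second projection, hence a monomorphism, and a space with monic diagonal is discrete. Your $\triangle$ does have one legitimate use that you gesture at but do not isolate: since translation by a point $c$ via $\triangle$ is an auto-equivalence of $B(\mathbf{2})$ sending $0$ to $c$, it gives an alternative to the paper's diagonal trick for propagating the vanishing of $\pi_n$ at $0$ to all other basepoints. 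Combined with the idempotent $\vee$ (or $\wedge$) Eckmann--Hilton step, that would yield a correct proof, but it is not what you wrote, and the $\mathbb{F}_2$-module and monadic arguments you actually lean on do not work.
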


\begin{remark}
The same result was obtained independently by Benjamin Antieau in \cite{antieau2025animationoppositefinitesets}.
\end{remark}

\begin{proof} Denote the one and two-element set by $\mathbf{1}$ and $\mathbf{2}$, respectively. The statement about the equivalence of $\infty$-categories follows immediately by observing that if $B$ is a Boolean algebra object such that $B(\mathbf{2})$ is discrete, then the value $B(\mathbf{2}^n) = B(\mathbf{2})^n$ is discrete for any $n \geq 0$.

Now let $B$ be a Boolean algebra object in spaces. First note that restricting to the $(0, \vee)$-structure equips $B$ with the structure of an $E_\infty$-monoid. By the Eckmann-Hilton argument, the operation $\vee$ agrees with the usual group multiplication on $\pi_n( B(\mathbf{2}), 0 )$ for any $n \geq 1$. However, for the operation $\vee$ any element is idempotent, hence $\pi_n( B(\mathbf{2}), 0 ) = 1$ is the trivial group. In other words, the map $0 : \mathrm{pt} \simeq B(\mathbf{1}) \rightarrow B(\mathbf{2})$ is a monomorphism in the $\infty$-category of spaces.

Observe that the diagonal map $\Delta : \mathbf{2} \rightarrow \mathbf{2} \times \mathbf{2}$ is equivalent to 
the map $(\mathrm{id},0) : \mathbf{2} \rightarrow \mathbf{2} \times \mathbf{2}$ in the category $\mathrm{Fin}_2$, as both are injections. Therefore, the diagonal map $\Delta : B(\mathbf{2}) \rightarrow B(\mathbf{2}) \times B(\mathbf{2})$ is obtained via pullback from the monomorphism $0 : B(\mathbf{1}) \rightarrow B(\mathbf{2})$, and hence itself a monomorphism. But a space is discrete iff its diagonal map is a monomorphism, see \cite[Lemma 5.5.6.17.]{luriehtt}.
\end{proof}

Now let $(\mathcal{C},\otimes)$ be a presentably symmetric monoidal $\infty$-category, i.e. $(\mathcal{C},\otimes) \in \mathrm{CAlg}( \mathrm{Pr}^L )$, and consider the commutative algebra  $1_\mathcal{C} \in \mathrm{CAlg}(\mathcal{C})$ given by the unit. Assume furthermore that the tensor product preserves finite products in each variable. By cotensoring with a finite set we get a functor
$$\begin{array}{rcl}
1_\mathcal{C}^- : \mathrm{Fin}_2 & \rightarrow & \mathrm{CAlg}(\mathcal{C}) \\
N & \mapsto & 1_\mathcal{C}^N. 
\end{array}$$
(Note that the forget functor $\mathrm{CAlg}(\mathcal{C}) \rightarrow \mathcal{C}$ preserves limits \cite[Corollary 3.2.2.5.]{lurieha}.)
The functor $1_\mathcal{C}^-$ preserves coproducts since by \cite[3.2.4]{lurieha} coproducts in $\mathrm{CAlg}(\mathcal{C})$ are given by tensor products, and under the assumption that the tensor product commutes with finite products we have that the natural map
$$ 1_\mathcal{C}^N \otimes 1_\mathcal{C}^M \rightarrow 1_\mathcal{C}^{N \times M}$$
is an equivalence. As a corollary we get an induced adjunction
\[\begin{tikzcd}
	 {\mathrm{BoolAlg}} \simeq {\text{Mod}_{\mathrm{BoolAlg}}(\mathrm{Spc})} \hspace{5ex} & {\mathrm{CAlg}(\mathcal{C})}
	\arrow[""{name=0, anchor=center, inner sep=0}, "{L}", curve={height=-12pt}, from=1-1, to=1-2]
	\arrow[""{name=1, anchor=center, inner sep=0}, "{\mathrm{Idem}}", curve={height=-12pt}, from=1-2, to=1-1]
	\arrow["\dashv"{anchor=center, rotate=-90}, draw=none, from=0, to=1]
\end{tikzcd}\]
where the right adjoint $\mathrm{Idem}$ associates to a commutative algebra object $A$ its (necessarily discrete) space of idempotent elements. (We remark that in the classical case of $\mathcal{C} = \mathrm{Ab}$, this recovers the Boolean algebra of idempotents of a commutative ring.)
Now we simply apply this to the situation where $\mathcal{C} = \mathrm{Sp}$ and the unit is the sphere spectrum $\mathbb{S}$.

\begin{proposition}
There is an adjunction
\[\begin{tikzcd}
	{\mathrm{BoolAlg}} & {\mathrm{CAlg}(\mathrm{Sp})}
	\arrow[""{name=0, anchor=center, inner sep=0}, "{\mathcal{M}}", curve={height=-12pt}, from=1-1, to=1-2]
	\arrow[""{name=1, anchor=center, inner sep=0}, "{\mathrm{Idem}}", curve={height=-12pt}, from=1-2, to=1-1]
	\arrow["\dashv"{anchor=center, rotate=-90}, draw=none, from=0, to=1]
\end{tikzcd}\]
such that for a Boolean algebra $B$, the value $\mathcal{M}(B)$ is the Moore spectrum to the module of motives $M(B)$.
\end{proposition}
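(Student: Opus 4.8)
First I would obtain the adjunction by specialising the general construction established just above the statement, which applies to any $(\mathcal C,\otimes)\in\mathrm{CAlg}(\mathrm{Pr}^L)$ whose tensor product preserves finite products in each variable. The $\infty$-category $\mathrm{Sp}$ qualifies: it is presentably symmetric monoidal, and being stable its finite products coincide with finite coproducts, which $-\otimes-$ preserves in each variable. Write $\mathcal M$ for the resulting left adjoint $L\colon\mathrm{BoolAlg}=\mathrm{Mod}_{\mathrm{BoolAlg}}(\mathrm{Spc})\to\mathrm{CALg}(\mathrm{Sp})$, with $\mathrm{Idem}$ the (discrete) space of idempotents. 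I record two consequences of this set-up. First, by the universal property of $\mathrm{Mod}_{\mathrm{BoolAlg}}(\mathrm{Spc})$, the functor $\mathcal M$ restricted to finitely generated free Boolean algebras is the co-model $1_{\mathrm{Sp}}^{(-)}$; since the free Boolean algebra on $n$ generators is $\mathcal P(\mathbf 2^n)$, the powerset of its $2^n$-element set of atoms, this gives $\mathcal M(\mathcal P(\mathbf 2^n))\simeq\mathbb S^{\mathbf 2^n}\simeq\bigoplus_{\mathbf 2^n}\mathbb S$. Second, $\mathrm{Idem}$ of a commutative ring spectrum depends only on $\pi_0$: since $\mathrm{Idem}(A)=\mathrm{Map}_{\mathrm{CALg}(\mathrm{Sp})}(\mathbb S^{\mathbf 2},A)$ and $\tau_{\le 0}(\mathbb S^{\mathbf 2})\simeq H(\mathbb Z^{\mathbf 2})$, one has $\mathrm{Idem}(HR)=\{x\in R:x^2=x\}$ for any ordinary commutative ring $R$.

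Next I would compute $\pi_0\mathcal M(B)$ for every Boolean algebra $B$. Since $HR$ is $0$-truncated, $\mathrm{Map}_{\mathrm{CALg}(\mathrm{Sp})}(\mathcal M(B),HR)\simeq\mathrm{Hom}_{\mathrm{CRing}}(\pi_0\mathcal M(B),R)$ for an ordinary commutative ring $R$; by the adjunction and the second fact above this is $\mathrm{Hom}_{\mathrm{BoolAlg}}(B,\mathrm{Idem}(R))$, which by Theorem~\ref{idempotents} — the adjunction $M\dashv\mathrm{Idem}$, with $M(B)$ unital because $B$ is bounded — equals $\mathrm{Hom}_{\mathrm{CRing}}(M(B),R)$. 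The Yoneda lemma then gives $\pi_0\mathcal M(B)\cong M(B)$.

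Then I would handle finite $B$ and bootstrap. A finite Boolean algebra is either trivial, in which case both sides of the claim are $0$, or $\mathcal P(S)$ for a nonempty finite set $S$; choosing an injection $S\hookrightarrow\mathbf 2^n$ together with a set-theoretic retraction exhibits $\mathcal P(S)$ as a retract in $\mathrm{BoolAlg}$ of the free algebra $\mathcal P(\mathbf 2^n)$. Applying $\mathcal M$ and using the first fact above, $\mathcal M(\mathcal P(S))$ is a retract of $\bigoplus_{\mathbf 2^n}\mathbb S$ in $\mathrm{Sp}$; hence it is connective, and $\mathcal M(\mathcal P(S))\otimes H\mathbb Z$ is a retract of $\bigoplus_{\mathbf 2^n}H\mathbb Z$ in $\mathrm{Mod}_{H\mathbb Z}$, hence discrete since the heart of $\mathrm{Mod}_{H\mathbb Z}$ is closed under retracts. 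By the definition of a Moore spectrum, $\mathcal M(\mathcal P(S))$ is therefore the Moore spectrum of $\pi_0(\mathcal M(\mathcal P(S))\otimes H\mathbb Z)=\pi_0\mathcal M(\mathcal P(S))=M(\mathcal P(S))$ (Hurewicz, and the previous paragraph; indeed $M(\mathcal P(S))=\mathbb Z[S]$ by Example~\ref{finiteset}). Finally, for an arbitrary Boolean algebra $B=\colim_i B_i$ over its finite subalgebras, $\mathcal M$ preserves this filtered colimit, $-\otimes H\mathbb Z$, $\pi_0$ and (by Lemma~\ref{motivesfiltered}) $M$ commute with filtered colimits, and $\mathrm{MooreSp}\subset\mathrm{Sp}$ is closed under them; so $\mathcal M(B)$ is connective with $\mathcal M(B)\otimes H\mathbb Z\simeq\colim_i HM(B_i)\simeq HM(B)$, i.e.\ it is the Moore spectrum of $M(B)$.

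The point needing the most care is not any single step but the handling of $\mathrm{Idem}$: one must use, as asserted in the construction preceding the statement, that the space of idempotents of a commutative ring spectrum is discrete and insensitive to homotopy above degree $0$, and that under this identification Theorem~\ref{idempotents} can be fed in directly. Extracting the value of $\mathcal M$ on free algebras from the universal property, the retract argument for finite Boolean algebras, and the filtered-colimit bootstrap are all formal.
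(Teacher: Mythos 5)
Your proof is correct and follows the same overall skeleton as the paper's: value on free Boolean algebras $\mathcal{P}(\mathbf{2}^n)$ via the universal property, extension to finite Boolean algebras via retracts, extension to all Boolean algebras via filtered colimits. The paper's route at the middle step is more direct: it observes that $L\vert_{\mathrm{FinBoolAlg}}$ and the cotensor $F\mapsto\mathbb S^F$ are two functors $\mathrm{FinSet}^{op}\to\mathrm{CAlg}(\mathrm{Sp})$ agreeing on $\mathrm{Fin}_2^{op}$, hence agree after idempotent completion, and $\mathbb S^F$ is visibly the Moore spectrum of $\mathbb Z[F]=M(\mathcal P(F))$ by Example~\ref{finiteset}; no appeal to $\mathrm{Idem}$ or to Theorem~\ref{idempotents} is needed. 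Your detour through $\pi_0\mathcal M(B)\cong M(B)$ via the adjunction unit and the $0$-truncation of $\mathbb S^{\mathbf 2}$ is a nice cross-check, but it buys nothing that the direct identification doesn't already give, and it imports two mild issues: the step $\mathrm{Map}_{\mathrm{CAlg}(\mathrm{Sp})}(\mathcal M(B),HR)\simeq\mathrm{Hom}_{\mathrm{CRing}}(\pi_0\mathcal M(B),R)$ requires $\mathcal M(B)$ to be connective (false for general $E_\infty$-rings, e.g.\ $KU$), and you establish connectivity only in the following paragraph — so the two paragraphs should be swapped, with connectivity deduced first from $\mathcal M$ being a left adjoint sending free algebras to $\mathbb S^{\mathbf 2^n}$ and colimits of connective $E_\infty$-rings being connective. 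Second, the paper defers its proof of Theorem~\ref{idempotents} to Theorem~\ref{spaceofidempotents}, which in turn relies on the present proposition, so leaning on it here is circular unless one accepts the paper's aside that Theorem~\ref{idempotents} ``can be checked directly''; worth making that explicit. One small point in your favour: the retract argument fails for $S=\emptyset$ (the empty set is a retract of nothing but itself), and you at least flag the trivial Boolean algebra as a separate case, whereas the paper glosses over it; in both arguments one should observe that $\mathcal M(\{*\})\simeq 0$ follows from writing the terminal Boolean algebra as a coequalizer $\mathcal P(\mathbf 2)\rightrightarrows\mathbf 2$ and using that $\mathcal M$ preserves colimits.
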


\begin{remark}
The right adjoint sends a given $E_\infty$-ring spectrum $A$ to the discrete space $\mathrm{Idem}(A) = \mathrm{Map}_{\mathrm{CAlg}}( \mathbb{S} \times \mathbb{S}, A)$, viewed as a Boolean algebra. A point of this space, given by a ring map $f : \mathbb{S} \times \mathbb{S} \rightarrow A$, can be interpreted as a pair of complementary, idempotent, elements $p,q$ in $\pi_0(A)$, in the sense that the relations
$$p^2 = q^2 = 1, ~ pq = 0, \text{ and } p+q =1$$
hold.
\end{remark}

\begin{proof}
We need to verify that the value of the induced left adjoint $L : \mathrm{BoolAlg} \rightarrow \mathrm{CAlg}(\mathrm{Sp})$ on a Boolean algebra $B$ agrees with $\mathcal{M}(B)$, defined as the Moore spectrum. We do this in two steps.\footnote{We thank Benjamin Dünzinger for suggesting this argument.}
\begin{itemize}
\item Since every finite set is a retract of a set of order a power of $2$, we observe that $L$ agrees with the cotensor
$$\begin{array}{rcl}
\mathrm{FinSet}^{op} & \rightarrow & \mathrm{Sp} \\
F & \mapsto & \mathbb{S}^F.
\end{array}
$$
when restricted to finite Boolean algebras, under the equivalence $\mathrm{FinBoolAlg} \simeq \mathrm{FinSet}^{op}$. 
\item Arbitrary Boolean algebras are obtained as filtered colimits of finite Boolean algebras inside the category $\mathrm{BoolAlg}$. Therefore observe that $L$ agrees with the induced filtered colimit preserving functor
$$ \mathrm{BoolAlg} \cong \mathrm{Ind}(\mathrm{FinSet}^{op}) \cong \mathrm{Pro}(\mathrm{FinSet})^{op} \rightarrow \mathrm{Sp}.$$
This matches the value-wise definition by Lemma \ref{motivesfiltered}.
\end{itemize}
\end{proof}

We can now compose with the adjunction
\[\begin{tikzcd}
	{\mathrm{DLatt}_{bd}} & {\mathrm{BoolAlg}}.
	\arrow[""{name=0, anchor=center, inner sep=0}, "{\mathrm{Bool}}", curve={height=-12pt}, from=1-1, to=1-2]
	\arrow[""{name=1, anchor=center, inner sep=0}, "{forget}", curve={height=-12pt}, from=1-2, to=1-1]
	\arrow["\dashv"{anchor=center, rotate=-90}, draw=none, from=0, to=1]
\end{tikzcd}\]
to get $\mathcal{M}$ as a functor on bounded distributive lattices. This matches the value-wise definition by Proposition \ref{motivesonbooleanization}.

\begin{proposition}
There is an adjunction
\[\begin{tikzcd}
	{\mathrm{DLatt}_{bd}} & {\mathrm{CAlg}(\mathrm{Sp})}
	\arrow[""{name=0, anchor=center, inner sep=0}, "{\mathcal{M}}", curve={height=-12pt}, from=1-1, to=1-2]
	\arrow[""{name=1, anchor=center, inner sep=0}, "{\mathrm{Idem}}", curve={height=-12pt}, from=1-2, to=1-1]
	\arrow["\dashv"{anchor=center, rotate=-90}, draw=none, from=0, to=1]
\end{tikzcd}\]
such that for a bounded distributive lattice $D$, the value $\mathcal{M}(D)$ is the Moore spectrum to the module of motives $M(D)$.
\end{proposition}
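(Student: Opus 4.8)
The plan is to build the stated adjunction by composing the one just constructed with the Booleanization adjunction. Concretely: the previous proposition supplies an adjunction $\mathcal{M} : \mathrm{BoolAlg} \to \mathrm{CALg}(\mathrm{Sp})$ with right adjoint $\mathrm{Idem}$, and the subsection on Boolean algebras supplies the adjunction $\mathrm{Bool} : \mathrm{DLatt}_{bd} \to \mathrm{BoolAlg}$ with right adjoint the forgetful functor. First I would simply compose these: a composite of adjoint pairs is again an adjoint pair, so we obtain an adjunction whose left adjoint is $\mathcal{M} \circ \mathrm{Bool} : \mathrm{DLatt}_{bd} \to \mathrm{CALg}(\mathrm{Sp})$ and whose right adjoint sends an $E_\infty$-ring $A$ to $\mathrm{Idem}(A)$ viewed as a bounded distributive lattice along the forgetful functor $\mathrm{BoolAlg} \to \mathrm{DLatt}_{bd}$. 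Since $\mathrm{Idem}(A)$ is canonically a Boolean algebra, this composite right adjoint is still legitimately denoted $\mathrm{Idem}$, in agreement with the statement, and the $E_\infty$-ring structure on $\mathcal{M}(D) := \mathcal{M}(\mathrm{Bool}(D))$ comes along for free.

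The second step is to identify the underlying spectrum of $\mathcal{M}(D)$ for a bounded distributive lattice $D$. By the previous proposition, $\mathcal{M}(\mathrm{Bool}(D))$ has underlying spectrum the Moore spectrum of the abelian group $M(\mathrm{Bool}(D))$. The canonical lattice homomorphism $D \to \mathrm{Bool}(D)$ induces a map $M(D) \to M(\mathrm{Bool}(D))$ which is an isomorphism by Proposition \ref{motivesonbooleanization}; transporting the Moore spectrum along this isomorphism identifies $\mathcal{M}(D)$ with the Moore spectrum of $M(D)$, as claimed. Because $D \to \mathrm{Bool}(D)$ preserves finite meets, this isomorphism is moreover compatible with the non-unital (and, when $D$ has a top element, unital) commutative ring structures $[U]\cdot[V] = [U \wedge V]$ on both sides, so the identification holds as ring spectra.

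I do not anticipate a genuine obstacle here: the whole argument is a formal composition of adjunctions together with one invocation of Proposition \ref{motivesonbooleanization}. The only point worth stating carefully — exactly as in the Boolean case treated immediately above — is that "Moore spectrum" in the conclusion refers to the \emph{underlying} spectrum, the $E_\infty$-refinement being produced by the adjunction rather than asserted to be canonical among all multiplicative structures on that spectrum.
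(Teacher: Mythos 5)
Your proposal is correct and matches the paper's argument essentially verbatim: the paper also obtains the adjunction by composing $\mathcal{M}\dashv\mathrm{Idem}$ on Boolean algebras with $\mathrm{Bool}\dashv\mathrm{forget}$, and identifies the underlying spectrum via Proposition \ref{motivesonbooleanization}. Your extra remark about compatibility of the multiplication $[U]\cdot[V]=[U\wedge V]$ with Booleanization is a fine clarification but not needed, since the multiplicative structure is already packaged by the adjunction.
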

	
In order to define $\mathcal{M}$ as a functor on lower bounded distributive lattice, we need the concept of a non-unital algebra.

\begin{proposition}[\cite{lurieha}, Proposition 5.4.4.10.] \label{nonunital}
Let $(\mathcal{C},\otimes)$ be a stable, symmetric monoidal  $\infty$-category. There is an equivalence
$$ \mathrm{CAlg}^{nu}(\mathcal{C}) \simeq \mathrm{CAlg}(\mathcal{C})_{/\mathbb{S}},$$
where the left-hand side is the $\infty$-category of non-unital commutative algebras in $\mathcal{C}$. The inverse functor sends an augmented algebra $f : A \rightarrow \mathbb{S}$ to the ideal $\mathrm{fib}(f)$.
\end{proposition}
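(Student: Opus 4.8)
\emph{Proof plan.} The plan is to write down explicit functors in both directions and verify that they are mutually inverse; stability of $\mathcal{C}$ will enter at exactly two points. Throughout write $1 = 1_{\mathcal{C}}$ for the unit object (so $1 = \mathbb{S}$ when $\mathcal{C} = \mathrm{Sp}$), and recall that $1$ with its canonical algebra structure is the initial object of $\mathrm{CAlg}(\mathcal{C})$, while the zero object $0 \in \mathcal{C}$ is the terminal object of $\mathrm{CAlg}^{nu}(\mathcal{C})$.

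First I would set up the forgetful functor $U : \mathrm{CAlg}(\mathcal{C}) \to \mathrm{CAlg}^{nu}(\mathcal{C})$ that discards the unit. It admits a left adjoint, the \emph{unitalization} $(-)^{+}$, whose existence is a standard consequence of the adjoint functor theorem (forgetful functors between $\infty$-categories of algebras preserve limits and filtered colimits) and which on underlying objects is $J^{+} \simeq 1 \oplus J$ with the evident multiplication. Each $J^{+}$ carries a canonical augmentation $\epsilon_{J} : J^{+} \to 1$: under the adjunction $\mathrm{Map}_{\mathrm{CAlg}(\mathcal{C})}(J^{+}, 1) \simeq \mathrm{Map}_{\mathrm{CAlg}^{nu}(\mathcal{C})}(J, U 1)$ it corresponds to the zero map $J \to 0 \to 1$. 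This defines $G : \mathrm{CAlg}^{nu}(\mathcal{C}) \to \mathrm{CAlg}(\mathcal{C})_{/1}$, $J \mapsto (J^{+} \xrightarrow{\epsilon_{J}} 1)$. Conversely, since the forgetful functor $\mathrm{CAlg}^{nu}(\mathcal{C}) \to \mathcal{C}$ creates limits and $\mathcal{C}$ is stable, I can form the \emph{augmentation ideal}: $F : \mathrm{CAlg}(\mathcal{C})_{/1} \to \mathrm{CAlg}^{nu}(\mathcal{C})$, $(A \xrightarrow{a} 1) \mapsto \mathrm{fib}(a) = A \times_{1} 0$, the fiber computed in $\mathrm{CAlg}^{nu}(\mathcal{C})$ with $a$ and $0 \to 1$ regarded as maps of non-unital algebras.

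For $F G \simeq \mathrm{id}$: the unit of the adjunction $\eta_{J} : J \to U(J^{+})$ is the inclusion $J \hookrightarrow 1 \oplus J$, a map of non-unital algebras with $\epsilon_{J} \circ \eta_{J} = 0$, and it identifies $J$ with $\mathrm{fib}(\epsilon_{J})$; hence $F G(J) \simeq J$, naturally in $J$. For $G F \simeq \mathrm{id}$: given $(A \xrightarrow{a} 1)$, set $I = \mathrm{fib}(a)$ with its canonical non-unital algebra map $\iota : I \to A$; by adjunction $\iota$ induces $\phi : I^{+} \to A$ in $\mathrm{CAlg}(\mathcal{C})$ with $a \circ \phi = \epsilon_{I}$, i.e. a morphism $G F(A) \to (A, a)$ in $\mathrm{CAlg}(\mathcal{C})_{/1}$. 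Because $1$ is initial in $\mathrm{CAlg}(\mathcal{C})$, the unit $u : 1 \to A$ is a section of $a$; in the stable $\infty$-category $\mathcal{C}$ a fiber sequence $I \xrightarrow{\iota} A \xrightarrow{a} 1$ equipped with a section of $a$ splits, so $(u, \iota) : 1 \oplus I \to A$ is an equivalence --- and this is precisely the underlying map of $\phi$. Thus $\phi$ is an equivalence, and $F$, $G$ are inverse equivalences.

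\emph{The main obstacle.} Two points are more than bookkeeping. First, the splitting lemma for stable $\infty$-categories used in the last step is exactly where stability is indispensable; the analogous statement fails for, e.g., commutative monoids in $\mathrm{Set}$, so the hypothesis cannot be dropped. Second, the identifications above must be upgraded from underlying objects to equivalences of (non-unital) commutative algebras and made natural; this is cleanest not via explicit multiplication formulas but by working with the $\infty$-operadic presentations of $\mathrm{CAlg}(\mathcal{C})$ and $\mathrm{CAlg}^{nu}(\mathcal{C})$ and the morphism of $\infty$-operads relating the unital and non-unital commutative operads, along the lines of \cite{lurieha}. The existence and computation of the unitalization $(-)^{+}$ is itself a cited input rather than something one reproves here.
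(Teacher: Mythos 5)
The paper does not give its own proof of this statement: it is stated as a citation to Lurie's \emph{Higher Algebra}, Proposition 5.4.4.10, and used as a black box in the construction of the adjunction in Theorem~\ref{spaceofidempotents}. So there is no in-paper argument to compare against; what you have written is a reconstruction of the cited proof.

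Your reconstruction is essentially the right argument and matches the standard (and Lurie's) strategy: define $G$ by unitalization $J \mapsto (J^{+} = 1 \oplus J \to 1)$, define $F$ by passing to the augmentation ideal $(A \to 1) \mapsto \mathrm{fib}(a)$, observe that $F G \simeq \mathrm{id}$ is immediate from the underlying-object computation $\mathrm{fib}(1 \oplus J \to 1) \simeq J$, and reduce $G F \simeq \mathrm{id}$ to the splitting lemma for a fiber sequence $I \to A \to 1$ that is split by the unit $1 \to A$. You correctly identify that stability is used twice: once to form the fiber, and once, crucially, to split the fiber sequence. You also correctly flag that the real labor in Lurie's treatment is making the two constructions functorial and verifying the triangle identities at the operadic level, not the underlying-object computation.

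Two small caveats. First, invoking the adjoint functor theorem for the existence of $(-)^{+}$ smuggles in a presentability hypothesis not present in the statement; the cleaner route, which you in fact essentially take, is to construct $J^{+} \simeq 1 \oplus J$ directly, which only needs the tensor product to distribute over finite (co)products --- automatic once one adopts the usual convention that ``stable symmetric monoidal'' means the tensor is exact in each variable. Second, it is worth making explicit in the splitting step that the inverse to $(u,\iota) : 1 \oplus I \to A$ is $(\mathrm{id}_A - u a, a) : A \to I \oplus 1$, where $\mathrm{id}_A - ua$ is checked to factor through $\mathrm{fib}(a)$ because $a(\mathrm{id}_A - ua) \simeq 0$; this is the precise place where additivity (hence stability) is indispensable. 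Neither of these affects the overall correctness of your sketch.
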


\begin{theorem} \label{spaceofidempotents}
There is an adjunction
\[\begin{tikzcd}
	{\mathrm{DLatt}_{lb}} & {\mathrm{CAlg}^{nu}(\mathrm{Sp})}
	\arrow[""{name=0, anchor=center, inner sep=0}, "{\mathcal{M}}", curve={height=-12pt}, from=1-1, to=1-2]
	\arrow[""{name=1, anchor=center, inner sep=0}, "{\mathrm{Idem}}", curve={height=-12pt}, from=1-2, to=1-1]
	\arrow["\dashv"{anchor=center, rotate=-90}, draw=none, from=0, to=1]
\end{tikzcd}\]
such that for a lower bounded distributive lattice $D$, the value $\mathcal{M}(D)$ is the Moore spectrum to the module of motives $M(D)$.
\end{theorem}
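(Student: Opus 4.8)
The plan is to obtain the adjunction by \emph{slicing} the adjunction $\mathcal{M}:\mathrm{DLatt}_{bd}\rightleftarrows\mathrm{CAlg}(\mathrm{Sp}):\mathrm{Idem}$ established above over the object $\mathbf{2}$ (resp.\ over $\mathcal{M}(\mathbf{2})$), replacing augmented $E_\infty$-ring spectra by non-unital ones via Proposition~\ref{nonunital}, and precomposing with the adjunction $(-)_\infty\dashv R$; the value will then be identified using Lemma~\ref{splitofmot}.

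First I would record that $\mathbf{2}$ is the initial object of $\mathrm{DLatt}_{bd}$, so the left adjoint $\mathcal{M}$ carries it to the initial $E_\infty$-ring: $\mathcal{M}(\mathbf{2})\simeq\mathbb{S}$ (equivalently $M(\mathbf{2})\cong\mathbb{Z}$, by Example~\ref{finiteset}). Any adjunction $F\dashv G$ with $F:\mathcal{A}\to\mathcal{B}$ induces, for each $a\in\mathcal{A}$, an adjunction $\mathcal{A}_{/a}\rightleftarrows\mathcal{B}_{/Fa}$ with left adjoint $(x\to a)\mapsto(Fx\to Fa)$ and right adjoint $(y\to Fa)\mapsto Gy\times_{GFa}a$, the pullback along the unit $a\to GFa$. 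Applying this with $a=\mathbf{2}$, and noting that the unit $\mathbf{2}\to\mathrm{Idem}(\mathcal{M}(\mathbf{2}))=\mathrm{Idem}(\mathbb{S})$ is an isomorphism (both sides being initial in $\mathrm{DLatt}_{bd}$), I obtain an adjunction
\[
{\mathrm{DLatt}_{bd}}_{/\mathbf{2}}\rightleftarrows\mathrm{CAlg}(\mathrm{Sp})_{/\mathbb{S}}
\]
with left adjoint $(E\to\mathbf{2})\mapsto(\mathcal{M}(E)\to\mathbb{S})$ and right adjoint $(A\to\mathbb{S})\mapsto(\mathrm{Idem}(A)\to\mathbf{2})$. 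Now precompose on the left with the fully faithful left adjoint $(-)_\infty:\mathrm{DLatt}_{lb}\to{\mathrm{DLatt}_{bd}}_{/\mathbf{2}}$ (with right adjoint $R(p)=p^{-1}(0)$) introduced earlier, and postcompose on the right with the equivalence $\mathrm{CAlg}(\mathrm{Sp})_{/\mathbb{S}}\simeq\mathrm{CAlg}^{nu}(\mathrm{Sp})$ of Proposition~\ref{nonunital}. The composite of left adjoints is the desired functor $\mathcal{M}:\mathrm{DLatt}_{lb}\to\mathrm{CAlg}^{nu}(\mathrm{Sp})$, with right adjoint the composite $\mathrm{Idem}$ in the other direction; unwinding the unitalization $A\mapsto(A\oplus\mathbb{S}\to\mathbb{S})$ of Proposition~\ref{nonunital}, one sees that $\mathrm{Idem}(A)=\{x\in\pi_0(A):x^2=x\}$ with $p\wedge q=pq$, $p\vee q=p+q-pq$ and bottom element $0$, so Theorem~\ref{idempotents} is recovered as a byproduct.

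It remains to identify the value. Since $(-)_\infty$ carries $D$ to the augmentation $r:D_\infty\to\mathbf{2}$ collapsing every element $\neq\infty$ to $0$, and the inverse of the equivalence of Proposition~\ref{nonunital} is $f\mapsto\mathrm{fib}(f)$, we have $\mathcal{M}(D)\simeq\mathrm{fib}\bigl(\mathcal{M}(D_\infty)\xrightarrow{\mathcal{M}(r)}\mathbb{S}\bigr)$. By Lemma~\ref{splitofmot}, $r$ admits a section $s:\mathbf{2}\to D_\infty$; hence $\mathcal{M}(r)\circ\mathcal{M}(s)\simeq\mathrm{id}_{\mathbb{S}}$, the map $\mathcal{M}(r)$ is a split epimorphism of spectra, and $\mathcal{M}(D_\infty)\simeq\mathrm{fib}(\mathcal{M}(r))\oplus\mathbb{S}$. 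By the adjunction $\mathcal{M}\dashv\mathrm{Idem}$ on $\mathrm{DLatt}_{bd}$ above, $\mathcal{M}(D_\infty)$ is the Moore spectrum of $M(D_\infty)$, which is free by Corollary~\ref{motivesisfree}; thus $\mathcal{M}(D_\infty)$ is connective with integral homology concentrated in degree $0$, and the same holds for its retract $\mathrm{fib}(\mathcal{M}(r))$, whose $H_0$ is $\ker(\pi_0\mathcal{M}(r))$. Finally $\pi_0\mathcal{M}(r):M(D_\infty)\to\mathbb{Z}$ is split surjective, so $\ker(\pi_0\mathcal{M}(r))\oplus\mathbb{Z}\cong M(D_\infty)\cong M(D)\oplus\mathbb{Z}$ by Lemma~\ref{splitofmot}; since $\ker(\pi_0\mathcal{M}(r))$ and $M(D)$ are both free abelian, cancelling the $\mathbb{Z}$ summand gives $\ker(\pi_0\mathcal{M}(r))\cong M(D)$. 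Hence $\mathrm{fib}(\mathcal{M}(r))$ is a connective spectrum $X$ with $X\otimes H\mathbb{Z}\simeq HM(D)$, i.e.\ $\mathcal{M}(D)\simeq\mathrm{fib}(\mathcal{M}(r))$ is the Moore spectrum of $M(D)$.

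The construction itself is entirely formal—slicing an adjunction, composing adjunctions, invoking Proposition~\ref{nonunital}. The only non-routine step is the value computation, and there the substance reduces to Lemma~\ref{splitofmot} (which supplies the section $s$ and identifies $M(D_\infty)\cong M(D)\oplus\mathbb{Z}$) together with the elementary cancellation property of free abelian groups, which is precisely what upgrades the connective retract $\mathrm{fib}(\mathcal{M}(r))$ of a wedge of spheres to the Moore spectrum of $M(D)$. I do not anticipate a serious obstacle beyond being careful with these two points.
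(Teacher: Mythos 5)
Your proof is correct and follows essentially the same route as the paper: slice the $\mathcal{M}\dashv\mathrm{Idem}$ adjunction on $\mathrm{DLatt}_{bd}$ over $\mathbf{2}$ (resp.\ $\mathbb{S}$), pass to non-unital $E_\infty$-rings via Proposition~\ref{nonunital}, precompose with $(-)_\infty\dashv R$, and identify the value via Lemma~\ref{splitofmot}. You supply more detail than the paper on two points --- making the slicing explicit as pullback along the unit (and observing that the unit $\mathbf{2}\to\mathrm{Idem}(\mathbb{S})$ is an isomorphism, so the pullback is trivial, which is exactly why the paper records $\mathcal{M}(\mathbf{2})\simeq\mathbb{S}$ \emph{and} $\mathrm{Idem}(\mathbb{S})\simeq\mathbf{2}$), and unwinding $\mathrm{Idem}$ on non-unital rings --- both of which the paper leaves implicit. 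The one place you overcomplicate is the value identification: you use a retract-of-wedge-of-spheres argument plus cancellation of a $\mathbb{Z}$ summand among free abelian groups to conclude $\ker(\pi_0\mathcal{M}(r))\cong M(D)$, but Lemma~\ref{splitofmot} already asserts that the splitting $M(D_\infty)\cong M(D)\oplus\mathbb{Z}$ is \emph{the one induced by} $D\to D_\infty\to\mathbf{2}$ and $\mathbf{2}\to D_\infty$, i.e.\ the kernel of $M(r)$ is identified with $M(D)$ on the nose, so no cancellation is needed. (The cancellation step is valid for free abelian groups, so this is a detour, not an error.) Your observation that $\mathrm{fib}(\mathcal{M}(r))$ is a connective retract of a wedge of spheres with $H\mathbb{Z}$-homology concentrated in degree $0$, hence the Moore spectrum of its $\pi_0$, is a clean way to upgrade the group-level splitting to the spectrum level, and is precisely what the paper's terse ``which agrees with $\mathcal{M}(D)$ by Lemma~\ref{splitofmot}'' is eliding.
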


\begin{proof}
For formal reasons, the adjunction for bounded distributive lattices induces an adjunction
\[\begin{tikzcd}
	{{\mathrm{DLatt}_{bd}}_{/ \mathbf{2}}} & {\mathrm{CAlg}(\mathrm{Sp})_{/\mathbb{S}}}
	\arrow[""{name=0, anchor=center, inner sep=0}, "{\mathcal{M}}", curve={height=-12pt}, from=1-1, to=1-2]
	\arrow[""{name=1, anchor=center, inner sep=0}, "{\mathrm{Idem}}", curve={height=-12pt}, from=1-2, to=1-1]
	\arrow["\dashv"{anchor=center, rotate=-90}, draw=none, from=0, to=1]
\end{tikzcd}\]
since $\mathcal{M}(\mathbf{2}) \simeq \mathbb{S}$ and $\mathrm{Idem}(\mathbb{S}) \simeq \mathbf{2}$. Using the adjunction
\[\begin{tikzcd}
	{\mathrm{DLatt}_{lb}} & {{\mathrm{DLatt}_{bd}}_{/\mathbf{2}}}
	\arrow[""{name=0, anchor=center, inner sep=0}, "{(-)_\infty}", curve={height=-12pt}, hook, from=1-1, to=1-2]
	\arrow[""{name=1, anchor=center, inner sep=0}, "R", curve={height=-12pt}, from=1-2, to=1-1]
	\arrow["\dashv"{anchor=center, rotate=-90}, draw=none, from=0, to=1]
\end{tikzcd}\]
and the equivalence provided by Proposition \ref{nonunital} we get the wanted adjunction
\[\begin{tikzcd}
	{\mathrm{DLatt}_{lb}} & {\mathrm{CAlg}^{nu}(\mathrm{Sp})}
	\arrow[""{name=0, anchor=center, inner sep=0}, curve={height=-12pt}, from=1-1, to=1-2]
	\arrow[""{name=1, anchor=center, inner sep=0}, curve={height=-12pt}, from=1-2, to=1-1]
	\arrow["\dashv"{anchor=center, rotate=-90}, draw=none, from=0, to=1]
\end{tikzcd}\]
Tracing through the definitions we see that the left adjoint is given as $D \mapsto \mathrm{fib}( \mathcal{M}(D_\infty) \rightarrow \mathcal{M}(\mathbf{2}) )$, which agrees with $\mathcal{M}(D)$ by Lemma \ref{splitofmot}. The right adjoint is given by sending a non-unital $E_\infty$-ring $A$ to the subset of $\mathrm{Idem}(A_+)$ of those idempotents that map to $0 \in \mathbb{S}$ under the augmentation $A_+ = A \oplus \mathbb{S} \rightarrow \mathbb{S}$, in other words the set of idempotents of $A$.
\end{proof}

\begin{remark}
The above adjunction restricts to an adjunction
\[\begin{tikzcd}
	{\mathrm{DLatt}_{lb}} & \hspace{3ex} {\mathrm{CAlg}^{nu}(\mathrm{Sp})_{conn}}
	\arrow[""{name=0, anchor=center, inner sep=0}, "{\mathcal{M}}", curve={height=-12pt}, from=1-1, to=1-2]
	\arrow[""{name=1, anchor=center, inner sep=0}, "{\mathrm{Idem}}", curve={height=-12pt}, from=1-2, to=1-1]
	\arrow["\dashv"{anchor=center, rotate=-90}, draw=none, from=0, to=1]
\end{tikzcd}\]
where $\mathrm{CAlg}^{nu}(\mathrm{Sp})_{conn}$ is the $\infty$-category of connective non-unital $E_\infty$-rings. 
By composing with the adjunction
\[\begin{tikzcd}
	{\mathrm{CAlg}^{nu}(\mathrm{Sp})_{conn}} & {\mathrm{CAlg}^{nu}}
	\arrow["{\pi_0}", curve={height=-12pt}, from=1-1, to=1-2]
	\arrow[curve={height=-12pt}, hook, from=1-2, to=1-1]
\end{tikzcd}\]
we obtain the adjunction claimed in Theorem \ref{idempotents}.
\end{remark}

\section{Algebraic $K$-theory of (locally) coherent spaces}

We are now ready to prove the two main theorems of this paper.

\begin{theorem} \label{ktheorycoherent}
Let $X$ be a coherent space. Then the natural map $X^{const} \rightarrow X$ induces an equivalence
$$F^{cont}( \mathrm{Sh}(X,\mathcal{C})) \simeq F^{cont}( \mathrm{Sh}(X^{const},\mathcal{C})) $$
for any finitary localizing invariant $F$ and dualizable stable $\infty$-category $\mathcal{C}$.
\end{theorem}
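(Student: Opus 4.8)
The plan is to reduce to finite distributive lattices and then run an induction along an open–closed recollement. Write $D = \mathcal{K}^o(X)$, a bounded distributive lattice, so that $\mathrm{Sh}(X,\mathcal{C}) \simeq \mathrm{Sh}((D,fin),\mathcal{C})$ and, since $\mathcal{K}^o(X^{const}) = \mathrm{Bool}(D)$, also $\mathrm{Sh}(X^{const},\mathcal{C}) \simeq \mathrm{Sh}((\mathrm{Bool}(D),fin),\mathcal{C})$; under these identifications the map of the theorem is the one induced by the unit $\eta_D\colon D \to \mathrm{Bool}(D)$ of the Booleanization adjunction (Lemma \ref{compmor}), so I must show $F^{cont}(\eta_D^{*})$ is an equivalence. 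First I would reduce to finite $D$: writing $D = \colim_i D_i$ as the filtered colimit of its finite sublattices, the functor $\mathrm{Sh}((-,fin),\mathcal{C})\colon \mathrm{DLatt}_{bd}\to\mathrm{Pr}^L_{dual}$ preserves filtered colimits by Corollary \ref{sheavesfiltereddualizable}, and so does $\mathrm{Sh}((\mathrm{Bool}(-),fin),\mathcal{C})$ since $\mathrm{Bool}$ is a left adjoint carrying finite lattices to finite Boolean algebras; as $F^{cont}$ is finitary, $F^{cont}(\eta_D^{*})$ is the filtered colimit of the $F^{cont}(\eta_{D_i}^{*})$, so it suffices to treat finite $D$.

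For finite $D$ put $P = \mathrm{pts}(D)$, a finite poset, so that $D \cong \Omega(P_{\mathrm{Alex}})$ and $\mathrm{Sh}((D,fin),\mathcal{C}) \simeq \mathrm{PSh}(P;\mathcal{C})$ by Example \ref{sheavesfiniteframepresentable}. By Proposition \ref{finiteboolean}, $\mathrm{Bool}(D) \cong \mathcal{P}(P)$, whose poset of points is the discrete poset $P_{\mathrm{disc}}$ on the underlying set of $P$, so $\mathrm{Sh}((\mathrm{Bool}(D),fin),\mathcal{C}) \simeq \mathrm{PSh}(P_{\mathrm{disc}};\mathcal{C}) \simeq \prod_{p\in P}\mathcal{C}$. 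Comparing the two sides on representables identifies $\eta_D^{*}$ with the restriction (``evaluate at the points'') functor $\iota^{*}\colon \mathrm{PSh}(P;\mathcal{C})\to\mathrm{PSh}(P_{\mathrm{disc}};\mathcal{C})$ along the identity-on-objects functor $\iota\colon P_{\mathrm{disc}}\to P$, and the task becomes showing $F^{cont}(\iota^{*})$ is an equivalence.

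Here I would induct on $|P|$, the case $|P| = 1$ being trivial. Choose a minimal element $p_0\in P$ with up-closed complement $Z = P\setminus\{p_0\}$, whose open complement in $P_{\mathrm{Alex}}$ is the single point $p_0$. The corresponding open–closed decomposition yields an exact sequence $\mathcal{C}\xrightarrow{i_!}\mathrm{PSh}(P;\mathcal{C})\xrightarrow{c^{*}}\mathrm{PSh}(Z;\mathcal{C})$ in $\mathrm{Pr}^L_{dual}$, with $i_!$ extension by zero at $p_0$ and $c^{*}$ restriction to $Z$, while $P_{\mathrm{disc}} = \{p_0\}\sqcup Z_{\mathrm{disc}}$ gives the split exact sequence $\mathcal{C}\to\mathrm{PSh}(P_{\mathrm{disc}};\mathcal{C})\to\mathrm{PSh}(Z_{\mathrm{disc}};\mathcal{C})$. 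Since $p_0$ is minimal, $i_!(c)$ has value $c$ at $p_0$ and $0$ elsewhere, so $\iota^{*}$ carries the first sequence to the second — inducing the identity of $\mathcal{C}$ on fibers and the map $\iota^{*}_Z$ on cofibers. Applying $F^{cont}$ turns both into fiber sequences in $\mathcal{E}$ and produces a map between them that is the identity on the first terms and, by the inductive hypothesis applied to $Z$, an equivalence on the third; hence it is an equivalence on the middle term, i.e. $F^{cont}(\iota^{*})$ is an equivalence, completing the induction and the proof.

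I expect the finite-case induction to be the crux. The reduction to finite lattices is routine given the cited filtered-colimit results, and the identification of $\eta_D^{*}$ with $\iota^{*}$ is bookkeeping; but the point of using a recollement is that $\iota^{*}$ does \emph{not} respect the evident point-indexed semi-orthogonal decompositions of the two sides — it spreads $p_!(\mathcal{C})\subset\mathrm{PSh}(P;\mathcal{C})$ over all points above $p$ — so a direct appeal to Corollary \ref{semiorthogonaldecomposition} is unavailable. The care needed is precisely in checking that $\iota^{*}$ is compatible with the two open–closed decompositions and that the induced map on cofibers is again the ``evaluate at the points'' functor for the smaller poset $Z$, so that the inductive hypothesis applies on the nose.
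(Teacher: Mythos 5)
Your proof is correct, and it follows the same overall reduction as the paper: use Corollary~\ref{sheavesfiltereddualizable} (and the fact that $\mathrm{Bool}$ preserves filtered colimits) to reduce to finite $D$, then analyze the finite case over the poset $P = \mathrm{pts}(D)$.

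Where you diverge from the paper is in the finite case, and your instinct there is well-placed. The paper's proof asserts a commuting square whose vertical legs are the semi-orthogonal decomposition equivalences of Corollary~\ref{semiorthogonaldecomposition} and whose bottom edge is the tautological bijection $\mathrm{pts}(D)\cong\mathrm{pts}(\mathrm{Bool}(D))$. As you note, that square does \emph{not} commute strictly: with respect to the two decompositions, $\iota^{*}p_!(X)$ splits as $\bigoplus_{q\leq p}q_!(X)$ rather than as the single summand $p_!(X)$, so the ``matrix'' of $F(\iota^{*})$ is unipotent with respect to the order of $P$ but not the identity. The paper's justification (``since $\mathrm{pts}(D)\cong\mathrm{pts}(\mathrm{Bool}(D))$'') therefore leaves a gap: one needs an argument for why a unipotent map of direct sums is an equivalence. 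Your inductive recollement argument supplies exactly that, by peeling off a minimal point $p_0$, observing that minimality makes $\iota^{*}$ strictly compatible with the two open--closed exact sequences, and applying $F^{cont}$ to get a map of fiber sequences that is the identity on fibers and, inductively, an equivalence on cofibers. This is in effect an unwinding of the proof of Proposition~\ref{ktheorysemiorthogonal} applied in a form adapted to the comparison map, and it closes the gap in the paper's commuting-square step. One small slip: $\iota^{*}$ spreads $p_!(\mathcal{C})$ over the points \emph{below} $p$ (those $q$ with $q\leq p$), not above, but this does not affect the argument since you peel off a minimal $p_0$ and the open part $\{p_0\}$ is exactly what makes $\iota^{*}i_!=i_!$ hold on the nose.
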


\begin{proof}
Write $D = \mathcal{K}^o(X)$ for the bounded distributive lattice of compact open sets of $X$. The natural map $X^{const} \rightarrow X$ corresponds under Stone-Duality to the homomorphism
$D \rightarrow \mathrm{Bool}(D)$. We can write $D = \mathrm{colim}_{i \in I} D_i$ with $D_i$ finite frames, hence using Theorem \ref{sheavesfiltereddualizablelowerbounded}, it suffices to prove the theorem for the case of $D$ being a finite frame. Then we have the natural commuting square
\[\begin{tikzcd}
	{F( \mathrm{Sh}(D,\mathcal{C}))} & {F( \mathrm{Sh}(\mathrm{Bool}(D),\mathcal{C}))} \\
	{F(\mathcal{C})^{|\mathrm{pts}(D)|}} & {F(\mathcal{C})^{|\mathrm{pts}(\mathrm{Bool}(D))|}}
	\arrow[from=1-1, to=1-2]
	\arrow["\simeq", from=2-1, to=1-1]
	\arrow["\simeq", from=2-1, to=2-2]
	\arrow["\simeq"', from=2-2, to=1-2]
\end{tikzcd}\]
where the bottom map is an equivalence since $\mathrm{pts}(D) \cong \mathrm{pts}(\mathrm{Bool}(D))$ and the vertical maps are equivalences by Example \ref{sheavesfiniteframepresentable} and Corollary \ref{semiorthogonaldecomposition}.
\end{proof}

Given Theorem \ref{ktheorycoherent}, we will now extend the statement to locally coherent spaces. Note that if $\mathcal{C}$ is a presentable stable $\infty$-category, it is tensored over the $\infty$-category of spectra $\mathrm{Sp}$. For $A \in \mathrm{Sp}$ and $c \in \mathcal{C}$, we write $A \otimes c$ for their tensor. Also recall the existence of the spectrum-of-motives-functor $\mathcal{M}$ provided by Theorem \ref{spaceofidempotents}.

\begin{theorem} \label{maintheorem}
Let $X$ be a locally coherent space corresponding to the frame $\mathrm{Ind}(D)$, where $D$ is the lower bounded distributive lattice of compact open subsets of $X$. Then for any finitary localizing invariant $F$ and dualizable $\infty$-category $\mathcal{C}$ there is a natural equivalence
$$F( \mathrm{Sh}(X; \mathcal{C}) ) \simeq \mathcal{M}(D) \otimes F(C).$$
If the localizing invariant $F$ takes values in the $\infty$-category of spectra, we have that
$$\pi_n F( \mathrm{Sh}(X; \mathcal{C}) ) \cong M(D) \otimes_\mathbb{Z} \pi_n F(C).$$
\end{theorem}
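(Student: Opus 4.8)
The plan is to compare the functors $\Phi_1 \defeq F(\mathrm{Sh}((-,fin),\mathcal{C}))$ and $\Phi_2 \defeq \mathcal{M}(-)\otimes F(\mathcal{C})$ from $\mathrm{DLatt}_{lb}$ to $\mathcal{E}$ (the first well defined by Lemma \ref{compmor}, the second by Theorem \ref{spaceofidempotents}), reducing successively along $\mathrm{DLatt}_{lb}\to\mathrm{DLatt}_{bd}\to\mathrm{BoolAlg}\to\mathrm{FinBoolAlg}$, in parallel with the proof of Corollary \ref{motivesisfree}. Note first that both functors preserve filtered colimits: $\Phi_1$ by Theorem \ref{sheavesfiltereddualizablelowerbounded}, and $\Phi_2$ because $\mathcal{M}$ is a left adjoint and $-\otimes F(\mathcal{C})\colon\mathrm{Sp}\to\mathcal{E}$ preserves all colimits. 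To pass from $\mathrm{DLatt}_{lb}$ to $\mathrm{DLatt}_{bd}$ I would use that for $D\in\mathrm{DLatt}_{lb}$ the open--closed decomposition of Display \ref{openclosedtopelement} gives a natural exact sequence $\mathrm{Sh}((D,fin),\mathcal{C})\to\mathrm{Sh}((D_\infty,fin),\mathcal{C})\to\mathcal{C}$; applying the localizing invariant $F$ produces a natural fiber sequence $\Phi_1(D)\to\Phi_1(D_\infty)\to F(\mathcal{C})$, split by the section induced by $\mathbf{2}\to D_\infty$, while Lemma \ref{splitofmot} gives a parallel split sequence $\Phi_2(D)\to\Phi_2(D_\infty)\to F(\mathcal{C})$ with $\Phi_2(D_\infty)\simeq(\mathcal{M}(D)\oplus\mathbb{S})\otimes F(\mathcal{C})$. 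Hence a natural equivalence $\Phi_1\simeq\Phi_2$ on $\mathrm{DLatt}_{bd}$ compatible with the maps to $F(\mathcal{C})$ yields, by passing to fibers, the equivalence on all of $\mathrm{DLatt}_{lb}$.

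For $D\in\mathrm{DLatt}_{bd}$, Theorem \ref{ktheorycoherent} gives a natural equivalence $\Phi_1(D)\simeq\Phi_1(\mathrm{Bool}(D))$ and Proposition \ref{motivesonbooleanization} gives $\Phi_2(D)\simeq\Phi_2(\mathrm{Bool}(D))$, so it remains to identify $\Phi_1$ with $\Phi_2$ on $\mathrm{BoolAlg}$. Since $\mathrm{BoolAlg}=\mathrm{Ind}(\mathrm{FinBoolAlg})$ and both functors preserve filtered colimits, it suffices to treat a finite Boolean algebra $\mathcal{P}(S)$. Here $\mathrm{pts}(\mathcal{P}(S))=S$ is discrete, so $\mathrm{Sh}((\mathcal{P}(S),fin),\mathcal{C})\simeq\mathrm{Fun}(S,\mathcal{C})$ by Example \ref{sheavesfiniteframepresentable} and $\Phi_1(\mathcal{P}(S))\simeq\bigoplus_{s\in S}F(\mathcal{C})$ by Corollary \ref{semiorthogonaldecomposition}; on the other hand $M(\mathcal{P}(S))\cong\mathbb{Z}[S]$ by Example \ref{finiteset}, so $\mathcal{M}(\mathcal{P}(S))\simeq\bigvee_S\mathbb{S}$ and $\Phi_2(\mathcal{P}(S))\simeq\bigoplus_{s\in S}F(\mathcal{C})$. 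A homomorphism $\mathcal{P}(S)\to\mathcal{P}(S')$ is pullback along a map $h\colon S'\to S$, and under both descriptions it acts by sending the $s$-indexed summand diagonally onto the summands indexed by $h^{-1}(s)$; so the two identifications agree as natural transformations over $\mathrm{FinSet}^{op}$, and passing to Ind-objects and unwinding the reductions gives the desired natural equivalence $F(\mathrm{Sh}(X;\mathcal{C}))\simeq\mathcal{M}(D)\otimes F(\mathcal{C})$.

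For the homotopy-group statement, $M(D)$ is free abelian by Corollary \ref{motivesisfree}, so $\mathcal{M}(D)$ is a wedge of sphere spectra indexed by a basis $I$ of $M(D)$ and $\mathcal{M}(D)\otimes F(\mathcal{C})\simeq\bigoplus_I F(\mathcal{C})$; since homotopy groups of spectra commute with wedge sums, $\pi_n F(\mathrm{Sh}(X;\mathcal{C}))\cong\bigoplus_I\pi_n F(\mathcal{C})\cong M(D)\otimes_{\mathbb{Z}}\pi_n F(\mathcal{C})$.

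The main obstacle is not any single hard step but the coherent bookkeeping of naturality through the three reductions: all the substantive inputs are already available, and what needs care is the identification of the structure maps after each reduction --- in particular that the boundary map $\Phi_1(D_\infty)\to F(\mathcal{C})$ corresponds to the projection onto the $F(\mathcal{C})$-summand of $(\mathcal{M}(D)\oplus\mathbb{S})\otimes F(\mathcal{C})$, and that the finite-stage identifications in the second paragraph are compatible with the $\mathrm{FinSet}^{op}$-functoriality. If only a pointwise equivalence is wanted, these compatibilities can be dropped and one reduces directly to finite distributive lattices, where $\mathrm{Sh}((D,fin),\mathcal{C})\simeq\mathrm{Fun}(\mathrm{pts}(D)^{op},\mathcal{C})$ by Example \ref{sheavesfiniteframepresentable} and the computation of $M(D)$ proceeds as in Corollary \ref{motivesisfree}.
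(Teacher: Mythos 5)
Your proposal is correct and follows essentially the same reduction chain as the paper's proof: (i) split off the top element via the open--closed Verdier sequence of Display~\ref{openclosedtopelement} together with Lemma~\ref{splitofmot}, reducing $\mathrm{DLatt}_{lb}$ to $\mathrm{DLatt}_{bd}$; (ii) pass to the Booleanization via Theorem~\ref{ktheorycoherent} and Proposition~\ref{motivesonbooleanization}; and (iii) use filtered-colimit continuity (Theorem~\ref{sheavesfiltereddualizablelowerbounded}, Lemma~\ref{motivesfiltered}) to reduce to finite Boolean algebras, where both sides become $\bigoplus_S F(\mathcal{C})$. The paper presents these steps in the opposite order, working outward from the profinite case, but the substance is identical; your framing as a comparison of two filtered-colimit-preserving functors $\Phi_1$ and $\Phi_2$ is a clean way to package the naturality that the paper handles more implicitly, and your explicit flag of the structure-map compatibilities (e.g.\ that the boundary map $\Phi_1(D_\infty)\to F(\mathcal{C})$ matches the projection onto the $\mathbb{S}$-summand) is a genuine point that deserves the care you give it. The only cosmetic divergence is in the final homotopy-group identification: you invoke freeness of $M(D)$ (Corollary~\ref{motivesisfree}) to split $\mathcal{M}(D)$ as a wedge of spheres, whereas the paper argues via commutation of $\pi_n$ with filtered colimits; both are valid.
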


\begin{proof} Let us first prove the case for $X$ being profinite, with $X = \lim_{i \in I} X_i$ for $X_i$ finite discrete sets. In this case we have
$$\mathrm{Sh}(X;\mathcal{C}) \simeq \colim_{i \in I} \mathrm{Sh}(X_i;\mathcal{C}) \simeq \colim_{i \in I} \bigoplus_{X_i}\mathcal{C} $$
in $\mathrm{Cat}^{dual}$ by Theorem \ref{sheavesfiltereddualizablelowerbounded}. Hence applying $F$ we get
$$F(\mathrm{Sh}(X;\mathcal{C})) \simeq \colim_{i \in I} \bigoplus_{X_i} F(\mathcal{C}) \simeq (\colim_{i \in I} \bigoplus_{X_i} \mathbb{S}) \otimes F(\mathcal{C}) \simeq \mathcal{M}(\mathcal{K}^o(X))  \otimes F(\mathcal{C}). $$

Now assume that $X$ is locally coherent space corresponding to the frame $\mathrm{Ind}(D)$. We can reduce to the case of coherent spaces by using the Verdier sequence
$$\mathrm{Sh}((D,fin);\mathcal{C}) \rightarrow \mathrm{Sh}((D_\infty,fin);\mathcal{C}) \rightarrow \mathcal{C}$$
given by the open-closed decomposition of $\mathrm{Ind}(D_\infty)$, see Display \ref{openclosedtopelement}, together with the natural splitting $M(D_{\infty}) \cong M(D) \oplus \mathbb{Z}$ provided by \ref{splitofmot}. The case of coherent spaces reduces to the profinite case by Theorem \ref{ktheorycoherent} and Proposition \ref{motivesonbooleanization}. The statement about the isomorphisms
$$\pi_n F( \mathrm{Sh}(X; \mathcal{C}) ) \cong M(D) \otimes_\mathbb{Z} \pi_n F(C)$$
in the case that $F$ has values in spectra follows directly from the construction of $\mathcal{M}(D)$, using the fact that taking homotopy groups commutes with filtered colimits.
\end{proof}

\begin{remark}
Note that if $X_i \rightarrow X_j$ is a transition map of finite sets in the diagram describing $X =  \lim_{i \in I} X_i $, then we have a map of spectra $\bigoplus_{X_i}\mathbb{S} \rightarrow \bigoplus_{X_j}\mathbb{S}$ naturally. The corresponding transition maps 
$$\bigoplus_{X_i} F(\mathcal{C}) \leftarrow \bigoplus_{X_j}F(\mathcal{C})$$
for the colimit in question are obtained by applying the functor $\mathrm{map}(-, F(\mathcal{C}))$.
\end{remark}

\begin{corollary} \label{ktheorycoherentvaluesspectra}
Let $X$ be a coherent space and $\mathcal{C}$ a dualizable $\infty$-category. Assume that $F$ is a localizing invariant with values in spectra. Then for all $n \in \mathbb{Z}$ the natural map $X^{const} \rightarrow X$ induces isomorphisms
$$ \pi_n F^{cont}( \mathrm{Sh}(X,\mathcal{C})) \cong C(X^{const}; \pi_n(F^{cont}(\mathcal{C})) ),$$
where $\pi_n(F^{cont}(\mathcal{C}))$ is equipped with the discrete topology.
\end{corollary}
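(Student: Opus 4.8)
The plan is to chain the two main theorems of the section with the computation of the motives of a Boolean algebra. Since the natural map $X^{const}\to X$ corresponds under Stone duality to the homomorphism $D\to\mathrm{Bool}(D)$ with $D=\mathcal{K}^o(X)$, Theorem~\ref{ktheorycoherent} already gives a natural equivalence $F^{cont}(\mathrm{Sh}(X,\mathcal{C}))\simeq F^{cont}(\mathrm{Sh}(X^{const},\mathcal{C}))$, so it suffices to compute the homotopy groups of the right-hand side. Now $X^{const}$ is a profinite, hence locally coherent, space with $\mathcal{K}^o(X^{const})=\mathrm{Bool}(D)$, so the profinite case of Theorem~\ref{maintheorem} (which is the one established first there) yields a natural isomorphism $\pi_n F^{cont}(\mathrm{Sh}(X^{const},\mathcal{C}))\cong M(\mathrm{Bool}(D))\otimes_{\mathbb{Z}}\pi_n F^{cont}(\mathcal{C})$. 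Finally Proposition~\ref{motivesforboolean} identifies $M(\mathrm{Bool}(D))$ with $C(X^{const};\mathbb{Z})$ via indicator functions, and records that this group is free.

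The only remaining ingredient is the elementary identification $C(X^{const};\mathbb{Z})\otimes_{\mathbb{Z}}A\cong C(X^{const};A)$ for an arbitrary discrete abelian group $A$, applied to $A=\pi_n F^{cont}(\mathcal{C})$. I would prove this by writing $X^{const}=\lim_{i\in I}X_i$ as a cofiltered limit of finite discrete sets: then $C(X_i;\mathbb{Z})=\mathbb{Z}^{X_i}$ is free of finite rank, so $C(X_i;\mathbb{Z})\otimes_{\mathbb{Z}}A\cong A^{X_i}=C(X_i;A)$, and passing to the filtered colimit over $I$ — using that $-\otimes_{\mathbb{Z}}A$ commutes with filtered colimits, and that both $C(-;\mathbb{Z})$ and $C(-;A)$ turn this cofiltered limit of spaces into the corresponding filtered colimit of groups by Lemma~\ref{profinitecontinuous} — gives the claim. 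Combining everything, $\pi_n F^{cont}(\mathrm{Sh}(X,\mathcal{C}))\cong C(X^{const};\mathbb{Z})\otimes_{\mathbb{Z}}\pi_n F^{cont}(\mathcal{C})\cong C(X^{const};\pi_n F^{cont}(\mathcal{C}))$, naturally in $X^{const}\to X$. Equivalently, one may rephrase the last step via Theorem~\ref{ktheorylocallycoherent}: $F^{cont}(\mathrm{Sh}(X^{const},\mathcal{C}))\simeq\bigoplus F^{cont}(\mathcal{C})$ indexed by a basis of $M(\mathrm{Bool}(D))$, and $\pi_n$ of such a coproduct of spectra is the corresponding direct sum, which matches $C(X^{const};\pi_n F^{cont}(\mathcal{C}))$ after a choice of basis.

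I do not anticipate a genuine obstacle, since every piece is assembled from results proved above; the two points deserving a sentence of care are, first, that the statement implicitly puts the \emph{discrete} topology on $\pi_n F^{cont}(\mathcal{C})$, which is exactly why Lemma~\ref{profinitecontinuous} (local constancy with compact open fibres on a profinite space) is the correct input for the colimit identification, and second, keeping track of naturality so that the displayed isomorphism is genuinely the one induced by $X^{const}\to X$ rather than merely an abstract one — this follows because each of Theorems~\ref{ktheorycoherent} and~\ref{maintheorem}, Proposition~\ref{motivesforboolean}, and Lemma~\ref{profinitecontinuous} is natural.
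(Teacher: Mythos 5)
Your proof is correct and follows essentially the same route as the paper: apply Theorem~\ref{maintheorem} (together with Proposition~\ref{motivesonbooleanization}/Proposition~\ref{motivesforboolean}) to identify $\pi_n F^{cont}(\mathrm{Sh}(X,\mathcal{C}))$ with $C(X^{const};\mathbb{Z})\otimes_{\mathbb{Z}}\pi_n F^{cont}(\mathcal{C})$, and then convert this into $C(X^{const};\pi_n F^{cont}(\mathcal{C}))$. The paper's own proof is terser and leaves the last identification $C(X^{const};\mathbb{Z})\otimes_{\mathbb{Z}}A\cong C(X^{const};A)$ implicit; you supply it explicitly (and correctly) via the cofiltered limit description of $X^{const}$ and Lemma~\ref{profinitecontinuous}.
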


\begin{proof}
This follows directly from the isomorphism $M(\mathcal{K}^o(X^{const})) \cong C(X^{const}; \mathbb{Z})$ provided by Proposition \ref{motivesforboolean}.
\end{proof}

\section{Applications}

\subsection{Scissors Congruence of polytopes} \label{scissorscongr}

An interesting application of the theory appears in the study of scissors congruence. Let $X$ be a geometry, such as Euclidean space $E^n$, the sphere $S^n$ or hyperbolic space $H^n$, of dimension $n$. A geometric $n$-simplex is the convex hull of $n+1$ points of non-trivial measure. (Meaning the $n+1$-points do not lie on an $n-1$-dimensional subspace.) A polytope in $X$ is a finite union of geometric $n$-simplices. (Note that this includes the empty set $\emptyset$.) Let $G$ be a subgroup of the group of isometries. A central question is Hilbert's third problem.

\begin{question}[Hilbert's third problem]
Given two polytopes $P$ and $Q$, when can $P$ be cut up into finitely many pieces, moved around via isometries in $G$ and reassembled into $Q$?
\end{question}

The question can be phrased differently. Consider the free abelian group $P(X,G)$ generated by the polytopes in $X$ modulo the relations
\begin{itemize}
\item $[P \cup Q] = [P] + [Q]$ whenever the intersection of $P$ and $Q$ has measure zero,
\item $[P] = [gP]$ for any isometry $g \in G$.
\end{itemize}
The group $P(X,G)$ is called \emph{scissors congruence group} of $X$. Hilbert's third problem can be reinterpreted as the question of finding an explicit description of this group (via invariants such as volume or the Dehn invariant).

Inna Zakharevich has generalized this abelian group into a spectrum $K^{sci}(X,G)$ called Scissor's congruence $K$-theory, whose zero-th homotopy group agrees with $P(X,G)$ and whose higher homotopy groups provide a playing field for the question of finding ``higher Dehn invariants'', see \cite{zakharevich2011scissorscongruencektheory}, also \cite{malkiewich2024higherscissorscongruence} and \cite{kupers2024scissorsautomorphismgroupshomology}. Let us mention two central statements about the description of this spectrum. The first is about the non-equivariant case, i.e. when we let $G = \{1\}$ be our group of isometries.

\begin{theorem}[Malkiewich-Zakharevich, \cite{malkiewich2024higherscissorscongruence} Theorem 1.10]
For any geometry $X$, there is an equivalence
$$K^{sci}(X,1) \simeq \bigoplus \mathbb{S},$$
where the wedge product is indexed by a choice of basis of the free (!) abelian group $P(X,1)$.
\end{theorem}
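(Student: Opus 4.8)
The plan is to show that $K^{sci}(X,1)$ is a Moore spectrum for the polytope module and then read off the wedge decomposition from freeness. First I would recall that, in Zakharevich's framework, $K^{sci}(X,1)$ is the $K$-theory of the assembler of polytopes in $X$ whose covering families are the finite interior-disjoint decompositions; this spectrum is connective by construction, and Zakharevich's identification of $\pi_0$ of assembler $K$-theory with the associated scissors congruence group gives $\pi_0 K^{sci}(X,1) \cong P(X,1)$. For the trivial group, the relations defining $P(X,1)$ are exactly the relations presenting the module of motives of the lower bounded distributive lattice $D(X)$ of polytopes (Example \ref{polytopelattice}), so $P(X,1) \cong M(D(X)) = \mathrm{Pt}(X)$, and this group is \emph{free} by Corollary \ref{motivesisfree} --- which is where Nöbeling's Theorem \ref{noebelingstheorem} enters.

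Next I would feed in the homology computation of Malkiewich--Zakharevich (see e.g.\ \cite[Theorem 1.5]{malkiewich2024higherscissorscongruence}), which expresses the integral homology of $K^{sci}(X,G)$ in terms of the group homology of $G$ with coefficients in $\mathrm{Pt}(X)$. Specializing to $G = 1$, this says that $H_*(K^{sci}(X,1);\mathbb{Z})$ is concentrated in degree zero, where it is $\mathrm{Pt}(X)$; equivalently $K^{sci}(X,1) \otimes H\mathbb{Z} \simeq H\mathrm{Pt}(X)$. Since $K^{sci}(X,1)$ is connective, this is precisely the defining property (Section \ref{functoriality}) of a Moore spectrum for $\mathrm{Pt}(X)$, and by uniqueness of Moore spectra we conclude $K^{sci}(X,1) \simeq \mathcal{M}(\mathrm{Pt}(X)) = \mathcal{M}(D(X))$. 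Because $\mathrm{Pt}(X)$ is free, choosing a basis identifies this Moore spectrum with $\bigoplus \mathbb{S}$, one summand per basis element, which is the assertion.

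The substantive step is the homology computation, i.e.\ the vanishing of all higher integral homology of $K^{sci}(X,1)$: in homotopy-theoretic terms, that in the absence of a symmetry group there are no higher Dehn-type invariants at all. This is the genuine input from Malkiewich--Zakharevich and is not formal; by contrast the other ingredient, freeness of $\mathrm{Pt}(X)$, is already available to us as Corollary \ref{motivesisfree}. A more self-contained but more laborious alternative would bypass the homology computation by working inside assembler $K$-theory directly, using localization/devissage sequences to peel the assembler of polytopes apart into pieces each of which manifestly contributes a single copy of $\mathbb{S}$; the Moore-spectrum formulation above is just the most economical way to package the result.
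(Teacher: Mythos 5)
The paper does not prove this statement; it is quoted verbatim from Malkiewich--Zakharevich \cite{malkiewich2024higherscissorscongruence} and used downstream only as an input to the corollary identifying $K^{sci}(X,1)$ with $\mathrm{THH}(\mathrm{Sh}(\mathrm{Coh}(X);\mathrm{Sp}))$. So there is no in-paper argument to compare your proposal against; I will evaluate it on its own terms.

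Taken as a conditional derivation, your outline is sound. Connectivity of assembler $K$-theory, the identification $\pi_0 K^{sci}(X,1) \cong P(X,1) \cong M(D(X))$, concentration of $H_*(-;\mathbb{Z})$ in degree zero, and freeness of $M(D(X))$ (which this paper does supply independently of Malkiewich--Zakharevich, via Corollary \ref{motivesisfree} and ultimately Nöbeling's Theorem \ref{noebelingstheorem}) together identify $K^{sci}(X,1)$ as the Moore spectrum of a free abelian group, hence a wedge of spheres. The caveat is the provenance of your homology input. In \cite{malkiewich2024higherscissorscongruence} the non-equivariant splitting (their Theorem 1.10) is established by a direct localization/devissage argument on the assembler, going back to Zakharevich's simplification results, and the general homology description (their Theorem 1.5) is then derived from that splitting combined with the orbit identification $K^{sci}(X,G) \simeq K^{sci}(X,1)_{hG}$ (their Theorem 1.11, quoted here as Theorem \ref{scissorsequivariant}). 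Deducing the splitting from the homology statement therefore risks inverting the actual order of dependence and being circular as a proof of the original result. The ``more laborious alternative'' you gesture at in your final sentence --- peeling the assembler apart directly by localization until only sphere summands remain --- is in fact the actual substance of the Malkiewich--Zakharevich proof, and the Moore-spectrum packaging is better thought of as a slick way to \emph{state} the conclusion than as an independent route to it. If you want the Moore-spectrum route to stand alone, you must check that the $G=1$ homology of the assembler $K$-theory spectrum can be computed without presupposing the splitting.
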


Furthermore, the spectrum $K^{sci}(X,1)$ naturally inherits an action by any group of isometries $G$. This is used in the following theorem.

\begin{theorem}[Bohmann-Gerhardt-Malkiewich-Merling-Zakharevich, \cite{malkiewich2024higherscissorscongruence} Theorem 1.11] \label{scissorsequivariant}
There is an equivalence of spectra
$$K^{sci}(X,1)_{hG} \simeq K^{sci}(X,G).$$
\end{theorem}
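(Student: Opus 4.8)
The cleanest self-contained route is simply to invoke the equivariant assembler machinery of \cite{malkiewich2024higherscissorscongruence} directly; but let me sketch how I would instead derive this from the dualizable-categories perspective of this paper. First I would set up the $G$-action: $G$ acts on the geometry $X$ by isometries, hence on the lattice $D(X)$ of polytopes by lattice homomorphisms, hence on the locally coherent space $\mathrm{Coh}(X)$ and on the dualizable category $\mathcal{A} := \mathrm{Sh}(\mathrm{Coh}(X),\mathrm{Sp})$, so that $\mathcal{A}$ becomes a functor $BG \to \mathrm{Pr}^L_{dual}$. Because the equivalence $\mathrm{THH}(\mathcal{A}) \simeq \mathcal{M}(D(X))$ of Theorem \ref{maintheorem} and the Malkiewich–Zakharevich identification $\mathcal{M}(D(X)) \simeq K^{sci}(X,1)$ are both natural in $X$, they are $G$-equivariant; in particular the $G$-action on $K^{sci}(X,1)$ is the one induced by the $G$-action on the polytope module $\mathrm{Pt}(X) = M(D(X))$. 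Taking homotopy orbits then gives $K^{sci}(X,1)_{hG} \simeq \mathrm{THH}(\mathcal{A})_{hG} \simeq \mathcal{M}(D(X))_{hG}$, so the content to be extracted is the identification of this object with $K^{sci}(X,G)$.

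Next I would identify $K^{sci}(X,G)$ with a homotopy-quotient construction. The plan is to realize $K^{sci}(X,G)$ as the value of scissors congruence $K$-theory on the \emph{quotient assembler} of $G$-polytopes, and to present that quotient as the assembler-level geometric realization of the bar construction $B_\bullet(\ast, G, \mathcal{C}_X)$ built from the polytope assembler $\mathcal{C}_X$. Applying $K$-theory levelwise would produce the simplicial spectrum $[n] \mapsto K^{sci}(X,1) \times G^{\times n}$, i.e. exactly the bar construction computing $K^{sci}(X,1)_{hG}$; so one is reduced to the assertion that scissors congruence $K$-theory commutes with this particular geometric realization. On the dualizable side this is the statement that the finitary localizing invariant in question commutes with the colimit over $\Delta^{op}$ presenting $\mathrm{Sh}$ of the quotient stack $[\mathrm{Coh}(X)/G]$, which for $\mathrm{THH}$ I would attack levelwise, each level again being the sheaf category of a disjoint union of locally coherent spaces so that Theorem \ref{maintheorem} applies.

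The hard part will be exactly that last commutation. Homotopy orbits is a colimit over $BG$, which is neither a filtered colimit nor a Verdier sequence, and a localizing invariant need not commute with it — this is, for instance, why the assembly map $K(\mathbb{S})_{hG} \to K(\mathbb{S}[G])$ is generally not an equivalence. What rescues the statement here is the geometric input specific to scissors congruence — arguments with polytopes in general position that force the relevant simplicial object to be one on which $K$-theory descends — and this is precisely the substance of \cite{malkiewich2024higherscissorscongruence}, Theorem 1.11. So I do not expect a genuinely new proof, only a repackaging; a satisfying formulation to aim for would be $\mathrm{THH}(\mathrm{Sh}([\mathrm{Coh}(X)/G],\mathrm{Sp})) \simeq \mathrm{THH}(\mathrm{Sh}(\mathrm{Coh}(X),\mathrm{Sp}))_{hG} \simeq K^{sci}(X,G)$, the first equivalence being the step that genuinely requires the general-position input.
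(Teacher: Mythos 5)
This theorem is not proved in the paper: it is imported as a black box from \cite{malkiewich2024higherscissorscongruence}, and the paper's own contribution in Section \ref{scissorscongr} is precisely to \emph{not} have a dualizable-categorical proof of it and instead record the gap as an open Conjecture. You correctly sense this, and your honesty at the end (``I do not expect a genuinely new proof, only a repackaging'') is the right assessment. The ``satisfying formulation to aim for'' that you display, $\mathrm{THH}(\mathrm{Sh}([\mathrm{Coh}(X)/G],\mathrm{Sp})) \simeq \mathrm{THH}(\mathrm{Sh}(\mathrm{Coh}(X),\mathrm{Sp}))_{hG} \simeq K^{sci}(X,G)$, is (after identifying $\mathrm{Sh}(\mathrm{Coh}(X),\mathrm{Sp})_{hG}$ in $\mathrm{Pr}^L_{dual}$ with the $G$-equivariant sheaf category) exactly the paper's Conjecture, so presenting it as a route to Theorem \ref{scissorsequivariant} gets the logical dependency backwards: the paper uses the cited theorem to motivate the conjecture, not the other way around.

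There is also a circularity in your final chain that you should flag explicitly. Theorem \ref{maintheorem} gives a natural, hence $G$-equivariant, equivalence $\mathrm{THH}(\mathrm{Sh}(\mathrm{Coh}(X),\mathrm{Sp})) \simeq K^{sci}(X,1)$, so your second displayed equivalence $\mathrm{THH}(\mathrm{Sh}(\mathrm{Coh}(X),\mathrm{Sp}))_{hG} \simeq K^{sci}(X,G)$ is, after applying $(-)_{hG}$ to that, literally the assertion $K^{sci}(X,1)_{hG} \simeq K^{sci}(X,G)$ — i.e.\ the theorem to be proved. Labelling only the first equivalence as ``the step requiring geometric input'' therefore hides that the second is unavailable without the theorem itself, or without an independent identification $\mathrm{THH}(\mathrm{Sh}([\mathrm{Coh}(X)/G],\mathrm{Sp})) \simeq K^{sci}(X,G)$; the latter would need a version of Theorem \ref{maintheorem} for sheaves on a quotient stack of a locally coherent space, which is not a locally coherent space and is not covered by anything in the paper. (A small further slip: the bar construction for homotopy orbits has $n$-simplices $\bigvee_{G^{\times n}} K^{sci}(X,1)$, not a product $K^{sci}(X,1) \times G^{\times n}$.) The correct response to this exercise is simply to cite \cite{malkiewich2024higherscissorscongruence}, as the paper does; any ``derivation'' from the present framework currently relies on an unproved conjecture.
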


This connects to the statements in this paper as follows. Note that the collection of polytopes in $X$ forms a lower bounded distributive lattice $D(X)$, where joins are given by union and meets are given by intersection up to measure zero (so two polytopes that touch along a set of $(n-1)$-faces have empty meet). It is clear by definition that
$$M(D(X)) \cong P(X,1).$$
The latter group is also referred to as the polytope module of $X$. Associated to $D(X)$, we have a locally coherent space $X_{\mathrm{Poly}}$ under Stone duality, and a continuous map $X_{\mathrm{Poly}} \rightarrow X$. (The inverse image part sends an open $U$ of $X$ to the ideal of polytopes contained in $U$.) An immediate corollary of Theorem \ref{maintheorem} is the following, by noting that $\mathrm{THH}(\mathrm{Sp}) \simeq \mathbb{S}$.

\begin{corollary}
Let $X$ be a geometry. There is an equivalence
$$\mathrm{THH}( \mathrm{Sh}( X_{\mathrm{Poly}}; \mathrm{Sp} ) ) \simeq \mathcal{M}(D(X)) \simeq K^{sci}(X,1).$$
\end{corollary}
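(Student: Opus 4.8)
The plan is to deduce this directly from Theorem~\ref{maintheorem}; the work is entirely bookkeeping. First I would recall that $\mathrm{Coh}(X)$ is, by its definition via Stone duality, the locally coherent space with $\mathcal{K}^o(\mathrm{Coh}(X)) = D(X)$, where $D(X) \in \mathrm{DLatt}_{lb}$ is the lattice of polytopes of Example~\ref{polytopelattice}. Since $\mathrm{THH}$ is a finitary localizing invariant, applying Theorem~\ref{maintheorem} with $F = \mathrm{THH}$ and $\mathcal{C} = \mathrm{Sp}$ yields
$$\mathrm{THH}(\mathrm{Sh}(\mathrm{Coh}(X);\mathrm{Sp})) \simeq \mathcal{M}(D(X)) \otimes \mathrm{THH}(\mathrm{Sp}),$$
and since $\mathrm{THH}(\mathrm{Sp}) \simeq \mathbb{S}$ is the unit for the smash product, the right-hand side is $\mathcal{M}(D(X))$.

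It remains to identify $\mathcal{M}(D(X))$ with $K^{sci}(X,1)$. The first step is the isomorphism of abelian groups $M(D(X)) \cong P(X,1)$. This is a direct comparison of presentations: unwinding the lattice operations on $D(X)$ — joins are unions, meets are intersections taken up to measure zero, so $U \wedge V = 0$ precisely when $U \cap V$ has measure zero — one checks that the modularity relation $[U]+[V]=[U\vee V]+[U\wedge V]$ together with $[0]=0$ generates exactly the relation $[P\cup Q]=[P]+[Q]$ (for $P\cap Q$ of measure zero) defining $P(X,1)$, and conversely. Hence $\mu_{univ}$ and the defining quotient map $\mathbb{Z}[D(X)] \to P(X,1)$ carry the same universal property. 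By Corollary~\ref{motivesisfree} (ultimately Nöbeling's theorem) the group $M(D(X)) \cong P(X,1)$ is free, so its Moore spectrum $\mathcal{M}(D(X))$ is a wedge $\bigoplus \mathbb{S}$ indexed by any chosen basis. On the other hand, the Malkiewich--Zakharevich theorem quoted above gives $K^{sci}(X,1) \simeq \bigoplus \mathbb{S}$ with the wedge indexed by a basis of $P(X,1)$. Comparing the two descriptions yields $\mathcal{M}(D(X)) \simeq K^{sci}(X,1)$.

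I do not expect a genuine obstacle. The only two points needing care are: (a) the translation of the polytope relations into valuation relations, i.e.\ matching the operations on $D(X)$ with union and intersection-up-to-measure-zero (including that polytopes agreeing up to a null set are identified in both quotients), which is routine but should be spelled out; and (b) the fact that the asserted equivalence $\mathcal{M}(D(X)) \simeq K^{sci}(X,1)$ is established by recognizing both sides as the Moore spectrum of the same free abelian group, hence is an equivalence of spectra and is not claimed to be canonical — no comparison of multiplicative or equivariant structure is made, so this is harmless for the statement. Producing a comparison map natural in $X$ would require more, but is not needed here.
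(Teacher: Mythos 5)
Your argument is correct and follows exactly the route the paper intends: apply Theorem~\ref{maintheorem} with $F = \mathrm{THH}$ and $\mathcal{C} = \mathrm{Sp}$, use $\mathrm{THH}(\mathrm{Sp}) \simeq \mathbb{S}$, identify $M(D(X)) \cong P(X,1)$ by matching presentations, and then compare the resulting wedge of spheres with the Malkiewich--Zakharevich description of $K^{sci}(X,1)$. Your caveats (a) and (b) are reasonable points of care that the paper elides as ``clear by definition'' and as an unnamed non-canonical identification, respectively, so you have not diverged from the intended proof, only spelled it out more fully.
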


If $G$ is a group of isometries, there is a natural action of $G$ on $X_{\mathrm{Poly}}$ via homeomorphisms (since $G$ acts on $D(X)$ via lattice isomorphisms). Theorem \ref{scissorsequivariant} immediately implies  the following statement.

\begin{corollary}
Let $X$ be a geometry and $G$ a group of isometries of $X$. There is an equivalence
$$\mathrm{THH}( \mathrm{Sh}( X_{\mathrm{Poly}}; \mathrm{Sp} ))_{hG}  \simeq K^{sci}(X,G).$$
\end{corollary}

This corollary means that higher scissors congruence $K$-theory can be thought of as a special case of the study of topological Hochschild Homology, or equivalently traces, of a class of dualizable $\infty$-categories. This tightens the relationship between scissors congruence $K$-theory with algebraic $K$-theory and could explain why several techniques have close analogues on both sides. However, there is more to be done. For one, it is unsatisfactory that $G$-orbits need to be taken after applying $\mathrm{THH}$. It would be interesting to figure out the direct relationship of Scissors Congruence $K$-theory with the $\infty$-topos of $G$-equivariant sheaves on $X_{\mathrm{Poly}}$. This would open the door to an understanding of other related spectra, such as the Scissors Congruence $K$-theories of manifolds or varieties.

\begin{remark}
For a given polytope $P$, the lattice $D(X)_{/P}$ is in fact Boolean. This means that $X_{\mathrm{Poly}}$ has a neighborhood basis of profinite spaces, closed under intersection. As a consequence we see that $X_{\mathrm{Poly}}$ is a locally compact Hausdorff space. 
\end{remark}

\subsection{Algebraic $K$-theory of measure spaces} \label{algebraicktheorymeasure}

Suppose $(X, \mathcal{L}, \mu)$ is a localizable measure space in the sense of \cite[Definition 211G]{fremlin2000measure}. We remark that this includes the case of $\sigma$-finite measure spaces.  Then we can associate to $X$ the complete Boolean algebra $\mathcal{B} = \mathcal{L}/\mathcal{N}$ where $\mathcal{N}$ is the $\sigma$-ideal of $\mu$-null sets, as discussed in example \ref{measurespacemotive}. We call $\mathrm{Stone}(X)$ the associated profinite space to the Boolean algebra $\mathcal{B}$, or equivalently described by the frame $\mathrm{Ind}(\mathcal{B})$.

Since $\mathcal{B}$ is Boolean, a valuation $\mu : \mathcal{B} \rightarrow \mathbb{C}$ is the same as a finitely additive complex measure on $X$, which vanishes on $\mu$-null sets. We define the \emph{total variation} of a $\mathbb{C}$-valued valuation $\mu$ on $\mathcal{B}$ to be
$$||\mu|| = \mathrm{sup}_{P_1,\hdots, P_n \text{ finite partition of } \mathcal{B}} \sum_{i = 1}^n |\mu(P_i)|.$$
We say that $\mu$ is bounded if its total variation is finite.

The dual space $L^\infty(X)^*$ to $L^\infty(X)$ can be characterized via an analogous statement to the Riesz representation theorem as the space of bounded valuations on $\mathcal{B}$, a result that is referred to as the Yoshida-Hewitt representation. For a proof see \cite{Toland2020}, at least in the case when $X$ is a complete, $\sigma$-finite measure space. Phrased differently, Theorem \ref{maintheorem} implies that we have an inclusion
$$L^\infty(X)^* \subset \mathrm{Hom}( K_0( \mathrm{Sh}(\mathrm{Stone}(X);\mathrm{Sp}) ), \mathbb{C} ).$$
that identifies $L^\infty(X)^*$ with the bounded elements in the target.

The module of motives of $\mathcal{B}$ also appears naturally. As seen in Example \ref{measurespacemotive}, we can identify 
$$K_0( \mathrm{Sh}(\mathrm{Stone}(X);\mathrm{Sp}) ) \cong M(\mathcal{B}) \subset L^\infty(X)$$
with the subset given by integral step functions on $X$.

Now if $(X,\mu)$ is a localizable measure space, the $*$-algebra $L^\infty(X,\mu)$ is a commutative von Neumann algebra, which is a fact going back to Segal \cite{SegalI.E.1951EoMS}, see also \cite[Theorem 243G]{fremlin2000measure}. The commutative von Neumann algebra $L^\infty(X,\mu)$ also has two associated $K$-theory spectra given by the algebraic $K$-theory $K^{alg}(L^\infty(X,\mu))$, where we simply consider $L^\infty(X,\mu)$ as a ring, as well as the topological/operator $K$-theory $K^{top}(L^\infty(X,\mu))$ with $L^\infty(X,\mu)$ viewed as a $C^*$-algebra. Note that for any $C^*$-algebra $A$, there is an isomorphism $K^{alg}_0(A) \cong K^{top}_0(A)$. The computation of the algebraic $K$-theory of von Neumann algebras in degree $0$ and $1$ has been done by Lück \cite{Lück2002}, Lück-R{\o}rdam \cite{Lueck1993}. See also \cite{reich_operator}. In the commutative case, this computation reduces to the following statements. 
\begin{proposition}
Let $(X,\mu)$ be a localizable measure space. Then
$$\begin{array}{rcl} K^{alg}_0( L^\infty(X;\mu) ) &\cong & L^\infty(X;\mu; \mathbb{Z}) \\
K^{alg}_1( L^\infty(X;\mu) ) &\cong & L^\infty(X;\mu)^\times,
\end{array}$$
where $L^\infty(X; \mathbb{Z})$ are the essentially bounded functions with values in $\mathbb{Z}$ and and $L^\infty(X)^\times$ denotes the abelian group of units in $L^\infty(X)$.
\end{proposition}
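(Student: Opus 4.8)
The plan is to prove the two isomorphisms separately: reduce $K_0^{alg}$ to operator $K$-theory, reduce $K_1^{alg}$ to a stable-rank computation, and then cross-check the answers against the general von Neumann algebra computations of Lück and Lück--R{\o}rdam. First I would record the structural input: for a localizable measure space $L^\infty(X;\mu)$ is a commutative von Neumann algebra (Segal; \cite[243G]{fremlin2000measure}), and under Gelfand (equivalently Stone) duality $L^\infty(X;\mu) \cong C(\mathrm{Stone}(X);\mathbb{C})$, where $\mathrm{Stone}(X)$ is the profinite (in fact Stonean) space attached to $\mathcal{B} = \mathcal{L}/\mathcal{N}$ as above; in particular $\mathrm{Stone}(X)$ is a zero-dimensional compact Hausdorff space.

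For the $K_0$ statement I would use the cited identification $K_0^{alg}(A) \cong K_0^{top}(A)$ for $C^*$-algebras, so that it suffices to compute $K^0(\mathrm{Stone}(X))$. Since $\mathrm{Stone}(X)$ is profinite, every complex vector bundle on it trivialises over the pieces of a suitable finite clopen partition and is therefore classified by its (locally constant, $\mathbb{N}$-valued) rank function; hence $K^0(\mathrm{Stone}(X)) \cong C(\mathrm{Stone}(X);\mathbb{Z})$. Finally I would invoke Proposition \ref{motivesforboolean}, applied to $\mathrm{Stone}(X)$ with its Boolean algebra of clopen sets $\mathcal{B}$, to identify $C(\mathrm{Stone}(X);\mathbb{Z}) \cong M(\mathcal{B})$ with the group of integral step functions, i.e. with $L^\infty(X;\mu;\mathbb{Z})$. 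This is exactly the group that appeared above as $K_0(\mathrm{Sh}(\mathrm{Stone}(X);\mathrm{Sp}))$, so the two descriptions agree.

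For the $K_1$ statement the key claim is that $A = L^\infty(X;\mu)$ has Bass stable rank $1$. Given a unimodular pair $(a,b)$, say $ca+db=1$, one has $|a|+|b| \geq \delta$ almost everywhere for some $\delta>0$; splitting $X$ into $\{|a|\geq\delta/2\}$ and its complement (on which $|b| > \delta/2$) and adding to $a$ a suitable bounded measurable multiple of $b$ supported on the complement produces $t\in A$ with $a+tb \in A^{\times}$. Once $\mathrm{sr}(A)=1$, Bass's theorem gives $GL(A)=GL_1(A)\,E(A)$, hence $K_1^{alg}(A)=GL_1(A)/(GL_1(A)\cap E(A))$; since $A$ is commutative and $E(A)\subseteq SL(A)=\ker(\det)$, the intersection is trivial, whence $K_1^{alg}(A)\cong A^{\times}=L^\infty(X;\mu)^{\times}$. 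As a consistency check, this is what the general formula for $K_1^{alg}$ of a von Neumann algebra from \cite{Lueck1993,Lück2002} yields when the center-valued invariant is specialised to $Z(A)=A$.

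The vector-bundle classification over a profinite space and the unimodular-row manipulation are routine. The one point that genuinely needs care is the passage from $\mathrm{sr}(A)=1$ to the $K_1$ formula --- i.e. ensuring Bass's presentation of $K_1$ is invoked correctly so that $SK_1(A)=0$ --- since this is precisely where algebraic $K_1$ could differ from the topological $K_1$ of $C(\mathrm{Stone}(X);\mathbb{C})$; accordingly I would lean on the cited von Neumann algebra computations to confirm the stable-rank argument.
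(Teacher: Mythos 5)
Your proof is correct, but it takes a different route from the paper: the paper's proof of this proposition consists entirely of two citations, to L\"uck \cite[Theorem~9.13, Example~9.14]{Lück2002} for $K_0$ and to L\"uck--R{\o}rdam \cite[Theorem~2.1]{Lueck1993} for $K_1$, both of which are formulated for general (not necessarily commutative) von Neumann algebras via the projection lattice, the center-valued dimension, and the Fuglede--Kadison determinant. You instead give a self-contained argument exploiting commutativity from the start: for $K_0$, you pass through $K_0^{alg}\cong K_0^{top}$, Serre--Swan, and the fact that a complex vector bundle on the zero-dimensional compact Hausdorff space $\mathrm{Stone}(X)$ is classified by its locally constant rank function, recovering $K^0(\mathrm{Stone}(X))\cong C(\mathrm{Stone}(X);\mathbb{Z})\cong M(\mathcal{B})$; for $K_1$, you verify Bass stable rank one by hand for $L^\infty$ (the unimodular-row manipulation is fine once one notes $|a|+|b|\ge\delta$ a.e.\ and then, on $\{|a|<\delta/2\}$ where $|b|>\delta/2$, sets $t=(c_0-a)/b$ for a fixed nonzero scalar $c_0$), and then use Bass surjectivity together with the determinant retraction $GL_1(A)\to K_1(A)\to A^\times$ to kill $SK_1$. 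Both routes land on the same answer; yours is arguably more illuminating for the commutative case, while the paper's citations cover the general von Neumann setting. One small remark: the "consistency check" at the end is more than a sanity check --- the stable-rank-one mechanism you employ is essentially the same mechanism underlying the L\"uck--R{\o}rdam computation, specialized to the case where the algebra equals its center, so in effect you have reproved the cited result in the commutative case rather than merely corroborated it. No gaps.
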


For the statement about $K_0$, see \cite[Theorem 9.13 and Example 9.14.]{Lück2002}. The statement about $K_1$ can be found in \cite[Theorem 2.1]{Lueck1993}. 

Using the fact that $K_0(\mathbb{S}) = \mathbb{Z}$ and $K_1(\mathbb{S}) = \{\pm 1\}$, this means that for a localizable measure space $(X,\mu)$ we have natural inclusions
$$\begin{array}{rcccccl} K_0( \mathrm{Sh}(\mathrm{Stone}(X);\mathrm{Sp}) ) &\cong & M(\mathcal{B}) &\subset & L^\infty(X; \mathbb{Z})  &\cong & K^{alg}_0( L^\infty(X;\mu) )  \\
K_1( \mathrm{Sh}(\mathrm{Stone}(X);\mathrm{Sp}) ) & \cong & M(\mathcal{B}) \otimes_\mathbb{Z} \{\pm 1\} & \subset & L^\infty(X)^\times &\cong & K^{alg}_1( L^\infty(X;\mu) ).
\end{array}$$
which correspond to the inclusions of functions taking on finitely many values, or functions taking values in $\{\pm 1\}$, respectively.

In summary, the relation between the (algebraic) $K$-theory of $\mathrm{Stone}(X)$ and the operator theory of $L^\infty(X,\mu)$ is very tight, related by forms of completions or passage to bounded elements. This suggests that one should think of the compactly generated category  $\mathrm{Sh}(\mathrm{Stone}(X);\mathrm{Sp})$ as the corresponding object in the setting of dualizable $\infty$-categories to the commutative von Neumann algebra $L^\infty(X,\mu)$ in the setting of $C^*$-algebras.

This analogy can be made more formal, using the notion of measurable locales, due to Pavlov \cite{Pavlov_2022}. A locale $L$ is \emph{measurable}, if its frame is Boolean and localizable, which means roughly speaking that it admits sufficiently many continuous valuations (See \cite[Definition 2.52]{Pavlov_2022}). There is the chain of equivalences of categories.
\begin{theorem}[\cite{Pavlov_2022} Theorem 1.1.]
The following categories are equivalent:
\begin{itemize}
\item The category $\mathrm{CSLEMS}$ of compact strictly localizable enhanced measurable spaces.
\item The category $\mathrm{HStonean}$ of hyperstonean spaces and open maps.
\item The category $\mathrm{HStoneanLoc}$ of hyperstonean locales and open maps.
\item The category $\mathrm{MeasLoc}$ given by the full subcategory of the category of locales spanned by the measurable locales.
\item The category $\mathrm{CVNA}^{op}$ opposite to the category of commutative von Neumann algebras .
\end{itemize}
\end{theorem}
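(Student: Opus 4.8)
The plan is to prove the theorem as a chain of equivalences, each factoring through the category $\mathrm{LocCBA}$ of \emph{localizable complete Boolean algebras} — complete Boolean algebras admitting a separating family of completely additive valuations — with morphisms the Boolean homomorphisms preserving arbitrary suprema. Concretely one establishes
\[
\mathrm{CVNA}^{op}\;\simeq\;\mathrm{LocCBA}\;\simeq\;\mathrm{MeasLoc}\;\simeq\;\mathrm{HStoneanLoc}\;\simeq\;\mathrm{HStonean}\;\simeq\;\mathrm{CSLEMS},
\]
the translating datum being, in turn, the projection lattice of a von Neumann algebra, the Boolean frame of a measurable locale, the clopen algebra of a (hyper)Stonean space, and the measure algebra $\Sigma/\mathcal{N}$ of an enhanced measurable space.

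For $\mathrm{CVNA}^{op}\simeq\mathrm{LocCBA}$ one sends a commutative von Neumann algebra $A$ to its projection lattice $\mathrm{Proj}(A)$: since projections commute this is a Boolean algebra with $p\wedge q=pq$ and $\neg p=1-p$, it is complete because suprema of projections are computed inside $A$, and it is localizable because the predual of $A$ furnishes enough normal states. Conversely $B\mapsto L^\infty(B):=C(\mathrm{Stone}(B))$; by Dixmier's analysis of Stonean spaces this is always an $AW^{*}$-algebra, and it is a von Neumann algebra exactly when $B$ is localizable, equivalently when $\mathrm{Stone}(B)$ is hyperstonean. These functors are mutually inverse — the spectral theorem identifies $A$ with $L^\infty(\mathrm{Proj}(A))$ by norm-density of simple functions, and the projections of $C(\mathrm{Stone}(B))$ are precisely the indicators of clopens — and on morphisms a normal unital $*$-homomorphism restricts to a supremum-preserving Boolean homomorphism of projection lattices, while any such homomorphism extends uniquely (linearly on simple functions, then by normal continuity) to a normal $*$-homomorphism, giving full faithfulness. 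The step $\mathrm{LocCBA}\simeq\mathrm{MeasLoc}$ is then nearly formal: the frame of a measurable locale is by definition a complete Boolean algebra, and a frame homomorphism between complete Boolean algebras automatically preserves complements and arbitrary infima, hence is exactly a complete Boolean homomorphism; the only substantive point is to verify that Pavlov's intrinsic, point-free localizability condition on a frame is equivalent to the existence of a separating family of continuous valuations.

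On the geometric side, $\mathrm{LocCBA}\simeq\mathrm{HStoneanLoc}\simeq\mathrm{HStonean}$ is Stone duality together with the identity $\Omega(S)\cong\mathrm{Ind}(\mathrm{Clopen}(S))$ for $S$ Stonean already used in the paper: any locale $\mathrm{Ind}(B)$ with $B$ complete is spatial, its point space being $\mathrm{Stone}(B)$, and $\mathrm{Stone}(B)$ is hyperstonean exactly when $B$ is localizable. A continuous map of Stone spaces pulls clopens back to clopens, hence corresponds to a Boolean homomorphism of clopen algebras, and — when those algebras are complete — it is \emph{open} precisely when that homomorphism preserves arbitrary suprema; this matches both $\mathrm{HStonean}$ and $\mathrm{HStoneanLoc}$ (each with open maps) with $\mathrm{LocCBA}$. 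Under this chain a measurable locale $L$ with frame $B$ is carried to the hyperstonean locale with frame $\mathrm{Ind}(B)$, namely $\mathrm{Stone}(L)$: a genuinely different locale, but the same object of the resulting equivalence.

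The equivalence $\mathrm{HStonean}\simeq\mathrm{CSLEMS}$ is where the real work lies, and I expect it to be the main obstacle. On objects one sends a hyperstonean space $S$ to the enhanced measurable space given by its Baire $\sigma$-algebra modulo the $\sigma$-ideal of meager Baire sets, whose measure algebra is $\mathrm{Clopen}(S)$; the conditions \emph{compact} and \emph{strictly localizable} in the definition of $\mathrm{CSLEMS}$ are exactly what forces the measure algebra of an abstract enhanced measurable space to be a localizable complete Boolean algebra (and forces the construction back into $\mathrm{CSLEMS}$). The delicate part is the comparison of morphisms: a morphism of $\mathrm{CSLEMS}$ is an equivalence class of measurable maps modulo modification on negligible sets, and one must show that each such class induces a unique open map of the associated hyperstonean spaces and that every open map so arises. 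Since measurable maps compose only up to negligible sets, turning this into an honest functor and then an equivalence essentially requires a measurable lifting theorem (von Neumann's lifting theorem, or the existence of strong liftings, which holds precisely in the compact strictly localizable regime). Granting this and the point-free reformulation of localizability, the remaining four identifications are formal consequences of Stone duality, the spectral theorem, and Dixmier's description of commutative von Neumann algebras as function algebras on hyperstonean spaces.
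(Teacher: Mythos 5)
You are proving a result that the paper does not itself prove: Theorem \ref{idempotents} --- sorry, the statement you were handed is the theorem attributed to \cite{Pavlov_2022}, Theorem 1.1, and the paper cites it as an external result without supplying an argument. There is therefore no ``paper proof'' to compare against; the relevant comparison is with Pavlov's own proof. Your sketch is broadly in line with that: Pavlov does route the equivalences through the datum of a localizable complete Boolean algebra (the projection lattice, the measure algebra $\mathcal{L}/\mathcal{N}$, the clopen algebra), he does identify $L^\infty(\cdot)$ on the operator-algebraic side and $\mathrm{Ind}(\cdot)$ on the locale side, and the genuinely nontrivial step is exactly where you locate it, namely the passage between $\mathrm{CSLEMS}$ and the point-free/hyperstonean picture, for which some form of von Neumann--Maharam lifting (or an equivalent Gleason-cover argument) is unavoidable because morphisms in $\mathrm{CSLEMS}$ are only equivalence classes of measurable maps.

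Two remarks on the sketch as written. First, the variance in the first two links is off. With your convention that $\mathrm{LocCBA}$ has \emph{complete Boolean homomorphisms} (supremum-preserving Boolean maps) as morphisms, the functor $A \mapsto \mathrm{Proj}(A)$ is covariant on $\mathrm{CVNA}$, so the equivalence you describe is $\mathrm{CVNA} \simeq \mathrm{LocCBA}$, not $\mathrm{CVNA}^{op} \simeq \mathrm{LocCBA}$. Likewise, morphisms of locales are \emph{opposite} to frame homomorphisms, so the identification of a frame homomorphism between Boolean frames with a complete Boolean homomorphism gives $\mathrm{MeasLoc} \simeq \mathrm{LocCBA}^{op}$, not $\mathrm{MeasLoc} \simeq \mathrm{LocCBA}$. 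The two errors cancel, so your composite $\mathrm{CVNA}^{op} \simeq \mathrm{MeasLoc}$ is still correct, but the intermediate statements as written are each the opposite of what they should be. Second, the hard link is substantially understated. You flag it honestly, but ``granting this and the point-free reformulation of localizability'' waves away what is, in Pavlov's paper, the bulk of the technical work: establishing that every morphism of $\mathrm{CSLEMS}$ induces a well-defined open map of hyperstonean spaces (and conversely), handling the subtleties of strict localizability versus mere localizability, and verifying the compactness hypothesis is exactly what makes the lifting machinery available. A proof that simply invokes the lifting theorem without these verifications does not yet establish the theorem. For the purposes of this paper, though, none of this matters: the theorem is used as a black box, and your outline is a fair account of the shape of Pavlov's argument.
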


A compact strictly localizable enhanced measurable space $(X, \mathcal{L}, \mathcal{N})$ is a set $X$, a $\sigma$-algebra $\mathcal{L}$ on $X$ and a $\sigma$-ideal $\mathcal{N}$ of negligible subsets of $X$, satisfying additional conditions, which in particular imply that the Boolean algebra $\mathcal{B} = \mathcal{L}/\mathcal{N}$ is complete. (An example is the measure space obtained from a Radon measure on a topological space.) Under these equivalences $X$ is sent to the measurable locale with frame given by $\mathcal{L}/\mathcal{N}$, or the corresponding Stone space $\mathrm{Stone}(X)$ with the Boolean algebra of compact opens given by $\mathcal{L}/\mathcal{N}$. The corresponding commutative von Neumann algebra is $L^\infty(X)$. Thus we have a fitting notion of algebraic $K$-theory of a measure space, simply as the algebraic $K$-theory of the corresponding hyperstonean space.

\appendix

\begingroup
\setlength{\emergencystretch}{8em}
\printbibliography
\endgroup

\end{document}